\numberwithin{equation}{section}
\newcommand{\RR}{\mathbb{R}}
\newcommand{\PP}{\mathbb{P}}
\newcommand{\QQ}{\mathbb{Q}}
\newcommand{\KK}{\mathbb{K}}
\newcommand{\HH}{\mathbb{H}}
\newcommand{\HHH}[1]{\mathrm{H}({#1})}
\newcommand{\pre}[2]{{}^{#1} #2}
\newcommand{\seq}[2]{\langle #1 \mid #2 \rangle}
\newcommand{\K}[1]{\mathrm{K}_{#1}}
\newcommand{\anf}[1]{{\text{``}\hspace{0.3ex}{#1}\hspace{0.3ex}\text{''}}}
\newcommand{\cof}{\operatorname{cof}}
\newcommand{\otp}[1]{{{\rm{otp}}\left(#1\right)}}
\newcommand{\supp}[1]{{{\rm{supp}}(#1)}}
\newcommand{\On}{{\mathrm{On}}}
\newcommand{\Lim}{{\mathrm{Lim}}}
\newcommand{\ZFC}{{\sf ZFC}}
\newcommand{\map}[3]{{#1} \colon {#2}\to{#3}}
\newcommand{\Map}[5]{{#1}\colon{#2}\to{#3} \colon{#4}\mapsto{#5}}
\newcommand{\pmap}[4]{{#1} \colon {#2}\xrightarrow{#4}{#3}}
\newcommand{\Set}[2]{\{{#1} \mid {#2}\}}
\newcommand{\lrSet}[2]{\left\{{#1} \mid {#2}\right\}}
\newcommand{\ran}[1]{{{\rm{ran}}(#1)}}
\newcommand{\dom}[1]{{{\rm{dom}}(#1)}}
\newcommand{\length}[1]{{{\rm{lh}}(#1)}}
\newcommand{\VV}{{\rm{V}}}
\newcommand{\WW}{{\rm{W}}}
\newcommand{\LL}{{\rm{L}}}
\newcommand{\Add}[2]{{\rm{Add}}({#1},{#2})}
\DeclarePairedDelimiterX{\goe}[1]{\prec}{\succ}{#1}
\newcommand{\goedel}[2]{\goe{#1, #2}}
\newtheorem{theorem}{Theorem}[section]
\newtheorem{lemma}[theorem]{Lemma}
\newtheorem{corollary}[theorem]{Corollary}
\newtheorem{proposition}[theorem]{Proposition}
\newtheorem{question}{Question}
\newtheorem*{claim*}{Claim}
\newtheorem*{subclaim*}{Subclaim}
\theoremstyle{definition}
\newtheorem{claim}{Claim}[theorem]
\newtheorem{definition}[theorem]{Definition}
\newtheorem{fact}[theorem]{Fact}
\theoremstyle{remark}
\newenvironment{enumerate-(a)}{\begin{enumerate}[label={\upshape (\alph*)}, leftmargin=2pc]}{\end{enumerate}}
\newenvironment{enumerate-(a)-r}{\begin{enumerate}[label={\upshape (\alph*)}, leftmargin=2pc,resume]}{\end{enumerate}}
\newenvironment{enumerate-(A)}{\begin{enumerate}[label={\upshape (\Alph*)}, leftmargin=2pc]}{\end{enumerate}}
\newenvironment{enumerate-(A)-r}{\begin{enumerate}[label={\upshape (\Alph*)}, leftmargin=2pc,resume]}{\end{enumerate}}
\newenvironment{enumerate-(i)}{\begin{enumerate}[label={\upshape (\roman*)}, leftmargin=2pc]}{\end{enumerate}}
\newenvironment{enumerate-(i)-r}{\begin{enumerate}[label={\upshape (\roman*)}, leftmargin=2pc,resume]}{\end{enumerate}}
\newenvironment{enumerate-(I)}{\begin{enumerate}[label={\upshape (\Roman*)}, leftmargin=2pc]}{\end{enumerate}}
\newenvironment{enumerate-(I)-r}{\begin{enumerate}[label={\upshape (\Roman*)}, leftmargin=2pc,resume]}{\end{enumerate}}
\newenvironment{enumerate-(1)}{\begin{enumerate}[label={\upshape (\arabic*)}, leftmargin=2pc]}{\end{enumerate}}
\newenvironment{enumerate-(1)-r}{\begin{enumerate}[label={\upshape (\arabic*)}, leftmargin=2pc,resume]}{\end{enumerate}}
\newenvironment{itemizenew}{\begin{itemize}[leftmargin=2pc]}{\end{itemize}}
\begin{document}

\author{Philipp L\"ucke} 
\address{Mathematisches Institut, Universit\"at Bonn,
Endenicher Allee 60, 53115 Bonn, Germany}
\email{pluecke@math.uni-bonn.de}

\author[L.~Motto Ros]{Luca Motto Ros}
\address{Dipartimento di matematica \guillemotleft{Giuseppe Peano}\guillemotright, Universit\`a di Torino, 
Via Carlo Alberto 10, 10121 Torino, Italy}
\email{luca.mottoros@unito.it}

 \author{Philipp Schlicht}
  \address{Institut f\"ur Mathematische Logik und Grundlagenforschung, Universit\"at M\"unster, Einsteinstr. 62, 48149 M\"unster, Germany and Mathematisches Institut,\
   Universit\"at Bonn\\Endenicher Allee 60\\53115 Bonn\\Germany} 
 \email{schlicht@math.uni-bonn.de}

\thanks{During the preparation of this paper, the first and the third author were partially supported by DFG-grant LU2020/1-1. The authors would like to thank the anonymous referee for the careful reading of the manuscript.}

\title[The Hurewicz dichotomy for generalized Baire spaces]{The Hurewicz dichotomy \\ for generalized Baire spaces} 

\begin{abstract}  
By classical results of Hurewicz, Kechris and Saint-Raymond, an analytic subset of a Polish space $X$ is covered by a  \( \K{\sigma} \) subset of $X$ if and only if it does not contain a closed-in-\( X \) subset homeomorphic to the Baire space ${}^{\omega}\omega$.  
We consider the analogous statement (which we call \emph{Hurewicz dichotomy}) for ${\bf \Sigma}^1_1$ subsets of the generalized Baire space ${}^{\kappa}\kappa$ for a given uncountable cardinal $\kappa$ with $\kappa=\kappa^{{<}\kappa}$. 
We show that the statement that this dichotomy holds at all uncountable regular cardinals is consistent with the axioms of $\ZFC$ together with $\mathsf{GCH}$  and large cardinal axioms. 
In contrast, we show that the dichotomy fails at all uncountable regular cardinals after we add a Cohen real to a model of $\mathsf{GCH}$. 
We also discuss connections with some regularity properties, like the \( \kappa \)-perfect set property, the $\kappa$-Miller measurability, and the $\kappa$-Sacks measurability. 
 
\end{abstract}

\maketitle

 \tableofcontents


\section{Introduction}\label{section:intro} 

A subset of a topological space \( X \)  is $\K{\sigma}$ if it can be written as a countable union of countably many compact subsets of \( X \).  
The space \( X \) is called $\K{\sigma}$ if it is a $\K{\sigma}$ subset of itself. It is easy to verify that if \( X \) is Hausdorff and \( A \subseteq X \) is \( \K{\sigma} \), then any closed subset of \( A \) is \( \K{\sigma} \) as well. A typical example of a non-\( \K{\sigma} \) Polish space is the Baire space ${}^\omega\omega$. This can be seen either by observing that a subset of \( \pre{\omega}{\omega} \) is  \( \K{\sigma} \) if and only if it is eventually bounded, or by applying the Baire Category Theorem. 
All together these observations show the following fact. 
\begin{fact} \label{fct:exclusiveomega}
A $\K{\sigma}$ subset of a Hausdorff topological space cannot contain a closed subset homeomorphic to ${}^\omega\omega$. 
\end{fact}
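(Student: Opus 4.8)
The plan is to reduce the statement to the single assertion that the Baire space ${}^{\omega}\omega$ is not $\K{\sigma}$, and then to prove that assertion by hand. Being $\K{\sigma}$ as a subset of oneself is invariant under homeomorphism, so it suffices to establish two things: first, that every closed subspace of a $\K{\sigma}$ subset of a Hausdorff space is again $\K{\sigma}$; and second, that ${}^{\omega}\omega$ is not $\K{\sigma}$. Granting both, if $A \subseteq X$ were $\K{\sigma}$ and contained a closed set $C$ homeomorphic to ${}^{\omega}\omega$, then $C$ would be $\K{\sigma}$, and hence so would ${}^{\omega}\omega$ --- a contradiction. Thus the whole content of the proof lies in the two elementary lemmas just isolated.

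For the first lemma, I would write $A = \bigcup_{n \in \omega} K_n$ with each $K_n$ compact, take $C \subseteq A$ closed in $A$, and fix $F$ closed in $X$ with $C = A \cap F$. Since $X$ is Hausdorff, each $K_n$ is closed in $X$, so $K_n \cap F$ is a closed subset of the compact space $K_n$ and is therefore compact; hence $C = \bigcup_{n \in \omega}(K_n \cap F)$ exhibits $C$ as a countable union of sets compact in $X$, and these are then also compact in $C$, so $C$ is $\K{\sigma}$. This is the only point at which the Hausdorff hypothesis enters.

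For the second lemma, I would observe that if $K \subseteq {}^{\omega}\omega$ is compact, then its image under each coordinate projection $x \mapsto x(n)$ is a compact, hence bounded, subset of $\omega$, so there is $f_K \in {}^{\omega}\omega$ with $K \subseteq \prod_{n \in \omega}(f_K(n)+1)$. If ${}^{\omega}\omega = \bigcup_{n \in \omega} K_n$ with each $K_n$ compact, then the function $g$ given by $g(n) = f_{K_n}(n)+1$ belongs to ${}^{\omega}\omega$ but to no $K_n$, a contradiction. Alternatively, one could appeal to the Baire Category Theorem, since each such $K_n$ is closed with empty interior in the completely metrizable space ${}^{\omega}\omega$.

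As everything here is elementary, there is no genuine obstacle; the only two points worth stating with care are the use of the Hausdorff property --- needed so that compact subsets are closed and so that intersecting a compact set with a closed set again yields a compact set --- and the fact that compact subsets of ${}^{\omega}\omega$ are bounded coordinatewise, which is what makes the diagonalization go through.
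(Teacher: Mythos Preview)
Your proof is correct and follows precisely the approach the paper sketches in the sentences immediately preceding the fact: the paper notes that closed subsets of $\K{\sigma}$ sets in Hausdorff spaces are again $\K{\sigma}$, and that ${}^\omega\omega$ is not $\K{\sigma}$ (via boundedness or the Baire Category Theorem). You have simply supplied the routine details of that sketch, including both alternatives the paper mentions for the second point.
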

The following classical result in descriptive set theory due to Hurewicz (see {\cite[Theorem 7.10]{MR1321597}}) shows that the property in Fact~\ref{fct:exclusiveomega} actually characterizes the class of $\K{\sigma}$ Polish spaces.

\begin{theorem}[Hurewicz] \label{thm:hurewicz}
 A Polish space is $\K{\sigma}$ if and only if it does not contain a closed subset homeomorphic to ${}^{\omega}\omega$. 
\end{theorem}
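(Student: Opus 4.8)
The plan is to prove the nontrivial implication in contrapositive form: if a Polish space $X$ is not $\K\sigma$, then it has a closed subspace homeomorphic to ${}^{\omega}\omega$ (the converse is immediate from Fact~\ref{fct:exclusiveomega} with the $\K\sigma$ subset taken to be $X$ itself). Fix a complete metric $d$ inducing the topology of $X$. First I would pass to a better subspace: let $W$ be the union of all open $\K\sigma$ subsets of $X$. Since $X$ is second countable this is a \emph{countable} union of open $\K\sigma$ sets, hence itself $\K\sigma$, so $F := X \setminus W$ is a nonempty closed — thus Polish and $d$-complete — subspace. The key feature of $F$ is that whenever $V \subseteq X$ is open with $V \cap F \neq \emptyset$, the relatively open set $V \cap F$ is \emph{not} $\K\sigma$: otherwise $V = (V \cap F) \cup (V \cap W)$ would be open and $\K\sigma$ (an open, hence $F_\sigma$, subset of a $\K\sigma$ set is $\K\sigma$), forcing $V \subseteq W$. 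Consequently no nonempty relatively open subset of $F$ has compact closure in $F$ (such a closure would be an open-and-$\sigma$-compact, hence $\K\sigma$, subset of $F$), and therefore, $F$ being $d$-complete, no nonempty relatively open subset of $F$ is totally bounded.

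Next I would recursively build a Luzin scheme of nonempty open sets $U_s \subseteq X$, $s \in {}^{<\omega}\omega$, with $U_\emptyset = X$ and, for all $s$ and all $i \neq j$: $U_s \cap F \neq \emptyset$; $\overline{U_{s^\frown i}} \subseteq U_s$; $\operatorname{diam}_d(U_{s^\frown i}) \leq 2^{-|s|-1}$; and $d(\overline{U_{s^\frown i}}, \overline{U_{s^\frown j}}) \geq \delta_s > 0$ for a single $\delta_s$ depending only on $s$ — so that each family $\{\overline{U_{s^\frown i}} \mid i \in \omega\}$ is discrete, hence pairwise disjoint and locally finite with closed union. This is exactly where the reduction pays off: given $U_s$, the set $U_s \cap F$ is nonempty and relatively open in $F$, so it is not totally bounded, and thus contains points $y_0, y_1, \dots$ pairwise at distance $\geq \varepsilon$ for some $\varepsilon > 0$; taking $U_{s^\frown i}$ to be a sufficiently small $d$-ball around $y_i$ (closure inside $U_s$, diameter $\leq 2^{-|s|-1}$, balls $(\varepsilon/3)$-separated) works, with $y_i \in F$ witnessing $U_{s^\frown i} \cap F \neq \emptyset$.

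Finally I would read off the embedding. Completeness of $d$ together with the vanishing diameters gives a well-defined continuous map $f \colon {}^{\omega}\omega \to X$ sending $x$ to the unique point of $\bigcap_n \overline{U_{x \restriction n}}$, and level-wise disjointness of the fibers makes $f$ injective; writing $[s] = \{x \in {}^{\omega}\omega \mid s \subseteq x\}$, the disjointness of $\overline{U_{t^\frown i}}$ and $\overline{U_{t^\frown j}}$ for $i \neq j$ yields $f[[s]] = f[{}^{\omega}\omega] \cap U_s$, so $f$ is open onto its range and hence a homeomorphism onto $f[{}^{\omega}\omega]$. The one genuinely delicate point — and the main obstacle — is that $f[{}^{\omega}\omega]$ must be \emph{closed}, not merely a homeomorphic copy of ${}^{\omega}\omega$: this is where the ``uniformly discrete fibers'' condition is essential, and that condition was available only because the reduction made $F$ nowhere locally compact (equivalently, nowhere totally bounded for $d$). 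Concretely, if $z$ lies in the closure of $f[{}^{\omega}\omega]$, then since $\bigcup_i \overline{U_{\langle i \rangle}}$ is closed and locally finite there is a unique $i_0$ with $z \in \overline{U_{\langle i_0 \rangle}}$, and local finiteness forces $z \in \overline{f[[\langle i_0 \rangle]]}$; iterating inside $\overline{U_{\langle i_0 \rangle}}$ produces some $x \in {}^{\omega}\omega$ with $z \in \bigcap_n \overline{U_{x \restriction n}}$, i.e.\ $z = f(x)$. The remaining checks are routine bookkeeping with nested closed sets of vanishing diameter.
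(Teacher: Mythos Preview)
The paper does not supply its own proof of this theorem; it is stated as a classical result of Hurewicz with a reference to \cite[Theorem~7.10]{MR1321597}, so there is nothing to compare against directly. Your argument is correct and is essentially the standard one: pass to the complement $F$ of the largest open $\K\sigma$ subset (the Lindel\"of property makes $W$ a countable union of open $\K\sigma$ sets, hence $\K\sigma$), observe that $F$ is closed, complete, and nowhere locally compact, and then build a Luzin scheme with uniformly discrete fibers to obtain a closed embedding of ${}^{\omega}\omega$. One minor wording slip: in the parenthetical ``such a closure would be an open-and-$\sigma$-compact \ldots'' you mean the relatively open set itself, not its closure, is $\sigma$-compact (being $F_\sigma$ inside its compact closure); the logic is fine.
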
 

Fact~\ref{fct:exclusiveomega} together with~\cite[Theorems 3.11 and 4.17]{MR1321597} show that Theorem~\ref{thm:hurewicz} can be equivalently restated as: For any \( \mathbf{G}_\delta \) subset \( A \) of a Polish space \( X \), either \( A \) is contained in a \( \K{\sigma} \) subset of \( X \), or else \( A \) contains a closed-in-\( X \) homeomorphic copy of \( \pre{\omega}{\omega} \).
The following result of Kechris and Saint-Raymond (see {\cite[Corollary 21.23]{MR1321597}}) extends this dichotomy to analytic subsets of Polish spaces.

\begin{theorem}[Kechris, Saint-Raymond]\label{theorem:HDClassicAnalytic}
For any $\mathbf{\Sigma}^1_1$ subset \( A \) of a Polish space \( X \), either \( A \) is contained in a \( \K{\sigma} \) subset of \( X \), or else \( A \) contains a closed-in-\( X \) homeomorphic copy of \( \pre{\omega}{\omega} \).
\end{theorem}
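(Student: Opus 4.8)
By Fact~\ref{fct:exclusiveomega} the two alternatives are mutually exclusive, so it suffices to show: if a $\mathbf{\Sigma}^1_1$ set $A \subseteq X$ is \emph{not} contained in any $\K{\sigma}$ subset of $X$, then $A$ contains a closed-in-$X$ homeomorphic copy of $\pre{\omega}{\omega}$. Fix a complete compatible metric on $X$; since $A \ne \emptyset$, fix a closed set $F$ in the Polish space $Z := X \times \pre{\omega}{\omega}$ with $A = \proj_X[F]$, where $\proj_X \colon Z \to X$ is the projection. The role of this unfolding is that, along a convergent branch of the construction below, the $\pre{\omega}{\omega}$-coordinate will converge to an actual membership witness, which is what will guarantee that the copy we build lands inside $A$ and not merely inside $\overline{A}$.

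It is tempting to apply the $\mathbf{G}_\delta$ form of the dichotomy (recorded after Theorem~\ref{thm:hurewicz}) to the closed --- hence $\mathbf{G}_\delta$ --- set $F$ in $Z$: one has that $F$ is not contained in any $\K{\sigma}$ subset of $Z$ (else its continuous image $A = \proj_X[F]$ would be contained in a $\K{\sigma}$ subset of $X$), so $F$ would contain a closed-in-$Z$ copy $C$ of $\pre{\omega}{\omega}$. The difficulty is that $\proj_X$ may collapse $C$ --- for instance $C$ could lie inside a single fibre $\{x\}\times\pre{\omega}{\omega}$ --- so that $\proj_X[C]$ need be neither closed in $X$ nor a copy of $\pre{\omega}{\omega}$. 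I would therefore run the relevant recursion directly, keeping the projection under control throughout.

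\textbf{The construction.} Recursively build a Lusin scheme $(x_s, W_s)_{s \in \pre{<\omega}{\omega}}$, where $x_s \in X$ and $W_s \subseteq \pre{\omega}{\omega}$ is basic clopen, together with radii $\varepsilon_s > 0$ vanishing along branches, such that, writing $B_s := B(x_s, \varepsilon_s) \subseteq X$: \emph{(nesting)} $\overline{B_{s^\frown i}} \times \overline{W_{s^\frown i}} \subseteq B_s \times W_s$ and $(B_s \times W_s) \cap F \ne \emptyset$; \emph{(largeness)} $\proj_X[F \cap (B_s \times W_s)]$ is not contained in any $\K{\sigma}$ subset of $X$; and \emph{(separation)} for each fixed $s$ the closed balls $\overline{B_{s^\frown i}}$, $i \in \omega$, are pairwise disjoint, their union $\bigcup_i \overline{B_{s^\frown i}}$ is closed in $X$, and $\varepsilon_{s^\frown i} \to 0$ as $i \to \infty$ fast enough that no new limit points are created. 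Granting such a scheme, each $\alpha \in \pre{\omega}{\omega}$ yields a Cauchy sequence $(x_{\alpha \restriction n})_n$ with limit $\varphi(\alpha) \in X$; the separation clause makes $\varphi$ a homeomorphism of $\pre{\omega}{\omega}$ onto the closed set $\bigcap_n \bigcup_{|s| = n} \overline{B_s}$, while nesting and largeness (together with choosing a point of $F$ inside each box and shrinking the $W_s$) force the $\pre{\omega}{\omega}$-coordinates of those $F$-points to converge, so that $\bigl(\varphi(\alpha), \text{limit witness}\bigr) \in F$ and hence $\varphi(\alpha) \in A$. Thus $\ran{\varphi} \subseteq A$ is the desired copy.

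\textbf{The recursion step and the main obstacle.} Given a box $B_s \times W_s$ satisfying largeness, split $W_s$ into its immediate successor basic clopen sets $W_s = \bigsqcup_i W_s^{(i)}$; then $\proj_X[F \cap (B_s \times W_s)] = \bigcup_i \proj_X[F \cap (B_s \times W_s^{(i)})]$, and since a countable union of $\K{\sigma}$-coverable sets is $\K{\sigma}$-coverable, at least one summand remains non-$\K{\sigma}$-coverable. The crux is to upgrade ``at least one'' to ``infinitely many, and spread apart in $X$''. This rests on the fact that a finitely branching tree has compact body, so that a set confined to such a body modulo a $\K{\sigma}$ set is $\K{\sigma}$-coverable: hence above any box with non-$\K{\sigma}$-coverable projection one finds arbitrarily fine ``splitting'' refinements whose projections decompose into infinitely many non-$\K{\sigma}$-coverable pieces sitting inside pairwise far-apart balls of $X$. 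Here the combinatorial characterization of $\K{\sigma}$-coverable subsets of $\pre{\omega}{\omega}$ --- those dominated, off a finite set, by a single function $\pre{<\omega}{\omega} \to \omega$ --- does the real work, and the passage from $\pre{\omega}{\omega}$ to an arbitrary Polish $X$ must be handled with care (e.g.\ via the observation, suitably localized, that a non-$\K{\sigma}$-coverable set contains an infinite closed discrete subset, which is what secures the separation clause). Interleaving these splittings with routine shrinking to make diameters vanish and to pin down the $F$-points yields the scheme. The main obstacle is precisely this coordination: maintaining the ``not $\K{\sigma}$-coverable'' invariant while simultaneously forcing infinite, uniformly separated branching in the $X$-coordinate.
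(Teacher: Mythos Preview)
The paper does not prove this theorem at all: it is stated as a classical result and attributed to Kechris and Saint-Raymond with a bare citation to {\cite[Corollary 21.23]{MR1321597}}, serving purely as motivation for the generalized dichotomy studied later. There is therefore no ``paper's own proof'' to compare your proposal against.

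As for the proposal itself, it is a reasonable outline of a direct tree/Lusin-scheme construction, and you correctly isolate the genuine difficulty: maintaining the ``not $\K{\sigma}$-coverable'' invariant while simultaneously forcing infinite, separated branching in the $X$-coordinate. Your suggested fix --- reducing to the combinatorial description of $\K{\sigma}$-coverable subsets of $\pre{\omega}{\omega}$ as eventually bounded sets --- is indeed how the standard proofs proceed, though to make it go through cleanly one usually first embeds $X$ as a closed subspace of $\pre{\omega}{\omega}$ (possible whenever $X$ is zero-dimensional; the general case reduces to this) rather than working with metric balls in an arbitrary $X$. The parenthetical claim that ``a non-$\K{\sigma}$-coverable set contains an infinite closed discrete subset'' is not quite what you need and is not obviously true as stated for the \emph{set itself} (as opposed to its closure); what you actually need, and what the eventual-boundedness picture gives directly, is infinitely many pairwise disjoint basic open sets each meeting the projection in a non-$\K{\sigma}$-coverable piece. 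So the sketch is sound in spirit but would require tightening at exactly the point you flag as the main obstacle.
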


\noindent
(The alternative in Theorem~\ref{theorem:HDClassicAnalytic} are again mutually exclusive by Fact~\ref{fct:exclusiveomega}.)
The results of~\cite{MR0450070} show that this dichotomy extends to all projective subsets of Polish spaces in  presence of the axiom of projective determinacy.

\medskip

In this paper, we consider the question of whether analogues of the above dichotomies hold if we replace $\omega$ with an uncountable regular cardinal $\kappa$ satisfying  $\kappa=\kappa^{{<}\kappa}$.  
For this purpose, we study the \emph{(\( \kappa \)-)generalized Baire space} ${}^\kappa\kappa$  consisting of all functions from $\kappa$ to $\kappa$ equipped with the \emph{bounded  topology}, i.e.\ the topology 
whose basic open sets are of the form $N_s = \Set{x\in{}^\kappa\kappa}{s\subseteq x}$ for some $s$ in the set ${}^{{<}\kappa}\kappa$ of all functions $\map{t}{\alpha}{\kappa}$ with $\alpha<\kappa$. When considering finite products \( (\pre{\kappa}{\kappa})^{n+1} \)  of the generalized Baire space, we always  endow them with the product of the bounded topology on \( \pre{\kappa}{\kappa} \).

We say that $T \subseteq ({}^{{<}\kappa}\kappa)^{n+1}$  is a \emph{subtree} (of $({}^{{<}\kappa}\kappa)^{n+1}$) if $\length{t_0}=\ldots=\length{t_n}$ and ${\langle {t_0\restriction\alpha},\dotsc,{t_n\restriction\alpha}\rangle\in T}$ for all $\langle t_0,\ldots,t_n\rangle\in T$ and $\alpha<\length{t_0}$. 
Given such a subtree $T$, we define $[T]$ (the \emph{body} of \( T \)) as the set of \emph{$\kappa$-branches through $T$}, i.e.\ the set of all $\langle x_0,\ldots,x_n\rangle\in({}^\kappa\kappa)^{n+1}$ such that $\langle x_0\restriction\alpha,\ldots,x_n\restriction\alpha\rangle\in T$ for all $\alpha<\kappa$.  
Note that a subset of $({}^\kappa\kappa)^{n+1}$ is closed with respect to the above topology if and only if it is of the form $[T]$ for some subtree $T \subseteq ({}^{{<}\kappa}\kappa)^{n+1}$. 
Given a closed  subset $C$ of $({}^\kappa\kappa)^{n+1}$, we call 
\begin{equation} \label{eq:canonicaltree}
 T_C = \Set{\langle x_0\restriction\alpha,\ldots,x_n\restriction\alpha\rangle}{\langle x_0,\ldots,x_n\rangle \in C , \, \alpha<\kappa}
\end{equation}
the \emph{(canonical) subtree induced by $C$}. 
We say that a subtree $T \subseteq ({}^\kappa\kappa)^{n+1}$ is \emph{pruned} if it is equal to the subtree induced by $[T]$. This is equivalent to require that every node in $T$ is extended by an element of $[T]$. 
Clearly the tree \( T_C \) is pruned for every closed subset $C$ of $({}^\kappa\kappa)^{n+1}$ and \( [T_C] = C \) holds for these subsets. 

Finally, we say that a subset of ${}^\kappa\kappa$ is  $\mathbf{\Sigma}^1_1$ if it is the projection of a closed subset of ${}^\kappa\kappa\times{}^\kappa\kappa$. 
Hence every $\mathbf{\Sigma}^1_1$ subset of \( \pre{\kappa}{\kappa} \) is the projection $p[T]$ of the set of all $\kappa$-branches through a (pruned) subtree \( T \) of ${}^{{<}\kappa}\kappa\times{}^{{<}\kappa}\kappa$. A subset of \( \pre{\kappa}{\kappa} \) is \( \mathbf{\Pi}^1_1 \) if its complement is \( \mathbf{\Sigma}^1_1 \).

The following  definition generalizes the notions of \( \K{\sigma} \) subset and  $\K{\sigma}$ space to our new setup.

\begin{definition} 
 Let $\kappa$ be an infinite cardinal and \( X \) be a topological space.
 \begin{enumerate-(i)} 
  \item A set $A \subseteq X$ is \emph{$\kappa$-compact} if every open cover of $A$ in $X$ has a subcover of size less than $\kappa$.  
  
  \item A set $A \subseteq X$ is $\K{\kappa}$ if it is a union of $\kappa$-many $\kappa$-compact subsets of $X$.

\item
The space \( X \) is \emph{\( \kappa \)-compact} (respectively, \( \K{\kappa} \)) if it is a \( \kappa \)-compact (respectively, \( \K{\kappa} \)) subset of itself.  
\end{enumerate-(i)} 
\end{definition}

Since the bounded topology on \( \pre{\kappa}{\kappa} \) is closed under intersections of size \( {<} \kappa \) whenever the cardinal $\kappa$ satisfies 
$\kappa=\kappa^{{<}\kappa}$, and since \( \pre{\kappa}{\kappa} \) is obviously a Hausdorff space, a standard argument shows that 
$\kappa$-compact subsets of ${}^\kappa\kappa$ are closed. Conversely, a closed subset of a \( \kappa \)-compact (respectively, \( \K{\kappa} \)) 
subset of \( \pre{\kappa}{\kappa} \) is \( \kappa \)-compact (respectively, \( \K{\kappa} \)). Finally, arguing as in the classical case \( \kappa = \omega \) 
it is not hard to check that the generalized Baire space \( \pre{\kappa}{\kappa} \), when \( \kappa \) is as above, is never \( \K{\kappa} \). (This is 
because although the converse may fail for some \( \kappa \), we still have that \( \K{\kappa} \) subsets of \( \pre{\kappa}{\kappa} \) are eventually 
bounded --- see Lemma~\ref{lemma:bounded}; alternatively, use the fact that the assumption \( \kappa^{< \kappa} = \kappa \) guarantees that  the 
space ${}^\kappa\kappa$ is $\kappa$-Baire, i.e.\ that ${}^\kappa\kappa$ is not a union of $\kappa$-many nowhere dense subsets of 
${}^\kappa\kappa$.) 
These observations show that (compare this with Fact~\ref{fct:exclusiveomega}):

\begin{fact} \label{proposition:DichotomyUnc}
A \( \K{\kappa} \) subset of \( \pre{\kappa}{\kappa} \) cannot contain a closed subset homeomorphic to \( \pre{\kappa}{\kappa} \).
\end{fact}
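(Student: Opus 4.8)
The plan is to prove the contrapositive in the style of the classical argument: if $C \subseteq \pre{\kappa}{\kappa}$ is a closed copy of $\pre{\kappa}{\kappa}$, then $C$ (hence any set containing it) is not $\K{\kappa}$, because $\K{\kappa}$-ness is inherited by closed subsets and $\pre{\kappa}{\kappa}$ itself is not $\K{\kappa}$. So the two ingredients I would assemble are: first, the observation (already recorded in the excerpt, using $\kappa = \kappa^{{<}\kappa}$) that $\kappa$-compact subsets of $\pre{\kappa}{\kappa}$ are closed and that a closed subset of a $\K{\kappa}$ subset of $\pre{\kappa}{\kappa}$ is again $\K{\kappa}$; second, that $\pre{\kappa}{\kappa}$ is not $\K{\kappa}$.

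For the first ingredient, suppose $A \subseteq \pre{\kappa}{\kappa}$ is $\K{\kappa}$, say $A = \bigcup_{i<\kappa} A_i$ with each $A_i$ $\kappa$-compact, and let $F \subseteq A$ be closed in $\pre{\kappa}{\kappa}$. Then $F = \bigcup_{i<\kappa} (F \cap A_i)$, and each $F \cap A_i$ is a closed subset of the $\kappa$-compact set $A_i$; a routine check (every open cover of $F \cap A_i$, together with the open complement of $F$, covers $A_i$, so has a subcover of size $<\kappa$) shows $F \cap A_i$ is $\kappa$-compact, whence $F$ is $\K{\kappa}$. In particular, a closed homeomorphic copy $C$ of $\pre{\kappa}{\kappa}$ sitting inside a $\K{\kappa}$ set would itself be $\K{\kappa}$; since $\K{\kappa}$-ness is a topological invariant among homeomorphic spaces, $\pre{\kappa}{\kappa}$ would be $\K{\kappa}$.

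For the second ingredient — that $\pre{\kappa}{\kappa}$ is not $\K{\kappa}$ — I would invoke the forward reference Lemma~\ref{lemma:bounded}, which (as the surrounding text indicates) says that $\K{\kappa}$ subsets of $\pre{\kappa}{\kappa}$ are eventually bounded, meaning: there is a single function $f \in \pre{\kappa}{\kappa}$ and a club (or merely unbounded) set of coordinates past which every element of the set is dominated by $f$. Since the identity function, or any $g$ with $g(\alpha) > f(\alpha)$ cofinally, is not eventually bounded by $f$, the whole space $\pre{\kappa}{\kappa}$ cannot be $\K{\kappa}$. Alternatively, one uses that $\kappa = \kappa^{{<}\kappa}$ makes $\pre{\kappa}{\kappa}$ a $\kappa$-Baire space: a $\kappa$-compact set is closed and nowhere dense (a proper closed subset of $\pre{\kappa}{\kappa}$ has empty interior once one checks no $N_s$ is $\kappa$-compact, again by an unbounded-branching argument), so a $\K{\kappa}$ set is a union of $\kappa$-many nowhere dense sets and hence cannot be all of $\pre{\kappa}{\kappa}$.

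The only mildly delicate point — the ``main obstacle'' such as it is — is that, unlike the classical case, $\kappa$-compactness and being closed-and-bounded need not coincide for arbitrary $\kappa$; this is precisely why the cleanest route is to quote Lemma~\ref{lemma:bounded} (eventual boundedness of $\K{\kappa}$ sets) rather than to argue directly that $\kappa$-compact means bounded. With that lemma in hand the proof is immediate: combine ``closed subsets of $\K{\kappa}$ sets are $\K{\kappa}$'' with ``$\pre{\kappa}{\kappa}$ is not $\K{\kappa}$'' and the fact that a closed homeomorphic copy of $\pre{\kappa}{\kappa}$ inside a putative $\K{\kappa}$ superset would contradict the latter. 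Everything else is the same two-line bookkeeping as in Fact~\ref{fct:exclusiveomega}.
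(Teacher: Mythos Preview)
Your argument is correct and matches the paper's own justification essentially line for line: the paragraph immediately preceding the Fact records exactly the two ingredients you assemble (closed subsets of $\K{\kappa}$ sets are $\K{\kappa}$, and $\pre{\kappa}{\kappa}$ is not $\K{\kappa}$ either via eventual boundedness from Lemma~\ref{lemma:bounded} or via the $\kappa$-Baire property), and then states the Fact as their consequence. One small slip: ``eventually bounded'' in this paper means domination on a \emph{tail} (there is $\beta<\kappa$ with $y(\alpha)\le x(\alpha)$ for all $\alpha\ge\beta$), not on a club or unbounded set --- but your use of the notion is correct regardless.
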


The results listed at the beginning of this introduction now motivate the following definition.

\begin{definition} \label{def:Hurewiczdichotomy}
 Given an infinite cardinal $\kappa$, we say that a set $A \subseteq {}^\kappa\kappa$ satisfies the \emph{Hurewicz dichotomy} 
 if either $A$ is contained in a $\K\kappa$ subset of ${}^\kappa\kappa$, or else $A$ contains a closed subset of ${}^\kappa\kappa$ homeomorphic to ${}^\kappa\kappa$. 
\end{definition}

Note that the two alternatives in the above definition are still mutually exclusive by Fact~\ref{proposition:DichotomyUnc}. 

Since $\K{\omega}$ is the same as $\K{\sigma}$, Theorem~\ref{theorem:HDClassicAnalytic} shows that $\mathbf{\Sigma}^1_1$ subsets of 
${}^\kappa \kappa$ satisfy the Hurewicz dichotomy if \( \kappa = \omega \).   The following result shows that it is possible to establish the Hurewicz 
dichotomy for $\mathbf{\Sigma}^1_1$ subsets of \( \pre{\kappa}{\kappa} \) also when \( \kappa^{< \kappa} = \kappa > \omega \) by forcing with a 
partial order that preserves many structural features of the ground model.

\begin{theorem}\label{theorem:Cons General}
 Let $\kappa$ be an uncountable cardinal with $\kappa=\kappa^{{<}\kappa}$. Then there is a partial order $\PP(\kappa)$ with the following properties.  
 \begin{enumerate-(1)}
  \item \label{theorem:Cons General-1}
$\PP(\kappa)$ is ${<}\kappa$-directed closed and $\kappa^+$-Knaster (hence it satisfies the \( \kappa^+ \)-chain condition).  
  \item \label{theorem:Cons General-2}
$\PP(\kappa)$ is a subset of $\HHH{\kappa^+}$ that is uniformly definable over $\langle\HHH{\kappa^+},\in\rangle$ in the parameter $\kappa$.\footnote{In the sense that the domain of $\PP(\kappa)$ and the relation $\leq_{\PP(\kappa)}$ are sets with this property.} 
  \item \label{theorem:Cons General-3}
$\PP(\kappa)$ forces that every $\mathbf{\Sigma}^1_1$ subset of ${}^\kappa\kappa$ satisfies the Hurewicz dichotomy. 
  \end{enumerate-(1)}
\end{theorem}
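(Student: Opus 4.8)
The plan is to obtain $\PP(\kappa)$ as a $<\kappa$-support iteration $\langle \PP_\alpha, \dot\QQ_\alpha \mid \alpha < \kappa^+ \rangle$ of length $\kappa^+$, guided by a bookkeeping function that is definable over $\langle \HHH{\kappa^+}, \in \rangle$ from $\kappa$ and that enumerates, cofinally often, all pairs $(\beta, \dot T)$ where $\beta < \kappa^+$ and $\dot T$ is a (canonically coded) nice $\PP_\beta$-name for a pruned subtree of $({}^{{<}\kappa}\kappa)^2$. At stage $\alpha$ the bookkeeping hands us a name $\dot T_\alpha$ for such a tree, and in $V^{\PP_\alpha}$ we let $\dot\QQ_\alpha$ be the trivial forcing if $\p[\dot T_\alpha]$ is contained in a $\K\kappa$ subset of $\pre{\kappa}{\kappa}$, and otherwise we let $\dot\QQ_\alpha = \QQ_{\dot T_\alpha}$ be the \emph{Hurewicz forcing} associated with $\dot T_\alpha$: a $<\kappa$-directed closed poset designed so that its generic is a pruned subtree $S$ of $\dot T_\alpha$ whose first-coordinate projection $\Set{x}{\exists y \, \langle x,y \rangle \in [S]}$ is a closed subset of $\pre{\kappa}{\kappa}$ homeomorphic to $\pre{\kappa}{\kappa}$ — hence, being contained in $\p[\dot T_\alpha]$, a witness for the second alternative of the Hurewicz dichotomy.

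The heart of the matter is to isolate the correct tree-theoretic reformulation of non-$\K\kappa$-coverability and to base $\QQ_T$ on it. Mimicking the Cantor--Bendixson / Kechris--Saint-Raymond derivative in the present setting, one attaches to a pruned $T \subseteq ({}^{{<}\kappa}\kappa)^2$ a monotone ``Hurewicz derivative'' and shows, using $\kappa = \kappa^{{<}\kappa}$, that the derivation either stabilises at some level $< \kappa^+$, at which point the removed material codes a $\K\kappa$ cover of $\p[T]$, or it never empties, in which case one extracts a \emph{robust unboundedness property} $U(T)$: roughly, that inside $T$ there is, cofinally and at every limit level of every cofinality, room to split a node into $\kappa$-many pairwise incomparable nodes without leaving $T$, in a way that keeps the first-coordinate projections of branches separated and hence keeps the projected body closed. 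Conditions of $\QQ_T$ are then $<\kappa$-sized approximations to such a $\kappa$-superperfect embedding $e \colon {}^{{<}\kappa}\kappa \hookrightarrow T$, ordered by end-extension while respecting prescribed splitting levels; $U(T)$ guarantees that every condition has $\kappa$-many mutually incompatible splitting extensions and that directed families of $<\kappa$-many conditions can be amalgamated by taking unions and, at any new limit level of cofinality $<\kappa$, using $U(T)$ to continue every partial branch inside $T$. This yields $<\kappa$-directed closure; and since $|{}^{{<}\kappa}\kappa| = \kappa$ we get $|\QQ_T| \le \kappa$, so $\QQ_T$ is trivially $\kappa^+$-Knaster.

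Granting the iterands, properties~\ref{theorem:Cons General-1} and~\ref{theorem:Cons General-2} are then routine: a $<\kappa$-support iteration of (names for) $<\kappa$-directed closed forcings is $<\kappa$-directed closed; the $\kappa^+$-Knaster property follows from a $\Delta$-system argument on the $<\kappa$-sized supports, using $\kappa^{{<}\kappa} = \kappa$ (only $\kappa$-many possible ``shapes'' on a fixed support) and the regularity of $\kappa^+$; and choosing the bookkeeping and the system of names canonically from $\kappa$ makes $\PP(\kappa)$ a subset of $\HHH{\kappa^+}$ of size $\kappa^+$ that is uniformly definable over $\langle \HHH{\kappa^+}, \in \rangle$ in the parameter $\kappa$. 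For property~\ref{theorem:Cons General-3}, let $G$ be $\PP(\kappa)$-generic, work in $V[G]$, and let $A = \p[T]$ be $\mathbf{\Sigma}^1_1$ with $T$ pruned and not contained in any $\K\kappa$ subset of $\pre{\kappa}{\kappa}$. Since $|T| = \kappa$ and $\PP(\kappa)$ has the $\kappa^+$-chain condition, $T$ has a nice name whose support is a subset of $\kappa^+$ of size $\le \kappa$, hence bounded, so $T$ already belongs to $V[G \restriction \beta]$ for some $\beta < \kappa^+$; invoking the absoluteness of non-$\K\kappa$-coverability (equivalently, of $U(T)$) between the intermediate extensions along the iteration, $\p[T]$ is non-$\K\kappa$-coverable in $V[G \restriction \beta]$ as well, and since the pair $(\beta, \dot T)$ is revisited by the bookkeeping at some stage $\alpha \in (\beta, \kappa^+)$, the iterand $\dot\QQ_\alpha = \QQ_{\dot T_\alpha}$ is non-trivial and $G(\alpha)$ adds a closed copy of $\pre{\kappa}{\kappa}$ inside $\p[\dot T_\alpha^{G \restriction \beta}] = A$. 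Thus every $\mathbf{\Sigma}^1_1$ set in $V[G]$ satisfies the Hurewicz dichotomy (the alternatives being exclusive by Fact~\ref{proposition:DichotomyUnc}).

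I expect the main obstacle to be precisely the construction and analysis of $\QQ_T$, i.e.\ the isolation of a characterization $U(T)$ of non-$\K\kappa$-coverability that is simultaneously (i) \emph{sharp} — its failure to terminate must yield genuinely enough superperfect room inside $T$ to build a \emph{closed} copy of $\pre{\kappa}{\kappa}$, the delicate point being control of the second coordinate so that the projected body stays closed, which in the uncountable setting is nontrivial since $\kappa$-branches may fail to exist at limit levels of small cofinality — and (ii) \emph{absolute enough} to survive the tail of the iteration, so that the catch-up step in the proof of~\ref{theorem:Cons General-3} actually goes through. Verifying that the generic of $\QQ_T$ is an honest closed homeomorphic copy of $\pre{\kappa}{\kappa}$, rather than a superperfect-looking tree whose body collapses or whose projection fails to be closed, is where the substantive work lies; by comparison the iteration bookkeeping and the preservation properties~\ref{theorem:Cons General-1}--\ref{theorem:Cons General-2} are standard.
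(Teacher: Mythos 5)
Your overall architecture (bookkeeping iteration of dedicated ``shooting'' forcings $\QQ_T$, one per name for a tree) is genuinely different from the paper's, which uses no bookkeeping at all: it simply alternates, for $\kappa^+$ stages, the $\kappa$-Hechler forcing $\HH(\kappa)$ and the covering forcing $\KK(\pre{\kappa}{\kappa})$, and then extracts the second alternative of the dichotomy in the final model from combinatorial characterizations rather than from a generic object shot through $T$. The first genuine gap in your proposal is the catch-up step. You need: if $\p[T]$ is not contained in a $\K{\kappa}$ set in $\VV[G]$, then at some bookkeeping stage $\alpha$ handling $T$ the intermediate model already satisfies the trigger condition. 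But non-$\K{\kappa}$-coverability is not downward absolute to intermediate extensions, and coverability is not upward absolute along the tail (new elements enter $\p[T]$, and the covering trees can lose $\kappa$-compactness, e.g.\ if weak compactness is destroyed later); indeed the paper's own forcing $\KK(A)$ is designed exactly to turn a non-coverable set into a covered one, so ``absoluteness of non-$\K{\kappa}$-coverability'' is false as stated, and the equivalence with a combinatorial, upward-absolute $U(T)$ is precisely the hard content you are assuming. The paper resolves this by making the iterands do the reflecting for you: if $\kappa$ is weakly compact in $\VV[G]$, non-coverability gives unboundedness over a Hechler generic, which via Lemmas~\ref{lemma:Hechler function and splitting name}, \ref{lemma:Superperfect Embedding from Splitting names} and \ref{lemma: equivalent conditions for superperfect embedding} yields an $\exists^x$-superperfect embedding into $T$ already in an intermediate model; if $\kappa$ is not weakly compact, then since the stage-$\alpha$ copy of $\KK(\pre{\kappa}{\kappa})$ makes $(\pre{\kappa}{\kappa})^{\VV[G_\alpha]}$ a $\K{\kappa}$ set (Lemma~\ref{lemma:CodedSetIsKKappa}), non-coverability forces the ${<}\kappa$-closed tail to have added a new element of $\p[T]$, which by Lemma~\ref{lemma: characterization exists-perfect embedding} yields an $\exists^x$-perfect embedding in the intermediate model. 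In both cases the embedding, not a generic tree, persists to $\VV[G]$ and produces the closed copy.

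The second gap is that your candidate $U(T)$ (``room for $\kappa$-splitting'', a $\kappa$-superperfect embedding into $T$) is provably not implied by non-coverability when $\kappa$ is not weakly compact in the final model: take $T$ with $\p[T]=\pre{\kappa}{2}$. This set is contained in no $\K{\kappa}$ subset (Fact~\ref{proposition:DichotomyUnc} together with Corollary~\ref{corollary:RevNegroPonte}), yet $T$ contains no $\kappa$-splitting node whatsoever, so there are no superperfect approximations inside $T$ for $\QQ_T$ to build from; the correct witness is a perfect subtree together with the homeomorphism $\pre{\kappa}{2}\cong\pre{\kappa}{\kappa}$, which is available exactly when $\kappa$ is not weakly compact. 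Conversely, when $\kappa$ is weakly compact, the equivalence of non-coverability with the existence of an $\exists^x$-superperfect embedding is not something you can posit; it is Lemma~\ref{lemma: equivalent conditions for superperfect embedding}, whose proof goes through eventual boundedness (Lemma~\ref{lemma:bounded}) and needs Hechler-type forcing to be part of the iteration. So the weakly compact/non-weakly compact case distinction in the final model, which the paper treats as the crux of Theorem~\ref{theorem:Cons General}, cannot be bypassed by a single uniform iterand. Your treatment of the routine parts --- ${<}\kappa$-directed closure, the $\Delta$-system/strong $\kappa$-linkedness argument for $\kappa^+$-Knaster, and the definability over $\langle\HHH{\kappa^+},\in\rangle$ --- matches the paper and is fine.
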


In the proof of this result, which is given in Section~\ref{sec:HDatagivenkappa}, we had to 
distinguish cases depending on whether $\kappa$ is weakly compact in the forcing extension or not. Such a distinction is connected to 
the observation that the spaces ${}^\kappa\kappa$ and ${}^\kappa 2$ are homeomorphic  if and only if $\kappa$ is not weakly compact 
(see {\cite[Theorem 1]{MR0367930}} and Corollary~\ref{corollary:RevNegroPonte}).

The methods developed in the non-weakly compact case actually allow us to prove a strengthening of the above result for such cardinals 
(see Section~\ref{sec:Hurewicznonweaklycompact} for the proof).

\begin{definition}\label{definition:StrongHD}
 Given an infinite cardinal $\kappa$, we say that  $A \subseteq {}^\kappa\kappa$ satisfies the \emph{strong Hurewicz dichotomy}%
\footnote{In the classical case \( \kappa = \omega \) this stronger version of the Hurewicz dichotomy holds for \( \mathbf{F}_\sigma \) sets, but there are \( \mathbf{G}_\delta \) counterexamples to it.}
 if either $A$ is a $\K\kappa$ subset of ${}^\kappa\kappa$, or else $A$ contains a closed subset of ${}^\kappa\kappa$ homeomorphic to ${}^\kappa\kappa$. 
\end{definition}

In this definition we are replacing the first alternative in Definition~\ref{def:Hurewiczdichotomy} with a stronger one.

\begin{theorem}\label{theorem:ConsNonWC}
 Let $\kappa$ be a non-weakly compact uncountable cardinal with $\kappa=\kappa^{{<}\kappa}$. Then there is a partial order $\PP$ with the following properties.  
 \begin{enumerate-(1)}
  \item $\PP$ is ${<}\kappa$-directed closed and $\kappa^+$-Knaster (hence it satisfies the \( \kappa^+ \)-chain condition). 

  \item $\PP$ forces that every $\mathbf{\Sigma}^1_1$ subset of ${}^\kappa\kappa$ satisfies the strong Hurewicz dichotomy. 
   \end{enumerate-(1)}
\end{theorem}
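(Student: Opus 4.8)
The plan is to build $\PP$ as an iteration (or product) of forcings, each designed to handle a single $\mathbf{\Sigma}^1_1$ set, together with a preliminary collapse-style forcing that regularizes the ground model. The starting observation is that since $\kappa$ is non-weakly compact and $\kappa^{<\kappa}=\kappa$, the spaces $\pre{\kappa}{\kappa}$ and $\pre{\kappa}{2}$ are homeomorphic (by the cited result of \cite{MR0367930} and Corollary~\ref{corollary:RevNegroPonte}), so it suffices to work with $\pre{\kappa}{2}$, where $\kappa$-compactness coincides with closedness and the combinatorics of trees is more tractable. A $\mathbf{\Sigma}^1_1$ set $A = p[T]$ for a pruned subtree $T$ of $({}^{{<}\kappa}\kappa)^2$; for each such $T$ I want a ${<}\kappa$-directed closed, $\kappa^+$-Knaster forcing $\PP_T$ that either generically covers $A$ by a $\K{\kappa}$ set (in fact makes $A$ itself equal to a $\K{\kappa}$ set, which is the strengthening) or generically produces a closed copy of $\pre{\kappa}{\kappa}$ inside $A$. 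The key point for the strong version is that the forcing should not merely cover $A$ but should actively shrink the complement: roughly, $\PP_T$ adds a continuous (or at least closed) surjection witnessing that $A$ is $\kappa$-compact whenever no perfect-tree-like subtree is present, and this can be arranged because in $\pre{\kappa}{2}$ a closed set is $\kappa$-compact.

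Concretely, I would analyze $T$ via a Cantor--Bendixson-type derivative on the tree of "large" nodes: call a node $t \in T$ \emph{large} if $p[T_t]$ is not contained in any $\K{\kappa}$ (equivalently, not $\kappa$-compact) subset of the relevant local piece; iterating the removal of non-large nodes stabilizes (using $\kappa^{<\kappa}=\kappa$ and regularity of $\kappa$) at a subtree $T^\infty$. If $T^\infty$ is empty, then $A$ is already "small" in a strong sense and one shows directly --- possibly after the preliminary forcing --- that $A$ is $\K{\kappa}$, with no further forcing needed for this coordinate. If $T^\infty$ is nonempty, then every node of $T^\infty$ has a splitting pattern rich enough to embed a superperfect/perfect $\kappa$-tree; the forcing $\PP_T$ then consists of conditions that are "small" approximations to such an embedding (partial closed subtrees of $\pre{{<}\kappa}{\kappa}$ mapping into $T^\infty$ together with a promise), ordered by end-extension-plus-refinement, and one checks ${<}\kappa$-closure (take decreasing unions of length $<\kappa$, which exist because the tree of conditions is $\kappa$-closed) and $\kappa^+$-Knaster (by a $\Delta$-system argument on supports, using $\kappa^{<\kappa}=\kappa$ so that there are only $\kappa$-many possible "roots"). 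The generic object yields a continuous injection $\pre{\kappa}{\kappa}\to A$ with closed range.

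Then I iterate: with a bookkeeping function of length $\kappa^+$ enumerating all $\mathbf{\Sigma}^1_1$ codes appearing in intermediate models (there are at most $\kappa^+$ many at each stage by the $\kappa^+$-chain condition and $\kappa^{<\kappa}=\kappa$), I form a ${<}\kappa$-support iteration $\langle \PP_\alpha, \dot\QQ_\alpha : \alpha < \kappa^+\rangle$ where $\dot\QQ_\alpha$ is the $\PP_T$ for the $T$ handed up by bookkeeping. The iteration is ${<}\kappa$-directed closed because each iterand is and we use ${<}\kappa$-support (directedness is needed to amalgamate conditions along a directed set of coordinates); it is $\kappa^+$-Knaster by the standard argument that a ${<}\kappa$-support iteration of length $\kappa^+$ of $\kappa^+$-Knaster forcings, each of size $\le \kappa$, is $\kappa^+$-Knaster, again via a $\Delta$-system on supports using $\kappa^{<\kappa}=\kappa$ and $(\kappa^+)^{<\kappa}=\kappa^+$ (which follows from $\kappa^{<\kappa}=\kappa$). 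Crucially, $\kappa^+$-Knaster plus ${<}\kappa$-closure means no new $\mathbf{\Sigma}^1_1$ subsets of $\pre{\kappa}{\kappa}$ are "missed": any such set in the final model appears with a code in $\HHH{\kappa^+}$ at some stage $<\kappa^+$ (by the $\kappa^+$-c.c.\ the code, being essentially a subtree of $\pre{{<}\kappa}{\kappa}$ of size $\le\kappa$, is added by a sub-iteration of size $\le\kappa$, hence by some $\PP_\alpha$), and then $\dot\QQ_\alpha$ ensures it satisfies the strong Hurewicz dichotomy in $V^{\PP_{\alpha+1}}$; the tail of the iteration, being ${<}\kappa$-closed and $\kappa^+$-c.c., preserves both "is $\K{\kappa}$" (closed sets and their $\kappa$-compactness are absolute enough) and "contains a closed copy of $\pre{\kappa}{\kappa}$" (the witnessing closed injection is coded by an element of $\HHH{\kappa^+}$ and closure under ${<}\kappa$-sequences keeps it working).

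The main obstacle I expect is the \emph{strong} dichotomy's first alternative: in the classical $\kappa=\omega$ case the strong version fails for $\mathbf{G}_\delta$ sets, so one cannot hope to make an arbitrary $\mathbf{\Sigma}^1_1$ set literally $\K{\kappa}$ by a tame forcing unless the non-weak-compactness of $\kappa$ is genuinely exploited. The resolution must use that for non-weakly compact $\kappa$ the space $\pre{\kappa}{2}$ is the right model and that $\kappa$-compactness there is just closedness, so "covered by a $\K{\kappa}$ set" can be upgraded to "is a $\K{\kappa}$ set" by additionally forcing the $\mathbf{\Sigma}^1_1$ set to be, coordinatewise, a union of $\kappa$-many closed pieces --- i.e.\ the forcing $\PP_T$ in the "small" case should not be trivial but should add a $\mathbf{\Pi}^1_1$ (indeed closed-modulo-$\K\kappa$) superset of the complement, or equivalently generically decide membership so that $A$ becomes $\K{\kappa}$. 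Making this precise --- designing $\PP_T$ so that it simultaneously stays ${<}\kappa$-directed closed and $\kappa^+$-Knaster while forcing the sharper conclusion, and verifying that the construction is robust under the tail of the iteration --- is where the real work lies, and it is exactly the part that distinguishes Theorem~\ref{theorem:ConsNonWC} from Theorem~\ref{theorem:Cons General}.
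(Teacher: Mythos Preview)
Your proposal has a fundamental misconception and a genuine gap that together prevent it from going through.

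First, the claim that in $\pre{\kappa}{2}$ ``$\kappa$-compactness coincides with closedness'' is false precisely when $\kappa$ is not weakly compact: by Lemma~\ref{lemma:kurepa} (or the remark following it), $\pre{\kappa}{2}$ itself is $\kappa$-compact if and only if $\kappa$ is weakly compact, and indeed for non-weakly compact $\kappa$ the whole space $\pre{\kappa}{2}$ is a closed non-$\kappa$-compact set homeomorphic to $\pre{\kappa}{\kappa}$. So passing to $\pre{\kappa}{2}$ does not simplify the first alternative of the dichotomy.

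Second, and more seriously, your Cantor--Bendixson analysis cannot decide the dichotomy in the ground model: Sections~\ref{section:FailureCohen} and~\ref{section:FailureL} produce closed sets that neither contain a perfect subtree nor are $\K\kappa$, so there is no derivative-style invariant that tells you which alternative to force. You correctly identify this at the end as ``where the real work lies'', but then leave the ``small case'' forcing unspecified --- and that forcing is the entire content of the paper's argument. The paper's $\KK(A)$ (with $A=p[T]$ computed at the current stage) generically writes $A$ as $\bigcup_{\gamma<\kappa}[T_G(\gamma)]$ with each $[T_G(\gamma)]$ $\kappa$-compact (Lemmas~\ref{lemma:TreesAreKurepaNoAronszajn} and~\ref{lemma:CodedSetIsKKappa}), so $A$ itself becomes $\K\kappa$. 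The dichotomy then emerges a posteriori: if the tail of the iteration adds no new elements to $p[T]$, then $p[T]$ in the final model equals $A$ and is $\K\kappa$; if it does add new elements, Lemma~\ref{lemma: characterization exists-perfect embedding} yields a $\exists^x$-perfect embedding into $T$, hence a closed copy of $\pre{\kappa}{2}\cong\pre{\kappa}{\kappa}$ inside $p[T]$. There is no case split at the time of forcing, and no forcing designed to add a perfect embedding --- only $\KK(A)$ is iterated, and the perfect-set alternative arises automatically from the characterization lemma. Also note the iteration has length $2^\kappa$, not $\kappa^+$, to catch all trees.
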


If instead $\kappa$ is a weakly compact cardinal, then it is easy to see that, as in the classical case \( \kappa = \omega \), there are always $\mathbf{\Sigma}^1_1$ (in fact, even \( \kappa \)-Borel%
\footnote{A subset of \( \pre{\kappa}{\kappa} \) is \emph{\( \kappa \)-Borel} if it belongs to the smallest \( \kappa^+ \)-algebra of subsets of \( \pre{\kappa}{\kappa} \) which contains all open sets.}%
) 
counterexamples to the strong Hurewicz dichotomy. Indeed,
by the results of~\cite{MR0367930} (see also Lemma~\ref{lemma:bounded}) the weak compactness of $\kappa$ implies that ${}^\kappa 2$ is a 
$\kappa$-compact subset of ${}^\kappa\kappa$, and therefore by Fact~\ref{proposition:DichotomyUnc} we have that every subset of ${}^\kappa 2$ 
that is not a union of $\kappa$-many closed subsets of ${}^\kappa 2$ does not satisfy the strengthened dichotomy. 
In particular, the set \( \{ x \in \pre{\kappa}{2} \mid \forall \alpha < \kappa \, \exists  \beta < \kappa \, ( \alpha < \beta \wedge x(\beta) = 1) \} \) is a \( \kappa \)-Borel subset of \( \pre{\kappa}{\kappa} \) that is homeomorphic to \( \pre{\kappa}{\kappa} \) and does not satisfy the strong Hurewicz dichotomy. 
Moreover, the results of~\cite{MR1880900} show that the club filter (viewed as a set of characteristic functions) is another natural example of a  
$\mathbf{\Sigma}^1_1$ subset of ${}^\kappa\kappa$ that does not satisfy the strong Hurewicz dichotomy. 
Nevertheless, when \( \kappa \) remains weakly compact after forcing with the partial order \( \PP(\kappa) \) from 
Theorem~\ref{theorem:Cons General}, we can strengthen our result in a different direction.

\begin{definition} \label{def:perfectandkappaMiller}
 Let $\nu \leq \kappa$ be  infinite cardinals with \( \kappa \) regular, and let $T$ be a subtree of ${}^{{<}\kappa}\kappa$.
 \begin{enumerate-(i)} 
  \item $T$ is \emph{superclosed} if it is closed under increasing sequences of length ${<}\kappa$. 
    
  \item $T$ is \emph{$\nu$-perfect} if it is superclosed and  for every $t\in T$ there is a node \( s \in T \) which extends $t$  and is $\nu$-splitting in $T$, i.e.\ it has at least \( \nu \)-many immediate successors.

  \item $T$ is \emph{perfect} if it is $2$-perfect, and it is a \emph{$\kappa$-Miller tree}%
\footnote{Note that $\omega$-Miller subtrees of \( \pre{< \omega}{\omega} \) are sometimes called \emph{superperfect trees} in the literature.}
 if it is $\kappa$-perfect.  
 \end{enumerate-(i)}
\end{definition}

Clearly every \( \kappa \)-Miller subtree of ${}^{{<}\kappa}\kappa$ is perfect, and every perfect subtree \( T \subseteq \pre{< \kappa}{\kappa} \) has height $\kappa$.
Moreover, every node in such a $T$ is extended by an element of $[T]$, and thus $T$ is pruned.

\begin{definition} \label{def:kappaMillerHurewiczdichotomy}
 Given an infinite cardinal $\kappa$, we say that a set $A \subseteq {}^\kappa\kappa$ satisfies the \emph{Miller-tree Hurewicz dichotomy} 
 if either $A$ is contained in a $\K\kappa$ subset of ${}^\kappa\kappa$, or else there is a $\kappa$-Miller tree $T$ with $[T]\subseteq A$. 
\end{definition}

Note that since Proposition~\ref{proposition:MillerTreeHomeoKappaKappa} shows that the set \( [T] \) is homeomorphic to \( \pre{\kappa}{\kappa} \) for every \( \kappa \)-Miller tree \( T \), the second alternative of Definition~\ref{def:kappaMillerHurewiczdichotomy} strengthens that of Definition~\ref{def:Hurewiczdichotomy}. 

In Theorem~\ref{theorem:MillTreeHD}
 we will show that if $\kappa$ is weakly compact, then $A \subseteq \pre{\kappa}{\kappa}$ satisfies the Hurewicz dichotomy if and only if it satisfies the Miller-tree Hurewicz dichotomy. Thus the following is an immediate corollary of Theorem~\ref{theorem:Cons General}.

\begin{theorem} \label{thm:weaklycompactintroduction}
Let \( \kappa \) be weakly compact. If \( \kappa \) remains weakly compact after forcing with the partial order \( \PP(\kappa) \) from Theorem~\ref{theorem:Cons General}, then in  the forcing extension every $\mathbf{\Sigma}^1_1$ subset of ${}^\kappa\kappa$ satisfies the Miller-tree Hurewicz dichotomy. 
\end{theorem}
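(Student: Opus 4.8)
The plan is to derive Theorem~\ref{thm:weaklycompactintroduction} as a direct consequence of two earlier results: Theorem~\ref{theorem:Cons General}, which guarantees that in the $\PP(\kappa)$-extension every $\mathbf{\Sigma}^1_1$ subset of ${}^\kappa\kappa$ satisfies the Hurewicz dichotomy, and Theorem~\ref{theorem:MillTreeHD}, which asserts that for weakly compact $\kappa$ the Hurewicz dichotomy and the Miller-tree Hurewicz dichotomy coincide for arbitrary subsets of ${}^\kappa\kappa$. Thus the real content is organizational, and the proof should consist of checking that the hypotheses of these two theorems are met in the relevant model.

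First I would fix a weakly compact cardinal $\kappa$ in the ground model $\VV$ and pass to the generic extension $\VV[G]$ obtained by forcing with $\PP(\kappa)$; note that $\kappa = \kappa^{<\kappa}$ holds since weak compactness implies inaccessibility, so Theorem~\ref{theorem:Cons General} applies and in $\VV[G]$ every $\mathbf{\Sigma}^1_1$ subset of ${}^\kappa\kappa$ satisfies the Hurewicz dichotomy. Second, by the standing hypothesis of Theorem~\ref{thm:weaklycompactintroduction}, $\kappa$ is still weakly compact in $\VV[G]$; in particular $\kappa$ remains regular and satisfies $\kappa = \kappa^{<\kappa}$ there, so Theorem~\ref{theorem:MillTreeHD} is available in $\VV[G]$. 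Applying it, for every $A \subseteq {}^\kappa\kappa$ in $\VV[G]$ the Hurewicz dichotomy for $A$ is equivalent to the Miller-tree Hurewicz dichotomy for $A$. Combining the two, every $\mathbf{\Sigma}^1_1$ subset of ${}^\kappa\kappa$ in $\VV[G]$ satisfies the Miller-tree Hurewicz dichotomy, which is exactly the conclusion.

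The only point that requires a word of care is that the equivalence in Theorem~\ref{theorem:MillTreeHD} must be invoked inside $\VV[G]$ rather than in $\VV$: being weakly compact, being $\mathbf{\Sigma}^1_1$, and being a $\kappa$-Miller tree are all notions that are sensitive to the ambient model, so one should make explicit that $\PP(\kappa)$ preserves the hypothesis ``$\kappa$ weakly compact'' by assumption and that $\mathbf{\Sigma}^1_1$ subsets of ${}^\kappa\kappa$ of $\VV[G]$ are computed in $\VV[G]$. Beyond this, there is genuinely no obstacle: the argument is a two-line deduction, and the substantive work has already been done in establishing Theorems~\ref{theorem:Cons General} and~\ref{theorem:MillTreeHD}. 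I would therefore present the proof in a couple of sentences, as is already hinted at in the remark that Theorem~\ref{thm:weaklycompactintroduction} ``is an immediate corollary'' of Theorem~\ref{theorem:Cons General} once Theorem~\ref{theorem:MillTreeHD} is in hand.
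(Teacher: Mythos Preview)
Your proposal is correct and matches the paper's approach exactly: the paper states explicitly that Theorem~\ref{thm:weaklycompactintroduction} is an immediate corollary of Theorem~\ref{theorem:Cons General} once Theorem~\ref{theorem:MillTreeHD} is available, and gives no further proof. Your care in noting that Theorem~\ref{theorem:MillTreeHD} must be applied inside $\VV[G]$ (using the hypothesis that $\kappa$ remains weakly compact there) is appropriate and is precisely the point the paper leaves implicit.
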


Theorem~\ref{thm:weaklycompactintroduction} applies e.g.\ when $\kappa$ is a weakly compact cardinal whose weak compactness is indestructible with respect to ${<}\kappa$-closed forcings satisfying the \( \kappa^+ \)-chain condition; such a \( \kappa \) may be found e.g.\  in a suitable forcing extension of $\LL$, assuming the existence of a strongly unfoldable cardinal in $\LL$ (see~\cite{MR2467213}). 

 Notice also that if $\kappa$ is not weakly compact, then  the set ${}^{\kappa}2$ is a closed counterexample to the Miller-tree Hurewicz dichotomy: on the one hand it clearly cannot contain the body of a \( \kappa \)-Miller tree; on the other hand, it is a closed set homeomorphic to \( \pre{\kappa}{\kappa} \) (see 
Corollary~\ref{corollary:RevNegroPonte}), and thus it is not contained in any \( \K{\kappa} \) subset of \( \pre{\kappa}{\kappa} \) by Fact~\ref{proposition:DichotomyUnc}.

Conditions~\ref{theorem:Cons General-1} and~\ref{theorem:Cons General-2} of Theorem~\ref{theorem:Cons General} allow us to construct a class-sized forcing iteration (whose iterands are the forcings \( \PP(\kappa) \)) that forces the Hurewicz dichotomy for $\mathbf{\Sigma}^1_1$ sets to hold globally. 
In the light of the case distinction mentioned above, it is interesting to know whether such a class forcing preserves the weak compactness of certain large cardinals. 
The following theorem provides examples of such large cardinal notions. (Their definitions and the proof of the theorem can be found in Section~\ref{section:Global}.)

\begin{theorem}\label{theorem:GlobalHD}
 Assume that the $\mathsf{GCH}$ holds. Then there is a definable class forcing $\vec{\PP}$ with the following properties. 
 \begin{enumerate-(1)}
  \item \label{theorem:GlobalHD-1}
Forcing with $\vec{\PP}$ preserves all cofinalities, the $\mathsf{GCH}$, strongly unfoldable cardinals, and supercompact cardinals. 

  \item \label{theorem:GlobalHD-2}
 If $\kappa$ is an infinite regular cardinal, then $\vec{\PP}$ forces that every $\mathbf{\Sigma}^1_1$ subset of ${}^\kappa\kappa$ satisfies the Hurewicz dichotomy. 
 \end{enumerate-(1)}
\end{theorem}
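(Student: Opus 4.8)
The plan is to build the class forcing $\vec{\PP}$ as a reverse Easton iteration whose iterand at each infinite regular cardinal $\kappa$ is (a name for) the forcing $\PP(\kappa)$ supplied by Theorem~\ref{theorem:Cons General}, with the usual convention that at limit stages of cofinality $\geq\kappa$ we take inverse limits and elsewhere direct limits. So first I would set up the iteration $\vec{\PP} = \langle \PP_\alpha, \dot{\QQ}_\beta \mid \alpha \in \On, \beta \in \On \rangle$, where $\dot{\QQ}_\beta$ is trivial unless $\beta$ is regular, in which case it is the canonical $\PP_\beta$-name for $\PP(\beta)^{\VV[\dot G_\beta]}$; here definability of $\PP(\kappa)$ over $\langle \HHH{\kappa^+}, \in\rangle$ in the parameter $\kappa$ (condition~\ref{theorem:Cons General-2}) is what makes $\vec{\PP}$ a definable class forcing and makes the tail forcings behave uniformly. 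The classical reverse Easton bookkeeping then gives the factorization $\vec{\PP} \cong \PP_\kappa * \dot{\QQ}_\kappa * \dot{\PP}^{\mathrm{tail}}$ where $\PP_\kappa$ has size $\leq \kappa$ (under $\mathsf{GCH}$, using condition~\ref{theorem:Cons General-1} to run the chain-condition/closure counting) and hence is $\kappa^+$-cc, and the tail $\dot{\PP}^{\mathrm{tail}}$ is forced to be ${<}\kappa^+$-closed, in fact ${<}\mu$-closed for $\mu$ the next regular cardinal above $\kappa$.

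Next I would verify part~\ref{theorem:GlobalHD-1}. Preservation of cofinalities and of $\mathsf{GCH}$ is the standard reverse Easton argument: at each stage $\PP_\kappa$ is $\kappa^+$-cc of size $\leq\kappa$ and the tail is sufficiently closed, so no cardinals or cofinalities are collapsed and $2^\lambda$ is computed correctly everywhere; the ${<}\kappa$-directed closure from condition~\ref{theorem:Cons General-1} is what one needs (rather than mere ${<}\kappa$-closure) for the Easton-style support arguments to go through cleanly and for preservation of $\mathsf{GCH}$ at inaccessible stages. For strongly unfoldable and supercompact cardinals I would use the by-now routine lifting technique: given an embedding $j \colon \VV \to M$ witnessing the relevant large-cardinal property of $\kappa$ (with $M$ closed under sequences of the appropriate length, and $j(\kappa) > \kappa$), one shows $j(\vec\PP) = \PP_\kappa * \dot{\QQ}_\kappa * \dot{\PP}^{[\kappa, j(\kappa))} * \dots$, lifts $j$ through $\PP_\kappa$ trivially (it has size $<\kappa$ and $j \restriction \PP_\kappa = \mathrm{id}$), then through $\dot\QQ_\kappa = \PP(\kappa)$ using that $\PP(\kappa)$ is ${<}\kappa$-directed closed and $\kappa^+$-Knaster to find a master condition and build a generic for $j(\PP(\kappa))$ below it in $M[G_\kappa * g_\kappa]$, and finally absorb the middle segment $\dot\PP^{[\kappa,j(\kappa))}$, which is ${<}\kappa^+$-closed (indeed highly closed) in the extension, by a standard diagonalization using the closure of $M$ and $\mathsf{GCH}$ to enumerate the $\leq 2^\kappa = \kappa^+$ many dense sets. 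This is where the precise hypotheses of Theorem~\ref{theorem:Cons General} — directed closure, Knaster, and definability over $\HHH{\kappa^+}$, so that $j(\PP(\kappa)) = \PP(j(\kappa))$ as computed in $M$ — are all used.

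Finally, part~\ref{theorem:GlobalHD-2}: fix an infinite regular $\kappa$ and let $G$ be $\vec\PP$-generic over $\VV$. Factor $G = G_\kappa * g_\kappa * G^{\mathrm{tail}}$. By Theorem~\ref{theorem:Cons General}\ref{theorem:Cons General-3}, $\VV[G_\kappa][g_\kappa] = \VV[G_\kappa * g_\kappa]$ satisfies ``every $\mathbf{\Sigma}^1_1$ subset of $\pre{\kappa}{\kappa}$ satisfies the Hurewicz dichotomy'' (note $\kappa = \kappa^{{<}\kappa}$ holds in $\VV[G_\kappa]$ by the cofinality- and $\mathsf{GCH}$-preservation just established, so the theorem applies). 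The tail $G^{\mathrm{tail}}$ is generic for a forcing that is ${<}\kappa^+$-closed in $\VV[G_\kappa * g_\kappa]$, hence adds no new elements of $\pre{\kappa}{\kappa}$ and no new subtrees of $\pre{{<}\kappa}{\kappa}$; consequently every $\mathbf{\Sigma}^1_1$ subset of $\pre{\kappa}{\kappa}$ in $\VV[G]$ is coded by a subtree already in $\VV[G_\kappa * g_\kappa]$ and denotes the same set there, a witnessing $\K{\kappa}$-cover (a $\kappa$-sequence of $\kappa$-compact, i.e.\ bodies of ${<}\kappa$-branching pruned subtrees) or a witnessing closed copy of $\pre{\kappa}{\kappa}$ is likewise absorbed upward, and the two alternatives remain mutually exclusive by Fact~\ref{proposition:DichotomyUnc}. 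The main obstacle I anticipate is the large-cardinal preservation in part~\ref{theorem:GlobalHD-1}: the lifting arguments for strong unfoldability and supercompactness require carefully tracking that the iteration ``looks the same'' inside the target model $M$ — i.e.\ that $j(\vec\PP)$ restricted to $[\kappa, j(\kappa))$ is computed correctly and remains suitably closed in $M[G_\kappa * g_\kappa]$ — and constructing the requisite generic filters by diagonalization, which is delicate precisely because $\vec\PP$ is a proper class forcing and one must argue about definability and $\mathrm{ZFC}$-preservation (via the usual $\bigcup_n$ approximation of a given formula's truth, or Ga\"ifman-style reflection) rather than appealing to a set-forcing theorem off the shelf.
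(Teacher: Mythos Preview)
Your proposal is correct and follows essentially the same architecture as the paper: an Easton-support class iteration whose nontrivial iterands are the $\PP(\nu)$ from Theorem~\ref{theorem:Cons General}, with preservation of cofinalities and $\mathsf{GCH}$ handled by the standard factorization (Lemma~\ref{lemma:Class iteration propoerties}), and part~\ref{theorem:GlobalHD-2} obtained from the fact that the tail forcing beyond stage $\kappa{+}1$ is ${<}\kappa^+$-closed and hence leaves $\HHH{\kappa^+}$ unchanged. The one organizational difference is in the large-cardinal preservation: the paper unifies strongly unfoldable and supercompact cardinals via an auxiliary notion of $(\mu,\nu)$-supercompactness and carries out the lifting for that notion using $\kappa$-properness of the two-step iteration $\vec{\PP}_{{<}\mu+1}*\dot{\PP}_{[\mu+1,\nu+1)}$ (Lemma~\ref{lemma:TwoStepKappaProper}) to place the relevant name inside a $\mu$-model, whereas you sketch a direct master-condition lifting; your sketch is fine for supercompactness but would need the $\kappa$-properness/elementary-submodel step (or something equivalent) to handle the set-sized embeddings witnessing strong unfoldability.
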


Next we study the relationships between the Hurewicz dichotomy and some regularity properties of definable subsets of ${}^\kappa\kappa$. We start by considering the \( \kappa \)-perfect set property.

\begin{definition} 
 Given an infinite cardinal $\kappa$, we say that a set $A \subseteq {}^\kappa\kappa$ has the \emph{\( \kappa \)-perfect set property}  if either $A$ has cardinality at most $\kappa$, or else $A$ contains a closed subset of ${}^\kappa\kappa$ homeomorphic to ${}^\kappa2$. 
\end{definition}

In the case of non-weakly compact cardinals, the \( \kappa \)-perfect set property is a strengthening of the Hurewicz dichotomy.  
Moreover, the assumption that all closed subsets of ${}^\kappa\kappa$ have the \( \kappa \)-perfect set property allows us to fully characterize the class of $\K{\kappa}$ subsets of ${}^\kappa\kappa$ as the collection of sets of size \( {\leq} \kappa \) (see Proposition~\ref{prop:PSPHurewicz} and Corollary~\ref{cor:PSPHurewicz}).

An argument due to the third author (see e.g.\ {\cite[Proposition 9.9]{PL}}) shows that all $\mathbf{\Sigma}^1_1$ subsets of 
${}^\kappa\kappa$ have the \( \kappa \)-perfect set property after we Levy-collapsed an inaccessible cardinal to turn it into $\kappa^+$. (The models obtained in this way are sometimes called \emph{Silver models}, and are the natural generalizations of the Solovay model to uncountable cardinals.) Thus if 
\( \kappa \) turns out to be not weakly compact in the generic extension, then in such a Silver model  all \( \mathbf{\Sigma}^1_1 \) subsets of \( \pre{\kappa}{\kappa} \) satisfy 
the Hurewicz dichotomy as well. In Theorem~\ref{theorem:HDinCollapse} we will actually show that the same is true also when $\kappa$ is a weakly 
compact cardinal (in the generic extension). 

The Silver model cannot in general be used to separate the various regularity properties: in fact, the results from~\cite{PS,Lag}  show that many of these properties are simultaneously  satisfied by \( \mathbf{\Sigma}^1_1 \) subsets of \( \pre{\kappa}{\kappa} \). However, 
we can use Theorem~\ref{theorem:GlobalHD} to separate the Hurewicz dichotomy from the \( \kappa \)-perfect set property. This is because 
if we force over \( \LL \) with the class forcing \( \PP \) from Theorem~\ref{theorem:GlobalHD}, then in the generic extension we get that for any 
uncountable regular cardinal \( \kappa \) there is a subtree $T \subseteq {}^{{<}\kappa}\kappa$ (which actually belongs to \( \LL \)) such that  the 
closed set $[T]$ does not have the \( \kappa \)-perfect set property (see Theorem~\ref{theorem:SeparatePSPfromHD}). Combining this with the 
mentioned Theorem~\ref{theorem:GlobalHD}, we get a model of \( \ZFC + \mathsf{GCH} \) in which all \( \mathbf{\Sigma}^1_1 \) subsets of 
\( \pre{\kappa}{\kappa} \) (\( \kappa \) an arbitrary regular uncountable cardinal) satisfy the Hurewicz dichotomy while the \( \kappa \)-perfect set 
property fails already for closed sets (Corollary~\ref{cor:HDnonPSP}).

We also consider other two regularity properties which are natural generalizations of the notions of \emph{Miller measurability} and \emph{Sacks measurability} from $\omega$ to uncountable regular cardinals.%
\footnote{The choice for such generalizations is not unique: other variants have been considered e.g.\ in~\cite{Lag,KKF}.}

\begin{definition} \label{def:regularityproperties}
 Let $\kappa$ be an infinite regular cardinal and $A$ be a subset of ${}^\kappa\kappa$. We say that $A$ is \emph{$\kappa$-Sacks measurable} (respectively, \emph{$\kappa$-Miller measurable}) if for every perfect (respectively, \( \kappa \)-Miller) subtree $T \subseteq {}^{{<}\kappa}\kappa$ there is a perfect (respectively, \( \kappa \)-Miller) subtree $S \subseteq T$ such that either $[S]\subseteq A$ or $[S]\cap A=\emptyset$. 
\end{definition}

Notice that \( \kappa \)-Sacks measurability is essentially a symmetric version of the \( \kappa \)-perfect set property, and in fact it is easy to see that if a set \( A \subseteq \pre{\kappa}{\kappa} \) has the \( \kappa \)-perfect set property, then it is also \( \kappa \)-Sacks measurable. 
Thus in a Silver model all \( \mathbf{\Sigma}^1_1 \) subsets (hence also all \(  \mathbf{\Pi}^1_1 \) subsets) of \( \pre{\kappa}{\kappa} \) are \( \kappa \)-Sacks measurable. Laguzzi showed in~\cite{Lag}  that in fact such sets are also \( \kappa \)-Miller measurable. 

In view of these results, it is  natural to ask whether it is possible to separate the \( \kappa \)-Miller measurability and the \( \kappa \)-Sacks 
measurability from the \( \kappa \)-perfect set property. In Theorem~\ref{theorem:HDimpliesMeasurability} we will show that the fact that all 
\( \mathbf{\Sigma}^1_1 \) sets satisfy the Hurewicz dichotomy directly implies that all these sets, together with their complements, are 
\( \kappa \)-Miller measurable or \( \kappa \)-Sacks measurable (which alternative applies depends again on whether \( \kappa \) is weakly compact or not). 
Combining this result with our Theorem~\ref{theorem:GlobalHD} and some of the previous observations, we will thus get that it is consistent to have 
a model in which e.g.\ all \( \mathbf{\Sigma}^1_1 \) and all \( \mathbf{\Pi}^1_1 \) sets are \( \kappa \)-Sacks measurable (alternatively, \( \kappa \)-Miller measurable) but there is a 
closed set which does not satisfy the \( \kappa \)-perfect set property (Corollary~\ref{cor:measurabilitynonPSP}). Thus the regularity properties from Definition~\ref{def:regularityproperties} are really weaker than the \( \kappa \)-perfect set property.

Finally, we will also consider failures of the Hurewicz dichotomy and show that the construction of counterexamples to the dichotomy leads to 
interesting combinatorial concepts and results. 
In Theorem~\ref{theorem:CounterexampleCohenReal} we will show that adding an element of \( \pre{\mu}{\mu} \) with the usual 
\( \mu \)-Cohen forcing (for \(\mu\) an infinite cardinal satisfying \( \mu^{< \mu} = \mu \)) provides a model in which for all \( \kappa > \mu \) such 
that \( \kappa^{<\kappa} = \kappa \) there are closed subsets of \( \pre{\kappa}{\kappa} \) which do not satisfy the Hurewicz dichotomy. 
In particular, starting from a model of the \( \mathsf{GCH} \) and taking \( \mu = \omega \) in the above construction we obtain that it is consistent that there are closed 
counterexamples to the Hurewicz dichotomy at every uncountable regular cardinal \( \kappa \). 
In addition, we will show that also in G\"odel's constructible universe $\LL$  there are closed counterexamples to the Hurewicz dichotomy at every uncountable regular cardinal (see Theorem~\ref{theorem:CounterexampleInL}). 
These closed sets are obtained using a modification of the construction of a $\kappa$-Baire almost $\kappa$-Kurepa tree due to Friedman and Kulikov (see~\cite{FriedmanKulikov}).

In the remainder of this paper, we will work in \( \mathsf{ZFC} \) and let $\kappa$ denote an uncountable cardinal with $\kappa = \kappa^{{<}\kappa}$. 
Note that this assumption implies that $\kappa$ is regular.


\section{Basic combinatorial results}

We start by recalling some definitions and establishing some basic results.

\begin{definition}\label{def:kappa-tree}
 Let $T$ be a subtree of ${}^{{<}\kappa}\kappa$. 
  \begin{enumerate-(i)} 
  \item \emph{$T$ has height $\kappa$} if $T\cap{}^\alpha\kappa\neq\emptyset$ for all $\alpha<\kappa$. 

  \item $T$ is a \emph{$\kappa$-tree} if it has height $\kappa$ and  for every $\alpha<\kappa$ the set $T\cap{}^\alpha\kappa$ has cardinality less than $\kappa$. 
 
  \item $T$ is a \emph{$\kappa$-Aronszajn tree} if it is a $\kappa$-tree and $[T]=\emptyset$. 
  
  \item $T$ is a \emph{$\kappa$-Kurepa tree} if it is a $\kappa$-tree and that set $[T]$ has cardinality greater than $\kappa$.
\end{enumerate-(i)} 
\end{definition}

The following lemma provides a combinatorial characterization of $\kappa$-compact subsets of ${}^\kappa\kappa$.  
The lemma was proved for the case \anf{$\kappa=\omega_1$} in {\cite[Theorem 3]{MR509548}} and the presented proof directly 
generalizes to higher cardinalities. 
  For sake of completeness, we include it here.

\begin{lemma}\label{lemma:kurepa}
Given a pruned subtree $T \subseteq {}^{{<}\kappa}\kappa$, the following statements are equivalent.  
 \begin{enumerate-(1)}
  \item $[T]$ is $\kappa$-compact.  
  
  \item $T$ is a $\kappa$-tree without $\kappa$-Aronszajn subtrees (i.e.\ there is no subtree $S$ of ${}^{{<}\kappa}\kappa$ such that $S\subseteq T$ and $S$ is a $\kappa$-Aronszajn tree).   
 \end{enumerate-(1)}
In particular, a closed \( C \subseteq \pre{\kappa}{\kappa} \) is \( \kappa \)-compact if and only it the canonical subtree induced by \( C \) defined in~\eqref{eq:canonicaltree} is a \( \kappa \)-tree without \( \kappa \)-Aronszajn subtrees.
\end{lemma}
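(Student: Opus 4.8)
The plan is to prove the two directions of the equivalence separately, keeping in mind that $T$ is pruned so that $[T]=C$ whenever $T=T_C$, and that $\kappa=\kappa^{<\kappa}$ guarantees the bounded topology is closed under $<\kappa$-intersections.

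\textbf{From (1) to (2).} Assume $[T]$ is $\kappa$-compact. First I would show $T$ is a $\kappa$-tree. Height $\kappa$ is immediate since $T$ is pruned and, by hypothesis in the paper, we may assume $[T]\neq\emptyset$ (the empty case is trivial: the empty tree is vacuously as required, or one notes the equivalence degenerates). For the level condition, suppose some level $T\cap{}^\alpha\kappa$ has size $\kappa$ for some $\alpha<\kappa$. The family $\{N_s\cap[T]\mid s\in T\cap{}^\alpha\kappa\}$ is then an open cover of $[T]$ by $\kappa$-many pairwise disjoint nonempty sets (nonempty since $T$ is pruned), so it admits no subcover of size $<\kappa$, contradicting $\kappa$-compactness. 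Next, suppose $S\subseteq T$ is a $\kappa$-Aronszajn subtree. Using that $[S]=\emptyset$, I would build an open cover of $[T]$ with no small subcover: for each $x\in[T]$, since $x\restriction\alpha\notin S$ for some $\alpha<\kappa$ (otherwise $x\in[S]$), pick the least such $\alpha=\alpha_x$ and cover $x$ by $N_{x\restriction\alpha_x}$. This yields a cover $\{N_t\mid t\in T\setminus S,\ t\restriction\beta\in S\text{ for all }\beta<\length t\}$. If a subfamily of size $<\kappa$ covered $[T]$, the nodes $t$ involved would have lengths bounded by some $\gamma<\kappa$ (as $\kappa$ is regular), but then any branch through $S$ above level $\gamma$ — which exists because $S$ is a $\kappa$-tree, by applying König-type reasoning or directly because some node of $S$ at level $\gamma$ has extensions in $S$ arbitrarily high, and in fact any node of $S$ extends to arbitrarily high levels of $S$ (so pick any branch of the subtree of $S$ above a fixed level-$\gamma$ node; this branch lies in $[S]$ if $S$ is also pruned, but $S$ need not be pruned). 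Here I must be careful: I would instead observe that the cover by the "exit nodes" of $S$ has no subcover of size $<\kappa$ because such a subcover would exhibit $S$ as having bounded height below $\kappa$, contradicting that $S$ has height $\kappa$. More precisely, if $t_0,\dots$ (a set of size $<\kappa$) cover $[T]$, each $t_i$ is an exit node, so its immediate predecessor lies in $S$; since $T$ is pruned every node of $S$ at level $\sup_i\length{t_i}<\kappa$ (which exists in $S$ because $S$ has height $\kappa$) extends to a branch of $T$, and that branch must pass through some $t_i$, forcing $\length{t_i}$ above that level — contradiction.

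\textbf{From (2) to (1).} Assume $T$ is a $\kappa$-tree with no $\kappa$-Aronszajn subtree. Let $\mathcal U$ be an open cover of $[T]$; I may assume each member is a basic open set $N_s$. Let $S=\{t\in T\mid N_{t'}\notin\mathcal U\text{ for every }t'\subseteq t\}$, the tree of nodes not yet covered. Then $S$ is a subtree of $T$. If $S$ had height $\kappa$ it would be a $\kappa$-tree (levels are subsets of $T$'s levels, hence small) and by hypothesis not Aronszajn, so $[S]\neq\emptyset$; but a branch $x\in[S]$ lies in $[T]$ yet in no member of $\mathcal U$, contradiction. Hence $S$ has height $<\kappa$, say all nodes of $S$ have length $\le\gamma$ for some $\gamma<\kappa$. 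Then for every $t\in T\cap{}^{\gamma+1}\kappa$, some initial segment $t'\subseteq t$ has $N_{t'}\in\mathcal U$; choosing one such $N_{t'}$ for each of the $<\kappa$-many nodes $t$ at level $\gamma+1$ (using that $T$ is a $\kappa$-tree) gives a subfamily of $\mathcal U$ of size $<\kappa$ that covers all of $[T]$, since every branch passes through level $\gamma+1$. This proves $\kappa$-compactness.

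\textbf{The "in particular" clause} is now immediate: for closed $C$, the paper already records that $T_C$ is pruned and $[T_C]=C$, so applying the equivalence to $T=T_C$ gives exactly the stated characterization. The main obstacle I anticipate is the $(1)\Rightarrow(2)$ direction's second half — producing from an Aronszajn (or merely height-$\kappa$, non-pruned) subtree $S$ an honest open cover of $[T]$ with no subcover of size $<\kappa$ — since $S$ is not assumed pruned and one must argue via the heights of exit nodes rather than via branches of $S$ directly; getting the regularity-of-$\kappa$ bounding argument exactly right is the delicate point.
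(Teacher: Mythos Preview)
Your proof is correct and follows essentially the same route as the paper. For $(1)\Rightarrow(2)$ you and the paper both use the ``exit nodes'' $\partial S=\{t\in T\setminus S:\forall\alpha<\length t\,(t\restriction\alpha\in S)\}$ to build a disjoint open cover with no small subcover; your bounding argument (a node of $S$ at level $\gamma=\sup_i\length{t_i}$ extends to a branch of $T$ whose exit node must lie above $\gamma$) is exactly the content behind the paper's one-line claim that $|\partial S|=\kappa$ ``because $S$ has height $\kappa$''. For $(2)\Rightarrow(1)$ there is a minor variation: the paper takes $S$ to be the set of $t\in T$ for which $N_t\cap[T]$ has no small subcover from $\mathcal U$, whereas you first reduce to basic open sets and take $S$ to be the nodes none of whose initial segments index a member of $\mathcal U$. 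Both are downward-closed, both are shown to have height ${<}\kappa$ via the no-Aronszajn hypothesis, and both conclude by reading off a small subcover from a single small level; your version is marginally simpler at the cost of the (harmless) WLOG reduction.
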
 

\begin{proof} 
 Suppose that $[T]$ is $\kappa$-compact. 
 Assume towards a contradiction that there is an $\alpha<\kappa$ such that $T\cap{}^\alpha\kappa$ has cardinality $\kappa$. 
 Then $\Set{N_t}{t\in T\cap{}^\alpha\kappa}$ is a cover of $[T]$ without a subcover of cardinality less than $\kappa$, a contradiction.  
 Now assume towards a contradiction that $T$ contains a $\kappa$-Aronszajn subtree $S$. Define 
 \begin{equation}\label{equation:BoundaryTree}
  \partial S  = \Set{t\in T\setminus S}{\forall \alpha<\length{t} \, (t\restriction\alpha\in S)}.
 \end{equation} 
 If $x\in[T]$, then since $x\notin[S] = \emptyset$ there is an $\alpha<\kappa$ with $x\restriction\alpha\in\partial S$. 
 This shows that $\mathcal{C} =  \Set{N_t}{t\in\partial S}$ is a disjoint open cover of $[T]$. 
 Then \( \mathcal{C} \) has cardinality \( \kappa \), because \( S \) has height \( \kappa \). 
 Since \( \mathcal{C} \) is a partition, it cannot be refined to any proper subcover and this contradicts the fact that \( [T] \) is \( \kappa \)-compact.

 For the other direction, suppose that $T$ is a $\kappa$-tree without $\kappa$-Aronszajn subtrees. Let $\mathcal{U}$ be an open cover of $[T]$. Define  
 \begin{equation*}
  S  =  \Set{t\in T}{\text{$\mathcal{U}$ does not have a subcover of $N_t\cap[T]$ of cardinality less than $\kappa$}}.
 \end{equation*}
 Then $S$ is a subtree of ${}^{{<}\kappa}\kappa$ and it is a \( \kappa \)-tree, because \( S \subseteq T \). 
 We first show that \( S \subseteq \pre{< \alpha}{\kappa} \) for some \( \alpha < \kappa \). Assume towards a contradiction that \( S \) has height \( \kappa \). 
 By our assumption, \( S \) is not a \( \kappa \)-Aronszajn tree and hence \( [S] \neq \emptyset \). 
 Fix any \( x \in [S] \). Since \( \mathcal{U} \) is an open cover of \( [T] \), there are $U\in\mathcal{U}$ and  $\alpha<\kappa$ such that  $N_{x\restriction\alpha}\subseteq U$. This implies \( x \restriction \alpha \notin S \), a contradiction.
Given now \( \alpha < \kappa \) such that \( S \subseteq \pre{< \alpha}{\kappa} \), for every  \( t \in T \cap \pre{\alpha}{\kappa} \subseteq T \setminus S \) there is a subcover $\mathcal{U}_t \subseteq \mathcal{U}$ of $N_t\cap[T]$ of cardinality less than $\kappa$.
 Since $T\cap{}^\alpha\kappa$ has cardinality less than $\kappa$ and \( \kappa \) is regular, $\bigcup\Set{\mathcal{U}_t}{t\in T\cap{}^\alpha\kappa} \subseteq \mathcal{U}$ is a subcover of $[T]$ of cardinality less than $\kappa$.  
\end{proof}

The above lemma shows that, as pointed out in~\cite[Theorem 5.6]{MR3093397}, the space ${}^{\kappa}2$ is $\kappa$-compact if and only if $\kappa$ is weakly compact.  
This implies that the implication in {\cite[Theorem 1]{MR0367930}} can be reversed.

\begin{corollary}\label{corollary:RevNegroPonte}
 The spaces ${}^\kappa 2$ and ${}^\kappa\kappa$ are homeomorphic if and only if $\kappa$ is not weakly compact.  
\end{corollary}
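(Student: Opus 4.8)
The plan is to prove the two directions separately. For the implication \anf{$\kappa$ not weakly compact $\Rightarrow$ ${}^\kappa 2$ and ${}^\kappa\kappa$ are homeomorphic}, I would simply invoke {\cite[Theorem 1]{MR0367930}}, which establishes precisely this; no further argument is needed.

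For the converse I would argue contrapositively: assuming $\kappa$ is weakly compact, I would show that ${}^\kappa 2$ is $\kappa$-compact while ${}^\kappa\kappa$ is not, and then conclude that the two spaces cannot be homeomorphic since being a $\kappa$-compact space --- i.e., being a $\kappa$-compact subset of itself in the sense of the Definition above --- is manifestly a topological invariant (a homeomorphism sends open covers to open covers and subcovers to subcovers). To see that ${}^\kappa 2$ is $\kappa$-compact, note that a weakly compact cardinal is (strongly) inaccessible, so $\lvert {}^\alpha 2\rvert = 2^{\lvert\alpha\rvert} < \kappa$ for every $\alpha < \kappa$; hence the pruned subtree ${}^{{<}\kappa}2$ of ${}^{{<}\kappa}\kappa$ is a $\kappa$-tree, and since weak compactness entails the tree property it has no $\kappa$-Aronszajn subtree (such a subtree would itself be a $\kappa$-Aronszajn tree). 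By Lemma~\ref{lemma:kurepa}, applied to the closed set ${}^\kappa 2 = [{}^{{<}\kappa}2]$, it follows that ${}^\kappa 2$ is $\kappa$-compact. On the other hand, $\Set{N_{\langle\alpha\rangle}}{\alpha < \kappa}$ is an open cover of ${}^\kappa\kappa$ by $\kappa$-many pairwise disjoint nonempty sets, so it admits no subcover of size ${<}\kappa$; thus ${}^\kappa\kappa$ is not $\kappa$-compact.

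There is essentially no genuine obstacle here: once Lemma~\ref{lemma:kurepa} is in hand, the statement that ${}^\kappa 2$ is $\kappa$-compact exactly when $\kappa$ is weakly compact is immediate, and the corollary is obtained by combining this with {\cite[Theorem 1]{MR0367930}} and the trivial remark that ${}^\kappa\kappa$ is never $\kappa$-compact. The only thing one must be slightly careful about is to use $\kappa$-compactness of the \emph{whole} space (which is intrinsic) rather than $\kappa$-compactness relative to an ambient space (which is not). If one wished to be self-contained and not cite {\cite[Theorem 1]{MR0367930}}, one would instead have to exhibit the homeomorphism ${}^\kappa 2 \cong {}^\kappa\kappa$ for non-weakly-compact $\kappa$ directly --- using a $\kappa$-Aronszajn subtree of ${}^{{<}\kappa}2$ to convert binary splitting into $\kappa$-splitting --- but this is not needed for the proof as planned.
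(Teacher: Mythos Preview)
Your proposal is correct and follows essentially the same approach as the paper: the paper notes (in the sentence immediately preceding the corollary) that Lemma~\ref{lemma:kurepa} yields that ${}^\kappa 2$ is $\kappa$-compact if and only if $\kappa$ is weakly compact, and combines this with {\cite[Theorem 1]{MR0367930}} and the fact that ${}^\kappa\kappa$ is never $\kappa$-compact. You simply spell out the details of each of these three ingredients.
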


Given $x,y\in{}^\kappa\kappa$, we write $x\leq y$ if $x(\alpha)\leq y(\alpha)$ for all $\alpha<\kappa$ and $x\leq^* y$ if there is a $\beta<\kappa$ such that $x(\alpha)\leq y(\alpha)$ for all $\beta\leq\alpha<\kappa$.

\begin{definition} \label{def:bounded} 
 Let $A$ be a subset of ${}^\kappa\kappa$.  
 \begin{enumerate-(i)}
  \item We say that $A$ is \emph{bounded} if there is an $x\in{}^\kappa\kappa$ with $y\leq x$ for every $y\in A$. 
  
  \item We say that $A$ is \emph{eventually bounded} if there an $x\in{}^\kappa\kappa$ such that $y \leq^* x$ for all $y \in A$. 
\end{enumerate-(i)}
\end{definition}

\begin{proposition}\label{proposition:bounded}
 If $A=\bigcup_{\alpha < \kappa} A_\alpha \subseteq {}^\kappa\kappa$ and each $A_\alpha$ is bounded, 
 then $A$ is eventually bounded. 
\end{proposition}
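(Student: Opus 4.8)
The plan is to bound $A$ \emph{eventually} by taking a pointwise supremum of the given bounds along a diagonal. For each $\alpha < \kappa$, fix $x_\alpha \in {}^\kappa\kappa$ witnessing that $A_\alpha$ is bounded, so that $y(\gamma) \le x_\alpha(\gamma)$ for all $y \in A_\alpha$ and all $\gamma < \kappa$. Then I would define $x \in {}^\kappa\kappa$ by
\[
 x(\gamma) \ = \ \sup\Set{x_\alpha(\gamma)}{\alpha \le \gamma}
\]
for every $\gamma < \kappa$.

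First I need to check that $x$ is well-defined, i.e.\ that $x(\gamma) < \kappa$ for each $\gamma$. This is the only place where a hypothesis is used: the set $\Set{x_\alpha(\gamma)}{\alpha \le \gamma}$ has cardinality at most $\abs{\gamma} + 1 < \kappa$ (since $\kappa$ is a cardinal and $\gamma < \kappa$), it consists of ordinals below $\kappa$, and $\kappa$ is regular (which follows from the standing assumption $\kappa = \kappa^{{<}\kappa}$), so its supremum is an ordinal below $\kappa$. Hence $x \in {}^\kappa\kappa$.

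Second, I would verify that $x$ eventually bounds every element of $A$. Let $y \in A$ and pick $\alpha < \kappa$ with $y \in A_\alpha$. For every $\gamma$ with $\alpha \le \gamma < \kappa$ we have $\alpha \le \gamma$, so $x(\gamma) \ge x_\alpha(\gamma) \ge y(\gamma)$ by the choice of $x_\alpha$. Taking $\beta = \alpha$ in Definition~\ref{def:bounded}, this shows $y \le^* x$; since $y$ was arbitrary, $A$ is eventually bounded.

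There is no serious obstacle here: the argument is a one-step diagonalization, and the only subtlety worth stating explicitly is that regularity of $\kappa$ is exactly what keeps the diagonal supremum inside $\kappa$. (Note that a single $x$ bounding all of $A$ \emph{pointwise} need not exist when the index set has size $\kappa$, so passing to $\le^*$ is genuinely necessary.)
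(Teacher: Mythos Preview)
Your proof is correct and follows essentially the same diagonalization as the paper: pick bounds $x_\alpha$ for each $A_\alpha$ and set $x(\gamma)=\sup_{\alpha\le\gamma}x_\alpha(\gamma)$, noting that regularity of $\kappa$ keeps this below $\kappa$. The paper's argument is identical up to notation (it writes $x(\alpha)=\sup_{\beta\le\alpha}x_\beta(\alpha)$) and is simply less explicit about the well-definedness check.
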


\begin{proof}
 Given $\alpha<\kappa$, pick $x_\alpha\in{}^\kappa\kappa$ with $y\leq x_\alpha$ for all $y \in A_\alpha$. 
 Define $x\in{}^\kappa\kappa$ by setting $x(\alpha) = \sup_{\beta \leq \alpha} x_\beta(\alpha)$ for all $\alpha<\kappa$: 
 then $x$ witnesses that $A$ is eventually bounded.  
\end{proof}

\begin{lemma}\label{lemma:bounded}
 Let \( A \) be a subset of \( \pre{\kappa}{\kappa} \).
 \begin{enumerate-(1)}
  \item \label{prop:bounded-01} If $A$ is contained in a $\kappa$-compact subset of \( \pre{\kappa}{\kappa} \), then $A$ is bounded. 
  
  \item \label{prop:bounded-02} If $A$ is contained in a $\K{\kappa}$ subset of \( \pre{\kappa}{\kappa} \), then $A$ is eventually bounded. 
    
  \item \label{prop:bounded-03} If $\kappa$ is weakly compact, then both the implications above can be reversed: The set $A$ is contained in a $\kappa$-compact (respectively, \( \K{\kappa} \)) subset if and only if $A$ is bounded (respectively, eventually bounded).
\end{enumerate-(1)}
\end{lemma}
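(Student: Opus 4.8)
The plan is to prove the three items in order, using the earlier ones to obtain the later ones.

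For item~\ref{prop:bounded-01} it suffices to bound a $\kappa$-compact set $C$ with $A \subseteq C$. Fixing $\alpha < \kappa$, the open sets $U^\alpha_\beta = \Set{y \in \pre{\kappa}{\kappa}}{y(\alpha) < \beta}$, $\beta < \kappa$, form an increasing open cover of $\pre{\kappa}{\kappa}$, hence of $C$; by $\kappa$-compactness of $C$ together with the regularity of $\kappa$ there is a single $\beta_\alpha < \kappa$ with $C \subseteq U^\alpha_{\beta_\alpha}$, and then $x = \seq{\beta_\alpha}{\alpha < \kappa}$ bounds $C$, and a fortiori $A$. Item~\ref{prop:bounded-02} is then immediate: writing $A \subseteq \bigcup_{\alpha<\kappa} C_\alpha$ with each $C_\alpha$ $\kappa$-compact and putting $A_\alpha = A \cap C_\alpha$, item~\ref{prop:bounded-01} shows each $A_\alpha$ is bounded, and since $A = \bigcup_{\alpha<\kappa} A_\alpha$ it is eventually bounded by Proposition~\ref{proposition:bounded}.

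The heart of the lemma is item~\ref{prop:bounded-03}, and the step I would isolate first is the claim that, for weakly compact $\kappa$, every set $C_x = \Set{y \in \pre{\kappa}{\kappa}}{y \leq x}$ with $x \in \pre{\kappa}{\kappa}$ is $\kappa$-compact. For this I would observe that $C_x = [T_x]$ for the pruned subtree $T_x = \Set{s \in \pre{{<}\kappa}{\kappa}}{\forall \alpha < \length{s}\,(s(\alpha) \leq x(\alpha))}$ and check the two conditions in Lemma~\ref{lemma:kurepa}: the $\alpha$-th level of $T_x$ is $\prod_{\beta<\alpha}(x(\beta)+1)$, which has size ${<}\kappa$ because $\kappa$, being weakly compact, is inaccessible (so a product of ${<}\kappa$ many cardinals each ${<}\kappa$ remains ${<}\kappa$), whence $T_x$ is a $\kappa$-tree; and any $\kappa$-Aronszajn subtree of $T_x$ would be a $\kappa$-tree with empty body, contradicting the tree property of $\kappa$. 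Granting this, if $A$ is bounded by some $x$ then $A \subseteq C_x$ gives the first reversal. If instead $A$ is eventually bounded by $x$, I would cover $A$ by the closed sets $B_\beta = \Set{y \in \pre{\kappa}{\kappa}}{\forall \alpha \in [\beta,\kappa)\,(y(\alpha) \leq x(\alpha))}$ for $\beta < \kappa$, then split each $B_\beta$ as $\bigcup_{s \in \pre{\beta}{\kappa}} (N_s \cap B_\beta)$ into $\kappa^{|\beta|} = \kappa$ many pieces, each of which is a closed subset of some $C_{x'}$ and hence $\kappa$-compact; since $\kappa \cdot \kappa = \kappa$, this exhibits $\bigcup_{\beta<\kappa} B_\beta \supseteq A$ as a $\K{\kappa}$ set.

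I expect the only genuine obstacle to be the $\kappa$-compactness claim for the sets $C_x$: this is the single place where weak compactness enters, and it enters twice — via inaccessibility, to keep the levels of $T_x$ below $\kappa$, and via the tree property, to exclude $\kappa$-Aronszajn subtrees. Some such appeal is unavoidable, since without weak compactness the reversals genuinely fail (for instance $\pre{\kappa}{2}$ is $\kappa$-compact precisely when $\kappa$ is weakly compact, by the discussion following Lemma~\ref{lemma:kurepa}). By contrast, items~\ref{prop:bounded-01} and~\ref{prop:bounded-02} are routine.
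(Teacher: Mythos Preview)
Your proof is correct and, for parts~\ref{prop:bounded-02} and~\ref{prop:bounded-03}, essentially identical to the paper's. The one difference is in part~\ref{prop:bounded-01}: the paper first invokes Lemma~\ref{lemma:kurepa} to see that the canonical tree of the closure of $A$ is a $\kappa$-tree, and then reads off a bound from the smallness of each level, whereas you give a direct open-cover argument (each coordinate projection is bounded because the increasing cover $\{U^\alpha_\beta\}_{\beta<\kappa}$ must stabilize below $\kappa$). Your route is slightly more elementary in that it does not require the tree characterization of $\kappa$-compactness; the paper's route has the mild advantage of keeping everything in the combinatorial tree language already set up. For part~\ref{prop:bounded-03} both arguments prove exactly the same key claim (that $C_x=[T_x]$ is $\kappa$-compact via Lemma~\ref{lemma:kurepa}, using inaccessibility for the level bound and the tree property to rule out Aronszajn subtrees); your handling of the eventually bounded case is just a more explicit unwinding of the paper's one-line remark that an eventually bounded set is a $\kappa$-union of bounded sets.
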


\begin{proof}
 Assume first that $A$ is contained in a $\kappa$-compact subset. 
 By Lemma~\ref{lemma:kurepa}, the canonical subtree \( T \subseteq{}^{{< }\kappa}\kappa \) induced by the closure of \( A \) is a $\kappa$-tree with $A \subseteq [T]$. 
 Given  $\alpha < \kappa$, we define $L(\alpha) = \Set{ t(\alpha)}{t \in T, \, \alpha\in\dom{t}}$. 
 Then $L(\alpha)$ is a bounded subset of $\kappa$, because $T\cap{}^{\alpha+1}\kappa$ has cardinality less than $\kappa$.  
 Define $x\in{}^\kappa\kappa$ by setting \( x(\alpha) = \sup L(\alpha) \) for all \( \alpha < \kappa \). Then $x$ witnesses that $A$ is bounded.  
 This proves~\ref{prop:bounded-01} and, by Proposition~\ref{proposition:bounded}, this argument also yields~\ref{prop:bounded-02}.
 
To prove~\ref{prop:bounded-03}, assume that $\kappa$ is weakly compact. 
 Let $x\in{}^\kappa\kappa$ witness that $A$ is bounded, and define 
 \begin{equation*}
  T  =  \Set{t \in {}^{{<}\kappa}\kappa }{\forall\alpha\in\dom{t} \, ( t(\alpha) \leq x(\alpha))}.
 \end{equation*}  
 Then $T$ is a pruned subtree of  ${}^{{<}\kappa}\kappa$ with $A\subseteq[T]$. Moreover, the inaccessibility of $\kappa$ implies that $T$ is a $\kappa$-tree and this implies that it has no $\kappa$-Aronszajn subtrees, because $\kappa$ has the tree property. 
 By Lemma~\ref{lemma:kurepa}, this shows that $[T]$ is $\kappa$-compact. 
 If $A$ is eventually bounded, then $A$ is contained in a union of $\kappa$-many bounded set, and thus the above computations show that $A$ is contained in a $\K{\kappa}$ subset.  
\end{proof}

We now establish some connections between the \( \kappa \)-perfect set property, \( \K{\kappa} \) sets, and the Hurewicz dichotomy. 

\begin{proposition} \label{prop:PSPHurewicz}
 Assume that $\kappa$ is not weakly compact, and let $A$ be a subset of ${}^\kappa\kappa$ with the \( \kappa \)-perfect set property. Then the following statements are equivalent. 
 \begin{enumerate-(1)}
   \item The set $A$ contains a closed subset homeomorphic to ${}^\kappa\kappa$.
 
  \item The set $A$ is not contained in a $\K{\kappa}$ subset of ${}^\kappa\kappa$. 
  
  \item The set $A$ has cardinality greater than $\kappa$.  
 \end{enumerate-(1)}
In particular, the set \( A \) satisfies the strong Hurewicz dichotomy.
\end{proposition}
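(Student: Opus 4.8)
The strategy is to prove the cycle of implications $(1) \Rightarrow (2) \Rightarrow (3) \Rightarrow (1)$, and then observe that the equivalence together with the hypotheses immediately yields the strong Hurewicz dichotomy.

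\emph{$(1) \Rightarrow (2)$.} This is essentially Fact~\ref{proposition:DichotomyUnc}: if $A$ contains a closed copy $C$ of ${}^\kappa\kappa$ and $A$ were contained in a $\K{\kappa}$ set $K$, then $C$ would be a closed subset of the $\K{\kappa}$ set $K$, hence itself $\K{\kappa}$ (this closure property of $\K{\kappa}$ sets under closed subsets was noted in the discussion after the definition of $\kappa$-compactness), contradicting that ${}^\kappa\kappa$ is not $\K{\kappa}$. In fact this implication needs no hypothesis on $\kappa$ or on the perfect set property.

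\emph{$(2) \Rightarrow (3)$.} I would argue contrapositively: if $|A| \le \kappa$, write $A = \set{x_\alpha}{\alpha < \kappa}$; then each singleton $\{x_\alpha\}$ is bounded, so $A$ is a union of $\kappa$-many bounded sets. Here is where non-weak-compactness seems to enter --- but actually I don't need Lemma~\ref{lemma:bounded}\ref{prop:bounded-03} (which goes the wrong way and needs weak compactness). Instead, note that a bounded set is contained in a set of the form $\set{y}{y \le x}$, and one checks directly that such a set is $\kappa$-compact: it equals $[T]$ for the pruned tree $T = \set{t \in {}^{<\kappa}\kappa}{\forall \alpha \in \dom{t}\,(t(\alpha) \le x(\alpha))}$, and every subtree of $T$ has levels of size $< \kappa$ only if $\kappa$ is inaccessible, which may fail. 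So the honest route for a single singleton is trivial ($\{x_\alpha\}$ is $\kappa$-compact, being a one-point set). Thus $A$ is a union of $\kappa$-many $\kappa$-compact (even one-point) sets, i.e. $A$ is $\K{\kappa}$, contradicting $(2)$. So $(2) \Rightarrow (3)$ also holds with no special hypotheses.

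\emph{$(3) \Rightarrow (1)$.} This is the only place the hypotheses are used. Since $A$ has the $\kappa$-perfect set property and $|A| > \kappa$, by definition $A$ contains a closed subset $C$ homeomorphic to ${}^\kappa 2$. Now invoke Corollary~\ref{corollary:RevNegroPonte}: since $\kappa$ is \emph{not} weakly compact, ${}^\kappa 2$ and ${}^\kappa\kappa$ are homeomorphic, so $C$ is a closed subset of ${}^\kappa\kappa$ homeomorphic to ${}^\kappa\kappa$, giving $(1)$.

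\emph{The final clause.} Given the equivalence, the strong Hurewicz dichotomy for $A$ follows at once: if $A$ is not contained in any $\K{\kappa}$ set then by $(2) \Rightarrow (1)$ it contains a closed copy of ${}^\kappa\kappa$, so in particular the strong dichotomy holds; and if $A$ \emph{is} contained in a $\K{\kappa}$ set, then by $(2) \Rightarrow (3)$ failing we get $|A| \le \kappa$, hence (as in the argument for $(2)\Rightarrow(3)$) $A$ is itself a union of $\kappa$-many one-point --- hence $\kappa$-compact --- sets, so $A$ is $\K{\kappa}$, which is the \emph{first, strengthened} alternative of the strong Hurewicz dichotomy. I expect the main subtlety to be resisting the temptation to route $(2)\Leftrightarrow(3)$ through Lemma~\ref{lemma:bounded}\ref{prop:bounded-03} (whose reversal genuinely requires weak compactness and so is unavailable here); the correct observation is simply that \emph{every} set of size $\le \kappa$ is trivially $\K{\kappa}$ because points are $\kappa$-compact, so the content of $(3)\Rightarrow(1)$ is entirely carried by the perfect set property hypothesis plus Corollary~\ref{corollary:RevNegroPonte}.
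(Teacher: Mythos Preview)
Your proof is correct and follows essentially the same route as the paper: $(1)\Rightarrow(2)$ via Fact~\ref{proposition:DichotomyUnc}, $(2)\Rightarrow(3)$ via the trivial observation that size-$\leq\kappa$ sets are $\K{\kappa}$ (unions of singletons), and $(3)\Rightarrow(1)$ via the $\kappa$-perfect set property together with Corollary~\ref{corollary:RevNegroPonte}. Your explicit derivation of the strong Hurewicz dichotomy from the equivalence (passing through $\neg(2)\Rightarrow\neg(3)\Rightarrow|A|\leq\kappa\Rightarrow A$ is itself $\K{\kappa}$) is exactly the point the paper leaves implicit, and your digression about bounded sets is unnecessary but you correctly recognize this and recover the right argument.
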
 

\begin{proof}
 By Fact~\ref{proposition:DichotomyUnc}, no closed subset homeomorphic to ${}^\kappa\kappa$  is  contained in a $\K{\kappa}$ subset of ${}^\kappa\kappa$.   
 Clearly, every subset of ${}^\kappa\kappa$ of cardinality at most $\kappa$ is a $\K{\kappa}$ subset. 
 Assume that  $A$ has cardinality greater than $\kappa$. Then by the \( \kappa \)-perfect set property \( A \) contains a closed subset homeomorphic to \( \pre{\kappa}{2} \). 
 Since under the above assumptions  the spaces ${}^\kappa\kappa$ and ${}^\kappa 2$ are homeomorphic by {\cite[Theorem 1]{MR0367930}}, we can conclude that $A$ contains a closed subset homeomorphic to ${}^\kappa\kappa$.  
\end{proof}

As a consequence, we get that the \( \kappa \)-perfect set property for closed sets allows us to fully characterize \( \K{\kappa} \) sets in the non-weakly compact case.

\begin{corollary} \label{cor:PSPHurewicz}
 Assume that $\kappa$ is not weakly compact and  that all closed subsets of ${}^\kappa\kappa$ have the \( \kappa \)-perfect set property. Then for every \( A \subseteq \pre{\kappa}{\kappa} \), \( A \) is (contained in) a \( \K{\kappa} \) set if and only if $A$ has cardinality at most $\kappa$. 
\end{corollary}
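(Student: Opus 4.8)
The plan is to derive the corollary directly from Proposition~\ref{prop:PSPHurewicz} by establishing the chain of implications: if \( \lvert A \rvert \leq \kappa \) then \( A \) is \( \K{\kappa} \); if \( A \) is \( \K{\kappa} \) then \( A \) is trivially contained in a \( \K{\kappa} \) set; and if \( A \) is contained in a \( \K{\kappa} \) set then \( \lvert A \rvert \leq \kappa \). The first two implications require no new ideas: every singleton \( \{x\} \subseteq \pre{\kappa}{\kappa} \) is \( \kappa \)-compact, since any open cover of it has a one-element subcover, so a set of cardinality at most \( \kappa \) is a union of \( \kappa \)-many \( \kappa \)-compact sets and hence \( \K{\kappa} \). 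This observation was already used in the proof of Proposition~\ref{prop:PSPHurewicz}.

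For the remaining implication, I would suppose \( A \subseteq B \), where \( B = \bigcup_{\alpha < \kappa} B_\alpha \) and each \( B_\alpha \) is \( \kappa \)-compact, and assume towards a contradiction that \( \lvert A \rvert > \kappa \). Since \( \kappa \) is regular and \( A = \bigcup_{\alpha < \kappa} (A \cap B_\alpha) \), there is some \( \alpha < \kappa \) with \( \lvert A \cap B_\alpha \rvert > \kappa \), and in particular \( \lvert B_\alpha \rvert > \kappa \). Because \( \kappa = \kappa^{{<}\kappa} \), the \( \kappa \)-compact set \( B_\alpha \) is closed in \( \pre{\kappa}{\kappa} \) (as recalled right after the definition of \( \K{\kappa} \)), so by hypothesis \( B_\alpha \) has the \( \kappa \)-perfect set property. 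Applying the equivalence of items (2) and (3) of Proposition~\ref{prop:PSPHurewicz} to \( B_\alpha \) — which is legitimate since \( \kappa \) is not weakly compact — I conclude that \( B_\alpha \) is not contained in any \( \K{\kappa} \) subset of \( \pre{\kappa}{\kappa} \). But \( B_\alpha \) is \( \kappa \)-compact and therefore \( \K{\kappa} \), a contradiction.

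There is no genuine obstacle here; the two points to handle with care are the pigeonhole step locating a single \( B_\alpha \) of cardinality \( {>}\kappa \) (which uses the regularity of \( \kappa \)) and the remark that \( \kappa \)-compact subsets of \( \pre{\kappa}{\kappa} \) are closed, so that the perfect set property hypothesis really applies to them.
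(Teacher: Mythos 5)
Your proof is correct and follows essentially the same route as the paper: both arguments use that each \( \kappa \)-compact piece of the covering \( \K{\kappa} \) set is closed, hence has the \( \kappa \)-perfect set property by hypothesis, and then apply Proposition~\ref{prop:PSPHurewicz} to conclude it has size at most \( \kappa \); the paper states this directly for each piece and sums up, whereas you phrase it contrapositively via a pigeonhole step (which, incidentally, needs only \( \kappa \cdot \kappa = \kappa \) rather than regularity).
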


\begin{proof}
For the nontrivial direction, let \( K_\alpha \subseteq \pre{\kappa}{\kappa} \), \( \alpha < \kappa \), be \( \kappa \)-compact sets with \( A \subseteq \bigcup_{\alpha < \kappa} K_\alpha \). By our assumption, each \( K_\alpha \), being closed,  has the \( \kappa \)-perfect set property, and hence has size \( {\leq} \kappa \) by Proposition~\ref{prop:PSPHurewicz}. It follows that \( A \) has size \( {\leq} \kappa \) as well.
\end{proof}

Finally, we provide combinatorial reformulations of the properties of containing a closed subset homeomorphic to \( \pre{\kappa}{2} \) or \( \pre{\kappa}{\kappa} \). Recall from Definition~\ref{def:perfectandkappaMiller} the notions of a perfect and \( \kappa \)-Miller tree. Given an inclusion-preserving function $\map{\iota}{{}^{{<}\kappa}\kappa}{{}^{{<}\kappa}\kappa}$, we say that $\iota$ is \emph{continuous}  if $\iota(u)=\bigcup\Set{\iota(u\restriction\alpha)}{\alpha<\length{u}}$ holds for all  $u\in{}^{{<}\kappa}\kappa$ with $\length{u}\in\Lim$.

\begin{lemma}\label{lemma:PerfectClosedTreeEmb}
 Given a subtree $T \subseteq \pre{< \kappa}{\kappa}$, the following statements are equivalent:
 \begin{enumerate-(1)}
  \item \label{item:homeo} the set $[T]$ contains a closed subset homeomorphic to ${}^\kappa 2$;

  \item\label{item:ContInjection} there is a continuous injection $\map{i}{{}^\kappa 2}{{}^\kappa \kappa}$ with $\ran{i}\subseteq[T]$;

  \item\label{item:TreeInjection} there is an inclusion-preserving continuous injection $\map{\iota}{{}^{{<}\kappa}2}{T}$; 
 
  \item \label{item:perfect} the tree $T$ contains a perfect subtree. 
 \end{enumerate-(1)}
\end{lemma}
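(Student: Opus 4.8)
The plan is to prove the cycle of implications $\ref{item:homeo} \Rightarrow \ref{item:ContInjection} \Rightarrow \ref{item:TreeInjection} \Rightarrow \ref{item:perfect} \Rightarrow \ref{item:homeo}$, since each adjacent pair is essentially a translation between topological and combinatorial data.

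\begin{proof}[Proof sketch]
We prove \ref{item:homeo}$\Rightarrow$\ref{item:ContInjection}$\Rightarrow$\ref{item:TreeInjection}$\Rightarrow$\ref{item:perfect}$\Rightarrow$\ref{item:homeo}.

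For \ref{item:homeo}$\Rightarrow$\ref{item:ContInjection}, suppose $C\subseteq[T]$ is closed in $\pre{\kappa}{\kappa}$ and homeomorphic to $\pre{\kappa}{2}$. Any homeomorphism $h\colon\pre{\kappa}{2}\to C$ is in particular a continuous injection into $\pre{\kappa}{\kappa}$ with range contained in $[T]$; so we may simply take $i=h$. (One should remark that continuity in the sense of the bounded topology is the usual notion here: $h$ is continuous iff for every $x$ and every $\alpha<\kappa$ there is $\beta<\kappa$ with $h[N_{x\restriction\beta}]\subseteq N_{h(x)\restriction\alpha}$.)

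For \ref{item:ContInjection}$\Rightarrow$\ref{item:TreeInjection}, we build $\iota\colon\pre{<\kappa}{2}\to T$ recursively, maintaining that $\iota$ is inclusion-preserving, continuous at limits, and that for each $s\in\pre{<\kappa}{2}$ the set $i[N_s]$ is contained in $N_{\iota(s)}$, while $\iota(s\frown 0)$ and $\iota(s\frown 1)$ are incomparable. At successor stages, given $\iota(s)$, we use injectivity and continuity of $i$: pick witnesses $x_0\in i[N_{s\frown 0}]$ and $x_1\in i[N_{s\frown 1}]$; since $i$ is injective these are distinct, and since $i$ is continuous, after refining we may take $\iota(s\frown j)$ to be a node of $T$ of length large enough to separate the two preimages and to lie above $\iota(s)$ on the branch through $x_j$; that $\iota(s\frown j)\in T$ follows because $x_j\in\ran{i}\subseteq[T]$. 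At limit stages $\lambda$, set $\iota(u)=\bigcup_{\alpha<\lambda}\iota(u\restriction\alpha)$; this is a function $\kappa$-branch-initial-segment in $T$ because it agrees with an initial segment of $i(x)$ for the unique $x\in\bigcap_\alpha N_{u\restriction\alpha}$ (continuity of $i$ forces the union to have length $\kappa$... here one must be slightly careful: the union has some length $\leq\kappa$, and we need it to be a legitimate element of $\pre{<\kappa}{\kappa}$, which it is as long as we arrange the lengths to stay below $\kappa$; using $\kappa=\kappa^{<\kappa}$ and bookkeeping on the lengths $\length{\iota(u\restriction\alpha)}$ this is routine). The resulting $\iota$ is the desired inclusion-preserving continuous injection into $T$.

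For \ref{item:TreeInjection}$\Rightarrow$\ref{item:perfect}, let $S=\Set{\iota(s)\restriction\gamma}{s\in\pre{<\kappa}{2},\ \gamma\leq\length{\iota(s)}}$, i.e.\ the downward closure of $\ran{\iota}$ inside $\pre{<\kappa}{\kappa}$. This is a subtree of $T$. It is superclosed: an increasing sequence of length $<\kappa$ of nodes in $S$ either stabilizes below some $\iota(s)$, or its union is $\bigcup_\alpha\iota(s_\alpha)$ for an increasing chain $s_\alpha$ in $\pre{<\kappa}{2}$, and by continuity of $\iota$ this union equals $\iota(\bigcup_\alpha s_\alpha)\in\ran{\iota}\subseteq S$ (here one uses $\length{\bigcup_\alpha s_\alpha}<\kappa$). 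Every node of $S$ lies below some $\iota(s)$, and $\iota(s)$ is extended in $S$ by the two incomparable nodes $\iota(s\frown 0),\iota(s\frown 1)$; so some node between them is $2$-splitting in $S$. Hence $S$ is perfect.

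For \ref{item:perfect}$\Rightarrow$\ref{item:homeo}, let $S\subseteq T$ be a perfect subtree. Using that $S$ is superclosed and that every node extends to a splitting node, build an inclusion-preserving continuous injection $j\colon\pre{<\kappa}{2}\to S$ sending $\emptyset$ to the first splitting node, and sending $s\frown 0, s\frown 1$ to the two distinct immediate-successor-directions above a splitting node extending $j(s)$; at limits take unions, which land in $S$ by superclosure (again keeping lengths $<\kappa$ via bookkeeping). Then $j$ induces a map $\bar j\colon\pre{\kappa}{2}\to[S]\subseteq[T]$ by $\bar j(x)=\bigcup_{\alpha<\kappa}j(x\restriction\alpha)$. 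This map is injective (distinct $x,y$ split at some stage), continuous (the $N_{j(x\restriction\alpha)}$ form a neighbourhood base along $\bar j(x)$), and its range is closed in $\pre{\kappa}{\kappa}$: if $z\notin\ran{\bar j}$, then either $z$ leaves $[S]$ at some $\alpha$, giving an open neighbourhood of $z$ disjoint from $\ran{\bar j}$, or $z\in[S]$ but there is a limit $\lambda$ where $z\restriction\lambda$ is not of the form $j(x\restriction\lambda)$ — but superclosure and the construction show this cannot happen, so in fact $[S]=\ran{\bar j}$ and $\ran{\bar j}$ is closed. Since $\pre{\kappa}{2}$ is compact-like enough (or directly: $\bar j$ is a continuous bijection onto $\ran{\bar j}$ with continuous inverse, as $\bar j^{-1}$ sends $N_{j(s)}\cap\ran{\bar j}$ to $N_s$), $\bar j$ is a homeomorphism onto a closed subset of $[T]$ homeomorphic to $\pre{\kappa}{2}$, establishing \ref{item:homeo}.
\end{proof}

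The main obstacle, and the point requiring genuine care rather than routine bookkeeping, is the limit-stage analysis in \ref{item:ContInjection}$\Rightarrow$\ref{item:TreeInjection} and the closedness of the range in \ref{item:perfect}$\Rightarrow$\ref{item:homeo}: one must ensure that the lengths $\length{\iota(u)}$ (respectively $\length{j(s)}$) form a continuous increasing sequence cofinal in $\kappa$ along each branch, so that the branch-unions have length exactly $\kappa$ and the tree $S$ has height $\kappa$; this is where $\kappa = \kappa^{<\kappa}$ (equivalently, the regularity of $\kappa$ together with closure of $\pre{<\kappa}{\kappa}$ under ${<}\kappa$-length unions) is used, and it is also why superclosedness, rather than mere closure under unions of cofinal branches, is the right hypothesis in the definition of a perfect tree.
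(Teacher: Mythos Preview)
Your overall strategy---cycling \ref{item:homeo}$\Rightarrow$\ref{item:ContInjection}$\Rightarrow$\ref{item:TreeInjection}$\Rightarrow$\ref{item:perfect}$\Rightarrow$\ref{item:homeo}---matches the paper's, and like the paper you correctly identify \ref{item:ContInjection}$\Rightarrow$\ref{item:TreeInjection} as the only nontrivial step. But your construction there has a genuine gap.

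You state the invariant $i[N_s]\subseteq N_{\iota(s)}$, and you rely on it at limit stages: $\iota(u)\in T$ because it is an initial segment of $i(x)$ for any $x\in N_u$. The problem is that this invariant cannot be preserved at successor stages in the way you describe. Given $i[N_s]\subseteq N_{\iota(s)}$, the images $i[N_{s^\smallfrown 0}]$ and $i[N_{s^\smallfrown 1}]$ need not each be contained in a single basic open set properly refining $N_{\iota(s)}$: elements of $i[N_{s^\smallfrown 0}]$ may split immediately above $\iota(s)$. Your phrase ``after refining'' acknowledges that you must pass to smaller neighbourhoods on the domain side, but once you do that you are no longer tracking $N_{s^\smallfrown j}$---you are tracking some $N_{t_j}$ with $t_j\supsetneq s^\smallfrown j$. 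If you do not record this refinement explicitly, then at limit stage $u$ you have no reason to believe there is any $x\in{}^\kappa 2$ with $\iota(u\restriction\alpha)\subseteq i(x)$ for all $\alpha$, and hence no reason $\iota(u)\in T$ (recall $T$ is not assumed superclosed).

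The paper fixes exactly this by carrying along an auxiliary inclusion-preserving continuous map $\map{e}{{}^{<\kappa}2}{{}^{<\kappa}2}$ recording the refinements, and maintaining the invariant $i[N_{e(s)}]\subseteq N_{\iota(s)}$ instead. At a limit $u$ one then takes $x\in N_{e(u)}$ (nonempty since $e(u)=\bigcup_\alpha e(u\restriction\alpha)\in{}^{<\kappa}2$), and $\iota(u)\subseteq i(x)\in[T]$ gives $\iota(u)\in T$. This is the missing bookkeeping your sketch needs; with it, the rest of your argument goes through.
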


\begin{proof}
The implications~\ref{item:homeo} \( \Rightarrow \)~\ref{item:ContInjection},~\ref{item:TreeInjection} \( \Rightarrow \)~\ref{item:perfect}, and~\ref{item:perfect} \( \Rightarrow \)~\ref{item:homeo} are obvious, so let us prove~\ref{item:ContInjection} \( \Rightarrow \)~\ref{item:TreeInjection}. 
 Let $\map{i}{{}^\kappa 2}{{}^\kappa \kappa}$ be a continuous injection with $\ran{i}\subseteq[T]$.  
 We will recursively construct inclusion-preserving continuous injections $\map{e}{{}^{{<}\kappa}2}{{}^{{<}\kappa}2}$ and $\map{\iota}{{}^{{<}\kappa}2}{T}$ 
 such that the following statements hold for all  $s\in{}^{{<}\kappa}2$:  
 \begin{enumerate-(a)}
  \item \label{item:e} $i[N_{e(s)}\cap \pre{\kappa}{2}] \subseteq N_{\iota(s)} \cap [T]$. 
  
  \item \label{item:iota} $N_{\iota(s {}^\smallfrown{}   0 )}\cap N_{\iota(s {}^\smallfrown{}  1)}=\emptyset$. 
 \end{enumerate-(a)}
The auxiliary map \( e \) and the corresponding condition~\ref{item:e} will ensure that our construction of \( \iota \) can go through limit levels maintaining the property that its range is contained in \( T \) (which cannot in general be assumed to be superclosed). The map \( \iota \) will thus be as in~\ref{item:TreeInjection} of the lemma.

  Assume that we already defined $\iota(s)$ for some $s\in{}^{{<}\kappa}2$. Pick $x_0,x_1\in N_{e(s)}\cap{}^\kappa 2$ with $x_0\neq x_1$. 
  Then $i(x_0)\neq i(x_1)$ and $i(x_0),i(x_1)\in N_{\iota(s)}$ by~\ref{item:e}. 
Hence by continuity of \( i \)  we can find ordinals $\length{\iota(s)}<\alpha<\kappa$ and $\length{e(s)}<\beta<\kappa$ 
  such that $i(x_0)\restriction\alpha\neq i(x_1)\restriction\alpha$ and $i[N_{x_j\restriction\beta}\cap{}^\kappa 2]\subseteq N_{i(x_j)\restriction\alpha} \cap [T]$ for \( j = 0,1 \). 
  It is then enough to set $e(s^\smallfrown  j  )=x_j \restriction\beta$ and $\iota(s^\smallfrown  j )=i(x_j)\restriction\alpha$.   
\end{proof}

The following result used in the proof of Theorem~\ref{theorem:CounterexampleCohenReal} follows directly from the above proof (condition~\ref{item:e}  guarantees that \( S =  \ran{\iota} \) is such that \( [S] \subseteq \ran{i} \)) and the observation that the same argument works for subtrees of ${}^{{<}\omega}\omega$.

\begin{corollary}\label{corollary:PerfectClosedTreeEmb}
 If  $\mu$ is an infinite cardinal with $\mu=\mu^{{<}\mu}$, $T$ is a subtree  of ${}^{{<}\mu}\mu$ and $\map{i}{{}^\mu 2}{{}^\mu \mu}$ is a continuous injection with $\ran{i}\subseteq[T]$, 
 then there is a perfect subtree $S \subseteq T$ with $[S]\subseteq\ran{i}$. 
\end{corollary}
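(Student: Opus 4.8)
The plan is to re‑run verbatim the recursive construction of the maps $e$ and $\iota$ performed in the proof of Lemma~\ref{lemma:PerfectClosedTreeEmb} (inside the implication~\ref{item:ContInjection}~$\Rightarrow$~\ref{item:TreeInjection}), with $\kappa$ replaced by the given cardinal $\mu$, and then to extract the required perfect subtree as the downward closure of $\ran{\iota}$, exploiting condition~\ref{item:e} to keep its body inside $\ran{i}$. First I would check that that recursion makes sense for \emph{every} infinite $\mu$ with $\mu=\mu^{<\mu}$: at successor steps one only uses that each basic open subset of $\pre{\mu}{2}$ has at least two points, together with the continuity of $i$; at a limit level one sets $\iota$ and $e$ equal to the appropriate unions, and condition~\ref{item:e} (applied to a point of $N_{e(u)}\cap\pre{\mu}{2}$, where $u$ is the node in question) guarantees that the resulting node still lies in $T$, exactly as in the cited proof. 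When $\mu=\omega$ there are no limit levels and this is just the usual recursion on the finite levels of $\pre{<\omega}{2}$. The outcome is a pair of inclusion‑preserving continuous injections $\map{e}{\pre{<\mu}{2}}{\pre{<\mu}{2}}$ and $\map{\iota}{\pre{<\mu}{2}}{T}$ satisfying conditions~\ref{item:e} and~\ref{item:iota} for all $s\in\pre{<\mu}{2}$.

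I would then put $S=\Set{\iota(s)\restriction\gamma}{s\in\pre{<\mu}{2},\ \gamma\leq\length{\iota(s)}}$, the downward closure of $\ran{\iota}$ in $\pre{<\mu}{\mu}$, and verify that $S$ is a perfect subtree of $T$. It is a subtree by construction, and $S\subseteq T$ since $\ran{\iota}\subseteq T$ and $T$ is a subtree; it is $2$‑splitting above every node because, given $r\in S$ with $r\subseteq\iota(s)$, the two nodes $\iota(s {}^\smallfrown 0),\iota(s {}^\smallfrown 1)\in S$ both extend $r$ and are incomparable by~\ref{item:iota}. For superclosedness --- which is vacuous when $\mu=\omega$ --- given an increasing sequence $\langle t_\xi : \xi<\lambda\rangle$ in $S$ with $\lambda<\mu$ limit and $t=\bigcup_{\xi<\lambda}t_\xi$, I would let $s_\xi$ be the longest $s$ with $\iota(s)\subseteq t_\xi$ (well defined and nondecreasing in $\xi$ by~\ref{item:iota} and the continuity of $\iota$) and distinguish two cases: if $\langle s_\xi : \xi<\lambda\rangle$ stabilizes at some $s^*$, then each sufficiently long $t_\xi$ is a proper initial segment of $\iota(s^* {}^\smallfrown 0)$ or of $\iota(s^* {}^\smallfrown 1)$, and~\ref{item:iota} forces $t$ to be an initial segment of one of these nodes (or of $\iota(s^*)$), hence $t\in S$; otherwise $\langle s_\xi : \xi<\lambda\rangle$ is cofinally strictly increasing, its union $s^*$ has limit length, and combining the continuity of $\iota$ with the length estimate $\length{t_\xi}<\length{\iota(s_{\xi'})}$ for $\xi<\xi'$ (which comes from the maximality defining $s_\xi$) gives $t=\iota(s^*)\in S$.

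It remains to show $[S]\subseteq\ran{i}$, and this is where~\ref{item:e} is used. For $x\in\pre{\mu}{2}$ set $\bar e(x)=\bigcup_{\alpha<\mu}e(x\restriction\alpha)$ and $\bar\iota(x)=\bigcup_{\alpha<\mu}\iota(x\restriction\alpha)$; since $e$ and $\iota$ are inclusion‑preserving injections, the lengths $\length{e(x\restriction\alpha)}$ and $\length{\iota(x\restriction\alpha)}$ strictly increase with $\alpha$ and are therefore cofinal in the regular cardinal $\mu$, so $\bar e(x)\in\pre{\mu}{2}$ and $\bar\iota(x)\in\pre{\mu}{\mu}$. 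Applying~\ref{item:e} at $s=x\restriction\alpha$ to the point $\bar e(x)\in N_{e(x\restriction\alpha)}\cap\pre{\mu}{2}$ yields $\iota(x\restriction\alpha)\subseteq i(\bar e(x))$ for all $\alpha<\mu$; taking the union over $\alpha$ we get $\bar\iota(x)\subseteq i(\bar e(x))$, and since both sequences have length $\mu$, $\bar\iota(x)=i(\bar e(x))\in\ran{i}$. Finally $[S]=\Set{\bar\iota(x)}{x\in\pre{\mu}{2}}$: the inclusion $\supseteq$ is immediate, while for $\subseteq$ one argues, as in the superclosedness step, that for $z\in[S]$ the longest $s$ with $\iota(s)\subseteq z\restriction\gamma$ cannot stabilize as $\gamma\to\mu$ (a node of the form $\iota(s^* {}^\smallfrown j)$ has length $<\mu$, so it would eventually be an initial segment of $z$), whence these nodes union to some $x\in\pre{\mu}{2}$ with $\bar\iota(x)=z$. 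Thus $S$ is a perfect subtree of $T$ with $[S]\subseteq\ran{i}$, as required.

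The real work is the limit‑level bookkeeping of the second and third paragraphs: checking that the downward closure $S$ is genuinely superclosed and that it acquires no branches other than the $\bar\iota(x)$. These are exactly the verifications packaged into calling the implication~\ref{item:TreeInjection}~$\Rightarrow$~\ref{item:perfect} of Lemma~\ref{lemma:PerfectClosedTreeEmb} "obvious"; they are routine but require some care when $\mu>\omega$, and they disappear entirely when $\mu=\omega$. Everything else is a transcription of the proof of that lemma together with the extra observation that, via~\ref{item:e}, $\ran{i}$ absorbs $[S]$.
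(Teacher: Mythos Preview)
Your proposal is correct and follows exactly the paper's own route: the paper's proof of the corollary is a single sentence pointing back to the construction of $e$ and $\iota$ in the proof of Lemma~\ref{lemma:PerfectClosedTreeEmb}, noting that condition~\ref{item:e} forces $[S]\subseteq\ran{i}$ and that the argument goes through for $\mu=\omega$ as well. You have simply unpacked the verifications (superclosedness of the downward closure of $\ran{\iota}$, and the fact that $[S]$ has no extra branches) that the paper leaves implicit under the label ``obvious'' for the implication~\ref{item:TreeInjection}~$\Rightarrow$~\ref{item:perfect}; your careful case analysis at limit levels is the right way to do this, and nothing is missing.
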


The next proposition provides the analogue of Lemma~\ref{lemma:PerfectClosedTreeEmb} for the case of closed homeomorphic copies of \( \pre{\kappa}{\kappa} \). 

\begin{proposition}\label{proposition:MillerTreeHomeoKappaKappa}
Let 
 \( T \subseteq \pre{< \kappa}{\kappa} \) be a pruned subtree. 
\begin{enumerate-(1)}
\item \label{item:kappaMiller}
If \( S \subseteq T \) is a \( \kappa \)-Miller tree, then \( [S] \) is a closed set homeomorphic to \( \pre{\kappa}{\kappa} \). 
\item \label{item:weaklycompact}
Assume that \( \kappa \) is weakly compact. If \( [T] \) contains a closed set homeomorphic to \( \pre{\kappa}{\kappa} \),  then there is a $\kappa$-Miller subtree $S \subseteq T$.
\end{enumerate-(1)}
\end{proposition}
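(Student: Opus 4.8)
For part~\ref{item:kappaMiller}, the plan is to build explicitly a homeomorphism between $[S]$ and $\pre{\kappa}{\kappa}$ by recursively defining an inclusion-preserving continuous bijection $\map{\iota}{\pre{<\kappa}{\kappa}}{S}$ that sends splitting levels to splitting levels. Concretely, I would define $\iota(\langle\rangle)$ to be the stem of $S$ (the shortest $\kappa$-splitting node above the root; superclosedness is used here to make sense of limits). Given $\iota(u) \in S$, since $S$ is $\kappa$-Miller there is a $\kappa$-splitting node $s \supseteq \iota(u)$ in $S$; enumerate its immediate successors in $S$ as $\langle s_\xi \mid \xi < \kappa \rangle$ (or rather, fix a bijection of the successor set with $\kappa$) and, for each $\alpha < \kappa$, let $\iota(u {}^\smallfrown \alpha)$ be the next $\kappa$-splitting node of $S$ strictly above $s_\alpha$. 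At limit stages $\ell \in \Lim$ with $\ell < \kappa$ set $\iota(u) = \bigcup_{\alpha < \ell} \iota(u \restriction \alpha)$; this lies in $S$ because $S$ is superclosed and the lengths $\length{\iota(u\restriction\alpha)}$ form a strictly increasing, hence cofinal-in-its-sup, sequence. The induced map $\pre{\kappa}{\kappa} \to [S]$, $x \mapsto \bigcup_{\alpha<\kappa}\iota(x\restriction\alpha)$, is then a continuous bijection; its inverse is continuous because distinct branches of $[S]$ are separated by some splitting node $\iota(u)$, so the preimage of $N_{\iota(u)} \cap [S]$ is the basic open set $N_u$. This yields the homeomorphism. (Since every $\kappa$-Miller tree is pruned, $[S]$ is indeed closed.)

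For part~\ref{item:weaklycompact}, the strategy mirrors the implication \ref{item:ContInjection} $\Rightarrow$ \ref{item:TreeInjection} of Lemma~\ref{lemma:PerfectClosedTreeEmb}, but replacing $\pre{<\kappa}{2}$ by $\pre{<\kappa}{\kappa}$ and building a \emph{$\kappa$-splitting} tree rather than merely a $2$-splitting one. Let $\map{i}{\pre{\kappa}{\kappa}}{\pre{\kappa}{\kappa}}$ be a continuous injection with $\ran{i} \subseteq [T]$ (such an $i$ exists, restricting a homeomorphism $\pre{\kappa}{\kappa} \to C \subseteq [T]$). As before I would recursively construct inclusion-preserving continuous injections $\map{e}{\pre{<\kappa}{\kappa}}{\pre{<\kappa}{\kappa}}$ and $\map{\iota}{\pre{<\kappa}{\kappa}}{T}$ satisfying (a) $i[N_{e(u)} \cap \pre{\kappa}{\kappa}] \subseteq N_{\iota(u)} \cap [T]$ and (b) the sets $N_{\iota(u {}^\smallfrown \alpha)}$ for $\alpha < \kappa$ are pairwise disjoint — so that $\iota(u)$ is a $\kappa$-splitting node of $S := \ran{\iota}$. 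At a successor step, given $\iota(u)$ with $i[N_{e(u)}] \subseteq N_{\iota(u)}$, I pick $\kappa$-many pairwise distinct points $x_\alpha \in N_{e(u)} \cap \pre{\kappa}{\kappa}$ ($\alpha < \kappa$); the images $i(x_\alpha)$ are pairwise distinct points of $N_{\iota(u)}$, and by continuity I choose a common level $\beta > \length{e(u)}$ on which $e(u {}^\smallfrown \alpha) := x_\alpha \restriction \beta$ are pairwise distinct, and levels $\gamma_\alpha > \length{\iota(u)}$ on which the $i(x_\alpha)\restriction\gamma_\alpha$ are pairwise distinct with $i[N_{x_\alpha\restriction\beta}\cap\pre{\kappa}{\kappa}]\subseteq N_{i(x_\alpha)\restriction\gamma_\alpha}\cap[T]$; set $\iota(u {}^\smallfrown \alpha) := i(x_\alpha)\restriction\gamma_\alpha$. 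The main point — and here is where weak compactness enters — is the limit step. We must show $S$ has height $\kappa$, i.e.\ at limit $\ell < \kappa$ the union $\iota(u) := \bigcup_{\alpha<\ell}\iota(u\restriction\alpha)$ still lands in $T$; this follows from condition (a), since $\bigcap_{\alpha<\ell} N_{e(u\restriction\alpha)}$ is nonempty (as $e$ is continuous inclusion-preserving, it equals $N_{\bigcup_\alpha e(u\restriction\alpha)}$), so picking $x$ in it we get $i(x) \in \bigcap_{\alpha<\ell} N_{\iota(u\restriction\alpha)}\cap[T]$, whence $\iota(u) \subseteq i(x) \in [T]$ gives $\iota(u) \in T$. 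So far this does not yet use weak compactness. Weak compactness is needed to ensure that the superclosure of $S$ obtained by also \emph{adding} all limits of branches of $S$ remains inside $T$: more precisely, to guarantee that $[S]$ (equivalently, the superclosed tree generated by $S$) is $\kappa$-Miller \emph{as a subtree of $T$}, one wants $T$ to be such that all relevant limit nodes are present — and without weak compactness this may fail, as the non-weakly-compact counterexample $\pre{\kappa}{2}$ shows. Alternatively, and more cleanly: by weak compactness $[T']$ is $\kappa$-compact for the $\kappa$-tree $T'$ bounding... — no; rather, the right move is to observe that weak compactness lets us thin the construction so that at each limit level we can pass to a node in $T$ by the tree property, making $S$ genuinely superclosed. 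I would set $S$ to be the downward closure of $\ran{\iota}$ together with all $\bigcup_{\alpha<\kappa}\iota(x\restriction\alpha)\restriction\delta$ for $x \in \pre{\kappa}{\kappa}$, $\delta<\kappa$, and verify using weak compactness that $S \subseteq T$ and that $S$ is a $\kappa$-Miller tree.

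The \textbf{main obstacle} is the limit-stage analysis in part~\ref{item:weaklycompact}: one must verify that the tree $S$ built from $\iota$ is genuinely \emph{superclosed} (closed under increasing unions of length $<\kappa$) while still being a subtree of $T$, and it is exactly here that weak compactness of $\kappa$ is indispensable — witnessed by the fact that $\pre{\kappa}{2}$ is a closed copy of $\pre{\kappa}{\kappa}$ containing no $\kappa$-Miller subtree when $\kappa$ is not weakly compact. I expect the cleanest route is to use the tree property of $\kappa$ (via Lemma~\ref{lemma:kurepa}): arrange the construction inside a $\kappa$-compact closed set containing a copy of $\pre{\kappa}{\kappa}$, or directly exploit that, for a $\kappa$-tree, every cofinal branch of a subtree has its limit realized. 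Part~\ref{item:kappaMiller} is routine by comparison, the only subtlety being the careful handling of limit levels using superclosedness, which is built into the definition of $\kappa$-Miller tree.
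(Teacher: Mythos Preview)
Your plan for part~\ref{item:kappaMiller} is correct and essentially the same as the paper's argument.

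For part~\ref{item:weaklycompact}, however, there is a genuine gap, and you have misidentified where weak compactness enters. Your successor step produces, above each $\iota(u)$, $\kappa$-many pairwise \emph{incompatible} extensions $\iota(u^\smallfrown\alpha)$; but pairwise incompatibility does \emph{not} yield a $\kappa$-splitting node in the resulting tree $S$. Indeed, take $T={}^{{<}\kappa}2$ with $\kappa$ not weakly compact: the homeomorphism $i\colon{}^\kappa\kappa\to{}^\kappa 2$ exists, your construction goes through verbatim, and you obtain $\kappa$-many incompatible nodes above each $\iota(u)$ --- yet $S\subseteq{}^{{<}\kappa}2$ has no $\kappa$-splitting node whatsoever. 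Your limit-stage analysis is actually fine (condition~(a) handles it, exactly as you say, without any appeal to weak compactness), and neither ``superclosure of $S$ inside $T$'' nor ``realizing limits via the tree property'' is the issue.

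The paper's route is different and cleaner. Assuming $[T]$ itself is homeomorphic to ${}^\kappa\kappa$ via $f$, it first proves (Claim~\ref{claim:kappasplitting}) that \emph{every node of $T$ lies below a $\kappa$-splitting node of $T$}. This is precisely where weak compactness is used: if some $t\in T$ had no $\kappa$-splitting node above it, then the pruned tree $U$ with $[U]=N_t\cap[T]$ would be a $\kappa$-tree (here inaccessibility of $\kappa$ is needed), hence by Lemma~\ref{lemma:kurepa} and the tree property $[U]$ would be $\kappa$-compact; but $f^{-1}[[U]]$ would then be a nonempty open $\kappa$-compact subset of ${}^\kappa\kappa$, a contradiction. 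A second claim (Claim~\ref{claim:meet}) shows one can refine any pair $\bar s,\bar t$ with $N_{\bar t}\subseteq f[N_{\bar s}]$ to $s\supseteq\bar s$, $t\supseteq\bar t$ with $f[N_s]=N_t\cap[T]$ exactly. Combining these, at each successor step one first moves to a genuine $\kappa$-splitting node of $T$ above $t_u$, takes its $\kappa$ immediate successors (recording the common splitting level $\gamma_u$), and then uses the ``meet'' claim to recover matching $s_{u^\smallfrown\alpha}$ on the domain side. Condition $f[N_{s_u}]=N_{t_u}\cap[T]$ carries the construction through limits, just as your condition~(a) does. The resulting $S=\{t\in T:\exists u\,(t\subseteq t_u)\}$ is then $\kappa$-Miller by construction, since the $\gamma_u$ witness $\kappa$-splitting at a single level.
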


\begin{proof}
For part~\ref{item:kappaMiller}, notice that the splitting properties of $S$ allow us to inductively construct an inclusion-preserving continuous injection $\map{\iota}{{}^{{<}\kappa}\kappa}{S}$ such that 
 \[
\Set{\iota(u^\smallfrown \alpha )}{\alpha<\kappa}  =  \Set{s\in S}{\iota(u)\subseteq s,  \length{s}=\length{\iota(u^\smallfrown  0 )}}
\]
 holds for all $u\in{}^{{<}\kappa}\kappa$.  This injection allows us to define a homeomorphism 
 \begin{equation*}
  \Map{f}{{}^\kappa\kappa}{[S]}{x}{\bigcup\Set{\iota(x\restriction\alpha)}{\alpha<\kappa}}. 
 \end{equation*}

We now prove part~\ref{item:weaklycompact}. Without loss of generality, we may assume that \( [T] \) itself be homeomorphic to \( \pre{\kappa}{\kappa} \), and let $\map{f}{{}^{\kappa}\kappa}{[T]}$ be a homeomorphism.

 \begin{claim} \label{claim:meet}
  If $\bar{s}\in{}^{{<}\kappa}\kappa$ and $\bar{t}\in[T]$ with $N_{\bar{t}}\subseteq f[N_{\bar{s}}]$, then there are $s\in{}^{{<}\kappa}\kappa$ and $t\in T$ such that $\bar{s}\subseteq s$, $\bar{t}\subseteq t$ and $f[N_s]=N_t\cap[T]$. 
 \end{claim}

 \begin{proof}[Proof of the Claim]
  Pick $x\in N_{\bar{s}}$ with $f(x)\in N_{\bar{t}}$. Using the fact that \( f \) is a homeomorphism, inductively define a strictly increasing sequence $\seq{\alpha_n<\kappa}{n<\omega}$ of ordinals such that 
  $\alpha_0=\max\{\length{\bar{s}},  \length{\bar{t}}\}$ and 
 \[
{N_{f(x)\restriction\alpha_{2n+2}}\cap[T]}  \subseteq  {f[N_{x\restriction\alpha_{2n+1}}]}  \subseteq  {N_{f(x)\restriction\alpha_{2n}}}
\]
  for  all $n<\omega$. Set $\alpha=\sup\Set{\alpha_n}{n<\omega}$, $s=x\restriction\alpha$, and $t=f(x)\restriction\alpha$. Using the fact that \( f \) is a bijection, 
 we get that  \( s \) and \( t \) are as required..  
 \end{proof}

 \begin{claim} \label{claim:kappasplitting}
  If $t\in T$, then there is a $\kappa$-splitting node $s$ in $T$ with $t\subseteq s$.
 \end{claim}

 \begin{proof}[Proof of the Claim]
  Assume, towards a contradiction, that there is a $t\in T$ without $\kappa$-splitting nodes above it. Let $U$ be a pruned subtree of $T$ with $[U]=N_t\cap[T]$. 
  Since \( \kappa \) is inaccessible,  $U$ is a $\kappa$-tree. By Lemma~\ref{lemma:kurepa}, the weak compactness of $\kappa$ implies that $[U]$ is $\kappa$-compact in ${}^\kappa\kappa$ and hence also in $[T]$. 
  Thus  the preimage of $[U]$ under the homeomorphism $f$ would be $\kappa$-compact in ${}^\kappa\kappa$ and would be open nonempty, a contradiction. 
 \end{proof}

Claims~\ref{claim:meet} and~\ref{claim:kappasplitting} allow us to recursively construct for each \( u \in \pre{<\kappa}{\kappa} \) a triple
\[
( s_u,t_u,\gamma_u)\in{}^{{<}\kappa}\kappa\times T\times\kappa
\]
such that the following statements hold for all $u,v\in {}^{{<}\kappa}\kappa$:
\begin{enumerate-(a)} 
 \item if $u\subsetneq v$, then $s_u\subsetneq s_v$ and $t_u\subsetneq t_v$;

 \item if $u$ and $v$ are incompatible in ${}^{{<}\kappa}\kappa$, then $s_u$ and $s_v$ are incompatible in ${}^{{<}\kappa}\kappa$ and $t_u$ and $t_v$ are incompatible in $T$; 
 \item if $\length{s}\in\Lim$, then $s_u=\bigcup\Set{s_{u\restriction\alpha}}{\alpha<\length{u}}$ and $t_u=\bigcup\Set{t_{u\restriction\alpha}}{\alpha<\length{u}}$; 

 \item \label{item:iv} $f[N_{s_u}]=N_{t_u} \cap[T]$;  
 
  \item \label{item:v} for all $\alpha,\beta<\kappa$,  $\gamma_u<\length{t_{u^\smallfrown \alpha }}, \length{t_{u^\smallfrown \beta }}$,
   $t_{u^\smallfrown \alpha }\restriction\gamma_u=t_{u^\smallfrown \beta }\restriction\gamma_u$, and $t_{u^\smallfrown \alpha }(\gamma_u)\neq t_{u^\smallfrown \beta }(\gamma_u)$.  
\end{enumerate-(a)} 
Condition~\ref{item:iv} ensures that the construction can be carried over limit levels (still having \( t_u \in T \)), while condition~\ref{item:v} gives us cofinally many \( \kappa \)-splitting nodes. Thus  
\[
S  =   \Set{t\in T}{\exists u\in{}^{{<}\kappa}\kappa \, (t \subseteq t_u)}
\]
 is a $\kappa$-Miller subtree of $T$.  
\end{proof} 

An important corollary of Proposition~\ref{proposition:MillerTreeHomeoKappaKappa} is the following result.

\begin{theorem}\label{theorem:MillTreeHD}
 Let $\kappa$ be a weakly compact cardinal and $A$ be a subset of ${}^\kappa\kappa$. Then $A$ satisfies the Hurewicz dichotomy if and only if it satisfies the Miller-tree Hurewicz dichotomy. 
\end{theorem}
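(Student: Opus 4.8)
The plan is to establish both implications by comparing the two alternatives in Definition~\ref{def:Hurewiczdichotomy} with those in Definition~\ref{def:kappaMillerHurewiczdichotomy}, using Proposition~\ref{proposition:MillerTreeHomeoKappaKappa} as the bridge. Note first that the first alternative — being contained in a \( \K{\kappa} \) subset of \( \pre{\kappa}{\kappa} \) — is literally the same in both definitions, so all the work concerns the second alternative, and the two dichotomies differ only in how strong that second alternative is.

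For the direction from the Miller-tree Hurewicz dichotomy to the ordinary Hurewicz dichotomy, nothing needs to be assumed about \( \kappa \): if \( A \) satisfies the former, then either \( A \) is contained in a \( \K{\kappa} \) set (and we are done), or there is a \( \kappa \)-Miller tree \( T \) with \( [T] \subseteq A \); by Proposition~\ref{proposition:MillerTreeHomeoKappaKappa}\ref{item:kappaMiller}, \( [T] \) is a closed subset of \( \pre{\kappa}{\kappa} \) homeomorphic to \( \pre{\kappa}{\kappa} \), so the second alternative of the Hurewicz dichotomy holds. This is the trivial direction and records the observation already made after Definition~\ref{def:kappaMillerHurewiczdichotomy}.

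For the converse, assume \( \kappa \) is weakly compact and \( A \) satisfies the Hurewicz dichotomy. If \( A \) is contained in a \( \K{\kappa} \) set, the Miller-tree version holds via its first alternative. Otherwise \( A \) contains a closed subset \( C \subseteq \pre{\kappa}{\kappa} \) homeomorphic to \( \pre{\kappa}{\kappa} \). Write \( C = [T] \) where \( T = T_C \) is the canonical pruned subtree induced by \( C \), as in~\eqref{eq:canonicaltree}. Since \( [T] \) contains a closed set homeomorphic to \( \pre{\kappa}{\kappa} \) (namely itself) and \( \kappa \) is weakly compact, Proposition~\ref{proposition:MillerTreeHomeoKappaKappa}\ref{item:weaklycompact} yields a \( \kappa \)-Miller subtree \( S \subseteq T \). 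Then \( [S] \subseteq [T] = C \subseteq A \), so the second alternative of the Miller-tree Hurewicz dichotomy holds for \( A \). This completes the proof.

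The only real content is Proposition~\ref{proposition:MillerTreeHomeoKappaKappa}\ref{item:weaklycompact}, which has already been proved in the excerpt; given that, the argument here is purely a matter of unwinding definitions and matching up the two alternatives, so there is no genuine obstacle. The one point to be slightly careful about is that the second alternative of the Hurewicz dichotomy only guarantees a closed homeomorphic copy of \( \pre{\kappa}{\kappa} \) inside \( A \), not that \( A \) itself is such a copy — but passing to \( C = [T_C] \) and applying the "without loss of generality" reduction in the proof of Proposition~\ref{proposition:MillerTreeHomeoKappaKappa}\ref{item:weaklycompact} takes care of this.
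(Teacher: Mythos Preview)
Your proof is correct and matches the paper's approach: the paper states Theorem~\ref{theorem:MillTreeHD} as an immediate corollary of Proposition~\ref{proposition:MillerTreeHomeoKappaKappa} without further argument, and your write-up simply spells out how the two parts of that proposition handle the two directions.
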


Moreover, using Corollary~\ref{corollary:RevNegroPonte} and Lemma~\ref{lemma:PerfectClosedTreeEmb} in the non-weakly compact case and 
Proposition~\ref{proposition:MillerTreeHomeoKappaKappa} 
in the weakly compact case, we can show that the Hurewicz dichotomy is always equivalent to an apparently stronger statement. 

\begin{corollary} 
Let $A$ be a subset of ${}^\kappa\kappa$. Then $A$ satisfies the Hurewicz dichotomy if and only if either $A$ is contained in a $\K\kappa$ subset of ${}^\kappa\kappa$ or there is a \emph{superclosed} subtree $T \subseteq {}^{{<}\kappa}\kappa$ such that $[T]\subseteq A$ and $[T]$ is homeomorphic to ${}^\kappa\kappa$. 
\end{corollary}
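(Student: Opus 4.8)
The plan is to prove the two directions of the biconditional, in each case splitting according to whether $\kappa$ is weakly compact. The right-to-left direction is immediate: if the apparently stronger statement holds, then either $A$ is contained in a $\K{\kappa}$ subset of ${}^\kappa\kappa$, or there is a superclosed subtree $T\subseteq{}^{{<}\kappa}\kappa$ with $[T]\subseteq A$ and $[T]$ homeomorphic to ${}^\kappa\kappa$; since $[T]$ is a closed subset of ${}^\kappa\kappa$ homeomorphic to ${}^\kappa\kappa$, this witnesses the second alternative of the Hurewicz dichotomy, so $A$ satisfies it.

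For the left-to-right direction, suppose $A$ satisfies the Hurewicz dichotomy. If $A$ is contained in a $\K{\kappa}$ subset, we are done, so assume $A$ contains a closed subset $C\subseteq{}^\kappa\kappa$ homeomorphic to ${}^\kappa\kappa$. Write $C=[T_C]$ for the canonical pruned subtree $T_C$ induced by $C$. Now distinguish cases. If $\kappa$ is weakly compact, apply Proposition~\ref{proposition:MillerTreeHomeoKappaKappa}\ref{item:weaklycompact} to $T_C$: since $[T_C]=C$ contains a closed set homeomorphic to ${}^\kappa\kappa$ (namely $C$ itself), there is a $\kappa$-Miller subtree $S\subseteq T_C$. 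By definition a $\kappa$-Miller tree is superclosed, and by Proposition~\ref{proposition:MillerTreeHomeoKappaKappa}\ref{item:kappaMiller} the body $[S]$ is a closed set homeomorphic to ${}^\kappa\kappa$; moreover $[S]\subseteq[T_C]=C\subseteq A$. So $S$ is the required superclosed subtree.

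If $\kappa$ is not weakly compact, then by Corollary~\ref{corollary:RevNegroPonte} the spaces ${}^\kappa 2$ and ${}^\kappa\kappa$ are homeomorphic, so $C\cong{}^\kappa\kappa\cong{}^\kappa 2$; in particular $C=[T_C]$ contains a closed subset homeomorphic to ${}^\kappa 2$. By Lemma~\ref{lemma:PerfectClosedTreeEmb} (equivalence of~\ref{item:homeo} and~\ref{item:perfect}) applied to $T_C$, there is a perfect subtree $S\subseteq T_C$. A perfect tree is superclosed by Definition~\ref{def:perfectandkappaMiller}, and $[S]\subseteq[T_C]=C\subseteq A$. It remains to see that $[S]$ is homeomorphic to ${}^\kappa\kappa$: by Lemma~\ref{lemma:PerfectClosedTreeEmb} again (now using~\ref{item:homeo}, or directly the construction there), $[S]$ contains a closed homeomorphic copy of ${}^\kappa 2$; but $[S]$ is closed in the $\kappa$-compact-free... more directly, $[S]\subseteq C\cong{}^\kappa 2$, so $[S]$ is a closed subset of a copy of ${}^\kappa 2$, and being the body of a perfect tree it is homeomorphic to ${}^\kappa 2$, hence (by Corollary~\ref{corollary:RevNegroPonte}) to ${}^\kappa\kappa$. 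Thus $S$ is again the required superclosed subtree, completing the proof.

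The only mildly delicate point — and the place where one must be a little careful rather than where there is a genuine obstacle — is the non-weakly-compact case: one needs to check that the perfect subtree produced by Lemma~\ref{lemma:PerfectClosedTreeEmb} has body homeomorphic to the \emph{full} space ${}^\kappa\kappa$ and not merely to ${}^\kappa 2$; this is exactly what Corollary~\ref{corollary:RevNegroPonte} supplies, since $\kappa$ is not weakly compact. Everything else is bookkeeping: unraveling "closed subset of ${}^\kappa\kappa$" as "body of a pruned subtree" and noting that perfect and $\kappa$-Miller trees are superclosed by definition.
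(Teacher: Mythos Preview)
Your proof is correct and follows exactly the approach the paper indicates: split on whether $\kappa$ is weakly compact, and use Proposition~\ref{proposition:MillerTreeHomeoKappaKappa} in the weakly compact case and Corollary~\ref{corollary:RevNegroPonte} together with Lemma~\ref{lemma:PerfectClosedTreeEmb} in the non-weakly compact case. The one passage that reads as uncertain --- your justification that the body $[S]$ of the perfect subtree is homeomorphic to ${}^\kappa 2$ --- is correct but could be stated more cleanly: rather than appealing to a general fact about bodies of perfect trees, simply thin $S$ (using its superclosedness and cofinal $2$-splitting) to a perfect subtree $S'\subseteq S$ in which every splitting node has exactly two immediate successors in $S'$; then the obvious map ${}^\kappa 2\to [S']$ is a homeomorphism, and $S'$ is still superclosed with $[S']\subseteq A$.
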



\section{The Hurewicz dichotomy at a given cardinal $\kappa$} \label{sec:HDatagivenkappa}

This section is mainly devoted to the proof of the consistency of the Hurewicz dichotomy for \( \mathbf{\Sigma}^1_1 \) subsetes of  \( \pre{\kappa}{\kappa} \) at a given uncountable \( \kappa \) satisfying \( \kappa = \kappa^{< \kappa} \) (Theorem~\ref{theorem:Cons General}), and of its strengthenings (Theorems~\ref{theorem:ConsNonWC} and ~\ref{thm:weaklycompactintroduction}).

\subsection{The non-weakly compact case}
 \label{sec:Hurewicznonweaklycompact}

We first consider the special case where $\kappa$ is not weakly compact.

\begin{definition} 
Given a subset  $A$ of ${}^\kappa\kappa$, 
we let $\KK(A)$ denote the partial order defined by the following clauses. 
\begin{enumerate-(i)}
 \item Conditions in $\KK(A)$ are pairs $p=\langle\alpha_p,c_p\rangle$ such that $\alpha_p<\kappa$ 
   and $\pmap{c_p}{A}{\kappa}{part}$ is a partial function of cardinality smaller than $ \kappa$. 
   Given  $p \in \KK(A)$ and $\gamma\in\ran{c_p}$, we define 
   \[
T_p(\gamma)  =  \Set{x\restriction\beta}{\beta\leq\alpha_p,  x\in\dom{c_p},  c_p(x)=\gamma}.
\] 

 \item Given $p, q \in \KK(A)$, we set $p\leq_{\KK(A)}q$ if and only if  $\alpha_q\leq\alpha_p$, $c_q\subseteq c_p$ and $T_p(\gamma)$ is an end-extension of $T_q(\gamma)$ for every $\gamma\in\ran{c_q}$, that is \( {T_p(\gamma) \cap \pre{\alpha_q+1}{\kappa}} = T_q(\gamma) \). 
\end{enumerate-(i)}
\end{definition}

\begin{proposition}\label{proposition:LongCondInKA}
 Given $\alpha<\kappa$, the set 
\[
D_\alpha  =  \Set{p\in\KK(A)}{\alpha\leq\alpha_p} 
\]
is dense in $\KK(A)$.  
\end{proposition}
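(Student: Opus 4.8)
Given a condition $p = \langle \alpha_p, c_p \rangle \in \KK(A)$, I need to find $q \leq_{\KK(A)} p$ with $\alpha \leq \alpha_q$. If $\alpha \leq \alpha_p$ there is nothing to do (take $q = p$), so assume $\alpha_p < \alpha$. The plan is to simply extend the "height" coordinate from $\alpha_p$ to $\alpha$ while keeping the domain of the condition unchanged, extending each branch approximation in the obvious way.

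More precisely, I would set $q = \langle \alpha, c_p \rangle$, so $\alpha_q = \alpha$, $\dom{c_q} = \dom{c_p}$, and $c_q = c_p$. Then $q$ is a legitimate condition: $\alpha_q = \alpha < \kappa$, and $c_q = c_p$ is a partial function from $A$ to $\kappa$ of size $< \kappa$. I then have to verify $q \leq_{\KK(A)} p$: we need $\alpha_p \leq \alpha_q$ (clear, since $\alpha_p < \alpha$), $c_p \subseteq c_q$ (in fact equality), and that $T_q(\gamma)$ end-extends $T_p(\gamma)$ for every $\gamma \in \ran{c_p}$. For the last point, note that for $\gamma \in \ran{c_p} = \ran{c_q}$ we have $T_q(\gamma) = \{ x \restriction \beta \mid \beta \leq \alpha, \, x \in \dom{c_p}, \, c_p(x) = \gamma\}$, and intersecting this with $\pre{\alpha_p + 1}{\kappa}$ recovers exactly $\{ x \restriction \beta \mid \beta \leq \alpha_p, \, x \in \dom{c_p}, \, c_p(x) = \gamma\} = T_p(\gamma)$, since restricting the bound $\beta \leq \alpha$ down to $\beta \leq \alpha_p$ is exactly what the intersection with $\pre{\alpha_p+1}{\kappa}$ does. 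Hence $q \in D_\alpha$ and $q \leq_{\KK(A)} p$, establishing density.

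I do not expect any serious obstacle here — this is a routine density argument where the only subtlety is checking that simply raising $\alpha_p$ to $\alpha$ without touching $c_p$ does genuinely produce a condition below $p$, which amounts to unwinding the definitions of $T_p(\gamma)$ and of the end-extension requirement. The one thing worth stating carefully is that $\dom{c_p}$ is unchanged, so no new splitting or branch information is introduced, and each $T_p(\gamma)$ simply grows by adding the nodes $x \restriction \beta$ for $\alpha_p < \beta \leq \alpha$ and $x$ in the (fixed) domain with $c_p(x) = \gamma$; these new nodes lie above level $\alpha_p$, so the end-extension condition is met automatically.

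\begin{proof}
If $\alpha \leq \alpha_p$, then $p \in D_\alpha$ and we are done, so assume $\alpha_p < \alpha$. Let $q = \langle \alpha, c_p \rangle$. Then $q \in \KK(A)$, since $\alpha < \kappa$ and $c_p$ is a partial function from $A$ to $\kappa$ of cardinality $< \kappa$. We claim that $q \leq_{\KK(A)} p$. Indeed, $\alpha_p \leq \alpha = \alpha_q$, and $c_p \subseteq c_q$ trivially, as $c_q = c_p$. Finally, fix $\gamma \in \ran{c_p} = \ran{c_q}$. Then
\[
 T_q(\gamma) \cap \pre{\alpha_p + 1}{\kappa} = \Set{x\restriction\beta}{\beta\leq\alpha, \, x\in\dom{c_p}, \, c_p(x)=\gamma} \cap \pre{\alpha_p + 1}{\kappa} = \Set{x\restriction\beta}{\beta\leq\alpha_p, \, x\in\dom{c_p}, \, c_p(x)=\gamma} = T_p(\gamma),
\]
because $x \restriction \beta \in \pre{\alpha_p+1}{\kappa}$ holds precisely when $\beta \leq \alpha_p$. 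Hence $T_q(\gamma)$ is an end-extension of $T_p(\gamma)$, and thus $q \leq_{\KK(A)} p$. Since $\alpha_q = \alpha$, we have $q \in D_\alpha$, which shows that $D_\alpha$ is dense in $\KK(A)$.
\end{proof}
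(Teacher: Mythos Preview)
Your proof is correct and follows exactly the same approach as the paper's own proof: set $q = \langle \alpha, c_p \rangle$ when $\alpha_p < \alpha$ and check $q \leq_{\KK(A)} p$. The paper's proof is simply the one-line version of what you spelled out in detail.
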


\begin{proof}
 If $p$ is a condition in $\KK(A)$ with $\alpha_p<\alpha$, then $q = \langle \alpha,c_p\rangle \in \KK(A)$ and $q \leq_{\KK(A)} p$. 
\end{proof}

Remember that a subset $D$ of a partial order $\PP$ is \emph{directed} if two conditions in $D$ have a common extension that is an element of $D$.

\begin{definition}
Let \( \PP \) be a partial order.  
\begin{enumerate-(i)}
\item We say that $\PP$ is 
\emph{${<}\kappa$-directed closed} if for every directed subset $D$ of $\PP$ of cardinality  ${<}\kappa$ 
there is a condition in $\PP$ that lies below all elements of $D$;
\item We say that $\PP$ is 
\emph{strongly $\kappa$-linked} if there is a function $\map{g}{\PP}{\kappa}$ such that \( p \) and \( q \) have a greatest lower bound $\mathrm{glb}_\PP(p,q)$ with respect to \( \PP \)
 for all $p,q\in\PP$ with $g(p)=g(q)$.  
\end{enumerate-(i)}
\end{definition}

\begin{lemma}
 The partial order $\KK(A)$ is ${<}\kappa$-directed closed and strongly $\kappa$-linked.  
\end{lemma}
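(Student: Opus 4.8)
The plan is to verify the two properties of $\KK(A)$ separately, both by direct constructions at the level of conditions.

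\medskip

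For \emph{${<}\kappa$-directed closure}, let $D = \set{p_i}{i \in I}$ be a directed subset of $\KK(A)$ with $|I| < \kappa$. First I would note that by directedness the ordinals $\set{\alpha_{p_i}}{i \in I}$ need not be increasing, but the family of partial functions $\set{c_{p_i}}{i \in I}$ is directed under inclusion (if $p_i, p_j \leq_{\KK(A)} q$ for some common extension $q \in D$, then $c_{p_i}, c_{p_j} \subseteq c_q$, so any two are compatible as functions). Hence $c = \bigcup_{i \in I} c_{p_i}$ is a partial function $A \to \kappa$, and $|c| < \kappa$ since each $|c_{p_i}| < \kappa$, $|I| < \kappa$, and $\kappa$ is regular. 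Set $\alpha = \sup_{i \in I} \alpha_{p_i}$ (which is $< \kappa$ by regularity, once we handle the successor/limit bookkeeping — take $\alpha$ to be a successor above this supremum if needed so that $\alpha < \kappa$ and conditions of the form $\langle \alpha, \cdot\rangle$ make sense) and put $q = \langle \alpha, c \rangle$. I must check $q \in \KK(A)$ and $q \leq_{\KK(A)} p_i$ for all $i$. The only nontrivial point is the end-extension clause: for $\gamma \in \ran{c_{p_i}}$ I need $T_q(\gamma) \cap \pre{\alpha_{p_i}+1}{\kappa} = T_{p_i}(\gamma)$. This follows because whenever $x \in \dom{c_{p_j}}$ with $c_{p_j}(x) = \gamma$ and $x$ is also relevant to $p_i$, directedness (via a common extension in $D$) forces $c_{p_i}(x) = \gamma$ as well and forces the relevant initial segments to agree; so no \emph{new} nodes of length $\leq \alpha_{p_i}$ appear in $T_q(\gamma)$ beyond those in $T_{p_i}(\gamma)$. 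I would spell this out using the common lower bound for each pair $p_i, p_j$ in $D$.

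\medskip

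For \emph{strong $\kappa$-linkedness}, the idea is that the ``non-stem'' part of a condition — namely the partial function $c_p$ together with $\alpha_p$ — already determines greatest lower bounds among compatible-looking conditions, so I want a coloring $g \colon \KK(A) \to \kappa$ that codes enough of this data. Since $\kappa = \kappa^{<\kappa}$, the set of pairs $\langle \alpha, f \rangle$ with $\alpha < \kappa$ and $f$ a partial function from $A$ to $\kappa$ of size $<\kappa$ whose \emph{domain} is a subset of $\pre{<\kappa}{\kappa}$ of size $<\kappa$... — here I need to be a little careful, since $A$ itself may have size $> \kappa$. The right move is: given $p = \langle \alpha_p, c_p\rangle$, consider the ``trace'' $\bar c_p = \set{\langle x \restriction (\alpha_p+1), c_p(x)\rangle}{x \in \dom{c_p}}$, a partial function from $\pre{\alpha_p+1}{\kappa}$ to $\kappa$ of size $< \kappa$; there are only $\kappa^{<\kappa} = \kappa$ many such traces, so fix an injection of the set of pairs $\langle \alpha_p, \bar c_p\rangle$ into $\kappa$ and let $g(p)$ be the image. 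Now if $g(p) = g(q)$ then $\alpha_p = \alpha_q =: \alpha$ and $\bar c_p = \bar c_q$. I then claim $\mathrm{glb}_{\KK(A)}(p,q) = \langle \alpha, c_p \cup c_q\rangle$ exists: first, $c_p \cup c_q$ is a function because $\bar c_p = \bar c_q$ means that for any $x \in \dom{c_p} \cap \dom{c_q}$ we have $c_p(x) = \bar c_p(x \restriction(\alpha+1)) = \bar c_q(x \restriction(\alpha+1)) = c_q(x)$; second, $r := \langle \alpha, c_p \cup c_q\rangle$ is a condition (size $< \kappa$ since $\kappa$ regular) and $r \leq_{\KK(A)} p, q$ because $\alpha_r = \alpha$, $c_p \subseteq c_{p} \cup c_q = c_r$, and the end-extension clause $T_r(\gamma) \cap \pre{\alpha+1}{\kappa} = T_p(\gamma)$ holds trivially as $\alpha_r = \alpha_p$; finally $r$ is below \emph{every} common lower bound $s$ of $p$ and $q$: such an $s$ has $\alpha \leq \alpha_s$, $c_p \cup c_q \subseteq c_s$, hence $c_r \subseteq c_s$ and $\alpha_r \leq \alpha_s$, and the end-extension clauses for $s$ over $p$ and over $q$ combine to give the one for $s$ over $r$. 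So $r$ is the greatest lower bound, as required.

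\medskip

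The main obstacle, and the part deserving the most care, is the bookkeeping in the directed-closure argument: one must check that passing to the union $c$ and the sup-level $\alpha$ really produces a \emph{lower bound} of every $p_i$, and in particular that the end-extension requirement $T_q(\gamma) \cap \pre{\alpha_{p_i}+1}{\kappa} = T_{p_i}(\gamma)$ is not violated by nodes contributed from some other $p_j \in D$ — this is precisely where directedness (not just pairwise compatibility) is used. Everything else is routine cardinal arithmetic using the regularity of $\kappa$ and, for the linkedness coloring, the hypothesis $\kappa^{<\kappa} = \kappa$.
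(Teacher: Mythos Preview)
Your argument for ${<}\kappa$-directed closure is essentially the paper's and is fine (though the wording of the end-extension check is a little off: the point is not that $c_{p_i}(x)=\gamma$ for the new $x$, since $x$ need not lie in $\dom{c_{p_i}}$ at all, but that a common refinement $r\in D$ of $p_i,p_j$ has $x\in\dom{c_r}$ with $c_r(x)=\gamma$, and the end-extension of $T_r(\gamma)$ over $T_{p_i}(\gamma)$ then forces $x\restriction\beta\in T_{p_i}(\gamma)$ for every $\beta\leq\alpha_{p_i}$).

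Your argument for strong $\kappa$-linkedness has a genuine gap. You assert that the trace
\[
\bar c_p=\Set{\langle x\restriction(\alpha_p+1),c_p(x)\rangle}{x\in\dom{c_p}}
\]
is a partial \emph{function}, and then use this to conclude that $\bar c_p=\bar c_q$ forces $c_p(x)=c_q(x)$ for $x\in\dom{c_p}\cap\dom{c_q}$. But nothing in the definition of a condition prevents two distinct $x,y\in\dom{c_p}$ from having $x\restriction(\alpha_p+1)=y\restriction(\alpha_p+1)$ while $c_p(x)\neq c_p(y)$; then $\bar c_p$ is only a relation. Concretely: take $\alpha_p=\alpha_q=0$ and distinct $x,y\in A$ with $x(0)=y(0)$; set $c_p=\{(x,0),(y,1)\}$ and $c_q=\{(x,1),(y,0)\}$. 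Then $\bar c_p=\{(\langle x(0)\rangle,0),(\langle x(0)\rangle,1)\}=\bar c_q$, so your colouring gives $g(p)=g(q)$, yet $c_p(x)\neq c_q(x)$ and $p,q$ are outright incompatible. Hence your $g$ does not witness strong $\kappa$-linkedness.

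The paper repairs exactly this point. It first passes to the dense set of conditions $p$ in which distinct elements of $\dom{c_p}$ already have distinct restrictions to $\alpha_p$ (so that the analogue of your trace \emph{is} injective there), fixes for each $p$ an extension $\bar p$ in that dense set, and then colours by a pair encoding both the trace of $p$ and the trace of $\bar p$. The $\bar p$-component guarantees compatibility (your own argument goes through verbatim once the trace is a function), while the $p$-component is what makes $\langle\alpha_p,c_p\cup c_q\rangle$ the greatest lower bound, exactly as you computed. So your idea is the right one, but it only works after restricting to this dense set; the extra bookkeeping with $\bar p$ is not optional. (Minor: in your glb verification you write ``$r$ is below every common lower bound $s$'' but then correctly argue $s\leq_{\KK(A)} r$, i.e.\ that $r$ is \emph{above} every such $s$.)
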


\begin{proof}
 Let $D$ be a directed subset of $\KK(A)$ of cardinality less than $\kappa$. Define 
\( p = \langle \alpha_p, c_p \rangle \) by setting
\[
\alpha_p = \sup\Set{\alpha_q}{q\in D} \quad \text{and} \quad c_p =  \bigcup\Set{c_q}{q\in D}.
\]
 Then $p$ is a condition in $\KK(A)$ (the map \( c_p \) is well-defined because \( D \) is directed). 
We claim that \( p \leq_{\KK(A)} q \) for all \( q \in D \). By definition of \( p \), given such a \( q \) we need only to show that \( T_p(\gamma) \) is an end-extension of \( T_q(\gamma) \) for all \( \gamma \in \ran{c_q}  \subseteq  \ran{c_p}$. Fix  $x\in\dom{c_p}$ with $c_p(x)=\gamma$. By definition of \( c_p \), there is \( q' \in D \) with \( x \in \dom{c_{q'}} \), and since \( D \) is directed there is \( r \in D \) with \( r \leq_{\KK(A)} q,q' \), so that 
 $x\in\dom{c_r}$. Since  \( c_r \subseteq c_p \), we have \( x \restriction \beta \in T_r(\gamma) \) for all \( \beta \leq \alpha_q \leq \alpha_r \), and since
$T_r(\gamma)$ is an end-extension of $T_q(\gamma)$ we get \( x \restriction \beta \in T_q(\gamma) \) for all \( \beta \leq \alpha_q \), as required.
 
 Now we prove that \( \KK(A) \) is strongly \( \kappa \)-linked. Fix a function $\map{f}{\KK(A)}{\kappa}$ such that $f(p)=f(q)$ if and only if $\alpha_p=\alpha_q$, $\ran{c_p}=\ran{c_q}$, and $T_p(\gamma)=T_q(\gamma)$ for every $\gamma\in\ran{c_p}$ (such an \( f \) exists because we are assuming that  $\kappa=\kappa^{{<}\kappa}$). Set
\[
D  =  \Set{p\in\KK(A)}{\forall x,y\in\dom{c_p} \, (x\neq y \Rightarrow  x\restriction\alpha_p\neq y\restriction\alpha_p)}.
\]
Using the fact that \( c_p \) has cardinality \( {<} \kappa \) and arguing as in the proof of Proposition~\ref{proposition:LongCondInKA}, $D$ is a dense subset of $\KK(A)$. Fix any map \( \KK(A) \to D;\ p \mapsto \bar{p} \) such that \( \bar{p} \leq_{\KK(A)} p \), and let \( \goedel{\cdot}{\cdot} \) be the usual G\"odel pairing function. Finally, set 
\[ 
\Map{g}{\KK(A)}{\kappa}{p}{\goedel{f(p)}{f(\bar{p})}}.
 \] 
We claim that \( g \) witnesses that \( \KK(A) \) is \( \kappa \)-linked.

First we show that if \( f(\bar{p}) = f(\bar{q}) \), then \( p \) and \( q \) are compatible in \( \KK(A) \). Fix $x\in\dom{c_{\bar{p}}}\cap\dom{c_{\bar{q}}}$, and set \( \gamma = c_{\bar{p}}(x) \). 
 Since $x\restriction\alpha_{\bar{p}}\in T_{\bar{p}}(\gamma)=T_{\bar{q}}(\gamma)$, there is  $y\in\dom{c_{\bar{q}}}$ with $c_{\bar{q}}(y) = \gamma$ and $x\restriction\alpha_{\bar{p}}=y\restriction\alpha_{\bar{p}}$. 
 By the definition of $D$, we can conclude that  $x=y$, and hence $c_{\bar{q}}(x) = \gamma = c_{\bar{p}}(x)$. This shows that $r=\langle\alpha_{\bar{p}},c_{\bar{p}}\cup c_{\bar{q}}\rangle$ is a condition in $\KK(A)$ that extends both $\bar{p}$ and $\bar{q}$, and hence it also extends both \( p \) and \( q \). 

Finally we show that if $p,q \in \KK(A)$ are compatible and such that $f(p)=f(q)$, then \( \mathrm{glb}_{\KK(A)}(p,q) \) exists. Set $r=\langle\alpha_p, c_p\cup c_q\rangle$. Then $r \in \KK(A)$,
 $\ran{c_p}=\ran{c_q}=\ran{c_r}$, and $T_p(\gamma)=T_q(\gamma)=T_r(\gamma)$ for all $\gamma\in\ran{c_r}$. 
 This shows that $r\leq_{\KK(A)}p,q$ and  $s\leq_{\KK(A)} r$ for all $s\in\KK(A)$ with $s\leq_{\KK(A)}p,q$: thus 
 $r=\mathrm{glb}_{\KK(A)}(p,q)$.  
\end{proof}

\begin{proposition}\label{proposition:CodeForceDenseElements}
 Given $x\in A$, the set 
\[
D_x  =  \Set{p\in\KK(A)}{x\in\dom{c_p}}
\]
 is dense in $\KK(A)$. 
\end{proposition}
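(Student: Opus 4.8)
The statement to prove is that for every $x \in A$, the set $D_x = \{p \in \KK(A) \mid x \in \dom{c_p}\}$ is dense in $\KK(A)$. The plan is simply to take an arbitrary condition $p = \langle \alpha_p, c_p \rangle \in \KK(A)$ and explicitly build an extension $q \leq_{\KK(A)} p$ with $x \in \dom{c_q}$. If $x$ is already in $\dom{c_p}$ there is nothing to do, so assume $x \notin \dom{c_p}$. The natural move is to keep $\alpha_q = \alpha_p$ and set $c_q = c_p \cup \{\langle x, \gamma \rangle\}$ for a suitably chosen ordinal $\gamma < \kappa$; one must check that this $q$ is a genuine condition and that it extends $p$.

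The one point requiring care is the choice of $\gamma$. For $q$ to extend $p$ we need, for every $\gamma' \in \ran{c_p}$, that $T_q(\gamma')$ is an end-extension of $T_p(\gamma')$, i.e.\ $T_q(\gamma') \cap \pre{\alpha_p+1}{\kappa} = T_p(\gamma')$; but since $\alpha_q = \alpha_p$ and $c_p \subseteq c_q$, for any $\gamma' \neq \gamma$ we have $T_q(\gamma') = T_p(\gamma')$, so that clause is automatic. The genuine constraint is that we must not accidentally force a ``branching'' in some $T_p(\gamma')$ at a level $\leq \alpha_p$ that was not already there — in other words, if we pick $\gamma \in \ran{c_p}$, then adding $x$ to the fiber over $\gamma$ enlarges $T_p(\gamma)$ to include $\{x \restriction \beta \mid \beta \leq \alpha_p\}$, which could violate the end-extension requirement relative to $p$ unless $x \restriction \alpha_p$ already extends a node of $T_p(\gamma)$ appropriately. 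The clean fix is to choose $\gamma \in \kappa \setminus \ran{c_p}$, which is possible since $\lvert \ran{c_p} \rvert \leq \lvert c_p \rvert < \kappa$; then $T_q(\gamma) = \{x \restriction \beta \mid \beta \leq \alpha_p\}$ is a brand-new tree, $T_q(\gamma') = T_p(\gamma')$ for all $\gamma' \in \ran{c_p}$, and all the end-extension clauses hold trivially.

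So concretely: given $p \in \KK(A)$ with $x \notin \dom{c_p}$, pick any $\gamma \in \kappa \setminus \ran{c_p}$ (nonempty as $\lvert \ran{c_p}\rvert < \kappa$), and set $q = \langle \alpha_p,\, c_p \cup \{\langle x,\gamma\rangle\}\rangle$. Then $c_q$ is still a partial function $A \to \kappa$ of cardinality $< \kappa$, so $q \in \KK(A)$; and $\alpha_q = \alpha_p \leq \alpha_p$, $c_p \subseteq c_q$, and $T_q(\gamma') = T_p(\gamma')$ for every $\gamma' \in \ran{c_p}$, whence $q \leq_{\KK(A)} p$. Since $x \in \dom{c_q}$, we have $q \in D_x$, which establishes density.

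I do not anticipate any real obstacle here; the only subtlety is noticing that one should pick the colour $\gamma$ outside $\ran{c_p}$ rather than reusing an existing one, so as to avoid checking end-extension conditions at all. The argument is entirely parallel to (indeed simpler than) the density argument already used in Proposition~\ref{proposition:LongCondInKA}.
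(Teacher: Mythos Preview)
Your proof is correct and follows essentially the same approach as the paper: given $p$ with $x\notin\dom{c_p}$, pick $\gamma\in\kappa\setminus\ran{c_p}$, set $q=\langle\alpha_p,\,c_p\cup\{\langle x,\gamma\rangle\}\rangle$, and observe $q\leq_{\KK(A)}p$. Your write-up is in fact more detailed than the paper's, spelling out why choosing a fresh colour $\gamma$ trivializes the end-extension check.
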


\begin{proof}
 Given a condition $p$ in $\KK(A)$ with $x\notin\dom{c_p}$, let $\gamma<\kappa$ be such that $\gamma\notin\ran{c_p}$. Define \( \alpha_q = \alpha_p \) and \( \map{c_q} {\dom{c_p} \cup \{ x \}}{\kappa} \) by setting \( c_q(x) = \gamma \) and \( c_q(y) = c_p(y) \) for all \( y \in \dom{c_p} \). 
 Then $q = \langle \alpha_q,c_q \rangle \in \KK(A)$ and $q \leq_{\KK(A)} p$. 
\end{proof}

\begin{definition}
If $G$ is $\KK(A)$-generic over the ground model $\VV$, then we set 
\[
\map{c_G  =  \bigcup\Set{c_p}{p\in G}}{A}{\kappa}.
\]
\end{definition}

By Proposition~\ref{proposition:CodeForceDenseElements}, \( c_G \) is a total function, and using a similar density argument it can be shown that it is also surjective.

The proof of the following lemma uses ideas contained in {\cite[Section 5]{MR631563}}.

\begin{lemma}\label{lemma:TreesAreKurepaNoAronszajn}
 Let $\dot{\PP}$ be a $\KK(A)$-name for a ${<}\kappa$-closed partial order and let $G*H$ be $(\KK(A)*\dot{\PP})$-generic over the ground model $\VV$. 
 If  $\gamma<\kappa$, then 
\[
T_G(\gamma)  =  \Set{x\restriction\beta}{ \beta<\kappa, \, x\in A, \,  c_G(x)=\gamma}
\]
 is a pruned $\kappa$-tree without $\kappa$-Aronszajn subtrees in $\VV[G,H]$. 
\end{lemma}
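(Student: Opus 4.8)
The plan is to work in the extension $\VV[G,H]$ and verify the two properties of $T_G(\gamma)$ separately: that it is a pruned $\kappa$-tree, and that it has no $\kappa$-Aronszajn subtrees. For the first part, prunedness and height $\kappa$ follow from density arguments over $\KK(A)$: by Proposition~\ref{proposition:CodeForceDenseElements} every $x \in A$ eventually enters $\dom{c_G}$, and by a surjectivity argument (already noted after the definition of $c_G$) the color $\gamma$ is realized, so there is some $x \in A$ with $c_G(x) = \gamma$, giving $x \restriction \beta \in T_G(\gamma)$ for all $\beta < \kappa$; prunedness is then immediate since each node $x \restriction \beta$ extends to the branch $x$. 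The levels have size $< \kappa$ because, by the end-extension clause in the definition of $\leq_{\KK(A)}$, the $\alpha_p$-th level of $T_p(\gamma)$ is frozen once $\alpha_p \geq \alpha$, and each $c_p$ has domain of size $< \kappa$; so $T_G(\gamma) \cap \pre{\alpha}{\kappa}$ is determined by some single condition $p \in G$ with $\alpha_p \geq \alpha$, hence has size $\leq |\dom{c_p}| < \kappa$. Note that since $\dot{\PP}$ is forced to be ${<}\kappa$-closed, it adds no new elements of $\pre{\alpha}{\kappa}$ for $\alpha < \kappa$ and hence no new nodes or levels, so these computations are unaffected by passing from $\VV[G]$ to $\VV[G,H]$.

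The heart of the matter is showing $T_G(\gamma)$ has no $\kappa$-Aronszajn subtree in $\VV[G,H]$; I expect this to be the main obstacle, and the reference to~\cite[Section 5]{MR631563} signals that the key device is an \emph{isomorphism-of-generics} argument. Suppose toward a contradiction that in $\VV[G,H]$ there is a subtree $S \subseteq T_G(\gamma)$ that is $\kappa$-Aronszajn, i.e.\ a $\kappa$-tree with $[S] = \emptyset$. The idea is that $S$ is "captured" by an initial segment of the generic together with a small amount of information: more precisely, working below a condition $(p, \dot{q}) \in G * H$ forcing $\dot{S}$ to be $\kappa$-Aronszajn, one analyzes how the names for the levels of $S$ depend on the generic. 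Since both $\KK(A)$ and (the forced) $\dot{\PP}$ are ${<}\kappa$-closed (indeed $\KK(A)$ is ${<}\kappa$-directed closed), the two-step iteration is ${<}\kappa$-closed, so for each $\alpha < \kappa$ the $\alpha$-th level $S \cap \pre{\alpha}{\kappa}$, a set of size $< \kappa$, is added by a condition; one builds a continuous increasing chain of conditions deciding more and more of $\dot{S}$ and of the function $c_G \restriction (\text{relevant part of } A)$.

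The decisive step is then to produce a branch through $S$ by a symmetry/homogeneity argument: having built (in $\VV$, via the ${<}\kappa$-closure) a condition that has decided the tree $\dot{S}$ up to some level and committed certain elements of $A$ to color $\gamma$, one finds two "copies" of the relevant generic data which agree on the part determining $\dot{S}$ but whose amalgamation forces a node at the next level to lie in $\dot{S}$ with $\gamma$-color --- using that $\KK(A)$ lets us freely assign colors to fresh elements of $A$ (Proposition~\ref{proposition:CodeForceDenseElements}) and that $T_p(\gamma)$ can always be end-extended (Proposition~\ref{proposition:LongCondInKA} together with the end-extension clause). Iterating this through all $\alpha < \kappa$ and taking unions at limits (legitimate by ${<}\kappa$-closure) yields a condition in $\KK(A) * \dot{\PP}$ forcing the existence of a $\kappa$-branch through $\dot{S}$, contradicting that $\dot{S}$ was forced to be $\kappa$-Aronszajn. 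The only genuinely delicate bookkeeping is ensuring that the elements of $A$ used to build the branch are genuinely new at each stage (so that color assignments do not clash with commitments already made) and that the limit stages go through --- both of which are handled by the ${<}\kappa$-closure of the iteration and the fact that $|A| \leq 2^\kappa$ while at each bounded stage only $< \kappa$ commitments have been made, so fresh elements of $A$ are always available; I would organize this as an explicit recursive construction of a descending $\kappa$-sequence of conditions.
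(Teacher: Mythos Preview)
Your argument that $T_G(\gamma)$ is a pruned $\kappa$-tree is correct and matches the paper's; the observation that the end-extension clause freezes $T_G(\gamma)\cap{}^{\alpha}\kappa$ once some $p\in G$ has $\alpha_p\geq\alpha$ is exactly the key point, and the paper also uses it---crucially---in the second half.

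Your plan for the Aronszajn-free part, however, has a genuine gap. You propose to produce a $\kappa$-branch through $\dot{S}$ by a symmetry/amalgamation argument, but there is no mechanism by which conditions in $\KK(A)*\dot{\PP}$ force a prescribed node to lie in $\dot{S}$. The forcing controls $T_G(\gamma)$ (by assigning colors to elements of $A$); it does not control the arbitrary subtree $\dot{S}$. Assigning color $\gamma$ to a fresh $x\in A$ puts the restrictions of $x$ into $T_G(\gamma)$, but gives no purchase on whether they lie in $\dot{S}$. So the step ``amalgamation forces a node at the next level to lie in $\dot{S}$'' is unsupported, and the recursion you describe cannot get off the ground.

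The paper argues in the opposite direction: rather than building a branch, it shows that $\dot{S}$ must have an \emph{empty level}. Starting from $\langle p_0,\dot{q}_0\rangle$ forcing $\dot{S}$ to be $\kappa$-Aronszajn, one builds a descending $\omega$-sequence $\langle p_n,\dot{q}_n\rangle$ with $\alpha_{p_n}$ strictly increasing, arranging at each step that every $x\in\dom{c_{p_n}}$ has an initial segment forced out of $\dot{S}$ by $\langle p_{n+1},\dot{q}_{n+1}\rangle$. This is possible precisely because $[\dot{S}]=\emptyset$ is forced: each fixed $x\in A$ must eventually leave $\dot{S}$, and since $\dom{c_{p_n}}$ has size ${<}\kappa$ and $\kappa$ stays regular, a single extension handles them all. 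Taking $p=\langle\sup_n\alpha_{p_n},\bigcup_n c_{p_n}\rangle$ and $\dot{q}$ below all $\dot{q}_n$, the end-extension clause now forces $T_G(\gamma)\cap{}^{\alpha_p}\kappa$ to be exactly $\{x\restriction\alpha_p:x\in\dom{c_p},\ c_p(x)=\gamma\}$, and every element of this set has been forced out of $\dot{S}$. Hence $\langle p,\dot{q}\rangle$ forces $\dot{S}\cap{}^{\alpha_p}\kappa=\emptyset$, contradicting that $\dot{S}$ is a $\kappa$-tree. Note that only an $\omega$-sequence is needed, and only $\sigma$-closure of the iteration is used; the ingredients you already isolated (end-extension freezing of levels, small domains) are exactly what drive this, just deployed to empty a level rather than to thread a branch.
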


\begin{proof}
 Since $T_G(\gamma)=\bigcup\Set{T_p(\gamma)}{p\in G, \, \gamma\in \ran{c_p}}$, Proposition~\ref{proposition:LongCondInKA} 
 and the definition of $\KK(A)$ imply that each $T_G(\gamma)$ is a pruned $\kappa$-tree in \( \VV[G,H] \).

 Let $\dot{T}_\gamma$ denote the canonical $(\KK(A)*\dot{\PP})$-name for $T_G(\gamma)$ 
 and assume, towards a contradiction, that there is a $(\KK(A)*\dot{\PP})$-name $\dot{S}$ 
 and a condition $\langle p_0,\dot{q}_0\rangle$ in $G*H$ that forces $\dot{S}$ to be a $\kappa$-Aronszajn subtree of $\dot{T}_\gamma$. 
 Since $\kappa$ remains regular in every $(\KK(A)*\dot{\PP})$-generic extension, 
 we can find a descending sequence $\seq{\langle p_n,\dot{q}_n\rangle}{n<\omega}$ of conditions in $\KK(A)*\dot{\PP}$  such that $\alpha_{p_n}<\alpha_{p_{n+1}}$ and 
 \[
\langle p_{n+1},\dot{q}_{n+1}\rangle\Vdash \anf{\check{x}\restriction\check{\alpha}_{p_{n}}\notin\dot{S}}
\]  
 for all $n<\omega$ and $x\in\dom{c_{p_n}}$. Define  
 \[
p  =  \left\langle \sup\Set{\alpha_{p_n}}{n<\omega}, \, \bigcup\Set{c_{p_n}}{n<\omega}\right \rangle.
\]
 Then $p$ is a condition in $\KK(A)$ that is stronger than $p_n$ for every $n<\omega$. 
 This allows us to find  a $\KK(A)$-name $\dot{q}$ for a condition in $\dot{\PP}$ such that $\langle p,\dot{q}\rangle\leq_{\KK(A)*\dot{\PP}}\langle p_n,\dot{q}_n\rangle$ holds for all $n<\omega$.  
 Our construction ensures that $\langle p,\dot{q}\rangle\Vdash\anf{\dot{T}_\gamma\cap{}^{\check{\alpha}_p}\check{\kappa}=\check{R}}$, where 
 \[
R  =  \Set{x\restriction\alpha_p}{x\in\dom{c_p}, \, c_p(x)=\gamma}.
\]
But we also ensured that $\langle p,\dot{q}\rangle\Vdash\anf{\check{s}\notin\dot{S}}$ holds for every $s\in R$. 
Since $\langle p,\dot{q}\rangle \leq_{\KK(A) * \dot{\PP}} \langle p_0, \dot{q}_0 \rangle$ also forces that $\dot{S}$ intersects every level of $\dot{T}_\gamma$, this yields a contradiction. 
\end{proof}

\begin{lemma}\label{lemma:CodedSetIsKKappa}
  Let $\dot{\PP}$ is a $\KK(A)$-name for a ${<}\kappa$-closed partial order and $G*H$ be $(\KK(A)*\dot{\PP})$-generic over the ground model $\VV$. Then 
 \[
A  =  \bigcup\Set{[T_G(\gamma)]^{\VV[G,H]}}{\gamma<\kappa}.
\]
  In particular, $A$ is a $\K\kappa$ subsets of ${}^\kappa\kappa$ in $\VV[G,H]$. 
\end{lemma}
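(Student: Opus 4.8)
The plan is to show the two inclusions separately. For the inclusion $A \subseteq \bigcup_{\gamma < \kappa}[T_G(\gamma)]^{\VV[G,H]}$, fix $x \in A$ (note $A$ is absolute since it is coded by a set in $\VV$, and membership in $A$ is witnessed in $\VV$). By Proposition~\ref{proposition:CodeForceDenseElements}, the set $D_x$ is dense in $\KK(A)$, so $x \in \dom{c_G}$; let $\gamma = c_G(x)$. Then for every $\beta < \kappa$ there is, by Proposition~\ref{proposition:LongCondInKA}, some $p \in G$ with $\beta \leq \alpha_p$ and $x \in \dom{c_p}$ (intersecting $D_x$ with $D_\beta$), and $c_p(x) = \gamma$, whence $x \restriction \beta \in T_p(\gamma) \subseteq T_G(\gamma)$. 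Since this holds for all $\beta < \kappa$, we get $x \in [T_G(\gamma)]^{\VV[G,H]}$.

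For the reverse inclusion $\bigcup_{\gamma < \kappa}[T_G(\gamma)]^{\VV[G,H]} \subseteq A$, this is where the argument uses the $\mathord{<}\kappa$-closure of $\dot{\PP}$ and is the main obstacle. Fix $\gamma < \kappa$ and suppose towards a contradiction that some condition $\langle p_0, \dot{q}_0 \rangle$ forces that $\dot{x}$ is a $\kappa$-branch through $\dot{T}_\gamma$ with $\dot{x} \notin \check{A}$. Building a descending $\omega$-sequence $\seq{\langle p_n, \dot{q}_n \rangle}{n < \omega}$ as in the proof of Lemma~\ref{lemma:TreesAreKurepaNoAronszajn}, we arrange that $\alpha_{p_n} < \alpha_{p_{n+1}}$, that $\langle p_{n+1}, \dot{q}_{n+1} \rangle$ decides $\dot{x} \restriction \alpha_{p_n}$ to be some fixed $s_n \in \pre{\alpha_{p_n}}{\kappa}$, and that moreover $s_n \in T_{p_{n+1}}(\gamma)$ (possible because the condition forces $\dot{x}$ to branch through $\dot{T}_\gamma$, and $T_G(\gamma)$ restricted to level $\alpha_{p_{n+1}}$ is already determined by $p_{n+1}$). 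Taking the fusion $p = \langle \sup_n \alpha_{p_n}, \bigcup_n c_{p_n} \rangle \in \KK(A)$ and a $\KK(A)$-name $\dot{q}$ for a lower bound of the $\dot{q}_n$ in $\dot{\PP}$ (using $\mathord{<}\kappa$-closure, hence $\sigma$-closure, of $\dot{\PP}$), the condition $\langle p, \dot{q} \rangle$ forces $\dot{x} \restriction \alpha_p = \bigcup_n s_n =: s$. But $s \in \pre{\alpha_p}{\kappa}$, and by construction each $s_n = s \restriction \alpha_{p_n}$ lies in $T_{p_{n+1}}(\gamma)$, so $s \restriction \beta \in T_p(\gamma)$ for all $\beta \leq \alpha_{p_n}$, for all $n$; hence every proper initial segment of $s$ of length $< \alpha_p$ belongs to $T_p(\gamma)$. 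Now the key point: since $c_p(x) = \gamma$ only for $x \in \dom{c_p} \subseteq A$, every node of $T_p(\gamma)$ is a restriction of some element of $A$. In particular, $s$ itself will be in $T_p(\gamma)$ only if $s = x \restriction \alpha_p$ for some $x \in \dom{c_p}$ with $c_p(x) = \gamma$; and extending $\langle p, \dot{q} \rangle$ further to force the value $\dot{x} \restriction \alpha_p$ into $\dot{T}_\gamma$ (which the remaining genericity does), we obtain such an $x \in A$ with $x \restriction \alpha_p = s = \dot{x} \restriction \alpha_p$. Iterating this through a cofinal sequence of lengths, or more carefully, noting that $\langle p, \dot q\rangle$ forces $\dot x \restriction \alpha_p \in \dot T_\gamma$ and hence $\dot x \restriction \alpha_p = \check x \restriction \alpha_p$ for a ground-model $x \in A$ determined by $p$, one continues the construction to build a single $x \in A$ with $\dot x = \check x$, contradicting $\langle p_0, \dot q_0\rangle \Vdash \dot x \notin \check A$.

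The cleanest way to organize the reverse inclusion is: work in $\VV[G,H]$, take $y \in [T_G(\gamma)]^{\VV[G,H]}$, and show $y \in A$ directly. For each $\beta < \kappa$ we have $y \restriction \beta \in T_G(\gamma)$, so by definition of $T_G(\gamma)$ there is $x_\beta \in A$ (in $\VV$) with $c_G(x_\beta) = \gamma$ and $y \restriction \beta \subseteq x_\beta$, i.e.\ $x_\beta \restriction \beta = y \restriction \beta$. If we can show that for large enough $\beta$ these $x_\beta$ stabilize to a fixed $x \in A$ with $x = y$, we are done. The stabilization is exactly what the $\mathord{<}\kappa$-closure of $\dot\PP$ buys us: by the argument of Lemma~\ref{lemma:TreesAreKurepaNoAronszajn}, $\KK(A) * \dot\PP$ does not add new $\kappa$-branches to $T_G(\gamma)$ beyond those realized by elements $x \in \dom{c_G}$ with $c_G(x) = \gamma$ that already lie in $\VV$; indeed, such a new branch would give a $\kappa$-branch through a ground-model tree, and the fusion-plus-$\sigma$-closure argument shows any condition forcing a branch $\dot x$ through $\dot T_\gamma$ can be extended to force $\dot x = \check x$ for some $x \in A$. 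Hence $y = x \in A$. Finally, combining the two inclusions, $A = \bigcup_{\gamma < \kappa}[T_G(\gamma)]^{\VV[G,H]}$, and since each $[T_G(\gamma)]$ is $\kappa$-compact by Lemma~\ref{lemma:TreesAreKurepaNoAronszajn} together with Lemma~\ref{lemma:kurepa}, the set $A$ is $\K\kappa$ in $\VV[G,H]$, as claimed. The main obstacle, as indicated, is the ``no new branches'' direction, and the crux is the interplay between the fusion of the $\KK(A)$-parts and the $\sigma$-closure of $\dot\PP$ allowing a lower bound of the side-conditions to be found.
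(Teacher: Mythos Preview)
Your forward inclusion is fine. The reverse inclusion, however, has a genuine gap: neither of your two attempts actually reaches a contradiction, and the key diagonalization step from the paper is missing.

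In your first attempt you build $\langle p,\dot q\rangle$ forcing $\dot x\restriction\alpha_p=\check s$ and observe (correctly) that then $s\in T_p(\gamma)$, so $s=x\restriction\alpha_p$ for some $x\in\dom{c_p}\subseteq A$. But this only pins down $\dot x$ on a bounded initial segment; the witness $x$ may change if you continue, and there is no way to ``iterate through a cofinal sequence of lengths'' while staying below a single condition --- that would require $\alpha_p=\kappa$. Your second attempt asserts stabilization and appeals to Lemma~\ref{lemma:TreesAreKurepaNoAronszajn}, but that lemma only shows $T_G(\gamma)$ has no $\kappa$-Aronszajn subtrees; it says nothing about the absence of new branches, which is precisely what is at stake.

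The paper's argument runs in the \emph{opposite} direction. Rather than forcing $\dot x$ to equal some $x\in A$, it uses the hypothesis $\dot x\notin\check A$ to diagonalize away from $\dom{c_{p_n}}$: since $\dot x$ is forced to differ from each of the ${<}\kappa$-many elements $y\in\dom{c_{p_n}}\subseteq A$, one chooses the decided initial segment $s_n$ long enough (with $\alpha_{p_n}<\length{s_n}<\alpha_{p_{n+1}}$) so that $s_n\not\subseteq y$ for all such $y$. This guarantees that the limit $s=\bigcup_n s_n$ is \emph{not} in $T_p(\gamma)$: if it were $y\restriction\alpha_p$ for some $y\in\dom{c_p}=\bigcup_n\dom{c_{p_n}}$, say $y\in\dom{c_{p_N}}$, then $s_N\subseteq s\subseteq y$, a contradiction. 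Since $\langle p,\dot q\rangle$ also forces $\check s=\dot x\restriction\check\alpha_p\in\dot T_\gamma$ while $p$ already determines $\dot T_\gamma\cap{}^{\alpha_p}\kappa$ to be $T_p(\gamma)\cap{}^{\alpha_p}\kappa$, this yields the contradiction directly. The diagonalization against $\dom{c_{p_n}}$ is the crux, and it is absent from your write-up.
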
 

\begin{proof}
 By Proposition~\ref{proposition:CodeForceDenseElements}, we have   $A\subseteq \bigcup\Set{[T_G(\gamma)]^{\VV[G,H]}}{\gamma<\kappa}$. 
 Assume, towards a contradiction, that there is a $\gamma<\kappa$ and an $x\in[T_G(\gamma)]^{\VV[G,H]}$ with $x\notin A$. 
 Pick a $(\KK(A)*\dot{\PP})$-name $\dot{x}$ for an element of ${}^\kappa\kappa$ with $x=\dot{x}^{G*H}$ 
 and let $\dot{T}_\gamma$ be the canonical $(\KK(A)*\dot{\PP})$-name for $T_G(\gamma)$. 
 Then we have $\langle p_0,\dot{q}_0\rangle\Vdash\anf{\dot{x}\in[\dot{T}_\gamma]\setminus\check{A}}$ for some condition $\langle p_0,\dot{q}_0\rangle$ in $G*H$. 
 Since $\kappa$ remains regular in every $(\KK(A)*\dot{\PP})$-generic extension, we can find a descending sequence $\seq{\langle p_n,\dot{q}_n\rangle}{n<\omega}$ 
 of conditions in $\KK(A)*\dot{\PP}$ and a sequence $\seq{s_n\in{}^{{<}\kappa}\kappa}{n<\omega}$ 
 such that $\alpha_{p_n}<\length{s_n}<\alpha_{p_{n+1}}$, $\langle p_{n+1},\dot{q}_{n+1}\rangle\Vdash\anf{\check{s}_n\subseteq\dot{x}}$ 
 and $s_n\not\subseteq y$ for all $n<\omega$ and $y\in\dom{c_{p_n}}$.  
Set  
\[
p  =  \left\langle \sup\Set{\alpha_n}{n<\omega},  \bigcup\Set{c_{p_n}}{n<\omega}\right\rangle.
\]
  Then $p \in \KK(A)$ and $p \leq_{\KK(A)} p_n$ for every $n<\omega$,
  and hence we can find a $\KK(A)$-name $\dot{q}$ for a condition in $\dot{\PP}$ such that 
  $\langle p,\dot{q}\rangle\leq_{\KK(A)*\dot{\PP}}\langle p_n,\dot{q}_n\rangle$ holds for all $n<\omega$.  
  Set $s=\bigcup\Set{s_n}{n<\omega}\in{}^{\alpha_p}\kappa$. Then $s\notin T_p(\gamma)$ and $\langle p,\dot{q}\rangle\Vdash\anf{\check{s}\subseteq\dot{x}}$. 
  Hence $\langle p,q\rangle\Vdash\anf{\dot{x}\notin\dot{T}_\gamma}$, a contradiction. 
\end{proof}

Let $T \subseteq {}^{{<}\kappa}\kappa\times{}^{{<}\kappa}\kappa$ be a subtree and $A=p[T]$ (where both the body of \( T \) and its projection are computed in the ground model \( \VV \)).  
We will now show that in every $\KK(A)$-generic extension $\VV[G]$ of $\VV$, 
either $p[T]^{\VV[G]}$ is a $\K{\kappa}$ set or $p[T]^{\VV[G]}$ contains a perfect subset.

\begin{definition}
 Given a subtree  $T \subseteq {}^{{<}\kappa}\kappa\times{}^{{<}\kappa}\kappa$, we call a function  
 \[
\map{\iota}{{}^{<\kappa}2}{T} \colon s \mapsto \langle t^s_0, t^s_1 \rangle
\]
 a \emph{$\exists^x$-perfect embedding into $T$} if the following statements hold for all $s,s'\in{}^{{<}\kappa}2$ and \( i = 0,1 \).
 \begin{enumerate-(i)}
  \item If $s\subsetneq s'$, then $t^s_i\subsetneq t^{s'}_i$.
 \item If $s$ and $s'$ are incompatible in ${}^{<\kappa}2$, then $t^s_0$ and $t^{s'}_0$ are incompatible in ${}^{<\kappa}\kappa$.
 \item If $\length{s}\in\Lim$, then 
  \begin{equation*}
   t^s_i=\bigcup\Set{t^{s \restriction \alpha}_i}{\alpha<\length{s} }.
  \end{equation*} 
 \end{enumerate-(i)}
\end{definition}

\begin{proposition}\label{proposition:perfect embedding} 
 Let $T$ be a subtree of ${}^{{<}\kappa}\kappa\times{}^{{<}\kappa}\kappa$. If there is a $\exists^x$-perfect embedding into $T$, then $p[T]$ contains a closed subset of ${}^\kappa\kappa$ homeomorphic to ${}^\kappa 2$. 
\end{proposition}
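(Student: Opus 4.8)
The plan is to extract a continuous injection \( \map{i}{\pre{\kappa}{2}}{\pre{\kappa}{\kappa}} \) with \( \ran{i} \subseteq p[T] \) directly from a \( \exists^x \)-perfect embedding \( \map{\iota}{\pre{<\kappa}{2}}{T} \), and then invoke Lemma~\ref{lemma:PerfectClosedTreeEmb} (specifically the implication \ref{item:ContInjection} \( \Rightarrow \) \ref{item:homeo}, applied to the projection tree) to conclude. First I would write \( \iota(s) = \langle t^s_0, t^s_1 \rangle \) as in the definition, and observe that conditions (i) and (iii) of \( \exists^x \)-perfect embedding say exactly that \( s \mapsto t^s_0 \) is inclusion-preserving and continuous. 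Hence for every \( x \in \pre{\kappa}{2} \) the union \( i(x) = \bigcup_{\alpha < \kappa} t^{x \restriction \alpha}_0 \) is a well-defined element of \( \pre{\kappa}{\kappa} \): continuity at limit levels guarantees that the lengths \( \length{t^{x \restriction \alpha}_0} \) are strictly increasing and that there is no gap, so \( \dom{i(x)} = \sup_{\alpha<\kappa} \length{t^{x\restriction\alpha}_0} = \kappa \) by regularity of \( \kappa \) (here one uses that \( \length{t^{s ^\smallfrown j}_0} > \length{t^s_0} \) for \( j = 0,1 \), which follows from (i)). The map \( i \) is continuous since \( i[N_s \cap \pre{\kappa}{2}] \subseteq N_{t^s_0} \), and it is injective by condition (ii): if \( x \neq y \) in \( \pre{\kappa}{2} \), pick \( \alpha \) minimal with \( x(\alpha) \neq y(\alpha) \); then \( x \restriction (\alpha+1) \) and \( y \restriction (\alpha+1) \) are incompatible in \( \pre{<\kappa}{2} \), so \( t^{x\restriction(\alpha+1)}_0 \) and \( t^{y\restriction(\alpha+1)}_0 \) are incompatible in \( \pre{<\kappa}{\kappa} \), whence \( i(x) \neq i(y) \).

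Next I would check \( \ran{i} \subseteq p[T] \). For \( x \in \pre{\kappa}{2} \), the sequences \( \langle t^{x\restriction\alpha}_0 \rangle_{\alpha < \kappa} \) and \( \langle t^{x\restriction\alpha}_1 \rangle_{\alpha < \kappa} \) are \( \subseteq \)-increasing chains in \( T \), and by continuity at limits their unions \( i(x) = \bigcup_\alpha t^{x\restriction\alpha}_0 \) and \( y_x := \bigcup_\alpha t^{x\restriction\alpha}_1 \) are functions on \( \kappa \) (the second has full domain for the same reason as the first, using that \( \length{t^s_0} = \length{t^s_1} \) for \( \langle t^s_0, t^s_1\rangle \in T \) since \( T \) is a subtree of the product). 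Then \( \langle i(x)\restriction\beta, y_x\restriction\beta \rangle \in T \) for cofinally many, hence all, \( \beta < \kappa \) (as \( T \) is a tree), so \( \langle i(x), y_x \rangle \in [T] \) and therefore \( i(x) \in p[T] \). Finally, applying Lemma~\ref{lemma:PerfectClosedTreeEmb} to the subtree \( T' = \Set{\langle s_0 \restriction \beta \rangle}{\langle s_0, s_1 \rangle \in T,\, \beta \leq \length{s_0}} \subseteq \pre{<\kappa}{\kappa} \) (so that \( [T'] \supseteq p[T] \supseteq \ran{i} \)) yields via \ref{item:ContInjection} \( \Rightarrow \) \ref{item:homeo} that \( [T'] \), hence \( p[T] \), contains a closed subset of \( \pre{\kappa}{\kappa} \) homeomorphic to \( \pre{\kappa}{2} \) — but we actually want the closed copy inside \( p[T] \) itself, so I would instead note that the argument of Lemma~\ref{lemma:PerfectClosedTreeEmb} with \( T' \) gives a perfect subtree \( S \subseteq T' \) with \( [S] \subseteq \ran{i} \subseteq p[T] \), and \( [S] \) is the desired closed homeomorphic copy of \( \pre{\kappa}{2} \).

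The main obstacle — really the only delicate point — is the verification at limit levels that \( i(x) \) and \( y_x \) are total functions on \( \kappa \), i.e.\ that the construction of the chains through \( T \) does not stall below \( \kappa \). This rests on combining condition (i) (strict growth of lengths at successors, so the length is a strictly increasing ordinal function of \( \alpha \)) with condition (iii) (no gap at limits, so \( \length{t^{x\restriction\lambda}_0} = \sup_{\alpha<\lambda} \length{t^{x\restriction\alpha}_0} \)) and the regularity of \( \kappa \): a strictly increasing continuous \( \kappa \)-sequence of ordinals is cofinal in \( \kappa \). Everything else is a routine unwinding of the definitions, and the reduction to Lemma~\ref{lemma:PerfectClosedTreeEmb} (or rather a direct rerun of its proof applied to the projection tree to keep \( [S] \) inside \( p[T] \)) handles the passage from a continuous injection to an honest closed homeomorphic copy.
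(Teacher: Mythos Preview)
Your argument is correct and shares the core construction with the paper's proof: both build the map \( i(x)=\bigcup_{\alpha<\kappa} t^{x\restriction\alpha}_0 \) and use the second coordinates \( t^{x\restriction\alpha}_1 \) to witness \( \ran{i}\subseteq p[T] \). The difference is only in the final step. You treat \( i \) merely as a continuous injection and then invoke Corollary~\ref{corollary:PerfectClosedTreeEmb} to extract a perfect subtree \( S \) with \( [S]\subseteq\ran{i} \). The paper instead observes directly that \( \ran{i} \) is already a closed set: setting \( S=\Set{t\in\pre{<\kappa}{\kappa}}{\exists s\in\pre{<\kappa}{2}\,(t\subseteq t^s_0)} \), conditions (i)--(iii) of a \( \exists^x \)-perfect embedding ensure that \( i \) is a homeomorphism of \( \pre{\kappa}{2} \) onto \( [S] \) (in particular \( i[N_s\cap\pre{\kappa}{2}]=N_{t^s_0}\cap[S] \), giving openness of \( i \), and surjectivity follows since any \( y\in[S] \) determines a unique branch \( x\in\pre{\kappa}{2} \) with \( t^{x\restriction\alpha}_0\subseteq y \) for all \( \alpha \)). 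This avoids the detour through Corollary~\ref{corollary:PerfectClosedTreeEmb} and the auxiliary projection tree \( T' \); your route is a legitimate alternative, but the paper's is more self-contained and also shows that \( \ran{i} \) itself---not just some subset of it---is the desired closed copy of \( \pre{\kappa}{2} \).
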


\begin{proof} 
We use the notation introduced in the previous definition. 
 Define $S=\Set{t\in{}^{{<}\kappa}\kappa}{\exists s\in {}^{<\kappa}2 \, ( t\subseteq t^s_0)}$. Then $S$ is a subtree of ${}^{{<}\kappa}\kappa$, and our assumption imply that the function 
 \[
\Map{i}{{}^\kappa 2}{[S]}{x}{\bigcup\Set{t^{x\restriction\alpha}_0}{\alpha<\kappa}},
\]
 is a homeomorphism between ${}^\kappa 2$ and the closed set $[S]$.  
 Given $x\in{}^\kappa 2$, the function $\bigcup\Set{t^{x\restriction\alpha}_1}{\alpha<\kappa}$ witnesses that $f(x)$ is an element $p[T]$.   
 This shows that $[S]$ is a closed-in-\( \pre{\kappa}{\kappa} \) subset of $p[T]$ homeomorphic to ${}^\kappa 2$. 
\end{proof}

\begin{lemma}[{\cite[Lemma 7.6]{PL}}] \label{lemma: characterization exists-perfect embedding} 
 The following statements are equivalent for every subtree $T$ of ${}^{{<}\kappa}\kappa\times{}^{{<}\kappa}\kappa$.
\begin{enumerate-(1)}
 \item There is a $\exists^x$-perfect embedding into $T$. 
 \item If $\PP$ is ${<}\kappa$-closed partial order such that forcing with $\PP$ adds a new subset of $\kappa$, then forcing with $\PP$ adds a new element of $p[T]$. 
\end{enumerate-(1)}
\end{lemma}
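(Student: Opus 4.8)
The plan is to prove the equivalence in Lemma~\ref{lemma: characterization exists-perfect embedding} by proving each implication separately, with the forward direction being essentially immediate from Proposition~\ref{proposition:perfect embedding} plus an absoluteness observation, and the backward direction requiring a construction of the $\exists^x$-perfect embedding by a recursion that reads off splitting information from a suitable forcing.

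\emph{From (1) to (2).} Suppose $\map{\iota}{{}^{<\kappa}2}{T} \colon s \mapsto \langle t^s_0, t^s_1 \rangle$ is a $\exists^x$-perfect embedding, and let $\PP$ be a ${<}\kappa$-closed partial order that adds a new subset of $\kappa$. Working in the generic extension $\VV[G]$, the new subset of $\kappa$ can be coded as a new element $z \in {}^\kappa 2$, i.e.\ $z \notin \VV$. Then $x = \bigcup\Set{t^{z \restriction \alpha}_0}{\alpha < \kappa}$ and $y = \bigcup\Set{t^{z \restriction \alpha}_1}{\alpha < \kappa}$ are elements of ${}^\kappa\kappa$ with $\langle x, y \rangle \in [T]$, so $x \in p[T]^{\VV[G]}$; moreover $x$ is new, since $z$ is recoverable from $x$ by following the (ground-model) splitting pattern of the $t^s_0$'s, so $x \in \VV$ would give $z \in \VV$. (Here I use that ${<}\kappa$-closed forcing adds no new elements of ${}^{<\kappa}2$ or ${}^{<\kappa}\kappa$, so the recursion defining $x,y$ from $z$ is absolute, and that $\kappa = \kappa^{<\kappa}$ is preserved.) This gives a new element of $p[T]$.

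\emph{From (2) to (1).} This is the substantive direction. The idea is to pick a specific ${<}\kappa$-closed forcing $\PP$ that adds a new subset of $\kappa$ — the natural candidate is $\Add{\kappa}{1} = {}^{<\kappa}2$ ordered by reverse inclusion — and argue that the generic branch together with the name for the new element of $p[T]$ lets us extract the embedding. More precisely, working in $\VV$, one fixes a $\PP$-name $\dot{x}$ forced by $\mathbbm{1}$ (or by some fixed condition, after shrinking) to be a new element of $p[T]$, together with a name $\dot{y}$ for a witness, so that $\mathbbm{1} \Vdash \langle \dot{x}, \dot{y} \rangle \in [\check T]$. One then builds $t^s_0, t^s_1$ by recursion on $\length{s}$: at successor steps, given a condition $p_s \in \PP$ deciding $\langle \dot x, \dot y \rangle$ up to some level $\geq \length{t^s_0}$, one uses that $\dot x$ is forced to be \emph{new} — hence not decided by any single condition — to find two incompatible extensions $p_{s{}^\smallfrown 0}, p_{s{}^\smallfrown 1} \leq p_s$ which decide $\dot x$ to split, and then decide $\dot x$ and $\dot y$ far enough to obtain $\langle t^{s{}^\smallfrown i}_0, t^{s{}^\smallfrown i}_1 \rangle \in T$ properly extending $\langle t^s_0, t^s_1 \rangle$ with $t^{s{}^\smallfrown 0}_0 \perp t^{s{}^\smallfrown 1}_0$; at limit steps $\length{s} \in \Lim$ one uses ${<}\kappa$-closure of $\PP$ to take lower bounds $p_s$ of the $p_{s\restriction\alpha}$, and sets $t^s_i = \bigcup_\alpha t^{s\restriction\alpha}_i$, which lies in $T$ because it is the union of a chain of nodes of $T$ and the decided values cohere. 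The three clauses in the definition of $\exists^x$-perfect embedding are then immediate from the construction.

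\emph{Main obstacle.} The delicate point is arranging, at successor steps, that the two extensions genuinely force the relevant coordinate of $\dot x$ to \emph{differ} (not merely to be undecided): one must use the hypothesis that $\dot x$ names a \emph{new} real in the precise form that for every condition $p$ and every $\alpha < \kappa$ there is $\beta \geq \alpha$ and two extensions of $p$ forcing incompatible values of $\dot x \restriction \beta$ — this is exactly where "new" (as opposed to merely "ground-model-definable") is used, and it requires that the forcing adds a new subset of $\kappa$ so that $\dot x$ cannot be covered by a ground-model function. A secondary point is checking that $t^s_i \in T$ is maintained at limits: this is where one needs that the pair $\langle \dot x \restriction \gamma, \dot y \restriction \gamma \rangle$ is forced into $T$ for cofinally many $\gamma$ below $\length{t^s_0}$ and that $T$, being a tree, contains all these initial segments, so their union is in $T$ provided it has length $< \kappa$, which holds since $\length{s} < \kappa$ and $\kappa$ is regular. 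Since this lemma is cited from {\cite[Lemma 7.6]{PL}}, I would in the write-up simply refer to that source rather than reproduce the recursion in full, but the above is the proof I would give if forced to be self-contained.
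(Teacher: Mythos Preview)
The paper does not prove this lemma; it merely cites it as \cite[Lemma~7.6]{PL}. Your sketch is therefore not competing against a proof in the paper, and you correctly note at the end that in the write-up you would simply refer to the source.

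Your self-contained argument is essentially correct. Both directions are handled the way one would expect: $(1)\Rightarrow(2)$ by pushing a new element of ${}^\kappa 2$ through the embedding (injectivity on first coordinates lets you recover $z$ from $x$, so $x$ is new), and $(2)\Rightarrow(1)$ by fixing a name $\dot x$ for a new element of $p[T]$ over $\Add{\kappa}{1}$ and building a fusion tree of conditions that split $\dot x$.

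One point of phrasing deserves tightening. At limit stages you write that $\langle t^s_0,t^s_1\rangle$ lies in $T$ ``because it is the union of a chain of nodes of $T$'' and because ``$T$, being a tree, contains all these initial segments, so their union is in $T$''. That reasoning is not valid as stated: subtrees of ${}^{{<}\kappa}\kappa\times{}^{{<}\kappa}\kappa$ are closed under restrictions, not under unions of chains (that would be \emph{superclosed}). The correct justification---which you have all the ingredients for---is that the lower bound $p_s$ forces $\langle t^s_0,t^s_1\rangle = \langle \dot x\restriction\gamma,\dot y\restriction\gamma\rangle$ (where $\gamma=\length{t^s_0}$), and since $p_s\Vdash\langle\dot x,\dot y\rangle\in[\check T]$ it forces this ground-model pair to lie in the ground-model tree $\check T$; by absoluteness it does. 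This is a wording fix, not a gap.
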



Before we present the proof of Theorem~\ref{theorem:ConsNonWC}, we need to show that a ${<}\kappa$-support iteration of partial orders of the form $\KK(A)$ preserves cardinals greater than $\kappa$. 
This is a consequence of the following lemma, whose proof is a small modification of an argument due to Baumgartner contained in the proof of  {\cite[Theorem 4.2]{MR823775}}.

\begin{lemma}\label{lemma:IterationCouplingRepresentable}
 Let \(\lambda\) be an ordinal, and let $\langle\seq{\vec{\PP}_{{<}\alpha}}{\alpha\leq\lambda},\seq{\dot{\PP}_\alpha}{\alpha<\lambda}\rangle$ be a forcing iteration with ${<}\kappa$-support such that 
 \begin{equation*}
   \mathbbm{1}_{\vec{\PP}_{{<}\alpha}}\Vdash\anf{\text{$\dot{\PP}_\alpha$ is a ${<}\check{\kappa}$-closed and strongly $\check{\kappa}$-linked}}
 \end{equation*}
 holds for all $\alpha<\lambda$. Then the partial order $\vec{\PP}_{{<}\lambda}$ is $\kappa^+$-Knaster.
\end{lemma}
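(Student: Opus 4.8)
The plan is to prove that $\vec{\PP}_{<\lambda}$ is $\kappa^+$-Knaster by a $\Delta$-system argument applied to the supports of the conditions, followed by a colour-coincidence argument using the strong $\kappa$-linkedness witnesses componentwise. The hard part will be handling limit stages of the iteration: since we use ${<}\kappa$-support, a condition $p$ has support $\supp{p}$ of size ${<}\kappa$, and I want to arrange not only that the supports form a $\Delta$-system with some root $r$, but also that below the root the conditions ``look the same'' in a way that lets me amalgamate any two of them. The subtlety is that the linkedness witness $\dot{g}_\alpha$ for $\dot{\PP}_\alpha$ is a name, not a ground-model object, so I cannot literally apply the pigeonhole principle to the values $\dot{g}_\alpha(p(\alpha))$ in $\VV$; this is exactly the point of Baumgartner's trick and the main obstacle to overcome.

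First I would fix, for each $\alpha < \lambda$, a $\vec{\PP}_{<\alpha}$-name $\dot{g}_\alpha$ forced to witness that $\dot{\PP}_\alpha$ is strongly $\check{\kappa}$-linked. Given a sequence $\seq{p_\xi}{\xi < \kappa^+}$ of conditions in $\vec{\PP}_{<\lambda}$, I first apply the $\Delta$-system lemma (valid since $\kappa^{<\kappa} = \kappa$, so $\kappa^+$ is regular and $({<}\kappa$-sized sets of ordinals form a suitable family) to thin out to a set $I \in [\kappa^+]^{\kappa^+}$ such that $\set{\supp{p_\xi}}{\xi \in I}$ forms a $\Delta$-system with root $r$, and $\otp{\supp{p_\xi}}$ and the order-isomorphism type of $\supp{p_\xi}$ over $r$ are the same for all $\xi \in I$. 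Since $|r| < \kappa$ and $\kappa = \kappa^{<\kappa}$, there are only $\kappa$-many possibilities for the ``isomorphism type'' of the restriction $p_\xi \restriction r$ as a condition up to the relevant combinatorial data; but because of the name issue I instead proceed recursively along $r$ (in its order type, which is ${<}\kappa$).

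The key recursion, following Baumgartner, is this: enumerate $r$ as $\seq{\beta_i}{i < \delta}$ with $\delta < \kappa$, and build a decreasing (in the sense of ``refining the index set'') sequence of conditions $q_i \in \vec{\PP}_{<\beta_i}$ together with shrinking sets $I_i \in [\kappa^+]^{\kappa^+}$ such that $q_i$ decides the value $\dot{g}_{\beta_i}(p_\xi(\beta_i))$ to be a fixed ordinal $\eta_i < \kappa$ for all $\xi \in I_i$, and $q_i \leq q_j \restriction \beta_i$ for $j < i$ in the appropriate sense, and $q_i \Vdash p_\xi \restriction \beta_i \in \dot{G}$ for $\xi \in I_i$ — using ${<}\kappa$-closure to pass through limit stages $i$ of the recursion (here $i < \delta < \kappa$, so ${<}\kappa$-closure of the iteration, which follows from ${<}\kappa$-closure of each iterand by a standard lemma, applies). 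After $\delta$ steps set $I^* = \bigcap_{i<\delta} I_i \in [\kappa^+]^{\kappa^+}$ and note all $\xi \in I^*$ have $p_\xi \restriction r$ forced into the generic by a single condition $q = q_\delta$, and all the colours $\seq{\eta_i}{i<\delta}$ agree. Then given any two $\xi, \xi' \in I^*$, the supports are disjoint off $r$, and on $r$ the conditions $p_\xi$ and $p_{\xi'}$ are compatible because $q$ forces, for each $\beta_i \in r$, that $\dot{g}_{\beta_i}(p_\xi(\beta_i)) = \eta_i = \dot{g}_{\beta_i}(p_{\xi'}(\beta_i))$, so $p_\xi(\beta_i)$ and $p_{\xi'}(\beta_i)$ have a greatest lower bound in $\dot{\PP}_{\beta_i}$ by the strong $\kappa$-linkedness of $\dot{\PP}_{\beta_i}$. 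Amalgamating componentwise — taking the glb on $r$, and just the union of the two conditions off $r$ (legitimate since the supports are disjoint there) — yields a common lower bound of $p_\xi$ and $p_{\xi'}$. Since $|I^*| = \kappa^+$, this shows any $\kappa^+$-sized set of conditions contains a $\kappa^+$-sized pairwise-compatible subset, i.e.\ $\vec{\PP}_{<\lambda}$ is $\kappa^+$-Knaster. I would remark that the only place regularity of $\kappa$ and $\kappa^{<\kappa} = \kappa$ enter are in the $\Delta$-system lemma and in ensuring $\delta < \kappa$ recursions go through, and that ${<}\kappa$-closure of the tail of the iteration at each $\beta_i$ is what licenses deciding the name values while staying in $\vec{\PP}_{<\beta_i}$.
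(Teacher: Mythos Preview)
Your proposal has a genuine gap at the core of the recursion. You want a single condition $q_i \in \vec{\PP}_{<\beta_i}$ with $q_i \leq p_\xi \restriction \beta_i$ (equivalently, $q_i \Vdash p_\xi\restriction\beta_i\in\dot G$) for \emph{every} $\xi$ in a set $I_i$ of size $\kappa^+$. But the petals $\supp{p_\xi}\setminus r$ may well meet the interval $[0,\beta_i)$, so the restrictions $p_\xi\restriction\beta_i$ are nontrivial and there is no reason $\kappa^+$ many of them admit a common lower bound --- indeed even their \emph{pairwise} compatibility is essentially what the lemma asserts. So the recursion cannot get started, let alone continue through successor steps. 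A related problem surfaces in your amalgamation step: to take the glb of $p_\xi(\beta_i)$ and $p_{\xi'}(\beta_i)$ you need the restriction of the common extension to $\beta_i$ --- which now includes petal coordinates of both $p_\xi$ and $p_{\xi'}$ below $\beta_i$ --- to force the colours to agree, and there is no reason that restriction lies below your $q$.

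The Baumgartner argument the paper refers to avoids this by never seeking a master condition below many $p_\xi$'s. Instead, for each $\gamma<\kappa^+$ \emph{separately} one builds a descending $\kappa$-sequence $\seq{\vec{p}_{\alpha,\gamma}}{\alpha<\kappa}$ of strengthenings of $p_\gamma$ in $\vec{\PP}_{<\lambda}$, at each step deciding one more value of the form $\dot g_\xi(\vec{p}_{\bar\alpha,\gamma}(\xi))$ at a coordinate $\xi$ of the (growing) support, with a G\"odel-pairing bookkeeping that guarantees every coordinate of every intermediate support is eventually handled. Only after this colour data has been extracted to the ground model does one thin the index set --- first by a $\Delta$-system argument on the \emph{enlarged} supports $D_\gamma=\{\xi_{\alpha,\gamma}:\alpha<\bar\kappa\}$, then by pigeonhole on the resulting colour functions --- and finally build a common extension of any two surviving conditions by recursion along the iteration, using the glb clause of strong $\kappa$-linkedness on root coordinates and ${<}\kappa$-closure to pass below the descending chains off the root. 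The essential difference from your sketch is that all deciding of names is done one condition at a time, and the thinning is carried out afterwards on ground-model data only.
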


\begin{proof}[Proof of Theorem~\ref{theorem:ConsNonWC}] 
Assume that \( \kappa \) is not weakly compact and set $\lambda=2^\kappa$.  
Since $\kappa$ remains an uncountable cardinal with $\kappa=\kappa^{{<}\kappa}$ after forcing with a ${<}\kappa$-support iteration of ${<}\kappa$-closed partial orders 
(see {\cite[Proposition 7.9]{MR2768691}}),  there is a forcing iteration  
\[
\vec{\PP}  =  \langle\seq{\vec{\PP}_{{<}\alpha}}{\alpha\leq\lambda},\seq{\dot{\PP}_\alpha}{\alpha<\lambda}\rangle
\]
with ${<}\kappa$-support and a sequence $\seq{\dot{b}_\alpha}{\alpha<\lambda}$ such that the following statements hold for all $\alpha<\lambda$:
\begin{enumerate-(a)}

 \item $\dot{b}_\alpha$ is a $\dot{\PP}_{{<}\alpha}$-name with the property that $\dot{b}_\alpha^G$ is a surjection from $(2^\kappa)^{\VV[G]}$ onto the set of all subtrees of 
  ${}^{{<}\kappa}\kappa\times{}^{{<}\kappa}\kappa$ in $\VV[G]$ whenever  $G$ is $\vec{\PP}_{{<}\alpha}$-generic over $\VV$;

 \item If $\alpha=\goedel{\beta}{\gamma}$, $G$ is $\vec{\PP}_{{<}\alpha}$-generic over $\VV$, $\bar{G}$ is the filter on $\vec{\PP}_{{<}\beta}$ induced by $G$, $T=\dot{b}_\beta^{\bar{G}}(\gamma)$, 
  and $A=p[T]^{\VV[G]}$, then $\dot{\PP}_\alpha^G=\KK(A)^{\VV[G]}$. 
\end{enumerate-(a)}

\begin{claim} \label{claim:vecPP}
 If $\alpha\leq\lambda$, then $\vec{\PP}_{{<}\alpha}$ is ${<}\kappa$-directed closed and $\kappa^+$-Knaster. 
\end{claim}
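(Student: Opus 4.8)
To prove Claim~\ref{claim:vecPP}, the plan is to establish, by induction on \( \alpha \leq \lambda \), the conjunction of two statements: that \( \vec{\PP}_{{<}\alpha} \) is \( {<}\kappa \)-directed closed, and that for every \( \beta < \alpha \) the iterand \( \dot{\PP}_\beta \) is \( \vec{\PP}_{{<}\beta} \)-forced to be \( {<}\kappa \)-directed closed and strongly \( \kappa \)-linked. Granting this, the \( \kappa^+ \)-Knaster property of each \( \vec{\PP}_{{<}\alpha} \) is immediate from Lemma~\ref{lemma:IterationCouplingRepresentable}, applied with \( \alpha \) in place of \( \lambda \) (note that \( {<}\kappa \)-directed closure of an iterand trivially entails the \( {<}\kappa \)-closure required by that lemma).

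For the inductive step the only genuinely new work arises at a successor \( \alpha = \beta_0 + 1 \), where I must check that \( \dot{\PP}_{\beta_0} \) is \( \vec{\PP}_{{<}\beta_0} \)-forced to be \( {<}\kappa \)-directed closed and strongly \( \kappa \)-linked; for limit \( \alpha \), and for the earlier iterands \( \dot{\PP}_\beta \) with \( \beta < \beta_0 \), this assertion is already part of the inductive hypothesis at a smaller index. Now, by the inductive hypothesis \( \vec{\PP}_{{<}\beta_0} \) is \( {<}\kappa \)-directed closed, hence \( {<}\kappa \)-closed, so by {\cite[Proposition 7.9]{MR2768691}} forcing with \( \vec{\PP}_{{<}\beta_0} \) preserves the statement that \( \kappa \) is an uncountable cardinal with \( \kappa = \kappa^{{<}\kappa} \). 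Since by construction \( \dot{\PP}_{\beta_0}^G = \KK(A)^{\VV[G]} \) for the relevant set \( A \subseteq \pre{\kappa}{\kappa} \) lying in \( \VV[G] \), and since the earlier proof that \( \KK(A) \) is \( {<}\kappa \)-directed closed and strongly \( \kappa \)-linked uses nothing beyond \( \kappa = \kappa^{{<}\kappa} \), that fact applies inside \( \VV[G] \); this gives the desired property of \( \dot{\PP}_{\beta_0} \). With every iterand below \( \alpha \) now known to be forced \( {<}\kappa \)-directed closed, I would conclude that \( \vec{\PP}_{{<}\alpha} \) is \( {<}\kappa \)-directed closed by the familiar coordinatewise recursion: given directed \( D \subseteq \vec{\PP}_{{<}\alpha} \) of size \( {<}\kappa \), regularity of \( \kappa \) makes \( s = \bigcup_{\vec{p} \in D} \supp{\vec{p}} \) of size \( {<}\kappa \), and one builds conditions \( \vec{r}_\xi \in \vec{\PP}_{{<}\xi} \) for \( \xi \leq \alpha \) with support contained in \( s \) and lying below \( \vec{p} \restriction \xi \) for all \( \vec{p} \in D \) — taking unions at limits, and at a successor coordinate \( \bar{\xi} \in s \) using that directedness of \( D \) forces \( \{ \vec{p}(\bar{\xi}) : \vec{p} \in D \} \) to be a directed subset of \( \dot{\PP}_{\bar{\xi}} \) of size \( {<}\kappa \), to which the \( {<}\kappa \)-directed closure of \( \dot{\PP}_{\bar{\xi}} \) applies. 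Then \( \vec{r}_\alpha \) is a common lower bound of \( D \), which closes the induction.

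The main obstacle will be purely the bookkeeping around an apparent circularity: to know the iterand \( \dot{\PP}_{\beta_0} = \KK(A) \) is well-behaved, one needs \( \kappa = \kappa^{{<}\kappa} \) to hold in the intermediate model \( \VV[G] \), and this rests on \( \vec{\PP}_{{<}\beta_0} \) being \( {<}\kappa \)-closed, which is itself a consequence of the earlier iterands being \( {<}\kappa \)-directed closed. Arranging the induction so that \( {<}\kappa \)-directed closure of every proper initial segment of the iteration is in hand before the next iterand is analyzed resolves this. Everything else — the limit and successor cases of the two recursions, and the reduction of the \( \kappa^+ \)-Knaster property to Lemma~\ref{lemma:IterationCouplingRepresentable} — is routine.
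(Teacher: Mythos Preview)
Your proposal is correct and takes essentially the same approach as the paper: the paper's proof is the one-liner ``This follows directly from Lemma~\ref{lemma:IterationCouplingRepresentable} and {\cite[Proposition 7.9]{MR2768691}},'' and you have simply unpacked those two citations in detail, making explicit the inductive verification that each iterand is forced to be ${<}\kappa$-directed closed and strongly $\kappa$-linked (so that the hypotheses of Lemma~\ref{lemma:IterationCouplingRepresentable} and of the coordinatewise directed-closure argument from \cite[Proposition 7.9]{MR2768691} are met). The ``circularity'' you flag is real but already dealt with in the paper by the remark immediately preceding the definition of the iteration, which notes that $\kappa=\kappa^{{<}\kappa}$ is preserved throughout; your inductive formulation just makes this bookkeeping explicit.
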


\begin{proof}[Proof of the Claim]
 This follows directly from  Lemma~\ref{lemma:IterationCouplingRepresentable} and {\cite[Proposition 7.9]{MR2768691}}. 
\end{proof}

\begin{claim}
 If $\alpha\leq\lambda$, then $\mathbbm{1}_{\vec{\PP}_{{<}\alpha}}\Vdash\anf{\check{\lambda}=2^{\check{\kappa}}}$. 
\end{claim}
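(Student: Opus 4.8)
The plan is to bound the sizes of the partial orders \( \vec{\PP}_{{<}\alpha} \) in the ground model \( \VV \) and then to count names. I will repeatedly use the arithmetic identities \( \lambda^\kappa = (2^\kappa)^\kappa = 2^\kappa = \lambda \) and \( \lambda^{{<}\kappa} = \lambda \) (both immediate from \( \lambda = 2^\kappa \)), together with \( \kappa^{{<}\kappa} = \kappa \), which in particular yields \( |\pre{{<}\kappa}{\kappa}| = \kappa \); moreover, since by \cite[Proposition 7.9]{MR2768691} the iteration \( \vec{\PP} \) and all of its initial segments are \( {<}\kappa \)-closed, the set \( \pre{{<}\kappa}{\kappa} \) is the same in \( \VV \) and in every generic extension considered below. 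The heart of the argument is to prove by induction on \( \alpha \le \lambda \) that \( |\vec{\PP}_{{<}\alpha}| \le \lambda \); along the way one also gets, for each \( \beta < \lambda \), that there are at most \( \lambda \)-many \( \vec{\PP}_{{<}\beta} \)-names for conditions in \( \dot{\PP}_\beta \).

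For the successor step \( \alpha = \beta + 1 \), recall that \( \dot{\PP}_\beta \) is a \( \vec{\PP}_{{<}\beta} \)-name for a partial order \( \KK(\dot{A}) \), where \( \dot{A} \) names a subset of \( \pre{\kappa}{\kappa} \), and that a condition in \( \KK(A) \) is a pair \( \langle \alpha_p, c_p\rangle \) with \( \alpha_p < \kappa \) and \( c_p \) a partial function from \( \pre{\kappa}{\kappa} \) to \( \kappa \) of size \( {<}\kappa \). By Claim~\ref{claim:vecPP}, \( \vec{\PP}_{{<}\beta} \) has the \( \kappa^+ \)-chain condition, so every subset of \( \kappa \) in a \( \vec{\PP}_{{<}\beta} \)-generic extension has a nice name, and there are at most \( |\vec{\PP}_{{<}\beta}|^\kappa \le \lambda^\kappa = \lambda \) of these. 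Using \( {<}\kappa \)-closedness and the \( \kappa^+ \)-chain condition, any \( \vec{\PP}_{{<}\beta} \)-name for a condition in \( \KK(\dot{A}) \) is then determined by fewer than \( \kappa \)-many maximal antichains (each of size \( {\le}\kappa \)) together with, below each of their elements, an ordinal \( {<}\kappa \) and a nice name for an element of \( \pre{\kappa}{\kappa} \); hence there are at most \( \lambda \)-many such names, and \( |\vec{\PP}_{{<}\beta+1}| \le \lambda \). For a limit \( \alpha \le \lambda \), every condition in \( \vec{\PP}_{{<}\alpha} \) has support of size \( {<}\kappa \) inside \( \alpha \subseteq \lambda \), of which there are at most \( \lambda^{{<}\kappa} = \lambda \), and on each coordinate of its support it carries one of at most \( \lambda \)-many names by the induction hypothesis; so \( |\vec{\PP}_{{<}\alpha}| \le \lambda \).

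Granting \( |\vec{\PP}_{{<}\alpha}| \le \lambda \), the claim follows. First, by the \( \kappa^+ \)-chain condition of \( \vec{\PP}_{{<}\alpha} \) (Claim~\ref{claim:vecPP}) every subset of \( \kappa \) in a generic extension has a nice name, and there are at most \( |\vec{\PP}_{{<}\alpha}|^\kappa \le \lambda^\kappa = \lambda \) such names, so \( \mathbbm{1}_{\vec{\PP}_{{<}\alpha}} \Vdash \anf{2^{\check{\kappa}} \le \check{\lambda}} \). Conversely, since \( \vec{\PP}_{{<}\alpha} \) is \( {<}\kappa \)-closed and has the \( \kappa^+ \)-chain condition, it preserves all cardinals; hence \( \lambda = (2^\kappa)^\VV \) remains a cardinal in the extension, while the ground-model power set of \( \kappa \), which has size \( \lambda \), is contained in the extension's power set of \( \kappa \), giving \( \mathbbm{1}_{\vec{\PP}_{{<}\alpha}} \Vdash \anf{\check{\lambda} \le 2^{\check{\kappa}}} \). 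Combining the two inequalities yields \( \mathbbm{1}_{\vec{\PP}_{{<}\alpha}} \Vdash \anf{\check{\lambda} = 2^{\check{\kappa}}} \).

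The main obstacle is the bookkeeping in the induction that bounds \( |\vec{\PP}_{{<}\alpha}| \): the iterands \( \dot{\PP}_\beta = \KK(\dot{A}) \) act on sets \( \dot{A} \subseteq \pre{\kappa}{\kappa} \) that genuinely need not belong to \( \VV \) --- indeed \( \dot{A} \) names a \( \mathbf{\Sigma}^1_1 \) set recomputed in \( \VV[G] \) --- so one cannot simply claim that \( \dot{\PP}_\beta \) has ground-model size \( \le \lambda \). The point is rather that a \emph{name} for a condition in \( \KK(\dot{A}) \) is determined by \( \kappa^+ \)-chain-condition data of size \( \le \lambda \), which is exactly where the chain condition from Claim~\ref{claim:vecPP} and the identities \( \lambda^\kappa = \lambda = \lambda^{{<}\kappa} \) (both consequences of \( \lambda = 2^\kappa \) and \( \kappa = \kappa^{{<}\kappa} \)) are essential. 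Everything else is routine.
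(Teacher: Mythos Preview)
Your proof is correct and follows essentially the same strategy as the paper: bound the size of the iteration by $\lambda$ inductively, then count nice names using the $\kappa^+$-chain condition from Claim~\ref{claim:vecPP}. The paper compresses your successor step into the single observation that each $\dot{\PP}_\alpha$ is forced to be a subset of $\HHH{\kappa^+}$ (since a condition $\langle \alpha_p, c_p\rangle$ in $\KK(A)$ has transitive closure of size at most $\kappa$), and then appeals to canonical names in the sense of \cite[Fact 3.6]{MR1234283} to replace arbitrary names for such conditions by ones in $\HHH{\kappa^+}$; this yields a \emph{dense subset} of $\vec{\PP}_{{<}\alpha}$ of size at most $\lambda$ rather than the literal bound $|\vec{\PP}_{{<}\alpha}| \le \lambda$ you aim for, but the two are interchangeable for the counting argument. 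Your explicit treatment of the limit case and of the lower bound $\check{\lambda} \le 2^{\check{\kappa}}$ via cardinal preservation fills in details the paper leaves implicit.
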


\begin{proof}[Proof of the Claim]
 By the definition of $\KK(A)$, we know that $\dot{\PP}_\alpha$ is forced to be a subset of $\HHH{\kappa^+}$ in every $\vec{\PP}_{{<}\alpha}$-generic extension of $\VV$  for every $\alpha<\lambda$. 
 Since $\vec{\PP}_{{<}\alpha}$ satisfies the $\kappa^+$-chain condition by Claim~\ref{claim:vecPP}, this observation allows us to inductively prove that $\vec{\PP}_{{<}\alpha}$ has a dense subset of cardinality at most $\lambda$ 
 and $\mathbbm{1}_{\vec{\PP}_{{<}\alpha}}\Vdash\anf{\check{\lambda}=2^{\check{\kappa}}}$ holds for all $\alpha<\lambda$. 
\end{proof}

Let $G$ be $\vec{\PP}_{{<}\lambda}$-generic over $\VV$. Given $\alpha<\lambda$, we let $G_\alpha$ denote the filter on $\vec{\PP}_{{<}\alpha}$ induced by $G$.  
If $\kappa$ is not inaccessible in $\VV$, then $\kappa$ is not inaccessible in $\VV[G]$, because both models contain the same ${<}\kappa$-sequences. 
Moreover, if there is a $\kappa$-Aronszajn tree in $\VV$, then this tree remains a $\kappa$-Aronszajn tree in $\VV[G]$, because ${<}\kappa$-closed forcings add no branches to such trees (see, for example, \cite[Proposition 7.3]{PL}). 
We can conclude that $\kappa$ is not a weakly compact cardinal in $\VV[G]$ and, by Corollary~\ref{corollary:RevNegroPonte}, this shows that the spaces ${}^\kappa\kappa$ and ${}^\kappa 2$ are homeomorphic in $\VV[G]$.  
Pick a subtree $T \subseteq {}^{{<}\kappa}\kappa\times{}^{{<}\kappa}\kappa$ in $\VV[G]$ and define $A=p[T]^{\VV[G]}$.

First assume that in $\VV[G]$ there is an $\exists^x$-perfect embedding into $T$. Since $\kappa$ is not weakly compact in $\VV[G]$, 
the above remarks and Proposition~\ref{proposition:perfect embedding} imply that $A$ contains a closed subset homeomorphic to ${}^\kappa\kappa$ in $\VV[G]$. 
Thus $A$ satisfies the strong Hurewicz dichotomy in $\VV[G]$.

Now assume that in $\VV[G]$ there is no $\exists^x$-perfect embedding into $T$. 
Since $\vec{\PP}_{{<}\lambda}$ satisfies the $\kappa^+$-chain condition, we can find $\beta<\lambda$ with $T\in\VV[G_\beta]$. 
Since the above claim shows that $\lambda=(2^\kappa)^{\VV[G_\beta]}$, there is a $\gamma<\lambda$ with $T=\dot{b}_\beta^{G_\beta}(\gamma)$.  Define $\alpha=\goedel{\beta}{\gamma}$. 
Since {\cite[Proposition 7.12]{MR2768691}} shows that $\VV[G]$ is a forcing extension of $\VV[G_\alpha]$ by a ${<}\kappa$-closed forcing  
and our assumption implies that there are no $\exists^x$-perfect embedding into $T$ in $\VV[G_\alpha]$, 
by Lemma~\ref{lemma: characterization exists-perfect embedding} we get $A=p[T]^{\VV[G_\alpha]}$. 
Moreover, we have $\dot{\PP}_\alpha^{G_\alpha}=\KK(A)^{\VV[G_\alpha]}$, and that $\VV[G]$ is a forcing extension of $\VV[G_{\alpha+1}]$ by a ${<}\kappa$-closed forcing. 
Thus $A$ is a $\K{\kappa}$ subset of ${}^\kappa\kappa$ in $\VV[G]$ by Lemma~\ref{lemma:CodedSetIsKKappa}, and hence  $A$ satisfies the strong Hurewicz dichotomy in $\VV[G]$ also in this case. 
\end{proof}


\subsection{The general case}
\label{sec:Hurewiczweaklycompact}

In this section we develop analogues of the above construction for the weakly compact case in order to prove Theorem~\ref{theorem:Cons General}. 
The key component is the following generalization of \emph{Hechler forcing} to uncountable regular cardinals.

\begin{definition}[$\kappa$-Hechler forcing]
 Let $\HH(\kappa)$ denote the partial order defined by the following clauses. 
 \begin{enumerate-(i)}
  \item A condition in $\HH(\kappa)$ is a pair $p=\langle t_p,a_p\rangle$ such that $t_p\in{}^{\alpha_p}\kappa$ for some $\alpha_p<\kappa$ and $a_p\in[{}^\kappa\kappa]^{{<}\kappa}$. 

  \item Given  $p,q \in \HH(\kappa)$, we set $p\leq_{\HH(\kappa)}q$ if and only if $t_q\subseteq t_p$, $a_q\subseteq a_p$, and $x(\beta)<t_p(\beta)$ for all $x\in a_q$ and $\beta\in\alpha_p\setminus\alpha_q$. 
 \end{enumerate-(i)}
\end{definition}

In the following we list the relevant basic properties of $\HH(\kappa)$.

\begin{lemma}
 The partial order $\HH(\kappa)$ is ${<}\kappa$-directed closed and strongly $\kappa$-linked.  
\end{lemma}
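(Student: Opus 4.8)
The plan is to verify the two properties of $\HH(\kappa)$ separately, mimicking the standard arguments for classical Hechler forcing adapted to the $\kappa$-setting, and using the assumption $\kappa = \kappa^{<\kappa}$ throughout.

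\textbf{${<}\kappa$-directed closure.} Let $D$ be a directed subset of $\HH(\kappa)$ with $|D| < \kappa$. Since $\kappa$ is regular, $\alpha^* = \sup\Set{\alpha_q}{q \in D} < \kappa$. The key point is that the stems $\Set{t_q}{q \in D}$ form a directed (hence linearly ordered by $\subseteq$) family of elements of ${}^{<\kappa}\kappa$: indeed, if $q, q' \in D$ and $r \in D$ refines both, then $t_q, t_{q'} \subseteq t_r$, so $t_q$ and $t_{q'}$ are compatible. Thus $t^* = \bigcup\Set{t_q}{q \in D}$ is a function with domain $\alpha^*$ (to get domain exactly $\alpha^*$ one may need to make a harmless choice of further extension; alternatively extend $t^*$ by zeros up to $\alpha^*$ if the supremum is not attained, and check this still refines each $q$, which it does since the domination clause only constrains coordinates $\beta \in \alpha_p \setminus \alpha_q$ for $x \in a_q$, and for $\beta \geq \alpha^*$ no such constraint is inherited). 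Set $a^* = \bigcup\Set{a_q}{q \in D}$, which has size $< \kappa$ because $\kappa$ is regular and each $a_q \in [{}^\kappa\kappa]^{<\kappa}$. Then $p = \langle t^*, a^* \rangle \in \HH(\kappa)$, and one checks $p \leq_{\HH(\kappa)} q$ for every $q \in D$: the stem and side-set inclusions are clear, and for $x \in a_q$ and $\beta \in \alpha^* \setminus \alpha_q$ we pick $r \in D$ with $\alpha_r > \beta$ refining $q$; then $x \in a_q \subseteq a_r$ forces $x(\beta) < t_r(\beta) = t^*(\beta)$ whenever $\beta < \alpha_r$, and for $\alpha_r \leq \beta < \alpha^*$ we iterate using directedness of $D$ to reach a condition whose stem is long enough. (The one slightly delicate case is when $\alpha^* \notin \Set{\alpha_q}{q\in D}$; one argues that for each $\beta < \alpha^*$ some $q \in D$ has $\alpha_q > \beta$, so $t^*(\beta)$ is determined and the domination inequality holds.)

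\textbf{Strong $\kappa$-linkedness.} Here I would use $\kappa = \kappa^{<\kappa}$ to fix an injection coding stems: since $|{}^{<\kappa}\kappa| = \kappa$, there is $\map{g}{\HH(\kappa)}{\kappa}$ with $g(p) = g(q)$ iff $t_p = t_q$. I claim this $g$ witnesses strong $\kappa$-linkedness. Suppose $g(p) = g(q)$, i.e. $t_p = t_q =: t$ (so in particular $\alpha_p = \alpha_q$). Set $r = \langle t, a_p \cup a_q \rangle$. Then $a_p \cup a_q \in [{}^\kappa\kappa]^{<\kappa}$, so $r \in \HH(\kappa)$; the refinement clause $r \leq_{\HH(\kappa)} p$ holds vacuously in the domination part since $\alpha_r = \alpha_p$ means $\alpha_r \setminus \alpha_p = \emptyset$, and likewise $r \leq_{\HH(\kappa)} q$; and if $s \leq_{\HH(\kappa)} p, q$ then $t_s \supseteq t_p = t_q$, $a_s \supseteq a_p \cup a_q$, and the domination condition of $s$ against $a_p \cup a_q$ is exactly the conjunction of its domination against $a_p$ and against $a_q$, hence $s \leq_{\HH(\kappa)} r$. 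Therefore $r = \mathrm{glb}_{\HH(\kappa)}(p, q)$, as required.

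\textbf{Main obstacle.} The computations are routine; the only place demanding genuine care is the limit step in the directed-closure argument, specifically ensuring that the union stem $t^*$ has the right domain and still refines every member of $D$ in the domination clause. The subtlety is that the domination requirement $x(\beta) < t_p(\beta)$ for $\beta \in \alpha_p \setminus \alpha_q$ must be checked for \emph{all} intermediate coordinates, including those $\beta < \alpha^*$ that lie beyond $\alpha_r$ for whichever single $r \in D$ one first picks; handling this requires invoking directedness of $D$ repeatedly (or observing that the constraints from different $q \in D$ on a fixed coordinate $\beta$ are mutually consistent precisely because $D$ is directed, so they are all realized by $t^*(\beta)$ as computed from a sufficiently long stem). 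Everything else reduces to the regularity of $\kappa$ and the cardinal arithmetic $\kappa^{<\kappa} = \kappa$.
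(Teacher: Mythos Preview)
Your proof is correct and follows the standard approach; the paper itself states this lemma without proof, so there is nothing to compare against. One small remark: your parenthetical worry about extending $t^*$ by zeros to reach domain $\alpha^*$ is unnecessary and slightly confused --- the union $\bigcup\Set{t_q}{q\in D}$ automatically has domain $\sup\Set{\alpha_q}{q\in D}=\alpha^*$, so no extension is needed, and your subsequent verification of the domination clause (pick $r\in D$ with $\alpha_r>\beta$, then use directedness to find a common refinement of $q$ and $r$) is exactly right.
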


\begin{proposition}
 Given $\alpha<\kappa$ and $x\in{}^\kappa\kappa$, the set 
 \[
D  =  \Set{p\in\HH(\kappa)}{\alpha<\alpha_p, \, x\in a_p}
\]
 is dense in $\HH(\kappa)$.   
\end{proposition}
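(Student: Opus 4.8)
The plan is to prove density of the set
\[
D = \Set{p \in \HH(\kappa)}{\alpha < \alpha_p,\ x \in a_p}
\]
by a direct extension argument: given an arbitrary condition $q = \langle t_q, a_q \rangle \in \HH(\kappa)$, I will build a condition $p \leq_{\HH(\kappa)} q$ lying in $D$. The two requirements on $p$ are handled somewhat independently: to get $x \in a_p$ one simply enlarges the side condition to $a_p = a_q \cup \{x\}$ (this is still an element of $[{}^\kappa\kappa]^{{<}\kappa}$ since $a_q$ has size ${<}\kappa$), and to get $\alpha < \alpha_p$ one extends the stem $t_q$ to a longer function. The only subtlety is that these two modifications interact through the ordering: once $x$ is placed into the side condition of $p$, the clause defining $\leq_{\HH(\kappa)}$ demands $y(\beta) < t_p(\beta)$ for \emph{all} $y \in a_q$ (not just the old ones) and all $\beta \in \alpha_p \setminus \alpha_q$. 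So the stem must be chosen to dominate all the finitely-many-in-spirit (i.e.\ ${<}\kappa$-many) functions in $a_q \cup \{x\}$ on the new coordinates.

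Concretely, first I would fix $q \in \HH(\kappa)$ with $\alpha_q < \kappa$ and set $a_p = a_q \cup \{x\}$, which has cardinality ${<}\kappa$ since $\kappa$ is regular. Next, choose any ordinal $\alpha_p < \kappa$ with $\alpha_p > \max\{\alpha, \alpha_q\}$ (so in particular $\alpha < \alpha_p$ as required for membership in $D$, and $\alpha_q < \alpha_p$ so there is genuinely room to extend the stem). Then define $t_p \in {}^{\alpha_p}\kappa$ by setting $t_p \restriction \alpha_q = t_q$ and, for each $\beta \in \alpha_p \setminus \alpha_q$,
\[
t_p(\beta) = \Big(\sup_{y \in a_p} y(\beta)\Big) + 1.
\]
This supremum is taken over a set of size ${<}\kappa$ of ordinals below $\kappa$, hence is itself an ordinal below $\kappa$ by the regularity of $\kappa$, so $t_p(\beta) < \kappa$ and $t_p$ is a legitimate element of ${}^{\alpha_p}\kappa$. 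Thus $p = \langle t_p, a_p \rangle \in \HH(\kappa)$.

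Finally I would verify $p \leq_{\HH(\kappa)} q$ by checking the three clauses of the ordering: $t_q = t_p \restriction \alpha_q \subseteq t_p$ by construction; $a_q \subseteq a_q \cup \{x\} = a_p$; and for every $y \in a_q$ and every $\beta \in \alpha_p \setminus \alpha_q$ we have $y \in a_p$, so $y(\beta) < \big(\sup_{z \in a_p} z(\beta)\big) + 1 = t_p(\beta)$. Hence $p \in D$ and $p \leq_{\HH(\kappa)} q$, proving density. There is no real obstacle here; the only point that genuinely uses a hypothesis is the computation that the sup stays below $\kappa$, which is exactly where the regularity of $\kappa$ (a consequence of $\kappa = \kappa^{{<}\kappa}$) enters, and similarly that $a_p$ remains in $[{}^\kappa\kappa]^{{<}\kappa}$.
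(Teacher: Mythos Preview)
Your argument is correct and is exactly the routine verification one would expect; the paper states this proposition without proof. One small expository slip: in your discussion you write that the ordering ``demands $y(\beta) < t_p(\beta)$ for \emph{all} $y \in a_q$ (not just the old ones)'', but $a_q$ \emph{is} the set of old ones --- the definition of $\leq_{\HH(\kappa)}$ only requires domination over $a_q$, not over $a_p$, so your choice to dominate all of $a_p = a_q \cup \{x\}$ is harmless overkill rather than a requirement.
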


\begin{definition}
We denote by $\dot{h}$ the canonical $\HH(\kappa)$-name such that 
\[
\dot{h}^G  =  \map{\bigcup\Set{t_p}{p\in G}}{\kappa}{\kappa}
\] 
whenever $G$ is $\HH(\kappa)$-generic over the ground model $\VV$.  
\end{definition}

\begin{proposition}
 If $G$ is $\HH(\kappa)$-generic over $\VV$, then $\dot{h}^G$ witnesses that $({}^\kappa\kappa)^\VV$ is an eventually bounded subset of ${}^\kappa\kappa$ in $\VV[G]$.  
\end{proposition}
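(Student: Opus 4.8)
The plan is to verify directly that $\dot{h}^G$ is a well-defined element of ${}^\kappa\kappa$ which eventually dominates every ground-model function, so that it witnesses eventual boundedness of $({}^\kappa\kappa)^\VV$ in the sense of Definition~\ref{def:bounded}. First I would check that $\dot{h}^G = \bigcup\Set{t_p}{p\in G}$ is a total function $\kappa\to\kappa$: the conditions in $G$ are pairwise compatible, so the $t_p$ for $p\in G$ form a chain under $\subseteq$, and by the density proposition above (applied with, say, $x$ arbitrary and the first coordinate $\alpha$ ranging over $\kappa$) for every $\gamma<\kappa$ there is $p\in G$ with $\gamma<\alpha_p = \length{t_p}$; hence $\dom{\dot{h}^G} = \kappa$.

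Next, fix $x\in({}^\kappa\kappa)^\VV$. By the density proposition the set $\Set{p\in\HH(\kappa)}{x\in a_p}$ is dense in $\HH(\kappa)$, so it meets $G$; pick $q\in G$ with $x\in a_q$ and set $\beta=\alpha_q<\kappa$. I claim that $x(\gamma)<\dot{h}^G(\gamma)$ for every $\gamma$ with $\beta\leq\gamma<\kappa$, which in particular gives $x\leq^{*}\dot{h}^G$ with witness $\beta$. To prove the claim, fix such a $\gamma$. Using the density proposition once more together with the fact that $G$ is a filter, choose $p\in G$ with $p\leq_{\HH(\kappa)} q$ and $\gamma<\alpha_p$ (first take any $p'\in G$ below $q$, then take a common extension in $G$ of $p'$ and a condition with long enough first coordinate). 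Since $t_p\subseteq\dot{h}^G$ and $\gamma<\alpha_p=\length{t_p}$, we have $\dot{h}^G(\gamma)=t_p(\gamma)$. On the other hand, $x\in a_q$ and $\gamma\in\alpha_p\setminus\alpha_q$ (because $\alpha_q=\beta\leq\gamma<\alpha_p$), so by the definition of $\leq_{\HH(\kappa)}$ applied to $p\leq_{\HH(\kappa)}q$ we get $x(\gamma)<t_p(\gamma)=\dot{h}^G(\gamma)$, as desired.

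Since $x\in({}^\kappa\kappa)^\VV$ was arbitrary and $\dot{h}^G\in({}^\kappa\kappa)^{\VV[G]}$, this shows that $\dot{h}^G$ witnesses that $({}^\kappa\kappa)^\VV$ is eventually bounded in $\VV[G]$. There is no genuine obstacle here; the only point requiring a little care is the routine interplay between the density lemma and the directedness of the generic filter when producing, for each $\gamma\geq\beta$, a single condition in $G$ that simultaneously extends $q$ and has first coordinate longer than $\gamma$.
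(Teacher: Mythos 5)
Your argument is correct, and it is exactly the standard density-plus-genericity argument the paper has in mind: the paper lists this proposition among the basic properties of $\HH(\kappa)$ without proof, and the intended justification is precisely to combine the preceding density proposition with the directedness of $G$ and the definition of $\leq_{\HH(\kappa)}$, as you do. Your only stylistic detour is the slightly roundabout way you produce a condition in $G$ below $q$ with long enough first coordinate; picking any $p'\in G$ with $\gamma<\alpha_{p'}$ and then a common extension of $p'$ and $q$ in $G$ suffices.
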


Our next aim is to generalize Lemma~\ref{lemma: characterization exists-perfect embedding} to the weakly compact case.

\begin{definition}\label{definition:ExistsSuperperfectEmb}
 Let $T$ be a subtree of ${}^{{<}\kappa}\kappa\times{}^{{<}\kappa}\kappa$. A \emph{$\exists^x$-superperfect embedding into $T$} is a function 
 \[
\Map{\iota}{{}^{<\kappa}\kappa}{T}{s}{\langle t^s_0, t^s_1 \rangle}
\]
 with the property that the following statements hold for all $s,s^\prime\in{}^{{<}\kappa}2$ and \( i = 0,1 \).
 \begin{enumerate-(i)}
  \item If $s\subsetneq s^\prime$, then $t^s_i\subsetneq t^{s^\prime}_i$. 

  \item There is  $\length{t_0^s}\leq\gamma_s<\kappa$ such that  for all $\alpha<\beta<\kappa$
\begin{itemizenew}
\item
 $\gamma_s<\length{t_0^{s^\smallfrown  \alpha }}, \length{t_0^{s^\smallfrown  \beta }}$, \\
\item
   $t_0^{s^\smallfrown \alpha }\restriction\gamma_s=t_0^{s^\smallfrown \beta }\restriction\gamma_s$, and \\
\item
 $t_0^{s^\smallfrown \alpha }(\gamma_s)\neq t_0^{s^\smallfrown \beta }(\gamma_s)$. 
   \end{itemizenew}
  \item If $\length{s}\in\Lim$, then
  \begin{equation*}
   t^s_i=\bigcup\Set{t^{s \restriction \alpha}_i}{\alpha<\length{s} }.
  \end{equation*} 
 \end{enumerate-(i)}
\end{definition}

\begin{proposition}\label{proposition:superperfect embedding} 
 Let $T$ be a subtree of ${}^{{<}\kappa}\kappa\times{}^{{<}\kappa}\kappa$. If there is a $\exists^x$-superperfect embedding into $T$, then there is a $\kappa$-Miller tree $S$ such that $[S]\subseteq p[T]$. 
\end{proposition}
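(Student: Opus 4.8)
The plan is to mimic the proof of Proposition~\ref{proposition:perfect embedding}, extracting from a $\exists^x$-superperfect embedding $\iota\colon {}^{<\kappa}\kappa \to T$, $s \mapsto \langle t^s_0, t^s_1\rangle$, a subtree $S \subseteq {}^{<\kappa}\kappa$ whose body maps into $p[T]$ via a homeomorphism of ${}^\kappa\kappa$ onto $[S]$, while ensuring along the way that $S$ is in fact $\kappa$-Miller. First I would set
\[
S = \Set{t \in {}^{{<}\kappa}\kappa}{\exists s \in {}^{{<}\kappa}\kappa \ (t \subseteq t^s_0)},
\]
so that $S$ is a subtree of ${}^{{<}\kappa}\kappa$ containing exactly the initial segments of the nodes $t^s_0$. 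The continuity clause~(iii) of Definition~\ref{definition:ExistsSuperperfectEmb} guarantees that $S$ is superclosed: an increasing sequence of nodes of $S$ of limit length ${<}\kappa$ coming from a chain $t^{s\restriction\alpha}_0$ has union $t^s_0 \in S$, and more generally any increasing ${<}\kappa$-sequence in $S$ is an increasing sequence of initial segments of some fixed $t^s_0$ (or converges to such a node), hence has its union in $S$.

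The key point is that $S$ is $\kappa$-perfect, i.e.\ every node of $S$ is extended by a $\kappa$-splitting node of $S$. Given $t \in S$, fix $s$ with $t \subseteq t^s_0$; then by clause~(ii) the node $t^s_0 \in S$ (which extends $t$) has the property that $\Set{t^{s^\smallfrown \alpha}_0}{\alpha < \kappa}$ consists of $\kappa$-many pairwise distinct extensions of $t^s_0$ of the same length, which all agree up to $\gamma_s \geq \length{t^s_0}$ and differ at coordinate $\gamma_s$; hence the node $u := t^{s^\smallfrown 0}_0 \restriction \gamma_s \in S$ is a $\kappa$-splitting node of $S$ extending $t$, since for distinct $\alpha < \beta < \kappa$ we have $t^{s^\smallfrown\alpha}_0\restriction(\gamma_s{+}1)$ and $t^{s^\smallfrown\beta}_0\restriction(\gamma_s{+}1)$ are distinct immediate successors of $u$ in $S$. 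Thus $S$ is a $\kappa$-Miller tree. (Here I use clause~(i) to see that the $t^{s^\smallfrown\alpha}_0$ properly extend $t^s_0$, and incompatibility of incompatible $s$'s is not even needed for this part, though it would be needed for injectivity.) Then Proposition~\ref{proposition:MillerTreeHomeoKappaKappa}\ref{item:kappaMiller} already gives that $[S]$ is homeomorphic to ${}^\kappa\kappa$, but it remains to check $[S] \subseteq p[T]$.

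For the inclusion $[S] \subseteq p[T]$: given $x \in [S]$, I would show that the set $\Set{s \in {}^{{<}\kappa}\kappa}{t^s_0 \subseteq x}$ forms a chain whose union, combined with clause~(iii), yields a single $y \in {}^\kappa\kappa$ with $t^s_0 \to x$ and $t^s_1 \to y$ along an increasing $\kappa$-chain of $s$'s, and then $\langle x \restriction \alpha, y \restriction \alpha \rangle \in T$ for cofinally many (hence all) $\alpha < \kappa$, witnessing $x \in p[T]$. The argument here is the standard ``fusion'' computation: one checks that for each $\alpha < \kappa$ there is $s$ with $\length{t^s_0} > \alpha$ and $t^s_0 \subseteq x$, so that $y = \bigcup_s t^s_1$ is well-defined and total, using clause~(i) for coherence and clause~(iii) to pass through limits. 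I expect the main obstacle to be precisely the bookkeeping showing that the relevant $s$'s form a directed (indeed linearly ordered) family whose $t^s_0$-images exhaust an initial segment of $x$ of length $\kappa$ — this needs clause~(ii) to produce, at each splitting stage, a successor $s^\smallfrown\alpha$ with $t^{s^\smallfrown\alpha}_0 \subseteq x$, and some care that the lengths $\length{t^s_0}$ are cofinal in $\kappa$ (which follows from $\gamma_s \geq \length{t^s_0}$ together with $\gamma_s < \length{t^{s^\smallfrown\alpha}_0}$, forcing the lengths to strictly increase at each step). Everything else is a routine adaptation of Proposition~\ref{proposition:perfect embedding}.
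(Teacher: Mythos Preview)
Your proposal is correct and follows essentially the same approach as the paper: define $S$ as the downward closure of $\{t^s_0 : s \in {}^{<\kappa}\kappa\}$, verify it is a $\kappa$-Miller tree using clauses (ii) and (iii), and then for each $x \in [S]$ reconstruct the branch $z \in {}^\kappa\kappa$ with $t^{z\restriction\alpha}_0 \subseteq x$ so that $\bigcup_\alpha t^{z\restriction\alpha}_1$ witnesses $x \in p[T]$. The paper's proof is extremely terse (it simply asserts that $S$ is $\kappa$-Miller and that $x = \bigcup\{s : t^s_0 \subseteq y\}$ lies in ${}^\kappa\kappa$), whereas you have correctly identified and outlined the substantive verifications---in particular, that the lengths $\length{t^s_0}$ strictly increase along any branch and that at each stage clause~(ii) singles out a unique successor $s^\smallfrown\alpha$ with $t^{s^\smallfrown\alpha}_0 \subseteq x$; your sketch of superclosedness is somewhat compressed but points in the right direction.
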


\begin{proof}
 We use the notation from the above definition. If we define 
 \[
S  =  \lrSet{t\in{}^{{<}\kappa}\kappa}{\exists s\in{}^{{<}\kappa}\kappa \,  (t \subsetneq  t_0^{s})},
\] 
 then $S$ is a $\kappa$-Miller tree. It remains to show that \( [S] \subseteq p[T] \). Pick $y\in[S]$. Then $x=\bigcup\Set{s\in{}^{{<}\kappa}\kappa}{t_0^s\subseteq y}$ is an element of ${}^\kappa\kappa$ with 
 $y=\bigcup\Set{t_0^{x\restriction\alpha}}{\alpha<\kappa}$ and $\left \langle y,  \bigcup\Set{t_1^{x\restriction\alpha}}{\alpha<\kappa}\right\rangle \in [T]$, so that \( y \in p[T] \). 
\end{proof}

\begin{corollary}\label{corollary:ProfPerfEmbImpliesNotKKappa}
 Let $T$ be a subtree of ${}^{{<}\kappa}\kappa\times{}^{{<}\kappa}\kappa$. If there is a $\exists^x$-superperfect embedding into $T$, then $p[T]$ is not contained in a $\K{\kappa}$ subset of ${}^\kappa\kappa$.  
\end{corollary}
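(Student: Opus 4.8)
The plan is to derive Corollary~\ref{corollary:ProfPerfEmbImpliesNotKKappa} directly from Proposition~\ref{proposition:superperfect embedding} together with Proposition~\ref{proposition:MillerTreeHomeoKappaKappa}\ref{item:kappaMiller} and Fact~\ref{proposition:DichotomyUnc}. Assume there is a $\exists^x$-superperfect embedding into $T$. By Proposition~\ref{proposition:superperfect embedding}, there is a $\kappa$-Miller tree $S$ with $[S] \subseteq p[T]$.

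By Proposition~\ref{proposition:MillerTreeHomeoKappaKappa}\ref{item:kappaMiller}, the set $[S]$ is a closed subset of $\pre{\kappa}{\kappa}$ homeomorphic to $\pre{\kappa}{\kappa}$. Suppose towards a contradiction that $p[T]$ were contained in a $\K{\kappa}$ subset $K$ of $\pre{\kappa}{\kappa}$. Then $[S] \subseteq p[T] \subseteq K$, so $[S]$ is a closed subset of $\pre{\kappa}{\kappa}$ homeomorphic to $\pre{\kappa}{\kappa}$ that is contained in a $\K{\kappa}$ subset of $\pre{\kappa}{\kappa}$, contradicting Fact~\ref{proposition:DichotomyUnc}. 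Hence $p[T]$ is not contained in a $\K{\kappa}$ subset of $\pre{\kappa}{\kappa}$.

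There is essentially no obstacle here: the corollary is a one-line consequence once Proposition~\ref{proposition:superperfect embedding} (which does the real combinatorial work of extracting the $\kappa$-Miller tree from the embedding) and the already-established facts that $\kappa$-Miller bodies are homeomorphic to $\pre{\kappa}{\kappa}$ and that such copies cannot sit inside a $\K{\kappa}$ set are in hand.

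\begin{proof}
Assume there is a $\exists^x$-superperfect embedding into $T$. By Proposition~\ref{proposition:superperfect embedding}, there is a $\kappa$-Miller tree $S$ with $[S] \subseteq p[T]$. By Proposition~\ref{proposition:MillerTreeHomeoKappaKappa}\ref{item:kappaMiller}, $[S]$ is a closed subset of $\pre{\kappa}{\kappa}$ homeomorphic to $\pre{\kappa}{\kappa}$. If $p[T]$ were contained in a $\K{\kappa}$ subset of $\pre{\kappa}{\kappa}$, then so would $[S]$ be, contradicting Fact~\ref{proposition:DichotomyUnc}. Hence $p[T]$ is not contained in a $\K{\kappa}$ subset of $\pre{\kappa}{\kappa}$.
\end{proof}
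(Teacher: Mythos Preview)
Your proof is correct and follows essentially the same route as the paper's own proof, which simply cites Propositions~\ref{proposition:MillerTreeHomeoKappaKappa} and~\ref{proposition:superperfect embedding}; you have merely unpacked the implicit appeal to Fact~\ref{proposition:DichotomyUnc} that closes the argument.
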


\begin{proof}
 This follows from Propositions~\ref{proposition:MillerTreeHomeoKappaKappa} and~\ref{proposition:superperfect embedding}.
\end{proof}

The following result shows how $\exists^x$-superperfect embeddings can be constructed from names with certain splitting properties.

\begin{lemma}\label{lemma:Superperfect Embedding from Splitting names}
 Assume that $\kappa$ is an inaccessible cardinal. Let $T$ be a subtree of ${}^{{<}\kappa}\kappa\times{}^{{<}\kappa}\kappa$, $\PP$ be a ${<}\kappa$-closed partial order, and $\dot{x}$ be a $\PP$-name for an element of \( \pre{\kappa}{\kappa} \) such that $\mathbbm{1}_\PP\Vdash\anf{\dot{x} \in p[T]}$ and 
for every $q \in \PP$ and every $\beta<\kappa$ there is an ordinal $\beta\leq\gamma<\kappa$ and a sequence $\seq{q_\alpha}{\alpha<\kappa}$ of conditions in $\PP$ below $q$ 
 with $q_{\alpha}\Vdash\anf{\dot{x}(\check{\gamma})>\check{\alpha}}$ for all $\alpha<\kappa$. Then there is a $\exists^x$-superperfect embedding into $T$. 
\end{lemma}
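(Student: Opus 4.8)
The plan is to construct the $\exists^x$-superperfect embedding $\iota \colon {}^{<\kappa}\kappa \to T$ by recursion on the levels of ${}^{<\kappa}\kappa$, simultaneously building an auxiliary system of conditions $\langle p_s \mid s \in {}^{<\kappa}\kappa \rangle$ in $\PP$ that ``witness'' the branches we are following, exactly in the spirit of the proof of Lemma~\ref{lemma:PerfectClosedTreeEmb} and of Lemma~\ref{lemma: characterization exists-perfect embedding}. Along with $\iota(s) = \langle t_0^s, t_1^s \rangle$ we will maintain ordinals $\gamma_s < \kappa$ as required by clause~(ii) of Definition~\ref{definition:ExistsSuperperfectEmb}, and the conditions $p_s$ will satisfy the following invariants: (a) $p_s \Vdash \anf{\check{t}_0^s \subseteq \dot{x}}$ and there is a $\PP$-name $\dot{y}_s$ with $p_s \Vdash \anf{\langle \dot{x}, \dot{y}_s \rangle \in [\check{T}] \wedge \check{t}_1^s \subseteq \dot{y}_s}$ — equivalently $p_s$ forces $\langle t_0^s, t_1^s\rangle$ to lie on the branch of $T$ determined by a witness for $\dot{x} \in p[T]$; (b) $s \subsetneq s'$ implies $p_{s'} \leq_\PP p_s$ and $t_i^s \subsetneq t_i^{s'}$; (c) the length of $t_0^s$ (and $t_1^s$) is strictly increasing along each branch and tends to $\kappa$.

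The successor step is where the splitting hypothesis on $\dot{x}$ is used. Given $p_s$ forcing $\check{t}_0^s \subseteq \dot{x}$, apply the hypothesis with $q = p_s$ and $\beta = \length{t_0^s}$: we obtain $\beta \leq \gamma < \kappa$ and conditions $\langle q_\alpha \mid \alpha < \kappa\rangle$ below $p_s$ with $q_\alpha \Vdash \anf{\dot{x}(\check\gamma) > \check\alpha}$. Set $\gamma_s = \gamma$. Then for each $\alpha < \kappa$, strengthen $q_\alpha$ to some $p_{s^\smallfrown\alpha} \leq_\PP q_\alpha$ that decides $\dot{x} \restriction (\gamma + 1)$, decides enough of $\dot{x}$ beyond $\gamma$ so that the decided initial segment $t_0^{s^\smallfrown\alpha}$ properly extends $t_0^s$ and has length $> \gamma$, and simultaneously decides an initial segment $t_1^{s^\smallfrown\alpha} \supsetneq t_1^s$ of the corresponding $T$-witness $\dot{y}$ so that $\langle t_0^{s^\smallfrown\alpha}, t_1^{s^\smallfrown\alpha}\rangle \in T$ (this is possible since $p_s$, hence $q_\alpha$, forces $\langle \dot x, \dot y\rangle \in [T]$, so we may take $\dot{y}_{s^\smallfrown \alpha} = \dot{y}$). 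Because $q_\alpha \Vdash \anf{\dot x(\gamma) > \alpha}$ the values $t_0^{s^\smallfrown\alpha}(\gamma)$ are pairwise distinct and all $> \alpha$, while $t_0^{s^\smallfrown\alpha}\restriction\gamma = t_0^s{}^\smallfrown(\text{the decided part of }\dot x\text{ on }[\length{t_0^s},\gamma))$ is the same for all $\alpha$; this is precisely clause~(ii). Clause~(i) at this step is clear.

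The limit step is where the ${<}\kappa$-closure of $\PP$ and the inaccessibility of $\kappa$ come in. Given $s$ with $\length{s} = \delta \in \Lim$, we have a descending $\delta$-sequence $\langle p_{s\restriction\alpha} \mid \alpha < \delta\rangle$ in $\PP$; since $\PP$ is ${<}\kappa$-closed there is a lower bound, and we let $p_s$ be any such lower bound that moreover decides whatever finitely many extra bits are needed; set $t_i^s = \bigcup_{\alpha < \delta} t_i^{s\restriction\alpha}$, which has length $\sup_{\alpha<\delta}\length{t_i^{s\restriction\alpha}} < \kappa$ since $\kappa$ is regular (inaccessible). We need $\langle t_0^s, t_1^s\rangle \in T$: this holds because $T$ is a tree and each initial segment $\langle t_0^{s\restriction\alpha}, t_1^{s\restriction\alpha}\rangle$ lies in $T$ — wait, $T$ need not be superclosed, so this requires care; but $p_s$ forces $\langle \dot x, \dot y_{s\restriction 0}\rangle \in [T]$ (the witness name can be taken fixed along the branch, say $\dot y_{s \restriction \alpha}$ is the same name for all $\alpha$, which we arrange in the successor step by never changing the witness once chosen — or rather, by using $\dot{y}$ as above and noting compatibility along the branch), and $p_s \leq p_{s\restriction\alpha}$ forces $\check t_0^{s\restriction\alpha} \subseteq \dot x$ and $\check t_1^{s\restriction\alpha}\subseteq \dot y$, hence $p_s \Vdash \anf{\check t_0^s \subseteq \dot x \wedge \check t_1^s \subseteq \dot y}$, and since $\langle\dot x, \dot y\rangle$ is forced to be a $\kappa$-branch through $T$ we get $p_s \Vdash \anf{\langle \check t_0^s, \check t_1^s\rangle \in \check T}$, which by absoluteness of membership in $T$ (a ground-model set) means $\langle t_0^s, t_1^s\rangle \in T$ outright. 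This gives invariant (a) at $s$.

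The main obstacle I expect is bookkeeping at limits, specifically ensuring that the tree-witness side $\langle t_1^s\rangle$ survives limit stages inside $T$ even though $T$ is not assumed superclosed — the fix, as indicated above, is to carry along a single name $\dot{y}_s$ for a $\kappa$-branch-witness that is \emph{never revised} along any fixed branch (each successor extension only \emph{decides more} of the already-chosen witness name), so that at a limit $\delta$ the condition $p_s$ forces both $\check t_0^s \subseteq \dot x$ and $\check t_1^s \subseteq \dot y_s$ with $\langle \dot x, \dot y_s\rangle$ a $T$-branch, whence $\langle t_0^s, t_1^s\rangle \in T$ by absoluteness. A secondary point to be careful about is that the hypothesis $\mathbbm 1_\PP \Vdash \anf{\dot x \in p[T]}$ only yields, below a given condition, \emph{some} witness name; so at the very start we fix $p_\emptyset = \mathbbm 1_\PP$ and a name $\dot y_\emptyset$ with $p_\emptyset \Vdash \anf{\langle \dot x, \dot y_\emptyset\rangle \in [T]}$ by maximality/mixing, and then at each successor node we keep the witness from the parent (i.e. $\dot y_{s^\smallfrown\alpha} := \dot y_s$), so that no limit-stage incompatibility of witnesses can arise. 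Once the recursion is complete, the map $\iota \colon s \mapsto \langle t_0^s, t_1^s\rangle$ satisfies (i)--(iii) of Definition~\ref{definition:ExistsSuperperfectEmb} with the chosen $\gamma_s$, so it is the desired $\exists^x$-superperfect embedding into $T$.
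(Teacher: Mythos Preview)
Your overall architecture---a single witness name $\dot{y}$ fixed once by maximality, an auxiliary system of conditions $p_s$, and the absoluteness argument at limit stages to keep $\langle t_0^s,t_1^s\rangle\in T$---is the same as the paper's and is correct. The gap is in your successor step. From $q_\alpha\Vdash\anf{\dot{x}(\check\gamma)>\check\alpha}$ you only get, after strengthening to decide $\dot{x}\restriction(\gamma+1)$, that $t_0^{s^\smallfrown\alpha}(\gamma)>\alpha$; this makes the values \emph{unbounded}, not pairwise distinct. More seriously, your assertion that $t_0^{s^\smallfrown\alpha}\restriction\gamma$ is ``the same for all $\alpha$'' is wrong: the conditions $q_\alpha$ are in general incompatible, and different extensions $p_{s^\smallfrown\alpha}$ may decide $\dot{x}$ on the interval $[\length{t_0^s},\gamma)$ in completely different ways. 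So neither clause of (ii) in Definition~\ref{definition:ExistsSuperperfectEmb} is verified.

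This is precisely where inaccessibility is used, and you did not use it (your limit-stage invocation of inaccessibility is really only regularity). The fix, as in the paper, is a thinning argument: first pass to a $\kappa$-sized subfamily on which the values $t_0^\alpha(\gamma)$ are distinct (possible since they are unbounded). Then let $\gamma_s\leq\gamma$ be least with $\vert\{t_0^\alpha\restriction(\gamma_s+1):\alpha<\kappa\}\vert=\kappa$; inaccessibility guarantees that $\vert\{t_0^\alpha\restriction\gamma_s:\alpha<\kappa\}\vert<\kappa$ (even when $\gamma_s$ is a limit, since the restriction map into $\prod_{\delta<\gamma_s}\{t_0^\alpha\restriction(\delta+1)\}$ is injective and the product has size ${<}\kappa$ by the strong-limit property). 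Pigeonhole then gives a single $u\in{}^{\gamma_s}\kappa$ with $\kappa$-many $\alpha$ satisfying $t_0^\alpha\restriction\gamma_s=u$ and with $\kappa$-many distinct values $t_0^\alpha(\gamma_s)$; re-index along an injection $f\colon\kappa\to\kappa$ into this family and set $t_i^{s^\smallfrown\alpha}=t_i^{f(\alpha)}$, $p_{s^\smallfrown\alpha}=p_{f(\alpha)}$.
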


\begin{proof}
 Let $\dot{y}$ be $\PP$-name for an element of ${}^\kappa\kappa$ with $\mathbbm{1}_\PP\Vdash\anf{\langle\dot{x},\dot{y}\rangle\in[\check{T}]}$. 
 We inductively define a $\exists^x$-superperfect embedding $\seq{\langle t_0^s,t_1^s\rangle}{s\in{}^{{<}\kappa}\kappa}$ into $T$ 
 and a sequence of conditions $\seq{p_s}{s\in{}^{{<}\kappa}\kappa}$ such that the following statements hold for all $s,s^\prime\in{}^{{<}\kappa}\kappa$. 
 \begin{enumerate-(a)} 
  \item If $s\subseteq s^\prime$, then $p_{s^\prime}\leq_\PP p_s$. 

  \item $p_s\Vdash\anf{\check{t}_0^s\subseteq\dot{x} \, \wedge \, \check{t}_1^s\subseteq\dot{y}}$. 
 \end{enumerate-(a)}

Assume that $s\in{}^{{<}\kappa}\kappa$ with $\length{s}\in\Lim$ and we already constructed $t_0^{s \restriction \alpha}$, $t_1^{s \restriction \alpha}$ and $p_{s\restriction\alpha}$ with the above properties for all $\alpha<\length{s}$. 
 Then we let $p_s$ denote a condition with $p\leq_\PP p_{s\restriction\alpha}$ for all $\alpha<\length{s}$ and define $t_i^s$ as in the last clause of Definition~\ref{definition:ExistsSuperperfectEmb}. 
 
 Now assume that we already constructed $t_0^s$, $t_1^s$ and $p_s$ with the above properties for some $s\in{}^{{<}\kappa}\kappa$.  
 By our assumptions, we can find an ordinal $\length{t_0^s}<\gamma<\kappa$ and sequences $\seq{t_i^\alpha\in{}^{\gamma+1}\kappa}{\alpha<\kappa, \, i= 0,1}$ and $\seq{p_\alpha\in \PP}{\alpha<\kappa}$ 
 such that $p_\alpha\leq_\PP p_s$, $p_\alpha\Vdash\anf{\check{t}_0^\alpha\subseteq\dot{x}\wedge\check{t}_1^\alpha\subseteq\dot{y}}$ and $t_\alpha(\gamma)\neq t_\beta(\gamma)$ for distinct $\alpha,\beta<\kappa$. 
 Since $\kappa$ is inaccessible, there is a $\gamma_s\leq\gamma$ and an injection $\map{f}{\kappa}{\kappa}$ with $t_0^{f(\alpha)}\restriction\gamma_s=t_0^{f(\beta)}\restriction\gamma_s$ 
 and $t_0^{f(\alpha)}\neq t_0^{f(\beta)}$ for distinct $\alpha,\beta<\kappa$. Define $t_i^{s^\smallfrown \alpha }=t_i^{f(\alpha)}$ and $p_{s^\smallfrown \alpha }=p_{f(\alpha)}$ for all $\alpha<\kappa$ and $i=0,1$.  
\end{proof}

\begin{lemma} \label{lemma:Hechler function and splitting name} 
 Let $p \in \HH(\kappa)$ 
 and $\dot{x}$ be a $\HH(\kappa)$-name for an element of ${}^\kappa\kappa$ with $p\Vdash\anf{\dot{x}\not\leq^* \dot{h}}$. If $q$ is a condition in $\HH(\kappa)$ below $p$ and $\beta<\kappa$, 
 then there is an ordinal $\beta\leq\gamma<\kappa$ and a sequence $\seq{q_\alpha}{\alpha<\kappa}$ of conditions in $\HH(\kappa)$ below $q$ such that $q_{\alpha}\Vdash\anf{\dot{x}(\check{\gamma})>\check{\alpha}}$ holds for all $\alpha<\kappa$. 
\end{lemma}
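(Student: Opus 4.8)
The plan is to argue by contradiction, manufacturing from a hypothetical failure a function in the ground model that is eventually dominated by the $\kappa$-Hechler generic below $q$ --- which is impossible, since $q \leq_{\HH(\kappa)} p$ and $p \Vdash \anf{\dot{x} \not\leq^* \dot{h}}$. So I would fix $q \leq_{\HH(\kappa)} p$ and $\beta < \kappa$, and for each $\gamma$ with $\beta \leq \gamma < \kappa$ consider
\[
S_\gamma = \Set{\alpha < \kappa}{\exists r \leq_{\HH(\kappa)} q \  \bigl(r \Vdash \anf{\dot{x}(\check{\gamma}) > \check{\alpha}}\bigr)}.
\]
Since $r \Vdash \anf{\dot{x}(\check{\gamma}) > \check{\alpha}}$ implies $r \Vdash \anf{\dot{x}(\check{\gamma}) > \check{\alpha'}}$ for every $\alpha' < \alpha$, each $S_\gamma$ is downward closed, hence an ordinal $\leq \kappa$. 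If $S_\gamma = \kappa$ for some $\gamma \in [\beta, \kappa)$, then for each $\alpha < \kappa$ one picks a witness $q_\alpha \leq_{\HH(\kappa)} q$ with $q_\alpha \Vdash \anf{\dot{x}(\check{\gamma}) > \check{\alpha}}$, and this $\gamma$ together with $\seq{q_\alpha}{\alpha < \kappa}$ is as required. So I may assume that $S_\gamma$ is bounded in $\kappa$ --- say $S_\gamma \subseteq \delta_\gamma$ with $\delta_\gamma < \kappa$ --- for every $\gamma \in [\beta, \kappa)$.

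Next I would exploit this boundedness to extract an upper bound that $q$ itself decides: since $\delta_\gamma \notin S_\gamma$, no condition below $q$ forces $\anf{\dot{x}(\check{\gamma}) > \check{\delta}_\gamma}$, and by the standard characterization of the forcing relation (a condition forces a statement exactly when no condition below it forces the negation) this gives $q \Vdash \anf{\dot{x}(\check{\gamma}) \leq \check{\delta}_\gamma}$ for all $\gamma \in [\beta, \kappa)$. Let $\bar{\delta} \in {}^\kappa\kappa$ be the ground-model function with $\bar{\delta}(\gamma) = \delta_\gamma$ for $\gamma \geq \beta$ and $\bar{\delta}(\gamma) = 0$ otherwise, and set $q' = \langle t_q, a_q \cup \{\bar{\delta}\}\rangle$. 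Then $q'$ is a condition in $\HH(\kappa)$ (its side-condition set still has size $< \kappa$) and $q' \leq_{\HH(\kappa)} q$ (the stems coincide, so the end-extension clause is vacuous). Since $\bar{\delta} \in a_{q'}$, the definition of the ordering yields $q' \Vdash \anf{\dot{h}(\check{\xi}) > \check{\bar{\delta}}(\check{\xi})}$ for every $\xi \geq \alpha_q$. Writing $\beta_0 = \max(\beta, \alpha_q)$, for every $\gamma \geq \beta_0$ we have $\bar{\delta}(\gamma) = \delta_\gamma$, hence $q' \Vdash \anf{\dot{x}(\check{\gamma}) \leq \check{\delta}_\gamma < \dot{h}(\check{\gamma})}$, and therefore $q' \Vdash \anf{\dot{x} \leq^* \dot{h}}$. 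As $q' \leq_{\HH(\kappa)} q \leq_{\HH(\kappa)} p$, this contradicts $p \Vdash \anf{\dot{x} \not\leq^* \dot{h}}$, which completes the argument.

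I do not expect a genuine obstacle here: this is the familiar bookkeeping showing that a dominating function destroys witnesses to non-domination. The only two points that need (routine) care are the inference of $q \Vdash \anf{\dot{x}(\check{\gamma}) \leq \check{\delta}_\gamma}$ from the boundedness of $S_\gamma$, which rests on the characterization of $\Vdash$ just mentioned, and the verification that adjoining $\bar{\delta}$ to the side condition of $q$ really produces an extension of $q$ in $\HH(\kappa)$, so that the resulting contradiction lands against $p$ rather than against some unrelated condition. Both are immediate from the definition of $\HH(\kappa)$ and its ordering.
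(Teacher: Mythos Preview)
Your proof is correct and follows essentially the same route as the paper's: assume the conclusion fails, extract for each $\gamma \in [\beta,\kappa)$ a bound $\delta_\gamma$ on the values $q$ can force for $\dot{x}(\gamma)$, assemble these bounds into a ground-model function, and adjoin that function to the side condition of $q$ to obtain an extension forcing $\dot{x} \leq^* \dot{h}$. The paper's proof is terser (it writes the bound as $\alpha_\gamma$ and the assembled function as $g$), but the logical structure is identical.
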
 

\begin{proof} 
 Assume towards a contradiction that the above implication fails for some $q\leq_{\HH(\kappa)}p$ and $\beta<\kappa$. 
 Given an ordinal $\beta\leq\gamma<\kappa$, this assumption implies that there is an $\alpha_\gamma<\kappa$ such that $r\Vdash\anf{\dot{x}(\check{\gamma})<\check{\alpha}_\gamma}$ holds for every extension $r$ of $q$ in $\HH(\kappa)$ 
 that decides $\dot{x}(\gamma)$. This allows us to define a function $g\in{}^\kappa\kappa$ such that $q\Vdash\anf{\dot{x}\leq^* \check{g}}$. 
 If we define $r=\langle t_q,a_q\cup\{g\}\rangle$, then $r$ is a condition in $\HH(\kappa)$ with $r\leq_{\HH(\kappa)}q$ and $r\Vdash\anf{\dot{x}\leq^*\check{g}\leq^*\dot{h}}$, a contradiction.  
\end{proof}

The next lemma summarizes the above observations.

\begin{lemma}\label{lemma: equivalent conditions for superperfect embedding} 
 Let $p$ be a condition in $\HH(\kappa)$ with $p\Vdash\anf{\text{$\check{\kappa}$ is weakly compact}}$.\footnote{Since $\HH(\kappa)$ is ${<}\kappa$-closed, this assumption implies that $\kappa$ is weakly compact (and therefore inaccessible) in $\VV$.} 
 Then the following statements are equivalent for every subtree $T$ of ${}^{{<}\kappa}\kappa\times{}^{{<}\kappa}\kappa$.  
 \begin{enumerate-(1)}
  \item\label{condition: ProjPerfectEmbedding} There is a $\exists^x$-superperfect embedding into $T$. 
 
  \item\label{condition: not K-kappa in Hechler Extensions} $p[T]$ is not contained in a $\K{\kappa}$ subset of ${}^\kappa\kappa$ in any generic extension of the ground model $\VV$ by a ${<}\kappa$-closed forcing. 
 
  \item\label{condition: not K-kappa in generic extension} $p[T]$ is not contained in a $\K{\kappa}$ subset of ${}^\kappa\kappa$ in any $\mathbb{H}(\kappa)$-generic extension of the ground model $\VV$. 
 
  \item\label{condition: existence of name} $p\Vdash\anf{\exists x\in p[T] \, ( x\not\leq^* \dot{h})}$. 
  
  \item\label{condition: existence of Hechler name} There is an $\HH(\kappa)$-name $\dot{x}$ for an element of \( \pre{\kappa}{\kappa} \) such that  $\mathbbm{1}_{\HH(\kappa)} \Vdash\anf{\dot{x} \in p[T]}$ and for all $q\leq_{\HH(\kappa)}p$ and $\beta<\kappa$
   there is an ordinal $\beta\leq\gamma<\kappa$ and a sequence $\seq{q_\alpha}{\alpha<\kappa}$ of extensions of $q$ in $\HH(\kappa)$ such that $q_\alpha\Vdash\anf{\dot{x}(\check{\gamma})>\check{\alpha}}$ for all $\alpha<\kappa$.   

  \item\label{condition: existence of closed po name} There is a ${<}\kappa$-closed partial order $\PP$ and a $\PP$-name $\dot{x}$ for an element of \( \pre{\kappa}{\kappa} \) such that  $\mathbbm{1}_{\HH(\kappa)} \vDash\anf{\dot{x} \in p[T]}$ and for all $q\leq_\PP p$ and $\beta<\kappa$, 
   there is an ordinal $\beta\leq\gamma<\kappa$ and a sequence $\seq{q_\alpha}{\alpha<\kappa}$ of extensions of $q$ in $\PP$ such that $q_\alpha\Vdash\anf{\dot{x}(\check{\gamma})>\check{\alpha}}$ for all $\alpha<\kappa$.  
\end{enumerate-(1)}
\end{lemma}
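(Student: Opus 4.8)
The plan is to prove the cycle of equivalences by establishing enough implications to connect all six statements, using the tools assembled above. First I would record the easy implications coming directly from earlier results: Proposition~\ref{proposition:superperfect embedding} together with Proposition~\ref{proposition:MillerTreeHomeoKappaKappa}\ref{item:kappaMiller} and Fact~\ref{proposition:DichotomyUnc} gives \ref{condition: ProjPerfectEmbedding}~$\Rightarrow$~\ref{condition: not K-kappa in Hechler Extensions} (a $\exists^x$-superperfect embedding into $T$, being absolute for ${<}\kappa$-closed forcings since such forcings add no new ${<}\kappa$-sequences, persists in any such extension, and there it yields a $\kappa$-Miller tree $S$ with $[S]\subseteq p[T]$ homeomorphic to $\pre{\kappa}{\kappa}$, which by Fact~\ref{proposition:DichotomyUnc} cannot sit inside a $\K{\kappa}$ set). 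The implication \ref{condition: not K-kappa in Hechler Extensions}~$\Rightarrow$~\ref{condition: not K-kappa in generic extension} is trivial since $\HH(\kappa)$ is ${<}\kappa$-closed (by the cited lemma). For \ref{condition: not K-kappa in generic extension}~$\Rightarrow$~\ref{condition: existence of name}: if $p$ did not force $\exists x\in p[T]\,(x\not\leq^*\dot h)$, then below some $q\leq_{\HH(\kappa)}p$ we would have $q\Vdash\anf{\forall x\in p[T]\,(x\leq^*\dot h)}$; since $\dot h^G$ eventually bounds $p[T]^{\VV[G]}$, and eventually bounded sets are contained in a $\K{\kappa}$ set (combine boundedness of each ``slice'' with Proposition~\ref{proposition:bounded}, or directly build a $\K{\kappa}$ cover as in Lemma~\ref{lemma:bounded}\ref{prop:bounded-03} using weak compactness of $\kappa$ in $\VV[G]$), this contradicts \ref{condition: not K-kappa in generic extension}.

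Next I would close the loop through the ``name'' conditions. The implication \ref{condition: existence of name}~$\Rightarrow$~\ref{condition: existence of Hechler name} is the key bridge: working below $p$, pick an $\HH(\kappa)$-name $\dot x$ for an element of $p[T]$ with $p\Vdash\anf{\dot x\not\leq^*\dot h}$, extend it by a maximal-antichain argument below $p$ to a name forced by $\mathbbm 1_{\HH(\kappa)}$ to lie in $p[T]$ (off $p$ the value is irrelevant), and then apply Lemma~\ref{lemma:Hechler function and splitting name} verbatim to obtain, for each $q\leq_{\HH(\kappa)}p$ and each $\beta<\kappa$, the required $\gamma$ and the sequence $\seq{q_\alpha}{\alpha<\kappa}$ with $q_\alpha\Vdash\anf{\dot x(\check\gamma)>\check\alpha}$. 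The implication \ref{condition: existence of Hechler name}~$\Rightarrow$~\ref{condition: existence of closed po name} is immediate, taking $\PP=\HH(\kappa)$ (which is ${<}\kappa$-closed). Finally, \ref{condition: existence of closed po name}~$\Rightarrow$~\ref{condition: ProjPerfectEmbedding} is exactly Lemma~\ref{lemma:Superperfect Embedding from Splitting names}, using that $p\Vdash\anf{\check\kappa\text{ is weakly compact}}$ forces $\kappa$ to be inaccessible (hence inaccessible in $\VV$, as noted in the footnote), so the hypotheses of that lemma are met. Chaining these gives \ref{condition: ProjPerfectEmbedding}~$\Rightarrow$~\ref{condition: not K-kappa in Hechler Extensions}~$\Rightarrow$~\ref{condition: not K-kappa in generic extension}~$\Rightarrow$~\ref{condition: existence of name}~$\Rightarrow$~\ref{condition: existence of Hechler name}~$\Rightarrow$~\ref{condition: existence of closed po name}~$\Rightarrow$~\ref{condition: ProjPerfectEmbedding}, establishing the full equivalence.

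The main obstacle, I expect, is the implication \ref{condition: not K-kappa in generic extension}~$\Rightarrow$~\ref{condition: existence of name}, specifically the step asserting that an eventually bounded subset of $\pre{\kappa}{\kappa}$ in the $\HH(\kappa)$-extension $\VV[G]$ is contained in a $\K{\kappa}$ set there. This is exactly where the weak compactness of $\kappa$ in $\VV[G]$ (guaranteed by $p\in G$) is essential: for a general $\kappa$ with $\kappa^{<\kappa}=\kappa$, bounded does not imply contained in a $\kappa$-compact set, and one must invoke Lemma~\ref{lemma:bounded}\ref{prop:bounded-03}, whose proof uses the tree property at $\kappa$ to turn the bounding function into a $\kappa$-compact ``box''. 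I would make sure to state explicitly that $\HH(\kappa)$ being ${<}\kappa$-closed preserves inaccessibility and that the forced weak compactness of $\kappa$ in $\VV[G]$ is what licenses the application of Lemma~\ref{lemma:bounded}. A secondary subtlety is that conditions~\ref{condition: existence of Hechler name} and~\ref{condition: existence of closed po name} only quantify over $q\leq_{\HH(\kappa)}p$ (resp.\ $q\leq_\PP p$), so throughout one must be careful that all genericity and density arguments are relativized below $p$; since $p$ is fixed at the outset of the lemma this causes no real difficulty, but it should be flagged when invoking Lemmas~\ref{lemma:Hechler function and splitting name} and~\ref{lemma:Superperfect Embedding from Splitting names}, whose statements are formulated relative to a fixed condition precisely for this reason.
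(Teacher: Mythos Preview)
Your proposal is correct and follows essentially the same route as the paper: the same cycle of implications $(1)\Rightarrow(2)\Rightarrow(3)\Rightarrow(4)\Rightarrow(5)\Rightarrow(6)\Rightarrow(1)$, invoking exactly the same auxiliary results (Corollary~\ref{corollary:ProfPerfEmbImpliesNotKKappa} for $(1)\Rightarrow(2)$, Lemma~\ref{lemma:bounded}\ref{prop:bounded-03} and weak compactness in $\VV[G]$ for $(3)\Rightarrow(4)$, Lemma~\ref{lemma:Hechler function and splitting name} plus a maximality argument for $(4)\Rightarrow(5)$, and Lemma~\ref{lemma:Superperfect Embedding from Splitting names} for $(6)\Rightarrow(1)$). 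The only cosmetic difference is that you phrase $(3)\Rightarrow(4)$ by contrapositive whereas the paper argues it directly; the content is identical.
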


\begin{proof} 
 The implications from~\ref{condition: not K-kappa in Hechler Extensions} to~\ref{condition: not K-kappa in generic extension} and from~\ref{condition: existence of Hechler name} to~\ref{condition: existence of closed po name} are trivial. 
 A combination of Lemma~\ref{lemma:Hechler function and splitting name} and the maximality principle shows that~\ref{condition: existence of name} implies~\ref{condition: existence of Hechler name}. 
 The implication from~\ref{condition: existence of closed po name} to~\ref{condition: ProjPerfectEmbedding} follows from Lemma~\ref{lemma:Superperfect Embedding from Splitting names} and our assumption. 
 Moreover, the implication from~\ref{condition: ProjPerfectEmbedding} to~\ref{condition: not K-kappa in Hechler Extensions} follows directly from Corollary~\ref{corollary:ProfPerfEmbImpliesNotKKappa} 
 and the observation that a $\exists^x$-superperfect embedding into $T$ remains $\exists^x$-superperfect after forcing with a ${<}\kappa$-distributive forcing.  
 Finally, assume that~\ref{condition: not K-kappa in generic extension} holds and let $G$ be $\HH(\kappa)$-generic over $\VV$ with $p\in G$. By our assumptions, $\kappa$ is weakly compact in $\VV[G]$ 
 and $p[T]$ is not contained in a $\K{\kappa}$ subset of ${}^\kappa\kappa$. In this situation, Lemma~\ref{lemma:bounded} shows that $p[T]$ is not eventually bounded in $\VV[G]$, and thus there is an $x\in p[T]$ with $x\not\leq^*\dot{h}^G$. 
 This shows that~\ref{condition: existence of name} holds in this case. 
\end{proof}

Note that if $\kappa$ is an inaccessible cardinal, then  the only implication in Lemma~\ref{lemma: equivalent conditions for superperfect embedding} 
requiring the assumption $p\Vdash\anf{\text{$\check{\kappa}$ is weakly compact}}$ is 
\ref{condition: not K-kappa in generic extension} \( \Rightarrow \)~\ref{condition: existence of name}. 

We are now ready to prove Theorem~\ref{theorem:Cons General} by combining all the previous results from this section.

\begin{proof}[Proof of Theorem~\ref{theorem:Cons General}]
Let
 \[
 \vec{\PP}  =  \langle\seq{\vec{\PP}_{{<}\alpha}}{\alpha\leq\kappa^+},\seq{\dot{\PP}_\alpha}{\alpha<\kappa^+}\rangle
\]
be the ${<}\kappa$-support forcing iteration such that the following statements hold for all $\alpha<\lambda$.
\begin{enumerate-(a)}
 \item If $\alpha$ is even, then $\mathbbm{1}_{\vec{\PP}_{{<}\alpha}}\Vdash\anf{\dot{\PP}_\alpha=\HH(\check{\kappa})}$.  

 \item If $\alpha$ is odd, then $\mathbbm{1}_{\vec{\PP}_{{<}\alpha}}\Vdash\anf{\dot{\PP}_\alpha=\KK({}^{\check{\kappa}}\check{\kappa})}$. 
\end{enumerate-(a)}
 We claim that there is an isomorphic copy \( \PP(\kappa) \) of $\PP_{{<}\kappa^+}$ that satisfies the statements listed in the theorem.  

 By Lemma~\ref{lemma:IterationCouplingRepresentable}, we know that for each $\alpha\leq\kappa^+$ the partial order $\vec{\PP}_{{<}\alpha}$ is ${<}\kappa$-directed closed and $\kappa^+$-Knaster, because it is a ${<}\kappa$-support iteration of ${<}\kappa$-directed closed and strongly $\kappa$-linked forcings. This shows that~\ref{theorem:Cons General-1} of Theorem~\ref{theorem:Cons General} is satisfied.

 In order to show that also~\ref{theorem:Cons General-2} of Theorem~\ref{theorem:Cons General} is satisfied, it suffices to show that each proper initial segment $\vec{\PP}_{{<}\alpha}$ of our iteration is a subset of $\HHH{\kappa^+}$ 
 and those initial segments are uniformly definable over $\langle\HHH{\kappa^+},\in\rangle$ in the parameter $\alpha$, because we can then replace $\vec{\PP}_{{<}\kappa^+}$ by 
 an isomorphic copy $\PP(\kappa)$ that is a definable subset of $\HHH{\kappa^+}$.  
 Let $\alpha<\kappa^+$ such that $\vec{\PP}_{{<}\bar{\alpha}}\subseteq\HHH{\kappa^+}$ for every $\bar{\alpha}<\alpha$. 
 If $\alpha\in\Lim$, then this assumption directly implies that $\vec{\PP}_{{<}\alpha}$ is a subset of $\HHH{\kappa^+}$. 
 Now assume that instead $\alpha=\bar{\alpha}+1$. If $\dot{q}$ is a $\vec{\PP}_{{<}\bar{\alpha}}$-name for a condition in $\vec{\PP}_{\bar{\alpha}}$, then $\dot{q}$ is forced to be an element of $\HHH{\kappa^+}$ 
 and hence we can find an equivalent canonical name (see \cite[Fact 3.6]{MR1234283}) that is an element of $\HHH{\kappa^+}$, because $\vec{\PP}_{{<}\bar{\alpha}}$ satisfies the $\kappa^+$-chain condition. 
 This shows that also in this case $\vec{\PP}_{{<}\alpha}\subseteq\HHH{\kappa^+}$. Next, note that the domains and the orderings of both $\KK({}^\kappa\kappa)$ and $\HH(\kappa)$ are definable over $\langle\HHH{\kappa^+},\in\rangle$ 
 by $\Sigma_1$-formulas with parameter $\kappa$. Hence, $\vec{\PP}_{{<}\alpha+1}$ is uniformly definable over $\langle\HHH{\kappa^+},\in,\vec{\PP}_{{<}\alpha}\rangle$. 
 This allows us to conclude that the initial segments of our iteration are uniformly definable over $\langle\HHH{\kappa^+},\in\rangle$. 

Finally, let us show that~\ref{theorem:Cons General-3} of Theorem~\ref{theorem:Cons General} is satisfied as well.
 Let $G$ be $\vec{\PP}_{{<}\kappa^+}$-generic over $\VV$. Given $\alpha<\kappa^+$, let $G_\alpha$ denote the filter on $\vec{\PP}_{{<}\alpha}$ induced by $G$, and $G^\alpha$ denote the filter in $\dot{\PP}_\alpha^{G_\alpha}$ induced by $G$. 
 Fix a subtree $T \subseteq {}^{{<}\kappa}\kappa\times{}^{{<}\kappa}\kappa$ in $\VV[G]$ such that in $\VV[G]$ the set $p[T]$ is not contained in a $\K{\kappa}$ subset of ${}^\kappa\kappa$ . 
 Since $\vec{\PP}_{{<}\kappa^+}$ satisfies the $\kappa^+$-chain condition in $\VV$, there is an $\alpha_0<\kappa^+$ with $T\in\VV[G_{\alpha_0}]$. We distinguish two cases, depending on whether \( \kappa \) is weakly compact in the forcing extension or not.

 First assume that $\kappa$ is weakly compact in $\VV[G]$.   
 By Lemma~\ref{lemma:bounded}, our assumptions imply that $p[T]$ is not eventually bounded in $\VV[G]$.  Pick an even $\alpha_0<\alpha<\kappa^+$. 
 Then we know that $\kappa$ is weakly compact in $\VV[G_\alpha,G^\alpha]$ and $p[T]$ is not eventually bounded in $\VV[G_\alpha,G^\alpha]$, 
 because $\VV[G]$ is a forcing extension of $\VV[G_\alpha,G^\alpha]$ by a ${<}\kappa$-closed forcing and both statements can be expressed by $\Pi_1$ (hence downward absolute) formulas over $\langle\HHH{\kappa^+},\in\rangle$.  
 This shows that there is a condition $p\in G^\alpha\subseteq\HH(\kappa)^{\VV[G_\alpha]}$ such that  
 \[
p \Vdash \anf{\text{$\kappa$ is weakly compact} \,  \wedge \,  \exists x\in p[\check{T}] \, (x\not\leq^*\dot{h})}
\]
 holds in $\VV[G_\alpha]$. By Lemma~\ref{lemma: equivalent conditions for superperfect embedding} there is a $\exists^x$-superperfect embedding into $T$ in $\VV[G_\alpha]$, and this embedding remains $\exists^x$-superperfect in $\VV[G]$. 
 By Proposition~\ref{proposition:superperfect embedding}  we can conclude that  in $\VV[G]$ the set $p[T]$ contains the body of a \( \kappa \)-Miller tree, which is a closed subset homeomorphic to ${}^\kappa\kappa$ by Proposition~\ref{proposition:MillerTreeHomeoKappaKappa}.

 Now assume that $\kappa$ is not weakly compact in $\VV[G]$. Pick an odd $\alpha_0<\alpha<\kappa^+$. 
 Since $\dot{\PP}_\alpha^{G_\alpha}=\KK({}^\kappa\kappa)^{\VV[G_\alpha]}$ and $\VV[G]$ is a forcing extension of $\VV[G_\alpha,G^\alpha]$ by a ${<}\kappa$-closed forcing, 
 Lemma~\ref{lemma:CodedSetIsKKappa} implies that $({}^\kappa\kappa)^{\VV[G_\alpha]}$ is a $\K{\kappa}$ subset of ${}^\kappa\kappa$ in $\VV[G]$, and therefore $p[T]^{\VV[G_\alpha]}\subsetneq p[T]^{\VV[G]}$. 
By Lemma~\ref{lemma: characterization exists-perfect embedding}, it follows that in $\VV[G_\alpha]$ there is a $\exists^x$-perfect embedding into $T$,  and this embedding remains $\exists^x$-perfect in $\VV[G]$. 
 Hence by Proposition~\ref{proposition:perfect embedding} the set  $p[T]$ contains a closed subset homeomorphic to ${}^\kappa 2$ in $\VV[G]$. 
 Since $\kappa$ is not weakly compact in $\VV[G]$, by Corollary~\ref{corollary:RevNegroPonte} such a set is also homeomorphic to ${}^\kappa\kappa$ in $\VV[G]$ and we are done. 
\end{proof}

\subsection{The Hurewicz dichotomy in Silver models}

A small variation of the above argument allows us to also show that every $\mathbf{\Sigma}^1_1$ subset of ${}^\kappa\kappa$ satisfies the 
Hurewicz dichotomy after forcing with $\mathrm{Col}(\kappa,\lambda)$ whenever $\lambda$ is an inaccessible cardinal greater than $\kappa$ (independently of whether \( \kappa \) is weakly compact in the forcing extension or not). 

\begin{theorem}\label{theorem:HDinCollapse}
 Let $\kappa$ be an uncountable regular cardinal, $\lambda>\kappa$ be an inaccessible cardinal and $G$ be $\mathrm{Col}(\kappa,\lambda)$-generic over $\VV$. 
 Then in $\VV[G]$ every $\mathbf{\Sigma}^1_1$ subset of ${}^\kappa\kappa$ satisfies the Hurewicz dichotomy.  
\end{theorem}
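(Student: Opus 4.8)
The plan is to run the same dichotomy-via-name argument used for Theorem~\ref{theorem:Cons General}, exploiting the homogeneity and factoring properties of the Levy collapse $\mathrm{Col}(\kappa,\lambda)$ together with the inaccessibility of $\lambda$. Fix in $\VV[G]$ a $\mathbf{\Sigma}^1_1$ set $A = p[T]$ for some subtree $T \subseteq \pre{<\kappa}{\kappa} \times \pre{<\kappa}{\kappa}$. By the $\kappa^+$-c.c.\ of $\mathrm{Col}(\kappa,\lambda)$ (here $\lambda^{<\kappa}=\lambda$ so the collapse has size $\lambda$ and is $\kappa^+$-c.c.), there is $\mu < \lambda$ such that $T \in \VV[G_\mu]$, where $G_\mu$ is the restriction of $G$ to $\mathrm{Col}(\kappa,\mu)$; and $\VV[G]$ is a $\mathrm{Col}(\kappa,\lambda)$-generic (equivalently, a further $\mathrm{Col}(\kappa,\mu')$-type) extension of $\VV[G_\mu]$ for a suitable tail. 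The key point is that $\lambda$ is still inaccessible — indeed a sufficiently large cardinal — from the point of view of $\VV[G_\mu]$, so the collapse $\mathrm{Col}(\kappa,\lambda)$ (as computed in $\VV[G_\mu]$) witnesses that "many reals are added" in the strong sense needed below.

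\textbf{First}, I would split into the two familiar cases according to whether $\kappa$ is weakly compact in $\VV[G]$. If $\kappa$ is \emph{not} weakly compact in $\VV[G]$: if $p[T]$ is not contained in a $\K{\kappa}$ set, then in particular $p[T]^{\VV[G_\mu]} \subsetneq p[T]^{\VV[G]}$ — because $\mathrm{Col}(\kappa,\lambda)$ over $\VV[G_\mu]$ is ${<}\kappa$-closed and adds a new subset of $\kappa$ (in fact collapses $\lambda$, so certainly a new element of $p[T]$ must appear, using that being covered by a $\K\kappa$ set is preserved and Lemma~\ref{lemma:CodedSetIsKKappa}-style reasoning, or more directly: if no new branch appeared the argument below would make $p[T]$ eventually bounded hence $\K\kappa$). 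Then Lemma~\ref{lemma: characterization exists-perfect embedding} gives a $\exists^x$-perfect embedding into $T$ in $\VV[G_\mu]$, which remains $\exists^x$-perfect in $\VV[G]$, so by Proposition~\ref{proposition:perfect embedding} the set $p[T]$ contains a closed homeomorphic copy of $\pre{\kappa}{2}$, hence of $\pre{\kappa}{\kappa}$ by Corollary~\ref{corollary:RevNegroPonte}. If $\kappa$ \emph{is} weakly compact in $\VV[G]$: by Lemma~\ref{lemma:bounded}, not being contained in a $\K{\kappa}$ set means $p[T]$ is not eventually bounded in $\VV[G]$. I would then argue that already in $\VV[G_\mu]$ there is a condition forcing (over $\mathrm{Col}(\kappa,\lambda)$, or over $\HH(\kappa)$ after first factoring in a Hechler-type step — since the collapse adds a dominating family this is automatic) that $\kappa$ remains weakly compact and that $\exists x \in p[\check T]\,(x \not\leq^* \dot h)$; the relevant instance of Lemma~\ref{lemma: equivalent conditions for superperfect embedding} (specifically \ref{condition: not K-kappa in generic extension} $\Rightarrow$ \ref{condition: existence of name}, the only one needing weak compactness) then yields a $\exists^x$-superperfect embedding into $T$ in $\VV[G_\mu]$, which stays superperfect in $\VV[G]$, whence by Propositions~\ref{proposition:superperfect embedding} and~\ref{proposition:MillerTreeHomeoKappaKappa} the set $p[T]$ contains the body of a $\kappa$-Miller tree, a closed homeomorphic copy of $\pre{\kappa}{\kappa}$.

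\textbf{The main obstacle} I anticipate is making the factoring clean: I need that $\VV[G]$ is obtained from $\VV[G_\mu]$ by a forcing that (a) is ${<}\kappa$-closed and (b) is exactly (a tail of) the Levy collapse as recomputed in $\VV[G_\mu]$, so that the "$\kappa$ weakly compact after ${<}\kappa$-closed forcing" and "new element of $p[T]$ appears" hypotheses transfer correctly; and I need to check that the weak-compactness case does not require the tail forcing to literally be $\HH(\kappa)$ — here I would either observe that $\mathrm{Col}(\kappa,\lambda)$ densely absorbs a $\kappa$-Hechler step (so the bounding function $\dot h$ can be extracted inside $\VV[G]$), or simply rerun the proof of the implication \ref{condition: not K-kappa in generic extension} $\Rightarrow$ \ref{condition: ProjPerfectEmbedding} of Lemma~\ref{lemma: equivalent conditions for superperfect embedding} directly for $\PP = \mathrm{Col}(\kappa,\lambda)$, using Lemma~\ref{lemma:Superperfect Embedding from Splitting names} with a dominating real for the collapse in place of $\dot h$. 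The inaccessibility (indeed: the fact that $\lambda$ can be chosen much larger than anything relevant) of $\lambda$ over $\VV[G_\mu]$ is what guarantees the ${<}\kappa$-closed tail adds the required new branch, so this is where that hypothesis is used; everything else is bookkeeping with the homogeneity of $\mathrm{Col}(\kappa,\lambda)$ and $\kappa^+$-c.c.\ nice-name arguments already invoked in the proof of Theorem~\ref{theorem:Cons General}.
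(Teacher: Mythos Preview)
Your approach is essentially the paper's: factor through an intermediate model $\WW$ containing $T$, split on whether $\kappa$ is weakly compact in $\VV[G]$, and in the weakly compact case absorb a copy of $\HH(\kappa)$ into the tail collapse to invoke Lemma~\ref{lemma: equivalent conditions for superperfect embedding}. Two small corrections: first, $\mathrm{Col}(\kappa,\lambda)$ is $\lambda$-c.c.\ (not $\kappa^+$-c.c.), which is what you actually need to find $\mu<\lambda$ with $T\in\VV[G_\mu]$; second, in the non-weakly compact case the clean reason for $p[T]^{\WW}\subsetneq p[T]^{\VV[G]}$ is simply that $({}^\kappa\kappa)^{\WW}$ has cardinality $\kappa$ in $\VV[G]$ (since $\lambda$ is collapsed), hence $p[T]^{\WW}$ is a $\K\kappa$ set there, while $p[T]^{\VV[G]}$ is not --- your references to Lemma~\ref{lemma:CodedSetIsKKappa} and eventual boundedness are unnecessary here.
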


\begin{proof}
 Let 
$T \subseteq {}^{{<}\kappa}\kappa\times{}^{{<}\kappa}\kappa$ be a subtree in $\VV[G]$ such that $p[T]$ is not contained in a $\K{\kappa}$ subset of ${}^\kappa\kappa$ in $\VV[G]$. 
 Then there is a forcing extension $\WW$ of $\VV$ such that $T\in\WW$ and $\VV[G]=\WW[H]$ is a $\mathrm{Col}(\kappa,\lambda)$-generic extension of $\WW$. 
 First, assume that $\kappa$ is not weakly compact in $\VV[G]$. Since $({}^\kappa\kappa)^\WW$ has cardinality $\kappa$ in $\VV[G]$, 
 our assumption implies $p[T]^\WW\subsetneq p[T]^{\VV[G]}$, and 
 by Corollary~\ref{corollary:RevNegroPonte} and Proposition~\ref{proposition:perfect embedding}
 we can conclude that  $p[T]$ contains a closed subset homeomorphic to ${}^\kappa\kappa$ in $\VV[G]$. 

 Now assume that $\kappa$ is weakly compact in $\VV[G]$. By Lemma~\ref{lemma:bounded}, we know that $p[T]$ is not  eventually bounded in $\VV[G]$. 
 Then there are $H_0,H_1\in\VV[G]$ such that $H_0$ is $\HH(\kappa)$-generic over $\WW$, 
 $H_1$ is $\mathrm{Col}(\kappa,\lambda)$-generic over $\WW[H_0]$ and $\VV[G]=\WW[H_0,H_1]$. Since $\VV[G]$ is an extension of $\WW[H_0]$ by a ${<}\kappa$-closed forcing, 
 we know that $p[T]$ is not eventually bounded in $\WW[H_0]$. In particular, there is an $x\in p[T]^{\WW[H_0]}$ with $x\not\leq^*\dot{h}^{H_0}$. 
 By Proposition~\ref{proposition:MillerTreeHomeoKappaKappa} and Lemma~\ref{lemma: equivalent conditions for superperfect embedding}, $p[T]$ contains a closed subset 
 homeomorphic to ${}^\kappa\kappa$ in $\VV[G]$. 
\end{proof}


\section{The Hurewicz dichotomy at all regular cardinals}\label{section:Global}

This section is devoted to the proof of Theorem~\ref{theorem:GlobalHD}: the class forcing \( \vec{\PP} \) in its statement will be just an Easton support iteration of the partial order given by Theorem~\ref{theorem:Cons General}.
We will first concentrate on the forcing preservation statements listed in the theorem; actually, we will prove those statements for a larger and broad class of forcing iterations.

In the following, we say that a class-sized forcing iteration 
\[
\vec{\PP}  =  \langle\seq{\vec{\PP}_{{<}\alpha}}{\alpha\in\On},\seq{\dot{\PP}_\alpha}{\alpha\in\On}\rangle
\]  
is \emph{suitable} if it has Easton support and satisfies the following statements  for every $\nu\in\On$. 

 \begin{enumerate-(1)}
  \item If $\nu$ is an uncountable regular cardinal, then $\dot{\PP}_\nu$ is forced to be a ${<}\nu$-directed closed partial order satisfying the $\nu^+$-chain condition 
   that is a subset of $\HHH{\nu^+}$ and uniformly definable over $\langle\HHH{\nu^+},\in\rangle$. 

  \item If $\nu$ is not an uncountable regular cardinal, then $\dot{\PP}_\nu$ is the canonical $\vec{\PP}_{{<}\nu}$-name for the trivial forcing.  
 \end{enumerate-(1)}

\begin{lemma}\label{lemma:Class iteration propoerties}
 Assume that the $\mathsf{GCH}$ holds. Let $\vec{\PP}$ be a suitable iteration. 
 Given $\alpha\leq\beta$, we let $\dot{\PP}_{[\alpha,\beta)}$ denote the canonical $\vec{\PP}_{{<}\alpha}$-name for the corresponding tail forcing. 
 The following statements hold for every uncountable regular cardinal $\delta$. 
 \begin{enumerate-(1)}
  \item The partial order $\vec{\PP}_{{<}\delta}$ has cardinality at most $\delta$ and $\vec{\PP}_{{<}\delta+1}$ satisfies  the $\delta^+$-chain condition. 

  \item If $\alpha<\delta$, then $\vec{\PP}_{{<}\alpha}$ is a subset of $\HHH{\delta}$ that is uniformly definable over $\langle\HHH{\delta},\in\rangle$ in the parameter $\alpha$. 
   
  \item\label{statement:TailsClosed} If  $\alpha\geq\delta$, then $\mathbbm{1}_{\vec{\PP}_{{<}\delta+1}}\Vdash\anf{\text{$\dot{\PP}_{[\delta+1,\alpha)}$ is ${<}\check{\delta}^+$-directed closed}}$.
  
   \item\label{statement:CofinalityPreserving} Forcing with $\vec{\PP}$ preserves all cofinalities and the $\mathsf{GCH}$.   
 \end{enumerate-(1)}
\end{lemma}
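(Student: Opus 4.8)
The plan is to establish statements~(1)--(3) by a simultaneous induction on the uncountable regular cardinal $\delta$, and then to deduce~(4) from them. Throughout I will use the standard structural features of Easton support iterations (see \cite{MR2768691}): at an inaccessible cardinal $\lambda$ one forms a direct limit, so that every condition in $\vec\PP_{{<}\lambda}$ has support bounded below $\lambda$; at all other limit ordinals one forms an inverse limit; a union of fewer than $\theta$ Easton supports, all of whose relevant inaccessibles have cofinality ${\geq}\theta$, is again an Easton support; and consequently any tail of the iteration all of whose iterands are forced to be ${<}\theta$-directed closed is itself forced to be ${<}\theta$-directed closed. I will also freely use that, for regular $\theta$, a forcing of size ${<}\theta$ followed by a forcing that is forced to be $\theta$-cc is again $\theta$-cc.

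For the inductive step of~(1) and~(2), fix $\delta$ and assume that for every uncountable regular $\nu<\delta$ the forcing $\vec\PP_{{<}\nu}$ is $\nu^+$-cc and has a dense subset of size ${\leq}\nu$ contained in $\HHH\nu$. For such a $\nu$, the $\nu^+$-cc of $\vec\PP_{{<}\nu}$, the $\mathsf{GCH}$, and the fact that $\dot\PP_\nu$ is forced to be a subset of $\HHH{\nu^+}$ imply that, up to forced equality, there are at most $\nu^+$-many $\vec\PP_{{<}\nu}$-names for conditions in $\dot\PP_\nu$, each choosable as a canonical name lying in $\HHH{\nu^+}\subseteq\HHH\delta$ (cf.\ \cite[Fact 3.6]{MR1234283}). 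A routine counting argument, treating separately the cases ``$\delta$ inaccessible'' (where $\vec\PP_{{<}\delta}=\bigcup_{\gamma<\delta}\vec\PP_{{<}\gamma}$ together with the strong limit property of $\delta$ suffice) and ``$\delta=\mu^+$ a successor cardinal'' (where one bounds the number of conditions directly, using $(\mu^+)^\mu=\mu^+$), then produces a dense subset of $\vec\PP_{{<}\delta}$ of size ${\leq}\delta$; since each of its conditions has support of size ${<}\delta$ and all coordinates below $\delta$, it is contained in $\HHH\delta$. Unwinding the recursion that defines the iteration --- exactly as the relevant $\Sigma_1$-definability was verified in the proof of Theorem~\ref{theorem:Cons General} --- shows that $\vec\PP_{{<}\alpha}$ is uniformly definable over $\langle\HHH\delta,\in\rangle$ in the parameter $\alpha$ for every $\alpha<\delta$, which is~(2). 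Finally $\vec\PP_{{<}\delta+1}=\vec\PP_{{<}\delta}*\dot\PP_\delta$ is a forcing of size ${\leq}\delta$ followed by a forced-$\delta^+$-cc one, hence $\delta^+$-cc; this closes the induction and yields~(1).

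For~(3), pass to a $\vec\PP_{{<}\delta+1}$-generic extension. There $\dot\PP_{[\delta+1,\alpha)}$ is an Easton support iteration whose iterands are forced to be ${<}\nu$-directed closed for $\nu\in[\delta+1,\alpha)$, hence a fortiori ${<}\delta^+$-directed closed; since every inaccessible in $(\delta,\alpha]$ has cofinality ${\geq}\delta^+$, the structural fact from the first paragraph (with $\theta=\delta^+$) shows that $\dot\PP_{[\delta+1,\alpha)}$ itself is ${<}\delta^+$-directed closed.

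For~(4) it suffices to show that every cardinal regular in $\VV$ remains regular in the extension; all cardinals, and hence all cofinalities, are then preserved (a singular ordinal keeps its ground-model cofinal set, whose order type does not change). This is trivial for $\omega$, so fix $\delta$ uncountable regular and factor $\vec\PP\cong\vec\PP_{{<}\delta}*\dot{\mathbb T}$. By the class-length version of the fact used for~(3), $\dot{\mathbb T}$ is forced to be ${<}\delta$-closed, so it adds no ${<}\delta$-sequences of ordinals and keeps $\delta$ regular; thus it remains to check that $\vec\PP_{{<}\delta}$ preserves all cardinals. If $\delta$ is inaccessible, $\vec\PP_{{<}\delta}$ is $\delta$-cc by a standard $\Delta$-system argument (using inaccessibility and the smallness of the iterands), so it preserves cofinalities ${\geq}\delta$, while those ${<}\delta$ are handled by the inductive hypothesis. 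If $\delta=\mu^+$ with $\mu$ regular, then $\vec\PP_{{<}\delta}=\vec\PP_{{<}\mu+1}=\vec\PP_{{<}\mu}*\dot\PP_\mu$ is a forcing of size ${\leq}\mu$ followed by a $\mu^+$-cc one, hence $\mu^+$-cc, and its ${<}\mu$-directed closed second factor together with the inductive hypothesis handles cofinalities ${\leq}\mu$. If $\delta=\mu^+$ with $\mu$ singular, one telescopes $\vec\PP_{{<}\mu}=\vec\PP_{{<}\rho+1}*\dot\PP_{[\rho+1,\mu)}$ over regular $\rho<\mu$, using that $\dot\PP_{[\rho+1,\mu)}$ is ${<}\rho^+$-closed by~(3) and that every $\vec\PP_{{<}\rho+1}$ is $\rho^+$-cc, to conclude that no cardinal ${\leq}\mu^+$ is collapsed. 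The $\mathsf{GCH}$ is then preserved by splitting $\vec\PP$ at $\delta$: the initial factor $\vec\PP_{{<}\delta}$, of size ${\leq}\delta$, keeps $2^\delta=\delta^+$ (it admits at most $(\delta^+)^\delta=\delta^+$ nice names for subsets of $\delta$), the next iterand $\dot\PP_\delta$ --- being $\delta^+$-cc and of size ${\leq}\delta^+$ --- keeps $2^\delta=\delta^+$ for the same reason, and the remaining tail is ${<}\delta^+$-closed, hence adds no new subset of $\delta$. The main difficulty throughout is the cardinal bookkeeping: one has to make the Easton support conventions and the $\mathsf{GCH}$ interact so that the sizes and chain conditions land exactly on target, which forces the three-way case distinction above according to whether $\delta$ is inaccessible, the successor of a regular cardinal, or the successor of a singular cardinal --- the last genuinely relying on the telescoping furnished by~(3) --- and additionally requires the (standard) verification that the class iteration is tame enough for the factorizations used in~(4).
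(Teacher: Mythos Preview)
Your argument is essentially correct and follows the same strategy as the paper: establish (1)--(3) by induction using the definability and smallness of the iterands together with the standard Easton-support machinery, and then deduce (4) by factoring the iteration at each regular cardinal. The paper's treatment of cofinality preservation is organized as a minimal-counterexample argument rather than a direct case split, but your handling of the singular-successor case via ``telescoping'' is the same idea---one ultimately factors at $\rho=\bar\nu$, where $\bar\nu$ is the putative new cofinality of $\mu^+$, and uses ${<}\bar\nu^+$-closure of the tail to push the short cofinal sequence down into the $\bar\nu^+$-cc initial segment. (You might make that choice of $\rho$ explicit; as written, ``over regular $\rho<\mu$'' slightly obscures the point.)

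There is, however, one genuine omission: you verify $2^\delta=\delta^+$ only for regular $\delta$, and $\mathsf{GCH}$ at singular cardinals does not follow from $\mathsf{GCH}$ at regular cardinals alone. The paper handles this explicitly: for singular $\nu$ with $\cof(\nu)=\theta$, factor $\vec{\PP}_{{<}\nu+1}$ at $\theta+1$; by (3) the tail is ${<}\theta^+$-closed, so every element of $({}^{\theta}\nu)^{\VV[G]}$ already lies in the intermediate extension $\VV[\bar G]$ by $\vec{\PP}_{{<}\theta+1}$, whence
\[
(2^\nu)^{\VV[G]}=(\nu^{\theta})^{\VV[G]}\leq(\nu^{\theta})^{\VV[\bar G]}=(2^\nu)^{\VV[\bar G]}=(\nu^+)^{\VV[\bar G]}=(\nu^+)^{\VV[G]},
\]
the last equalities using that $\vec{\PP}_{{<}\theta+1}$ has size ${<}\nu$ and that cardinals are preserved. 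This is exactly the telescoping you already invoke for cofinalities, so the fix is routine, but as written your proof does not cover this case.
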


\begin{proof}
 The first two statements inductively follow from our assumptions as in the proof of Theorem~\ref{theorem:Cons General}. 
 The third statement is a consequence of the first statement and {\cite[Proposition 7.12]{MR2768691}}. 
 
 Now, assume towards a contradiction, that forcing with $\vec{\PP}$ can change the cofinality of a regular cardinal. 
 Then there is a minimal cardinal $\nu$ such that forcing with $\vec{\PP}_{{<}\nu}$ can change such cofinalities. 
 By the first three statements, $\nu$ is a limit cardinal and forcing with $\nu$ preserves cofinalities greater than $(\nu^{{<}\nu})$ and less than or equal to $\nu$. 
 Hence $\nu$ is singular and forcing with $\vec{\PP}_{{<}\nu}$ can change the cofinality of $\nu^+$ to some regular cardinal $\bar{\nu}<\nu$. 
 By the third statement, this implies that forcing with $\vec{\PP}_{{<}\bar{\nu}+1}$ can change the cofinality of $\nu^+$, a contradiction.  
 
 Finally, let $\nu$ be an uncountable cardinal. Since $\vec{\PP}_{{<}\alpha}$ is $\sigma$-closed for every ordinal $\alpha$, forcing with $\vec{\PP}$ preserves $\mathsf{CH}$. 
 If $\nu$ is regular than the partial order $\vec{\PP}_{{<}\nu+1}$ has cardinality $\nu^+$ and satisfies the $\nu^+$-chain condition. 
 By the third statement, this shows that forcing with $\PP$ preserves the $\mathsf{GCH}$ at $\delta$. 
 Now, assume that $\nu$ is a singular cardinal. Let $G$ be $\vec{\PP}_{{<}\nu+1}$-generic over $\VV$ and $\bar{G}$ be the filter on $\vec{\PP}_{{<}\cof(\nu)}$ induced by $G$. 
 By the third statement, we have $({}^{\cof(\nu)}\nu)^{\VV[G]}\subseteq \VV[\bar{G}]$ and this implies 
 \[
(2^\nu)^{\VV[G]}=(\nu^{\cof(\nu)})^{\VV[G]}\leq(\nu^{\cof(\nu)})^{\VV[\bar{G}]}=(2^\nu)^{\VV[\bar{G}]}=(\nu^+)^{\VV[\bar{G}]}=(\nu^+)^{\VV[G]},
\] 
 because $\vec{\PP}_{{<}\bar{\nu}+1}$ has cardinality less than $\nu$ and forcing with $\vec{\PP}_{{<}\nu+1}$ preserves all cardinals.  
\end{proof}

In the following, we consider generalizations of weak compactness motivated by the definition of unfoldable cardinals in~\cite{MR1649079}. 
Remember that, given a regular cardinal $\delta$, a \emph{$\delta$-model} is a transitive model of $\ZFC^-$ of cardinality $\delta$ that is closed under ${<}\delta$-sequences.

\begin{definition}
 Let $\delta\leq\mu\leq\nu$ be cardinals with $\mu=\mu^{{<}\mu}$. We say that $\delta$ is \emph{$(\mu,\nu)$-supercompact} 
 if, for every $\mu$-model $M$ there is a transitive model $N$ and an elementary embedding $\map{j}{M}{N}$ with critical point $\delta$ such that ${}^\nu N\subseteq N$ and $j(\delta)>\nu$.  
\end{definition}

Note that, if the $\mathsf{GCH}$ holds, then we may assume that the model $N$ has cardinality $\nu^+$ in the above definition by repeatedly forming Skolem hulls and closures under $\nu$-sequences.

Given an ordinal $\theta$, we say that a cardinal $\delta\leq\theta$ is \emph{$\theta$-strongly unfoldable} if for every $\delta$-model $M$, there is a transitive set $N$ and an elementary embedding $\map{j}{M}{N}$ with critical point $\delta$  
such that $\VV_\theta\subseteq N$ and $j(\delta)>\theta$. A cardinal $\delta$ is \emph{strongly unfoldable} if it is $\theta$-strongly unfoldable for all $\theta\geq\delta$. 
Note that $\theta$-strongly unfoldable cardinals are weakly compact.

The following result due to D{\u{z}}amonja and Hamkins shows that we can use the above definition to characterize strongly unfoldable cardinals in models of the $\mathsf{GCH}$.

\begin{lemma}[{\cite[Lemma 5]{MR2279655}}] 
 A cardinal $\delta$ is $(\theta+1)$-strongly unfoldable if and only if it is $(\delta,\beth_\theta)$-supercompact. 
\end{lemma}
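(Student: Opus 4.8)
The plan is to prove both directions by relating $\delta$-models witnessing $(\theta+1)$-strong unfoldability to $\beth_\theta$-models witnessing $(\delta,\beth_\theta)$-supercompactness, using the $\mathsf{GCH}$ to pass between the set $\VV_\theta$ (equivalently $\HHH{\beth_\theta}$) and transitive models of $\ZFC^-$ of the right size. The key observation is that under $\mathsf{GCH}$ we have $|\VV_\theta| = \beth_\theta$ and, more importantly, $\VV_{\theta+1}$ is (essentially) a $\beth_\theta$-model, so the ambient set over which the embedding must be "large" has the same size as the index cardinal in the supercompactness definition.

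\textbf{From $(\theta+1)$-strong unfoldability to $(\delta,\beth_\theta)$-supercompactness.} Suppose $\delta$ is $(\theta+1)$-strongly unfoldable and let $\mu = \beth_\theta$; note $\mu^{<\mu} = \mu$ under $\mathsf{GCH}$. Given a $\mu$-model $M$, I would first enlarge $M$ to a $\delta$-model in a controlled way — actually the cleaner route is: since $M$ has size $\mu = \beth_\theta$ and is closed under ${<}\mu$-sequences, build a $\delta$-model $\bar M$ (of size $\delta$, closed under ${<}\delta$-sequences) with $\bar M \in \VV_{\theta+1}$ and $M \in \bar M$ or at least with enough of $M$ coded into $\bar M$; here one uses that $\delta$ is inaccessible (being weakly compact) so $\VV_\delta \models \ZFC$ and Löwenheim–Skolem hulls of size $\delta$ inside $\VV_\theta$ are available. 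Apply $(\theta+1)$-strong unfoldability to get $\map{j}{\bar M}{\bar N}$ with critical point $\delta$, $\VV_{\theta+1} \subseteq \bar N$ (using $(\theta+1)$ rather than just $\theta$), and $j(\delta) > \theta \geq \mu$. Since $\VV_{\theta+1} \subseteq \bar N$ and $M \in \VV_{\theta+1}$, we get $M \in \bar N$, and $j \restriction M \in \VV_{\theta+1} \subseteq \bar N$ as well (as $M$ has size $\mu = \beth_\theta$, the function $j\restriction M$ is coded by a subset of $\VV_\theta$), so $j\restriction M$ is a genuine elementary embedding definable from objects in $\bar N$; restricting, $\map{j\restriction M}{M}{j(M)}$ works, and $j(M)$ is transitive and contains $\VV_\theta \cap j(M)$, hence is ${}^{\nu}$-closed for $\nu = \beth_\theta = \mu$ since $j(M)$ computes sequences correctly up to where $\VV_\theta$ reaches. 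Finally $j(\delta) > \theta \geq \beth_\theta = \nu$ gives the last clause.

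\textbf{From $(\delta,\beth_\theta)$-supercompactness to $(\theta+1)$-strong unfoldability.} Suppose $\delta$ is $(\delta, \beth_\theta)$-supercompact; set $\mu = \delta$ and $\nu = \beth_\theta$. Given a $\delta$-model $M$, apply the definition to obtain a transitive $N$ with $\map{j}{M}{N}$, $\mathrm{crit}(j) = \delta$, ${}^{\nu}N \subseteq N$, and $j(\delta) > \nu = \beth_\theta \geq \theta$. The task is to show $\VV_\theta \subseteq N$ (or at least $\VV_{\theta+1} \subseteq N$ as needed for the intended application — but the statement only asks for $\theta$-strong unfoldability at $\theta+1$, i.e. $\VV_{\theta+1} \subseteq N$; here I must be careful and show $\VV_{(\theta+1)}\subseteq N$ rather than merely $\VV_\theta\subseteq N$, which follows since $\beth_{\theta+1}$-closure of... wait, we only have $\beth_\theta$-closure). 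The right argument: by induction on $\alpha \leq \theta+1$ show $\VV_\alpha \subseteq N$, using that $|\VV_\alpha| \leq \beth_\alpha \leq \beth_\theta = \nu$ for $\alpha \leq \theta$ under $\mathsf{GCH}$, so $\VV_\alpha$, being a subset of $N$ of size $\leq \nu$ by the inductive hypothesis applied elementwise, is itself an element of $N$ by ${}^{\nu}$-closure; and since $N$ is transitive and contains all these $\VV_\alpha$ it contains $\VV_{\theta+1} = \bigcup_{\alpha \leq \theta}\pow(\VV_\alpha)$ provided it contains each $\pow(\VV_\alpha)$ — which again follows because $|\pow(\VV_\alpha)| = \beth_{\alpha+1} \leq \beth_\theta = \nu$ for $\alpha < \theta$, and for $\alpha = \theta$ we only need $\VV_\theta$ itself. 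Then $j(\delta) > \theta$ completes it.

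\textbf{Main obstacle.} The delicate point is the precise bookkeeping of $\beth$-indices and the $\theta$ versus $\theta+1$ shift: one must verify under $\mathsf{GCH}$ that $\beth_\theta$-closure of $N$ together with $|\VV_\alpha| \leq \beth_\theta$ for $\alpha\leq\theta$ is exactly enough to push $\VV_{\theta+1}$ into $N$, no more and no less, and conversely that $\VV_{\theta+1}\supseteq M$-sized objects carry the embedding $j\restriction M$ back into the target model. The other subtlety is ensuring, in the first direction, that the $\mu$-model $M$ can be absorbed into a $\delta$-model whose image under the unfoldability embedding retains the required ${}^{\beth_\theta}$-closure — but since we are handed the very strong closure $\VV_{\theta+1}\subseteq \bar N$, and $\beth_\theta$-sequences of elements of $\VV_\theta$ lie in $\VV_{\theta+1}$, this goes through. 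Since this lemma is quoted from \cite{MR2279655}, I would present the proof at the level of these structural observations and refer to that paper for the full verification of the routine induction.
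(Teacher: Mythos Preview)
The paper does not give its own proof of this lemma; it is quoted from \cite[Lemma~5]{MR2279655} and used as a black box. Your closing remark that one should ``refer to that paper for the full verification'' is therefore exactly what the paper itself does.

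That said, your first direction rests on a misreading of the definition. In the paper's notion of $(\mu,\nu)$-supercompactness the parameter $\mu$ governs the size of the \emph{source} model, so $(\delta,\beth_\theta)$-supercompactness is tested against $\delta$-models, not $\beth_\theta$-models. Both sides of the equivalence thus begin with an arbitrary $\delta$-model $M$; the difference lies entirely in what is demanded of the target $N$ (namely $\VV_{\theta+1}\subseteq N$ and $j(\delta)>\theta+1$ on one side, versus ${}^{\beth_\theta}N\subseteq N$ and $j(\delta)>\beth_\theta$ on the other). Your detour---starting from a $\beth_\theta$-model, then passing to a $\delta$-model $\bar M$ inside $\VV_{\theta+1}$, then applying unfoldability---is therefore built on a confusion and should be discarded. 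The correct first step is to apply $(\theta+1)$-strong unfoldability directly to the given $\delta$-model; the remaining work is to pass from $\VV_{\theta+1}\subseteq N$ to a target with ${}^{\beth_\theta}$-closure, which is done by the Skolem-hull-and-collapse manoeuvre the paper already mentions immediately after the definition of $(\mu,\nu)$-supercompactness (take an elementary submodel of $N$ containing $\VV_{\theta+1}\cup\ran{j}$, closed under ${<}\beth_\theta$-sequences, and collapse). One also needs $j(\delta)>\beth_\theta$, which does not follow from $j(\delta)>\theta+1$ alone; this is handled in \cite{MR2279655} by choosing $M$ rich enough that $\delta$ is seen to be a beth fixed point, so that elementarity pushes $j(\delta)$ above $\beth_\theta$.

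Your second direction is set up correctly (there you do take $\mu=\delta$), and the inductive argument that ${}^{\beth_\theta}N\subseteq N$ forces $\VV_{\theta+1}\subseteq N$ is the right one, despite the mid-proof hesitation.
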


Remember that, given cardinals $\delta\leq\mu$, we say that $\delta$ is \emph{$\mu$-supercompact} if there is a transitive class $M$ and an elementary embedding $\map{j}{\VV}{M}$  
with critical point $\delta$ such that ${}^\mu M\subseteq M$ and $j(\delta)>\mu$. Moreover, the existence of such an embedding is equivalent to the existence of a normal ultrafilter on $\mathrm{P}_\kappa(\mu)$.

\begin{proposition}
 Let $\delta\leq\mu$ be cardinals with $\mu=\mu^{{<}\mu}$. If $\delta$ is $\mu$-supercompact, then it is $(\mu,\mu)$-supercompact. 
\end{proposition}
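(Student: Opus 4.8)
The plan is to take the supercompactness embedding witnessing $\mu$-supercompactness of $\delta$ and simply restrict it to an arbitrary $\mu$-model. So fix an elementary embedding $\map{j}{\VV}{M}$ with critical point $\delta$ such that ${}^\mu M\subseteq M$ and $j(\delta)>\mu$, and let $\bar M$ be a $\mu$-model with $\delta\in\bar M$ (here one tacitly restricts to $\mu$-models $\bar M$ with $\delta\in\bar M$, as the requirement ``$j(\delta)>\nu$'' in the definition of $(\mu,\nu)$-supercompactness implicitly presupposes). Since $\bar M$ is transitive of cardinality $\mu$ and $\mathrm{Ord}\subseteq M$, the closure property ${}^\mu M\subseteq M$ gives that $\bar M\in M$, and likewise that $j\restriction\bar M\in M$, the latter being a set of ordered pairs of size $\mu$ all of whose members lie in $M$. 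I then propose to check that $N:=j(\bar M)$ and $k:=j\restriction\bar M$ witness that $\delta$ is $(\mu,\mu)$-supercompact.

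First I would verify that $N$ really is a transitive model of $\ZFC^-$ and that $k\colon\bar M\to N$ is an elementary embedding with critical point $\delta$ and $k(\delta)>\mu$. Transitivity and $\models\ZFC^-$ transfer from $M$ to $\VV$ because $N\in M$ is transitive. For elementarity, one uses that, since $\bar M,N\in M$ are transitive, the satisfaction relations of $\bar M$ and of $N$ (with parameters in the respective models) are absolute between $M$ and $\VV$; chaining this absoluteness with the elementarity of $j$ shows that $\bar M\models\varphi(a_0,\dots,a_{n-1})$ holds iff $N\models\varphi(k(a_0),\dots,k(a_{n-1}))$ holds, for every formula $\varphi$ and all $a_i\in\bar M$. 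Since $j$ fixes every ordinal below $\delta$ while $k(\delta)=j(\delta)>\mu\ge\delta$, the critical point of $k$ is exactly $\delta$, and $k(\delta)>\mu$ is immediate.

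The step I expect to require the most care is proving ${}^\mu N\subseteq N$ in $\VV$; the idea is to obtain it first inside $M$ and then transfer it down. Inside $M$: one checks that $M$ correctly recognizes $\bar M$ as a $\mu$-model --- this is where one needs that a bijection $\bar M\to\mu$ and all ${<}\mu$-sequences from $\bar M$ lie in $M$, which again follows from ${}^{{<}\mu}M\subseteq M$ together with $\mathrm{Ord}\subseteq M$ --- so by elementarity $M$ satisfies that $N=j(\bar M)$ is a $j(\mu)$-model, hence that $N$ is closed under sequences of length ${<}j(\mu)$. Since $\delta\le\mu$ yields $j(\delta)\le j(\mu)$ and $j(\delta)>\mu$, we have $\mu<j(\mu)$, so $M$ believes ${}^\mu N\subseteq N$. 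To conclude, given an arbitrary $\map{f}{\mu}{N}$ in $\VV$, note that $N\subseteq M$ and ${}^\mu M\subseteq M$ force $f\in M$, and then $M\models f\in N$ gives $f\in N$. This establishes ${}^\mu N\subseteq N$ in $\VV$, and combined with the previous paragraph it shows that $N$ and $k$ witness the $(\mu,\mu)$-supercompactness of $\delta$.
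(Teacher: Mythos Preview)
Your proposal is correct and follows essentially the same approach as the paper: take the $\mu$-supercompactness embedding $j$, and for a given $\mu$-model use $j$ restricted to that model together with the image $j(\bar M)$ as the witnessing embedding and target, verifying closure via $\mu<j(\mu)$ and ${}^\mu M\subseteq M$. The paper's proof is terser (it simply notes $({}^\mu j(M))^\VV\subseteq({}^\mu j(M))^\WW\subseteq j(M)$), but the substance is identical.
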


\begin{proof}
 Fix a $\mu$-model $M$. Let $\map{j}{\VV}{\WW}$ be a $\mu$-supercompact embedding.  
 Then $j(M)$ is a $j(\mu)$-model in $\WW$ and $j(\mu)>\mu$ implies $({}^\mu j(M))^\VV\subseteq ({}^\mu j(M))^\WW \subseteq j(M)$. 
 We can conlcude that $\map{j\restriction M}{M}{j(M)}$ is an elementary embedding with the desired properties.  
\end{proof}

Note that the above proof shows that $\delta$ is also $(\mu,\mu)$-supercompact in $\WW$.

\begin{proposition} 
 Let $\delta\leq\mu$ be cardinals with $\mu=\mu^{{<}\delta}$ and $2^\mu=\mu^+$. 
 If $\delta$ is $(\mu^+,\mu^+)$-supercompact, then it is $\mu$-supercompact. 
\end{proposition}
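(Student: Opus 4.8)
The plan is to extract, from a single $(\mu^+,\mu^+)$-supercompactness embedding applied to a sufficiently large and closed model, a normal fine $\delta$-complete ultrafilter on $\mathrm{P}_\delta(\mu)$, and then to invoke the equivalence recalled above between such ultrafilters and $\mu$-supercompactness embeddings; the hypotheses are used precisely to make the relevant model large enough. Since $\mu = \mu^{{<}\delta}$, the set $\mathrm{P}_\delta(\mu)$ has cardinality $\mu$, hence $\pow(\mathrm{P}_\delta(\mu))$ has cardinality $2^\mu = \mu^+$; moreover $(\mu^+)^{{<}\mu^+} = \mu^+$, so $\mu^+$-models exist. I would thus fix a $\mu^+$-model $M$ with $\delta,\mu,\mathrm{P}_\delta(\mu) \in M$, $\mathrm{P}_\delta(\mu) \subseteq M$ and $\pow(\mathrm{P}_\delta(\mu)) \subseteq M$ (e.g.\ by collapsing a suitable elementary submodel of some $\langle\HHH{\theta},\in\rangle$ of size $\mu^+$ closed under $\mu$-sequences); note that $M$ is closed under ${<}\mu^+$-sequences, in particular under $\mu$-sequences and under ${<}\delta$-sequences. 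Applying $(\mu^+,\mu^+)$-supercompactness of $\delta$ to $M$ yields a transitive model $N$ and an elementary embedding $\map{j}{M}{N}$ with critical point $\delta$, ${}^{\mu^+}N\subseteq N$ and $j(\delta)>\mu^+$.

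Set $d = j[\mu] = \set{j(\alpha)}{\alpha<\mu}$. I would first check that $d\in N$ and $d\in j(\mathrm{P}_\delta(\mu))$: the former because $d$ is a $\mu$-indexed sequence of elements of $N$ and ${}^\mu N\subseteq N$; the latter because $d\subseteq j(\mu)$ and, in $N$, the map $\alpha\mapsto j(\alpha)$ (which lies in $N$ by closure) is a surjection from $\mu$ onto $d$, so $N$ computes $\lvert d\rvert \le \mu < \mu^+ < j(\delta)$. Then I would define
\[
 U  =  \Set{A \in \pow(\mathrm{P}_\delta(\mu))}{d \in j(A)}
\]
and verify, by routine arguments using only elementarity of $j$ together with the closure of $M$, that $U$ is a normal fine $\delta$-complete ultrafilter on $\mathrm{P}_\delta(\mu)$. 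Concretely: $U$ is a filter and an ultrafilter because $j$ commutes with finite Boolean operations and complementation on subsets of $\mathrm{P}_\delta(\mu)$, all of which belong to $M$; $U$ is $\delta$-complete because any sequence $\seq{A_\xi}{\xi<\gamma}$ with $\gamma<\delta$ and all $A_\xi\in U$ belongs to $M$ and satisfies $j(\seq{A_\xi}{\xi<\gamma}) = \seq{j(A_\xi)}{\xi<\gamma}$, so $j(\bigcap_{\xi<\gamma}A_\xi) = \bigcap_{\xi<\gamma}j(A_\xi)\ni d$; $U$ is fine because for $\alpha<\mu$ one has $j(\Set{x\in\mathrm{P}_\delta(\mu)}{\alpha\in x}) = \Set{y}{j(\alpha)\in y}$ and $j(\alpha)\in d$; and $U$ is normal because for any $\map{f}{\mathrm{P}_\delta(\mu)}{\mu}$ with $\Set{x}{f(x)\in x}\in U$ — and $f\in M$, since $M$ is closed under $\mu$-sequences and $\lvert\mathrm{P}_\delta(\mu)\rvert=\mu$ — elementarity gives $j(f)(d)\in d$, hence $j(f)(d) = j(\beta)$ for a unique $\beta<\mu$, and then $\Set{x}{f(x)=\beta}\in U$. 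An appeal to the recalled equivalence then gives that $\delta$ is $\mu$-supercompact.

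The only step that is not pure bookkeeping is verifying $d\in N\cap j(\mathrm{P}_\delta(\mu))$, which uses the closure ${}^{\mu^+}N\subseteq N$ and $j(\delta)>\mu^+$; everything else is the standard ``ultrafilter from an embedding'' computation. What genuinely requires care — and where the hypotheses $\mu = \mu^{{<}\delta}$ and $2^\mu = \mu^+$ enter — is arranging the $\mu^+$-model $M$ to contain $\mathrm{P}_\delta(\mu)$ together with \emph{all} of its subsets (so that membership in $U$ is well posed and $U$ is a genuine ultrafilter on $\mathrm{P}_\delta(\mu)$) as well as the regressive functions and ${<}\delta$-sequences invoked in the verification, and then remembering throughout that $j$ is defined only on $M$, so that every object to which $j$ is applied must have been placed in $M$ in advance.
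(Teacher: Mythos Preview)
Your proof is correct and follows essentially the same approach as the paper: both build a $\mu^+$-model $M$ (via collapsing an elementary submodel of some $\HHH{\theta}$ closed under $\mu$-sequences) containing $\pow(\mathrm{P}_\delta(\mu))$, apply $(\mu^+,\mu^+)$-supercompactness to obtain $\map{j}{M}{N}$, and define $U=\Set{A\subseteq\mathrm{P}_\delta(\mu)}{j[\mu]\in j(A)}$. The paper simply asserts that $U$ is a normal ultrafilter on $\mathrm{P}_\delta(\mu)$, whereas you spell out the verification of fineness, $\delta$-completeness, normality, and the check that $j[\mu]\in N\cap j(\mathrm{P}_\delta(\mu))$; these added details are all correct and do not depart from the paper's strategy.
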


\begin{proof}
 Let $\theta$ be a sufficiently large regular cardinal and $X$ be an elementary submodel of $\HHH{\theta}$ of cardinality $\mu^+$ 
 such that ${}^\mu X\subseteq X$ and $\mu^++1\subseteq X$.  If we let $\map{\pi}{\langle X,\in\rangle}{\langle M,\in\rangle}$ 
 denote the corresponding transitive collapse, then $M$ is a $\mu^+$-model with $\mathrm{P}(\mathrm{P}_\delta(\mu))\subseteq M$. 
 By our assumption, there is a transitive model $N$ and an elementary embedding $\map{j}{M}{N}$ with ${}^{\mu^+}N\subseteq N$ and $j(\delta)>\mu^+$. 
 If we define \[
U  =  \Set{X\subseteq\mathrm{P}_\delta(\mu)}{j[\mu]\in j(X)},
\]
 then the above computations imply that $U$ 
 is a normal ultrafilter on $\mathrm{P}_\delta(\mu)$ in $\VV$. 
\end{proof}

Our next goal is to show that forcing with suitable forcing iteration preserves $(\mu,\nu)$-supercompact cardinals. 
In the proof of the preservation lemma, we make use of the notion of \emph{$\kappa$-proper forcing} introduced in {\cite{MR2342475}} (see {\cite[Definition 12]{MR2467213}})  
and related results proved in \cite[Section 4]{MR2467213}. In addition to these results, we need the following lemma.

\begin{lemma}\label{lemma:TwoStepKappaProper}
 Let $\delta$ be an uncountable cardinal with $\delta=\delta^{{<}\delta}$. 
 If $\mathbb{P}$ is a partial order satisfying the $\delta^+$-chain condition and $\dot{\QQ}$ is a $\PP$-name for a ${<}\delta^+$-closed partial order, 
 then the partial order $\PP*\dot{\QQ}$ is $\delta$-proper. 
\end{lemma}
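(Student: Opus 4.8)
The plan is to verify $\PP * \dot{\QQ}$ directly against the definition of $\delta$-properness: for every sufficiently large regular cardinal $\theta$, every $\delta$-model $M \prec \HHH{\theta}$ with $\delta, \PP * \dot{\QQ} \in M$, and every $\langle p_0, \dot{q}_0 \rangle \in (\PP * \dot{\QQ}) \cap M$, I will produce an $(M, \PP * \dot{\QQ})$-generic condition extending $\langle p_0, \dot{q}_0 \rangle$. The first observation is that, since $\PP$ satisfies the $\delta^+$-chain condition and $M$ is transitive with $\delta \subseteq M$, any maximal antichain $A$ of $\PP$ lying in $M$ has cardinality at most $\delta$, hence is enumerated in $M$ by a surjection from some ordinal $\gamma \leq \delta$; as $\gamma \subseteq \delta \subseteq M$, it follows that $A \subseteq M$, so $A \cap M = A$ is, being a maximal antichain, predense below every condition. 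Thus \emph{every} condition of $\PP$ --- in particular $p_0$ --- is $(M, \PP)$-generic. (This step does not use that $M$ is closed under ${<}\delta$-sequences; it is the analogue of the classical fact that ccc forcings are proper.) By the standard criterion for two-step iterations --- namely that $\langle p, \dot{q} \rangle$ is $(M, \PP * \dot{\QQ})$-generic whenever $p$ is $(M, \PP)$-generic and $p$ forces $\dot{q}$ to be $(M[\dot{G}], \dot{\QQ})$-generic, where $M[\dot{G}] = \Set{\dot{a}^{\dot{G}}}{\dot{a} \in M}$ --- it therefore suffices to exhibit a $\PP$-name $\dot{q}$ for which $p_0$ forces $\dot{q} \leq_{\dot{\QQ}} \dot{q}_0$ and forces $\dot{q}$ to be $(M[\dot{G}], \dot{\QQ})$-generic.

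Exhibiting such a $\dot{q}$ is where the ${<}\delta^+$-closure of $\dot{\QQ}$ enters, and it is also where the argument sidesteps the obstruction that defeats the naive composition: a $\delta^+$-chain-condition forcing need not preserve $\delta$-models --- in a $\PP$-generic extension $\VV[G]$ the model $M[G]$ need not be closed under ${<}\delta$-sequences --- so one cannot simply appeal to the implication that the two-step iteration of a $\delta$-proper forcing with a forced-to-be $\delta$-proper forcing is $\delta$-proper. Instead I would fix a $\PP$-generic filter $G$ with $p_0 \in G$ and work inside $\VV[G]$: there $M[G]$ is a transitive model of $\ZFC^-$ with $\delta \in M[G]$ and $|M[G]| \leq \delta$, the objects $\dot{q}_0^G$ and $\dot{\QQ}^G$ belong to $M[G]$, and --- this is all that is needed --- the family of dense subsets of $\dot{\QQ}^G$ that lie in $M[G]$ has size at most $\delta$, \emph{even though} $M[G]$ itself may fail to be a $\delta$-model. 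Enumerating this family in order type $\leq \delta$ and using that $\dot{\QQ}^G$ is ${<}\delta^+$-closed, hence closed under decreasing sequences of length $\leq \delta$, a routine recursion yields a decreasing sequence in $\dot{\QQ}^G$ starting below $\dot{q}_0^G$ whose terminal lower bound $q_G$ extends a member of each of these dense sets; thus $q_G \leq_{\dot{\QQ}^G} \dot{q}_0^G$ and $q_G$ is $(M[G], \dot{\QQ}^G)$-generic. Since $G$ was an arbitrary generic filter containing $p_0$, the maximality principle yields a $\PP$-name $\dot{q}$ --- which may be taken inside $\HHH{\theta}$, as all the witnesses live inside a fixed $\HHH{\lambda}$ containing $\dot{\QQ}$ --- with exactly the forcing properties required above.

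Combining the two halves through the two-step criterion shows that $\langle p_0, \dot{q} \rangle$ is $(M, \PP * \dot{\QQ})$-generic, which is what had to be proved. If one prefers not to use the criterion as a black box, this last step can be carried out directly, following the classical verification that two-step iterations of proper forcings are proper: the only property of $M[G]$ that verification actually uses is that $M[G] \prec \HHH{\theta}^{\VV[G]}$ --- which holds because $p_0$ is $(M, \PP)$-generic and $M \prec \HHH{\theta}$ --- together with the facts that the projection over $\dot{G}$ onto the second coordinate of a dense set $D \in M$ of $\PP * \dot{\QQ}$ is a dense subset of $\dot{\QQ}$ definable over $M[\dot{G}]$, and that $q_G$ meets the trace of that projection on $M[G]$. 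I expect the genuinely delicate point to be precisely this bookkeeping around the failure of $\delta^+$-chain-condition forcings to preserve $\delta$-models: the argument has to be arranged so that the ${<}\delta$-closure of the intermediate model $M[G]$ is never invoked, which succeeds here only because the descending sequence one must build in $\dot{\QQ}^G$ has length bounded by $\delta$ while $\dot{\QQ}$ is ${<}\delta^+$-closed. This is also why the lemma is established on its own rather than deduced from the iteration results of \cite{MR2467213}.
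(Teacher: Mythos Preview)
Your overall strategy is sound, and the first half --- that every condition in $\PP$ is already $(M,\PP)$-generic because the $\delta^+$-chain condition together with $\delta+1\subseteq M$ forces every maximal antichain in $M$ to be a subset of $M$ --- is correct. (One terminological slip: the $M$ in the definition of $\delta$-properness used here is an elementary submodel of $\HHH{\theta}$ of size $\delta$ with ${}^{{<}\delta}M\subseteq M$ and $\delta+1\subseteq M$, not a transitive model; but your argument for this part only needs elementarity and $\delta\subseteq M$, so nothing is lost.)

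The gap is in the second half. For $q_G$ to be $(M[G],\dot{\QQ}^G)$-generic it is not enough that $q_G$ extend a member of each dense $E\in M[G]$; one needs $q_G$ to extend a member of $E\cap M[G]$. Your ``routine recursion'' in $\VV[G]$ does not deliver this. At successor stages one can remain in $M[G]$ by elementarity, but at a limit $\lambda<\delta$ the lower bound $r_\lambda$ is produced by the ${<}\delta^+$-closure of $\dot{\QQ}^G$ in $\VV[G]$ and may well fall outside $M[G]$ --- precisely because, as you yourself emphasise, $M[G]$ need not be closed under ${<}\delta$-sequences. Once $r_\lambda\notin M[G]$, elementarity no longer lets you choose the next term inside $M[G]$; and for tree-like $\dot{\QQ}^G$ such as $\Add{\delta^+}{1}$, every extension of $r_\lambda$ determines $r_\lambda$, so no extension lies in $M[G]$ and the construction cannot be repaired from that point.

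The paper's proof sidesteps this by running the recursion in $\VV$ with names rather than in $\VV[G]$ with conditions: it enumerates the $\PP$-names $\dot{D}_\alpha\in M$ for dense subsets of $\dot{\QQ}$ and builds a descending sequence of names $\dot{q}_\alpha$ \emph{inside $M$}, using ${}^{{<}\delta}M\subseteq M$ --- which does hold, in $\VV$ --- to keep every proper initial segment of the sequence in $M$, so that elementarity supplies a lower-bound name in $M$ at each limit. The final lower bound $\dot{q}$ of the whole $\delta$-sequence then satisfies $\dot{q}^G\leq\dot{q}_{\alpha+1}^G\in\dot{D}_\alpha^G\cap M[G]$ for every $\alpha$, and from this the paper verifies $(M,\PP*\dot{\QQ})$-genericity of $\langle p_0,\dot{q}\rangle$ directly. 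The missing idea, then, is to exploit the ground-model closure of $M$ by working with names there, rather than with conditions in the extension where the closure is lost.
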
 

\begin{proof} 
 Fix a sufficiently large regular cardinal $\theta$ and an elementary submodel $X$ of $\HHH{\theta}$ of cardinality $\delta$ with ${}^{{<}\delta}X\subseteq X$, $\delta+1\subseteq X$ and $\PP*\dot{\QQ}\in X$. 
 Pick $\langle p_0,\dot{q}_0\rangle\in(\PP*\dot{\QQ})\cap X$. 
 Let $\seq{\dot{D}_\alpha}{\alpha<\delta}$ be an enumeration of all $\PP$-names for dense subsets of $\dot{\mathbb{Q}}$ contained in $X$. 
 By our assumptions, there is a sequence $\seq{\dot{q}_\alpha}{\alpha<\delta}$ of $\PP$-names for conditions in $\dot{\QQ}$ contained in $X$ such that 
 $\mathbbm{1}_\PP\Vdash\anf{\dot{q}_{\alpha+1}\in \dot{D}_\alpha}$ and $\mathbbm{1}_\PP\Vdash\anf{\dot{q}_\beta\leq_{\dot{\QQ}} \dot{q}_\alpha}$ holds for all $\alpha<\beta<\delta$. 
 Our assumption on $\dot{\QQ}$ imply that there is a $\PP$-name $\dot{q}$ for a condition in $\dot{\QQ}$ with $\mathbbm{1}_\PP\Vdash\anf{\dot{q}\leq_{\dot{\QQ}}\dot{q}_\alpha}$ for all $\alpha<\delta$. 
 In the following, we will show that the condition $\langle p_0,\dot{q}\rangle$ is $(X,\PP*\dot{\QQ})$-generic.

 Suppose that $D\in X$ is a dense subset of $\PP*\dot{\QQ}$. Let $\seq{\langle r_{\beta},\dot{s}_{\beta})}{\beta<\rho}$ be an enumeration of $D$ in $X$.  
 Define $\dot{D}=\Set{\langle\dot{s}_{\beta},r_{\beta}\rangle}{\beta<\rho}\in X$. Then $\dot{D}$ is a $\PP$-name.

 \begin{claim} 
  $\mathbbm{1}_\PP\Vdash\anf{\text{$\dot{D}$ is a dense subset of $\dot{\QQ}$}}$. 
 \end{claim} 

 \begin{proof}[Proof of the Claim]
  Let $p$ be a condition in $\PP$ and $\dot{q}$ be a name for a condition in $\dot{\QQ}$. Then $\langle p,\dot{q}\rangle$ is a condition in $\PP*\dot{\QQ}$ and there is 
  a $\beta<\rho$ with $\langle r_\beta,\dot{s}_\beta\rangle\leq_{\PP*\dot{\QQ}}\langle p,\dot{q}\rangle$. 
  Then $r_\beta\leq_\PP p$, $r_\beta\Vdash\anf{\dot{s}_\beta\leq_{\dot{\QQ}} \dot{q}}$, and $r_\beta\Vdash\anf{\dot{s}_\beta\in \dot{D}}$. 
  This shows that the set $\bar{D}=\Set{r\in\PP}{r\Vdash\anf{\exists s\in\dot{D} \, ( \dot{s}\leq_{\dot{\QQ}}\dot{q})}}$ is dense in $\PP$ and this yields the statement of the claim. 
 \end{proof}

 By our construction, there is an $\alpha<\delta$ with  $\dot{D}=\dot{D}_{\alpha}$.

 \begin{claim} 
  The set 
   \[
E  =  \lrSet{p\in\PP}{\exists\beta<\rho \, \left({p\leq_\PP r_\beta}  \, \wedge \,  {p\Vdash\anf{\dot{q}_\alpha=\dot{s}_\beta}}\right)}\in X
\] 
  is dense open below $p_0$ in $\PP$. 
 \end{claim} 

 \begin{proof}[Proof of the Claim]
  Suppose that $p_1\leq_\PP p_0$ and let $H$ be $\PP$-generic over $V$ with $p_1\in H$. 
  Since $\dot{q}_\alpha^H\in\dot{D}^H$, there is a $\beta<\rho$ with $r_\beta\in H$ and $\dot{q}_{\alpha}^H=\dot{s}_\beta^H$. 
  Hence, there is a condition in $E\cap H$ below $p_1$. 
 \end{proof}

 Let $A\in X$ be a maximal antichain in $E$ below $p_0$. Since $\PP$ satisfies that $\delta^+$-chain condition, we have $A\subseteq X$. 
 Suppose that $G=G_0*G_1$ is $\PP*\dot{\QQ}$-generic over $V$ with $\langle p_0,\dot{q}\rangle\in G$. By the above remark, there is a condition $p\in A\cap G_0\cap X$. 
 Then $\langle p,\dot{\mathbbm{1}}_{\dot{\QQ}})\in G$ and hence $\langle p,\dot{q}\rangle\in G$, because $p\leq_\PP p_0$ and $\langle p_0,\dot{q}\rangle\in G$. 
 Since $\mathbbm{1}_{\PP}\Vdash\anf{\dot{q}\leq_{\dot{\QQ}}\dot{q}_\alpha}$, this implies that $\langle p,\dot{q}_{\alpha}\rangle\in G$. 
 We have  $p\in A\subseteq E\cap X$ and this implies that there is a $\beta\in X\cap \delta$ with $p\leq_\PP r_\beta$ and $p\Vdash\anf{\dot{q}_{\alpha}=\dot{s}_{\beta}}$. 
 We can conclude that  $\langle p,\dot{q}_{\alpha}\rangle\leq_{\PP*\dot{\QQ}}\langle r_{\beta},\dot{s}_\beta\rangle\in X$ and thus $\langle r_{\beta},\dot{s}_\beta\rangle \in D\cap G\cap X$.  
\end{proof}

We are now ready to prove our large cardinal preservation result.

\begin{lemma}
 Assume that the $\mathsf{GCH}$ holds. Let $\vec{\PP}$ be a suitable iteration. 
 If $\delta$ is a $(\mu,\nu)$-supercompact cardinal with $\nu$ regular, then $\delta$ remains $(\mu,\nu)$-supercompact in every $\vec{\PP}$-generic extension of the ground model. 
\end{lemma}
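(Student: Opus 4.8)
The plan is to reduce to a set forcing and then run a standard embedding‑lifting argument, using the $\mu$‑properness of the iteration (obtained from Lemma~\ref{lemma:TwoStepKappaProper}) to cope with the fact that $\vec{\PP}$ is not highly closed.

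\emph{Step 1: reduction to a set forcing.} Write $\vec{\PP}\cong\vec{\PP}_{<\nu^+}*\dot{\PP}_{[\nu^+,\On)}$. Since $\nu$, and hence $\nu^+$, is an uncountable regular cardinal and $\vec{\PP}$ is suitable, every iterand of the tail $\dot{\PP}_{[\nu^+,\On)}$ is forced to be $<\gamma$‑directed closed for some cardinal $\gamma\geq\nu^+$; arguing as in the proof of Lemma~\ref{lemma:Class iteration propoerties} this tail is therefore $<\nu^+$‑directed closed, in particular $\nu^+$‑distributive, so it adds no new $\nu$‑sequences and no new sets of size at most $\nu$. It follows that it is enough to show that $(\mu,\nu)$‑supercompactness of $\delta$ is preserved by the set forcing $\QQ:=\vec{\PP}_{<\nu^+}$: any $\mu$‑model of the full extension already lies in the $\QQ$‑extension, and a witnessing embedding $j\colon M\to N$ with $\nu$‑closed $N$ computed there remains such a witness in the full extension.

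\emph{Step 2: $\QQ$ is $\mu$‑proper.} Since $\mu=\mu^{{<}\mu}$, the cardinal $\mu$ is uncountable and regular, so we may factor $\QQ\cong\vec{\PP}_{\leq\mu}*\dot{\PP}_{(\mu,\nu^+)}$ and apply Lemma~\ref{lemma:Class iteration propoerties} with $\mu$ in place of $\delta$: $\vec{\PP}_{\leq\mu}$ satisfies the $\mu^+$‑chain condition, while $\dot{\PP}_{(\mu,\nu^+)}$ is forced to be $<\mu^+$‑directed closed, hence $<\mu^+$‑closed. Lemma~\ref{lemma:TwoStepKappaProper}, again with $\mu$ in place of $\delta$ (this is where $\mu=\mu^{{<}\mu}$ enters), then shows that $\QQ$ is $\mu$‑proper. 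Note also that $\vec{\PP}_{<\delta}$ has size at most $\delta$, so any elementary embedding with critical point $\delta$ fixes it pointwise and as a set.

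\emph{Step 3: lifting.} Fix a $\QQ$‑generic filter $G$ over $\VV$ and an arbitrary $\mu$‑model $M^*$ in $\VV[G]$, together with a $\QQ$‑name $\dot{M}^*$ for it. In $\VV$, for a sufficiently large regular $\theta$, choose an elementary submodel $X\prec\langle\HHH{\theta},\in,\QQ,\dot{M}^*\rangle$ of size $\mu$ with ${}^{{<}\mu}X\subseteq X$ and $\VV_\delta\subseteq X$, and let $\pi\colon X\to M$ be its transitive collapse, so that $M$ is a $\mu$‑model, $\pi\restriction\VV_\delta=\id$, and (the $\pi$‑images of) $\QQ$ and $\dot{M}^*$ lie in $M$. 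Applying $(\mu,\nu)$‑supercompactness of $\delta$ to $M$ produces a transitive $N$ and $j\colon M\to N$ with critical point $\delta$, $j(\delta)>\nu$ and ${}^\nu N\subseteq N$. Using that $j$ fixes $\vec{\PP}_{<\delta}$, that ${}^\nu N\subseteq N$ makes $N$ correct about the iterands and cardinals below $\nu^+$, and that $\QQ=\vec{\PP}_{<\nu^+}$ is an initial segment of $j(\QQ)$ in $N$, we factor $j(\QQ)\cong\QQ*\dot{\QQ}_{\mathrm{tail}}$ with $\dot{\QQ}_{\mathrm{tail}}$ forced in $N$ to be highly closed. By $\mu$‑properness of $\QQ$, the filter $G$ contains an $(X,\QQ)$‑generic condition, so $G_M:=\pi[G\cap X]$ is generic over $M$ for $M$'s copy of $\QQ$. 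Now invoke the $\mu$‑proper lifting machinery of~\cite{MR2342475} and~\cite[Section~4]{MR2467213}: from $G_M$, together with the chain condition of the lower part of $j(\QQ)$ and the closure of $\dot{\QQ}_{\mathrm{tail}}$ inside the $\nu$‑closed model $N$, one builds in $\VV[G]$ an $N$‑generic filter $G^*$ for $j(\QQ)$ with $j[G_M]\subseteq G^*$, and lifts $j$ to $\hat{\jmath}\colon M[G_M]\to N[G^*]$. Since $\dot{M}^*\in M$, the $\mu$‑model $M^*=(\dot{M}^*)^{G_M}$ lies in the domain of $\hat{\jmath}$; put $N^*=\hat{\jmath}(M^*)$. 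Then $\hat{\jmath}\restriction M^*\colon M^*\to N^*$ has critical point $\delta$ and $\hat{\jmath}(\delta)=j(\delta)>\nu$, and a final argument shows that $N[G^*]$, hence $N^*$, is still closed under $\nu$‑sequences in $\VV[G]$, so ${}^\nu N^*\subseteq N^*$. Therefore $\delta$ is $(\mu,\nu)$‑supercompact in $\VV[G]$.

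\emph{Main obstacle.} All the substance is in Step 3: arranging, via $\mu$‑properness, that an arbitrary $\mu$‑model of the extension is captured by a ground‑model $\mu$‑model to which the ground‑model supercompactness embedding applies; building the lifted generic $G^*$ (the master‑condition construction) from the $(X,\QQ)$‑generic filter; and — most delicately — verifying that the closure ${}^\nu N^*\subseteq N^*$ survives the passage to $\VV[G]$, which is where one must balance the $\nu$‑closure of $N$ against the size and chain condition of $j(\QQ)$. The precise bookkeeping behind the factorization $j(\QQ)\cong\QQ*\dot{\QQ}_{\mathrm{tail}}$ in $N$ is the technical heart of the argument.
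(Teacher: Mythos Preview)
Your plan is correct and follows essentially the same route as the paper: reduce to the set forcing up to (roughly) $\nu$, use the factorization into a $\mu^+$-cc piece times a ${<}\mu^+$-closed tail together with Lemma~\ref{lemma:TwoStepKappaProper} to obtain $\mu$-properness, collapse an elementary submodel capturing a name for the given $\mu$-model to a ground-model $\mu$-model, apply the $(\mu,\nu)$-supercompactness embedding, and lift via a master-condition argument. The paper carries out your Step~3 in full detail---in particular it splits the tail of $j(\QQ)$ into two intervals $[\nu+1,\delta_N)$ and $[\delta_N,\nu_N+1)$ (the first needing only closure, the second needing directed closure for the master condition), and it proves the ${}^\nu N[G^*]\subseteq N[G^*]$ claim directly from the $\nu^+$-chain condition---but these are exactly the details you flag as the ``technical heart,'' so there is no divergence in approach.
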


\begin{proof}
 Let $G$ be $\vec{\PP}_{{<}\nu+1}$-generic over $\VV$, $M_0$ be a $\mu$-model in $\VV[G]$ and $\dot{x}$ be a $\vec{\PP}_{{<}\nu+1}$-name for a subset of $\mu$ coding $M_0$. 
 Work in $\VV$ and fix a sufficiently large regular cardinal $\theta$. 
 We know that $\vec{\PP}_{{<}\nu+1}$ densely embeds into $\vec{\PP}_{{<}\mu+1}*\dot{\PP}_{[\mu+1,\nu+1)}$ and, by  Lemma~\ref{lemma:Class iteration propoerties}, Lemma~\ref{lemma:TwoStepKappaProper} and {\cite[Fact 13]{MR2467213}}, 
 this implies that the partial order  $\vec{\PP}_{{<}\nu+1}$  is $\mu$-proper. 
 This allows us to find an elementary submodel $X$ of $\HHH{\theta}$ of cardinality $\mu$ and conditions $\vec{p},\vec{q}\in\vec{\PP}_{{<}\nu+1}$ 
 such that ${}^{{<}\mu}X\subseteq X$, $\mu+1\subseteq X$, $\vec{p},\dot{x},\vec{\PP}_{{<}\nu+1}\in X$, $\vec{q}\leq_{\vec{\PP}_{{<}\nu+1}}\vec{p}$ 
 and $\vec{q}\in G$ is $(X,\vec{\PP}_{{<}\nu+1})$-generic. 
 Let $\map{\pi}{\langle X,\in\rangle}{\langle M,\in\rangle}$ denote the corresponding transitive collapse. 
 Then $M$ is a $\mu$-model and our assumptions imply that there is a transitive model $N$ of cardinality $\nu^+$ and an elementary embedding $\map{j}{M}{N}$ 
 such that ${}^\nu N\subseteq N$ and $j(\delta)>\nu$.

 Let $\vec{\PP}_{\nu+1}$ denote the restriction of $\vec{\PP}$ to $\nu+1$. 
 We define $\delta_M=\pi(\delta)$, $\delta_N=j(\delta_M)$, $\nu_M=\pi(\nu)$ and $\nu_N=j(\nu_M)$. Moreover, set 
 \[
\vec{\QQ}  =   \pi(\vec{\PP}_{\nu+1})  =  \langle\seq{\vec{\QQ}_{{<}\alpha}}{\alpha\leq\nu_M+1},\seq{\dot{\QQ}_\alpha}{\alpha\leq\nu_M}\rangle
\]
 and 
\[
\vec{\RR}  =   j(\vec{\QQ})  =  \langle\seq{\vec{\RR}_{{<}\alpha}}{\alpha\leq\nu_N+1},\seq{\dot{\RR}_\alpha}{\alpha\leq\nu_N}\rangle.
\]
 By our assumptions, we have $\HHH{\mu}^\VV=\HHH{\mu}^M=\HHH{\mu}^N$ and $\HHH{\nu^+}^\VV=\HHH{\nu^+}^N$. 
 This implies that $\vec{\PP}_{{<}\mu}=\vec{\QQ}_{{<}\mu}=\vec{\RR}_{{<}\mu}$  
 and $\vec{\PP}_{{<}\nu+1}=\vec{\RR}_{{<}\nu+1}$.

 \begin{claim}
  $({}^\nu N[G])^{\VV[G]}\subseteq N[G]$.
 \end{claim}

 \begin{proof}[Proof of the Claim]
  Let $x$ be a subset of $\nu$ in $\VV[G]$. Then there is a $\vec{\PP}_{{<}\nu+1}$-nice name $\dot{x}$ for $x$ in $\VV$. 
  Since $\vec{\PP}_{{<}\nu+1}$ satisfies the $\nu^+$-chain condition in $\VV$, our assumptions imply $\dot{x}\in\HHH{\nu^+}^\VV\subseteq N$ and this shows that $x$ is an element of $N[G]$. 
  Now, pick $x\in([N\cap\On]^\nu)^{\VV[G]}$. Since $\vec{\PP}_{{<}\nu+1}$ satisfies the $\nu^+$-chain condition in $\VV$, there are $f,y\in\VV$ with $x\subseteq y\subseteq N\cap\On$ and $\map{f}{y}{\nu}$ is a bijection. 
  Our assumptions imply $f,y\in N$. By the above computations, $f[x]\subseteq\nu$ is an element of $N[G]$ and this implies $x\in N[G]$.  
  These computations show that ${}^\nu(N\cap \On)\subseteq N[G]$ and this yields the statement of the claim, because $N[G]$ is a transitive model of $\ZFC^-$. 
 \end{proof}

  By the above computations, the partial order $\RR_0=\dot{\RR}_{[\nu+1,\delta_N)}^G$ is ${<}\nu^+$-closed in $\VV[G]$. 
  Moreover, its power set in $N[G]$ has cardinality at most $\nu^+$ in $\VV[G]$. 
  This shows that there is an $F\in\VV[G]$ that is $\RR_0$-generic over $N[G]$. 
  Let $H_0$ denote the filter on $\vec{\RR}_{{<}\delta_N}$ induced by $G$ and $F$. 
  By the above claim, we have $({}^\nu N[H_0])^{\VV[G]}\subseteq N[H_0]$.

  By the definition of $\dot{\PP}_\delta$, Lemma~\ref{lemma:Class iteration propoerties} and the above computations, 
  we know that $\RR_1=\dot{\RR}_{[\delta_N,\nu_N+1)}^{H_0}$ is ${<}\nu^+$-directed closed in $\VV[G]$. 
  Define $\bar{G}=\pi[G\cap X]$. Since $\vec{q}\in G$ is $X$-generic, we know that $\bar{G}$ is $\vec{\QQ}_{{<}\nu_M+1}$-generic over $M$. 
  Moreover,   
\[
D  =  \Set{\vec{r}\restriction[\delta_N,\nu_N+1)}{\vec{r}\in j[\bar{G}}
\]
  is a directed subset of $\RR_1$ of cardinality $\nu$ and we can conclude that there is a condition $\vec{m}$ in $\RR_1$ such that $\vec{m}\leq_{\RR_1}\vec{r}$ holds for all $\vec{r}\in D$.  
  The power set of $\RR_1$ in $N[H_0]$ has cardinality  at most $\nu^+$ in $\VV[G]$ and there is  $H_1\in\VV[G]$ that is $\RR_1$-generic over $N[H_0]$ with $\vec{m}\in H_1$.  
  Let $H$ be the filter on $\vec{\RR}_{{<}\nu_N+1}$ induced by $H_0$ and $H_1$. Then another application of the above claim yields $({}^\nu N[H])^{\VV[G]}\subseteq N[H]$.

  Pick $\vec{r}\in\bar{G}$. Since the support of $\vec{r}$ in $\vec{\QQ}_{{<}\nu_M+1}$ below $\delta$ is bounded, we have $j(\vec{r})\restriction\delta=\vec{r}\restriction\delta$, $j(\vec{r})\restriction[\delta,\delta_N)$ is trivial and 
  $j(\delta)\restriction[\delta_N,\nu_N+1)\in D$. This shows that $j(\vec{r})\in H$. In this situation, we can apply {\cite[Proposition 9.1]{MR2768691}} to construct an elementary embedding $\map{j_*}{M[\bar{G}]}{N[H]}$ extending $j$.  
  By {\cite[Lemma 19]{MR2467213}}, we know that $\dot{x}^G=\pi(\dot{x})^{\bar{G}}\in M[\bar{G}]$ and this implies that $M_0$ is a $\mu$-model in $M[\bar{G}]$. 
  By elementarity, $j(M_0)$ is a $j(\mu)$-model in $N[H]$ and $j(\mu)\geq j(\kappa)>\nu$ implies that $({}^\nu j(M_0))^{\VV[G]}\subseteq ({}^\nu j(M_0))^{N[H]}\subseteq j(M_0)$. 
  We can conclude that $\map{j_*\restriction M_0}{M_0}{j(M_0)}$ is an embedding with the desired properties. 

 The above argument shows that $\kappa$ remains $(\mu,\nu)$-supercompact in every $\vec{\PP}_{{<}\nu+1}$-generic extension. 
 Since $\dot{\PP}_{[\nu+1,\alpha)}$ is a forced to be ${<}\nu^+$-closed for every $\alpha>\nu$, this conclusion implies the statement of the lemma.  
\end{proof}

\begin{corollary}
 Assume that $\mathsf{GCH}$ holds. If $\vec{\PP}$ is a suitable iteration, then forcing with $\PP$ preserves strongly unfoldable and supercompact cardinals.  
\end{corollary}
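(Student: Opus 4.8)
The plan is to deduce this corollary from the preceding preservation lemma for $(\mu,\nu)$-supercompact cardinals, together with the two auxiliary propositions relating $\mu$-supercompactness to $(\mu,\mu)$-supercompactness and the D\v{z}amonja--Hamkins characterization {\cite[Lemma 5]{MR2279655}}, using freely that by Lemma~\ref{lemma:Class iteration propoerties} forcing with a suitable iteration $\vec{\PP}$ preserves all cofinalities and the $\mathsf{GCH}$, hence also all cardinals and the $\beth$-function. Throughout, fix a $\vec{\PP}$-generic filter $G$ over $\VV$.

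\medskip
\noindent\emph{Strongly unfoldable cardinals.} Let $\delta$ be strongly unfoldable in $\VV$ and fix $\theta\geq\delta$; I want $\delta$ to be $(\theta+1)$-strongly unfoldable in $\VV[G]$. In $\VV$ the cardinal $\delta$ is $(\theta+2)$-strongly unfoldable, so by {\cite[Lemma 5]{MR2279655}} it is $(\delta,\beth_{\theta+1})$-supercompact; here $\beth_{\theta+1}=2^{\beth_\theta}=\beth_\theta^+$ is regular by the $\mathsf{GCH}$, and $\delta=\delta^{{<}\delta}$ since $\delta$ is inaccessible. Thus the preservation lemma applies with $\mu=\delta$ and $\nu=\beth_{\theta+1}$ and yields that $\delta$ is $(\delta,\beth_{\theta+1})$-supercompact in $\VV[G]$ (in particular $\delta$ remains weakly compact, hence inaccessible, there, so $\delta=\delta^{{<}\delta}$ in $\VV[G]$ as well). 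Since the $\beth$-function is unchanged, $\beth_{\theta+1}^{\VV[G]}=\beth_{\theta+1}^{\VV}\geq\beth_\theta^{\VV[G]}$, and $(\mu,\nu)$-supercompactness is visibly monotone in $\nu$ (replacing $\nu$ by a smaller value only weakens the closure and the $j(\delta)>\nu$ requirements on $N$ and $j$), so $\delta$ is $(\delta,\beth_\theta)$-supercompact in $\VV[G]$, i.e.\ $(\theta+1)$-strongly unfoldable there by {\cite[Lemma 5]{MR2279655}} applied in $\VV[G]$. As $\theta\geq\delta$ was arbitrary, $\delta$ is strongly unfoldable in $\VV[G]$.

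\medskip
\noindent\emph{Supercompact cardinals.} Let $\delta$ be supercompact in $\VV$ and fix a cardinal $\mu\geq\delta$; I want $\delta$ to be $\mu$-supercompact in $\VV[G]$, and since $\mu'$-supercompactness implies $\mu$-supercompactness for every $\mu'\geq\mu$, it suffices to treat a regular $\mu'\geq\mu$. By the $\mathsf{GCH}$ in $\VV[G]$ we then have $2^{\mu'}=(\mu')^+$ and $\mu'=(\mu')^{{<}\delta}$, so by the proposition deriving $\mu'$-supercompactness from $((\mu')^+,(\mu')^+)$-supercompactness it is enough to show that $\delta$ is $((\mu')^+,(\mu')^+)$-supercompact in $\VV[G]$. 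Put $\nu=(\mu')^+$; this is a successor cardinal, hence regular, satisfies $\nu=\nu^{{<}\nu}$ by the $\mathsf{GCH}$, and is computed the same in $\VV$ and $\VV[G]$. In $\VV$ the cardinal $\delta$ is $\nu$-supercompact, so by the proposition deriving $(\nu,\nu)$-supercompactness from $\nu$-supercompactness it is $(\nu,\nu)$-supercompact in $\VV$; the preservation lemma, applied with $\mu=\nu$ and $\nu$ regular, then gives that $\delta$ remains $(\nu,\nu)$-supercompact in $\VV[G]$, which is what we wanted. Hence $\delta$ is $\mu'$-supercompact, and a fortiori $\mu$-supercompact, in $\VV[G]$; as $\mu$ was arbitrary, $\delta$ is supercompact in $\VV[G]$.

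\medskip
Passing from an initial segment $\vec{\PP}_{{<}\alpha}$ to the full class iteration is already subsumed in the preservation lemma, whose statement is about arbitrary $\vec{\PP}$-generic extensions and whose proof invokes that the tails $\dot{\PP}_{[\nu+1,\alpha)}$ are forced to be ${<}\nu^+$-directed closed (via {\cite[Proposition 7.12]{MR2768691}} and Lemma~\ref{lemma:Class iteration propoerties}). I do not expect a genuine obstacle here: the only thing requiring care is routine cardinal arithmetic under the $\mathsf{GCH}$ --- verifying the side hypotheses $\mu=\mu^{{<}\mu}$, $2^\mu=\mu^+$, and $\mu=\mu^{{<}\delta}$ at the relevant cardinals, the regularity of $\beth_{\theta+1}$ and of $(\mu')^+$, and the absoluteness of $\beth_\theta$ and of successor cardinals between $\VV$ and $\VV[G]$ --- all of which follow from the $\mathsf{GCH}$ together with the preservation of cofinalities established in Lemma~\ref{lemma:Class iteration propoerties}.
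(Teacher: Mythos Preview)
Your argument is correct and is exactly the intended derivation: the paper states this corollary without proof, and you have supplied precisely the verification the authors had in mind, combining the preservation lemma for $(\mu,\nu)$-supercompactness (with $\nu$ regular) with the D\v{z}amonja--Hamkins characterization and the two propositions linking $\mu$-supercompactness to $(\mu,\mu)$- and $(\mu^+,\mu^+)$-supercompactness. The small trick of passing to $\beth_{\theta+1}=\beth_\theta^+$ to ensure regularity of $\nu$ before invoking the preservation lemma, and then descending to $\beth_\theta$ by monotonicity, is the right way to handle the possibility that $\beth_\theta$ is singular.
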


We are now ready to prove Theorem~\ref{theorem:GlobalHD} with the help of the above results and Theorem~\ref{theorem:Cons General}.

\begin{proof}[Proof of Theorem~\ref{theorem:GlobalHD}] 
 Assume that the $\mathsf{GCH}$ holds. 
 We define a class-sized forcing  iteration  $\vec{\PP}  =  \langle\seq{\vec{\PP}_{{<}\alpha}}{\alpha\in\On},\seq{\dot{\PP}_\alpha}{\alpha\in\On}\rangle$ with Easton support by the following clauses for all $\nu\in\On$.  

 \begin{enumerate-(a)}
  \item If $\mathbbm{1}_{\vec{\PP}_{{<}\nu}}\Vdash\anf{\text{$\check{\nu}$ is an uncountable cardinal with $\check{\nu}=\check{\nu}^{{<}\check{\nu}}$}}$, 
   then $\dot{\PP}_\nu$ is a $\vec{\PP}_{{<}\nu}$-name for the partial order $\PP(\nu)$ given by Theorem~\ref{theorem:Cons General}.  
  \item Otherwise, $\dot{\PP}_\nu$ is a $\vec{\PP}_{{<}\nu}$-name for the trivial forcing.  
 \end{enumerate-(a)}

 It follows directly from the properties of $\PP(\nu)$ listed in Theorem~\ref{theorem:Cons General} that the iteration $\vec{\PP}$ is suitable. 
 Thus the above results show that $\vec{\PP}$ satisfies condition~\ref{theorem:GlobalHD-1} of Theorem~\ref{theorem:GlobalHD}. 
 Let now $\VV[G]$ be a $\vec{\PP}$-generic extension of $\VV$, $\kappa$ be an uncountable regular cardinal and $\bar{G}$ be the filter on $\vec{\PP}_{{<}\kappa+1}$ induced by $G$. 
 By the above remarks and  Theorem~\ref{theorem:Cons General}, every $\mathbf{\Sigma}^1_1$ subset of $\pre{\kappa}{\kappa}$ satisfies the Hurewicz dichotomy in $\VV[\bar{G}]$. 
 Since Lemma~\ref{lemma:Class iteration propoerties} implies that $\HHH{\kappa^+}^{\VV[G]}=\HHH{\kappa^+}^{\VV[\bar{G}]}$, this shows that condition~\ref{theorem:GlobalHD-2} of Theorem~\ref{theorem:GlobalHD} holds as well. 
\end{proof}


\section{Connections with regularity properties} \label{sec:regularityproperties}

In this section we study the mutual relationships between the (various form of the) Hurewicz dichotomy, the \( \kappa \)-perfect set property, and some other regularity properties. Our starting point is the following simple observation.

\begin{lemma}\label{lemma:MeasureabilityFromHD}
 Assume that all $\mathbf{\Sigma}^1_1$ subsets of ${}^\kappa\kappa$ satisfy the Hurewicz dichotomy. If $A$ is a $\mathbf{\Sigma}^1_1$ subset of ${}^\kappa\kappa$ and $C$ is closed subset of ${}^\kappa\kappa$ homeomorphic to ${}^\kappa\kappa$, 
 then there is a closed subset $D$ of $C$ homeomorphic to ${}^\kappa\kappa$ such that either $D\subseteq A$ or $A\cap D=\emptyset$.  Hence the same conclusion is true for \( \mathbf{\Pi}^1_1 \) sets \( A \subseteq \pre{\kappa}{\kappa} \) as well.
\end{lemma}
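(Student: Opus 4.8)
The plan is to transport the Hurewicz dichotomy for $A$ through a homeomorphism $\map{f}{{}^\kappa\kappa}{C}$. First I would note that, since $\ran{f}=C$, the set $B := f^{-1}(A\cap C) = f^{-1}(A)$ is a $\mathbf{\Sigma}^1_1$ subset of ${}^\kappa\kappa$: writing $A$ as the projection of a closed subset $F$ of ${}^\kappa\kappa\times{}^\kappa\kappa$, the preimage of $F$ under $f\times\id$ is closed and its projection is $B$. Applying the hypothesis to $B$ splits the argument into two cases.

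In the first case, $B$ contains a closed subset $E$ of ${}^\kappa\kappa$ homeomorphic to ${}^\kappa\kappa$. Then $D := f[E]$ works: as $f$ is a homeomorphism of ${}^\kappa\kappa$ onto the closed set $C$, the set $f[E]$ is closed in $C$, hence closed in ${}^\kappa\kappa$, and homeomorphic to ${}^\kappa\kappa$; moreover $D = f[E]\subseteq f[B] = A\cap C\subseteq A$.

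In the second case, $B$ is contained in a $\K\kappa$ subset $K$ of ${}^\kappa\kappa$, and the point is to produce a closed copy of ${}^\kappa\kappa$ disjoint from $K$ and then push it forward by $f$. By Lemma~\ref{lemma:bounded}, $K$ is eventually bounded, say by $x\in{}^\kappa\kappa$, so every $y\in K$ satisfies $y\leq^* x$. Set $Z = \Set{z\in{}^\kappa\kappa}{\forall\alpha<\kappa\ (z(\alpha)>x(\alpha))}$. Then $Z = [T]$ for the subtree $T = \Set{s\in{}^{{<}\kappa}\kappa}{\forall\alpha\in\dom{s}\ (s(\alpha)>x(\alpha))}$, so $Z$ is closed; it is homeomorphic to ${}^\kappa\kappa$ via $z\mapsto\langle g_\alpha(z(\alpha))\rangle_{\alpha<\kappa}$, where $g_\alpha\colon\kappa\to[x(\alpha)+1,\kappa)$ is the order isomorphism (this is exactly the identification of $Z$ with $\prod_{\alpha<\kappa}[x(\alpha)+1,\kappa)$ carrying the bounded topology); and $Z\cap K=\emptyset$ because any $z\in Z$ has $z(\alpha)>x(\alpha)$ for every $\alpha$, while any $y\in K$ has $y(\alpha)\leq x(\alpha)$ for all large $\alpha$. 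Consequently $D := f[Z]$ is a closed subset of $C$ (hence of ${}^\kappa\kappa$) homeomorphic to ${}^\kappa\kappa$ with $f^{-1}(D)=Z$ disjoint from $B$, so $D\cap A = D\cap A\cap C = \emptyset$.

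Finally, the statement for $\mathbf{\Pi}^1_1$ sets is immediate: if $A$ is $\mathbf{\Pi}^1_1$, apply what has just been proved to the $\mathbf{\Sigma}^1_1$ set ${}^\kappa\kappa\setminus A$, obtaining a closed $D\subseteq C$ homeomorphic to ${}^\kappa\kappa$ with $D\subseteq{}^\kappa\kappa\setminus A$ or $D\cap({}^\kappa\kappa\setminus A)=\emptyset$, i.e.\ $D\cap A=\emptyset$ or $D\subseteq A$. I do not expect a genuine obstacle here; the only points needing a little care are that $\mathbf{\Sigma}^1_1$ is preserved under the continuous preimage along $f$ and that the explicitly built $Z$ carries, as a subspace of ${}^\kappa\kappa$, precisely the bounded topology transported from ${}^\kappa\kappa$ by the coordinatewise order isomorphisms — both of which are routine.
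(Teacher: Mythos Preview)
Your proof is correct, but it takes a different route from the paper's. The paper does not transport through a homeomorphism at all: it applies the Hurewicz dichotomy directly to the $\mathbf{\Sigma}^1_1$ set $A\cap C$; if this set is covered by a $\K\kappa$ set $K$, the paper then applies the dichotomy \emph{a second time} to the $\mathbf{\Sigma}^1_1$ set $C\setminus K$, which cannot be covered by a $\K\kappa$ set since $C$ is not (Fact~\ref{proposition:DichotomyUnc}), and hence must contain a closed copy of ${}^\kappa\kappa$ disjoint from $A$. Your approach, by contrast, pulls back along $f$, applies the dichotomy once to $B=f^{-1}(A)$, and in the $\K\kappa$ case invokes Lemma~\ref{lemma:bounded} to build by hand a closed copy $Z$ of ${}^\kappa\kappa$ that avoids the eventually bounded set $K$. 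The paper's argument is slightly slicker in that it needs neither the homeomorphism transport (and the attendant check that $\mathbf{\Sigma}^1_1$ is preserved under continuous preimages) nor the combinatorics of Lemma~\ref{lemma:bounded}: it uses only the hypothesis twice plus the fact that $\K\kappa$ sets cannot swallow a closed copy of ${}^\kappa\kappa$. Your version has the mild advantage of applying the hypothesis only once and giving an explicit description of the witness in the disjoint case.
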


\begin{proof}
 Assume that $A\cap C$ does not contain a closed-in-\( \pre{\kappa}{\kappa} \) subset homeomorphic to the whole ${}^\kappa\kappa$. 
 Since by our hypothesis $A\cap C$ must satisfy the Hurewicz dichotomy, there is a $\K{\kappa}$ set $K \subseteq {}^\kappa\kappa$ with 
$A\cap C\subseteq K$.  Consider now the set $C\setminus K$. Since it is a \( \mathbf{\Sigma}^1_1 \) set, it satisfies the Hurewicz dichotomy as well. But \( C \) cannot be 
contained in a $\K{\kappa}$ set by Fact~\ref{proposition:DichotomyUnc}, hence we can conclude that $C\setminus K$ contains a 
closed-in-\( \pre{\kappa}{\kappa} \) subset $D$ homeomorphic to ${}^\kappa\kappa$. Since 
$D \subseteq {C \setminus K} \subseteq {C\setminus A}$, we are done. 
\end{proof}

Recall from Definition~\ref{def:regularityproperties} the notions of \( \kappa \)-Sacks measurability and \( \kappa \)-Miller measurability.

\begin{theorem}\label{theorem:HDimpliesMeasurability}
 Let $\kappa$ be an uncountable cardinal with $\kappa=\kappa^{{<}\kappa}$. Assume that every $\mathbf{\Sigma}^1_1$ subset of ${}^\kappa\kappa$ satisfies the Hurewicz dichotomy. 
 \begin{enumerate-(1)}
  \item \label{theorem:HDimpliesMeasurability-1}
If $\kappa$ is not weakly compact, then all $\mathbf{\Sigma}^1_1$ and all \( \mathbf{\Pi}^1_1 \) subsets of ${}^\kappa\kappa$ are $\kappa$-Sacks measurable. 
  
  \item \label{theorem:HDimpliesMeasurability-2}
If $\kappa$ is weakly compact, then all $\mathbf{\Sigma}^1_1$ and all \( \mathbf{\Pi}^1_1 \) subsets of ${}^\kappa\kappa$ are $\kappa$-Miller measurable. 
 \end{enumerate-(1)}
\end{theorem}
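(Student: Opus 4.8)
The plan is to derive both parts from Lemma~\ref{lemma:MeasureabilityFromHD} together with the combinatorial translations of ``containing a closed copy of $\pre{\kappa}{2}$'' (Lemma~\ref{lemma:PerfectClosedTreeEmb}) and of ``containing a closed copy of $\pre{\kappa}{\kappa}$'' (Proposition~\ref{proposition:MillerTreeHomeoKappaKappa}). In both cases one is handed a tree $T$ of the relevant kind; one produces a closed-in-$\pre{\kappa}{\kappa}$ set $C \subseteq [T]$ homeomorphic to $\pre{\kappa}{\kappa}$, feeds $C$ into Lemma~\ref{lemma:MeasureabilityFromHD} to get a closed $D \subseteq C$ homeomorphic to $\pre{\kappa}{\kappa}$ with $D \subseteq A$ or $A \cap D = \emptyset$, and finally reads off from $D$ a subtree $S$ of $T$ of the required kind with $[S] \subseteq D$. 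The elementary but recurring point is: if $U = T_D$ is the canonical pruned subtree induced by a closed set $D$ with $D = [U] \subseteq [T]$, then $U \subseteq T$, since every $u \in U$ is an initial segment of some $x \in [U] = D \subseteq [T]$ and hence $u = x \restriction \length{u} \in T$. Thus any subtree of $U$ is automatically a subtree of $T$. The $\mathbf{\Pi}^1_1$ case needs no extra work, being covered by the last sentence of Lemma~\ref{lemma:MeasureabilityFromHD}.

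\emph{Part~\ref{theorem:HDimpliesMeasurability-1}.} Assume $\kappa$ is not weakly compact, let $A \subseteq \pre{\kappa}{\kappa}$ be $\mathbf{\Sigma}^1_1$ (or $\mathbf{\Pi}^1_1$), and let $T \subseteq \pre{< \kappa}{\kappa}$ be a perfect tree. Applying the implication~\ref{item:perfect}~$\Rightarrow$~\ref{item:homeo} of Lemma~\ref{lemma:PerfectClosedTreeEmb} to $T$ itself (a perfect subtree of $T$), the set $[T]$ contains a closed subset $C$ homeomorphic to $\pre{\kappa}{2}$, which by Corollary~\ref{corollary:RevNegroPonte} is homeomorphic to $\pre{\kappa}{\kappa}$. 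Lemma~\ref{lemma:MeasureabilityFromHD} applied to $A$ and $C$ yields a closed $D \subseteq C$ homeomorphic to $\pre{\kappa}{\kappa}$, hence (again by Corollary~\ref{corollary:RevNegroPonte}) homeomorphic to $\pre{\kappa}{2}$, with $D \subseteq A$ or $A \cap D = \emptyset$. Set $U = T_D$; then $U \subseteq T$ by the observation above, and since $[U] = D$ is itself a closed copy of $\pre{\kappa}{2}$, the implication~\ref{item:homeo}~$\Rightarrow$~\ref{item:perfect} of Lemma~\ref{lemma:PerfectClosedTreeEmb} applied to $U$ gives a perfect subtree $S \subseteq U \subseteq T$ with $[S] \subseteq [U] = D$. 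Hence $[S] \subseteq A$ or $[S] \cap A = \emptyset$, witnessing $\kappa$-Sacks measurability of $A$ as in Definition~\ref{def:regularityproperties}.

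\emph{Part~\ref{theorem:HDimpliesMeasurability-2}.} Assume $\kappa$ is weakly compact, let $A \subseteq \pre{\kappa}{\kappa}$ be $\mathbf{\Sigma}^1_1$ (or $\mathbf{\Pi}^1_1$), and let $T \subseteq \pre{< \kappa}{\kappa}$ be a $\kappa$-Miller tree. By part~\ref{item:kappaMiller} of Proposition~\ref{proposition:MillerTreeHomeoKappaKappa}, the set $C := [T]$ is closed and homeomorphic to $\pre{\kappa}{\kappa}$, and Lemma~\ref{lemma:MeasureabilityFromHD} applied to $A$ and $C$ produces a closed $D \subseteq C$ homeomorphic to $\pre{\kappa}{\kappa}$ with $D \subseteq A$ or $A \cap D = \emptyset$. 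Put $U = T_D$; since $T$ is pruned we have $T = T_{[T]}$, so the observation above gives $U \subseteq T$, and since $[U] = D$ is homeomorphic to $\pre{\kappa}{\kappa}$, part~\ref{item:weaklycompact} of Proposition~\ref{proposition:MillerTreeHomeoKappaKappa} (which is exactly where weak compactness of $\kappa$ enters) yields a $\kappa$-Miller subtree $S \subseteq U \subseteq T$ with $[S] \subseteq [U] = D$. Thus $[S] \subseteq A$ or $[S] \cap A = \emptyset$, as required.

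I do not anticipate a genuine obstacle: everything reduces cleanly to the quoted results, and the only care needed is the bookkeeping with canonical pruned trees versus arbitrary subtrees, making sure that each invocation of Lemma~\ref{lemma:PerfectClosedTreeEmb} or Proposition~\ref{proposition:MillerTreeHomeoKappaKappa} is applied to a tree whose body actually is (or contains) the relevant homeomorphic copy. It is worth recording a strengthening that the argument gives for free: the measuring subtree $S$ can in fact be found with $[S]$ contained in any prescribed closed homeomorphic copy of $\pre{\kappa}{\kappa}$ sitting inside $[T]$.
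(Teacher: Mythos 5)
Your proposal is correct and follows essentially the same route as the paper's own proof: reduce to Lemma~\ref{lemma:MeasureabilityFromHD}, use Lemma~\ref{lemma:PerfectClosedTreeEmb} together with Corollary~\ref{corollary:RevNegroPonte} in the non-weakly compact case and Proposition~\ref{proposition:MillerTreeHomeoKappaKappa} in the weakly compact case, and pull the resulting perfect (respectively, $\kappa$-Miller) tree back inside $T$ via the canonical pruned tree of $D$. The only difference is bookkeeping: the paper handles $\mathbf{\Pi}^1_1$ sets by noting the symmetry of the measurability notion rather than by citing the last sentence of Lemma~\ref{lemma:MeasureabilityFromHD}, which amounts to the same thing.
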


\begin{proof}
It is clearly enough to consider the case of \( \mathbf{\Sigma}^1_1 \) sets, so let $A \subseteq \pre{\kappa}{\kappa}$ be $\mathbf{\Sigma}^1_1$ and $T \subseteq {}^{{<}\kappa}\kappa$ be a subtree. 

\ref{theorem:HDimpliesMeasurability-1} Assume that $\kappa$ is not weakly compact and that $T$ is perfect. Then $[T]$ contains a closed subset $C$ homeomorphic to ${}^\kappa 2$ by Lemma~\ref{lemma:PerfectClosedTreeEmb}, and 
 Corollary~\ref{corollary:RevNegroPonte} implies that $C$ is homeomorphic to ${}^\kappa\kappa$. By Lemma~\ref{lemma:MeasureabilityFromHD} there is a closed subset $D$ of $C$ homeomorphic to ${}^\kappa\kappa$ 
 such that either $D\subseteq A$ or $A\cap D=\emptyset$. Using Lemma~\ref{lemma:PerfectClosedTreeEmb} again, we get that there is a perfect tree $S$ with $[S]\subseteq D$. 
 Since $[S]\subseteq [T]$ holds and every node in $S$ is extended by a $\kappa$-branch through $S$, we can conclude that $S$ is a perfect subtree of $T$ such that either $[S]\subseteq A$ or $A\cap[S]=\emptyset$, as required.

\ref{theorem:HDimpliesMeasurability-2} Assume that $\kappa$ is weakly compact and that $T$ is a $\kappa$-Miller tree. By Proposition~\ref{proposition:MillerTreeHomeoKappaKappa} the set $[T]$ is homeomorphic to ${}^\kappa\kappa$, hence 
 Lemma~\ref{lemma:MeasureabilityFromHD} yields a closed subset $D$ of $[T]$ such that either $D\subseteq A$ or $A\cap D=\emptyset$. 
 By Proposition~\ref{proposition:MillerTreeHomeoKappaKappa} there is $\kappa$-Miller tree $S$ with $[S]\subseteq D$, and hence arguing as above we can conclude that  $S$ is a \( \kappa \)-Miller subtree of \( T \) such that either $[S]\subseteq A$ or $A\cap[S]=\emptyset$, as required.  
\end{proof}

In the remainder of this section we use the forcing construction from Section~\ref{section:Global} to separate the Hurewicz dichotomy, the \( \kappa \)-Sacks measurability, and the \( \kappa \)-Miller measurability from the \( \kappa \)-perfect set property.

\begin{theorem}\label{theorem:SeparatePSPfromHD}
 Assume $\VV=\LL$ and let $\vec{\PP}$ be the class forcing constructed in the proof of Theorem~\ref{theorem:GlobalHD}. 
 If $\kappa$ is an uncountable regular cardinal, then there is a subtree $T$ of ${}^{{<}\kappa}\kappa$ such that in every $\vec{\PP}$-generic extension of $\VV$, the closed set $[T]$ does not have the \( \kappa \)-perfect set property. 
\end{theorem}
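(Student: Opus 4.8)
By Lemma~\ref{lemma:PerfectClosedTreeEmb}, for any subtree $T\subseteq{}^{{<}\kappa}\kappa$ the closed set $[T]$ contains a closed-in-${}^\kappa\kappa$ copy of ${}^\kappa 2$ if and only if $T$ contains a perfect subtree. Hence it suffices to produce a subtree $T\in\LL$ of ${}^{{<}\kappa}\kappa$ such that, in every $\vec{\PP}$-generic extension $\LL[G]$ of $\LL$, the set $[T]$ has cardinality greater than $\kappa$ while $T$ has no perfect subtree. The plan is to take $T$ to be (a coding as a subtree of ${}^{{<}\kappa}\kappa$ of) a carefully constructed tree in $\LL$ having $\geq\kappa^+$ cofinal branches there, and then to verify both properties in $\LL[G]$ by analysing $\vec{\PP}$. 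The cardinality requirement will then be immediate: forcing does not destroy branches, so $[T]^{\LL}\subseteq[T]^{\LL[G]}$, and since forcing with $\vec{\PP}$ preserves all cofinalities (Lemma~\ref{lemma:Class iteration propoerties}\ref{statement:CofinalityPreserving}) one gets $\lvert[T]^{\LL[G]}\rvert\geq(\kappa^+)^{\LL}=(\kappa^+)^{\LL[G]}>\kappa$.

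For the no-perfect-subtree requirement, I would first reduce to an intermediate model. Factor $\vec{\PP}\cong\vec{\PP}_{{<}\kappa+1}*\dot{\PP}_{[\kappa+1,\On)}$ and let $\bar{G}$ be the $\vec{\PP}_{{<}\kappa+1}$-generic filter induced by $G$. By Lemma~\ref{lemma:Class iteration propoerties}\ref{statement:TailsClosed} the tail $\dot{\PP}_{[\kappa+1,\On)}$ is forced to be ${<}\kappa^+$-directed closed, hence ${<}\kappa^+$-closed, hence adds no new subsets of $\kappa$; and since every subtree of ${}^{{<}\kappa}\kappa$ has size $\kappa^{{<}\kappa}=\kappa$ and is therefore coded by a subset of $\kappa$, the tail adds no new subtrees of ${}^{{<}\kappa}\kappa$, in particular no new perfect subtree of $T$. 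Thus the property of $T$ having no perfect subtree is absolute between $\LL[\bar{G}]$ and $\LL[G]$, and it suffices to build $T\in\LL$ with $\mathbbm{1}_{\vec{\PP}_{{<}\kappa+1}}\Vdash\anf{\text{$\check{T}$ has no perfect subtree}}$.

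When $\kappa$ is a successor cardinal, say $\kappa=\mu^+$, any $\kappa$-Kurepa tree $T\in\LL$ will work, with no further analysis of $\vec{\PP}_{{<}\kappa+1}$ needed. Indeed, since $\vec{\PP}$ preserves $\mathsf{GCH}$ and does not collapse $\kappa$, the tree $T$ remains a $\kappa$-tree with $\geq\kappa^+$ branches in $\LL[G]$, and in any model of $\mathsf{GCH}$ a $\kappa$-tree $T$ has no perfect subtree: if $S\subseteq T$ were perfect, then using that $S$ is pruned and superclosed (Definition~\ref{def:perfectandkappaMiller}) and that every node of $S$ has a splitting node above it, one recursively constructs nodes $u_s\in S$ for $s\in{}^{{\leq}\mu}2$ together with a strictly increasing continuous sequence $\seq{\alpha_\xi}{\xi\leq\mu}$ of ordinals below $\kappa$ with $u_s\in{}^{\alpha_{\length{s}}}\kappa$, with $u_s\subsetneq u_{s'}$ whenever $s\subsetneq s'$, and with $u_s$ incompatible with $u_{s'}$ whenever $s$ is; superclosedness of $S$ is used at limit stages $\xi\leq\mu$ (which have length $<\kappa$), and $\kappa^{{<}\kappa}=\kappa$ keeps the $\alpha_\xi$ below $\kappa$. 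Then $\set{u_s}{s\in{}^\mu 2}$ is a family of $2^\mu=\kappa$ pairwise incompatible nodes of $T$ all lying on level $\alpha_\mu<\kappa$, contradicting that $T$ is a $\kappa$-tree.

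The main obstacle is the case where $\kappa$ is inaccessible in $\LL$, and especially where $\kappa$ is weakly compact in $\LL$: then $2^\mu<\kappa$ for every $\mu<\kappa$, so the stacking argument above breaks down, and if $\kappa$ is weakly compact there is not even a $\kappa$-Kurepa tree in $\LL$ to start from. Here the plan is to build $T\in\LL$ by recursion on levels, in the spirit of the Friedman–Kulikov construction of a $\kappa$-Baire almost $\kappa$-Kurepa tree (cf.\ the tree used in Theorem~\ref{theorem:CounterexampleInL}): using a $\diamondsuit_\kappa$-sequence together with condensation for the canonical well-ordering of $\LL$, one anticipates at a stationary set of levels $\alpha$ the restrictions to level $\alpha$ of the $\vec{\PP}_{{<}\kappa+1}$-names for would-be perfect subtrees of $T$ — note $\vec{\PP}_{{<}\kappa+1}$ has size $\leq\kappa^+$ and the $\kappa^+$-chain condition, so each such name is determined by a subset of $\kappa^+$ and only $\kappa^+$-many of them can be active on a given bounded fragment, which makes the bookkeeping feasible — and at each guessed level extends $T$ so as to kill the corresponding name, e.g.\ by giving a node it predicts to be splitting only one successor in $T$; simultaneously one keeps $T$ a $\kappa$-tree (or, for weakly compact $\kappa$, a tree with a single wide level) with $\geq\kappa^+$ cofinal branches, as in the almost-Kurepa construction. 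The technical heart of the proof is checking that this recursion can be carried out — that the guessing genuinely meets every name densely below $\mathbbm{1}_{\vec{\PP}_{{<}\kappa+1}}$, and that the ``almost Kurepa'' branch requirement survives the $\leq\kappa^+$-sized, $\kappa^+$-chain condition forcing $\vec{\PP}_{{<}\kappa+1}$. Granting this, $\mathbbm{1}_{\vec{\PP}_{{<}\kappa+1}}$ forces that $\check{T}$ has no perfect subtree, and combining with the previous two paragraphs we conclude that in $\LL[G]$ the closed set $[T]$ has cardinality $>\kappa$ and contains no closed copy of ${}^\kappa 2$, i.e.\ $[T]$ fails the $\kappa$-perfect set property.
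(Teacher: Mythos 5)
Your reduction to the intermediate model $\LL[\bar G]$ and your successor-cardinal argument are fine (for $\kappa=\mu^+$ the stacking argument only needs $2^\mu\geq\mu^+=\kappa$, so any $\kappa$-Kurepa tree of $\LL$ indeed works). The genuine gap is that the case where $\kappa$ is inaccessible in $\LL$ --- which is the hard case, and in particular the ineffable/weakly compact case where no $\kappa$-Kurepa tree exists in $\LL$ --- is not proved: your plan is to build $T$ by a $\diamondsuit_\kappa$-style recursion that anticipates and kills $\vec{\PP}_{{<}\kappa+1}$-names for perfect subtrees, and you explicitly write ``Granting this'' at the point where all the content lies. As it stands the sketch does not go through: there are $2^\kappa=\kappa^+$ many nice names for subtrees of $T$ (the forcing has size $\kappa^+$ and is only $\kappa^+$-c.c.), so a guessing sequence along $\kappa$ cannot enumerate them, and the phrase ``only $\kappa^+$-many of them can be active on a given bounded fragment'' does not explain how a recursion of length $\kappa$ diagonalizes against them; moreover, sealing one predicted splitting node at one level does not densely force that the named set fails to be a perfect subtree, and one must simultaneously verify that the $\kappa^+$ branches survive --- none of this is carried out. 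Note also that a perfect subtree of a ground-model tree can perfectly well appear only in the extension, so some forcing-absolute obstruction is needed, and your construction does not exhibit one.

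The paper avoids all of this with a single combinatorial property that is absolute modulo preservation of cardinals and of one stationary set: it takes $T\in\LL$ to be a \emph{weak $S$-Kurepa tree}, i.e.\ $[T]$ has size $\kappa^+$ while $\vert T\cap\pre{\alpha}{\kappa}\vert\leq\vert\alpha\vert$ for all $\alpha$ in a stationary $S\subseteq\kappa$. If $\kappa$ is not ineffable in $\LL$, Jensen--Kunen give a slim $\kappa$-Kurepa tree ($S=\kappa$); if $\kappa$ is ineffable, the Friedman--Kulikov construction gives a weak $S$-Kurepa tree for some stationary $S$, and since $\kappa$ is then Mahlo, $\vec{\PP}_{{<}\kappa}$ is $\kappa$-c.c.\ and the stage-$\kappa$ forcing is ${<}\kappa$-closed, so $S$ remains stationary (and all cardinals are preserved) in $\LL[\bar G]$. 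Then, in the extension, a perfect subtree of $T$ would give, at any level $\alpha$ in the club of closure points of the associated embedding $\map{\iota}{\pre{<\kappa}{2}}{T}$, at least $2^{\vert\alpha\vert}>\vert\alpha\vert$ nodes of $T$; taking such an $\alpha$ in $S$ contradicts slimness. So $[T]$ has size $\kappa^+>\kappa$ but contains no closed copy of $\pre{\kappa}{2}$ (Lemma~\ref{lemma:PerfectClosedTreeEmb}), with no analysis of names at all; the passage to the full $\vec{\PP}$-extension is then your (correct) closure argument, as in Lemma~\ref{lemma:Class iteration propoerties}. To repair your proof you would either have to supply the missing diagonalization in full, or switch to this slimness-on-a-stationary-set argument.
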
 

\begin{proof}
 Fix an uncountable regular cardinal $\kappa$ and assume that $G$ is $\vec{\PP}_{{<}\kappa+1}$-generic over $\VV$. 

 \begin{claim} \label{claim:slim}
  There are $S,T\in\VV$ such that $S$ is a stationary subset of $\kappa$ in $\VV[G]$ and $T$ is a \emph{weak $S$-Kurepa tree} in $\VV[G]$, i.e.\  $T$ is a subtree of ${}^{{<}\kappa}\kappa$ 
  such that $[T]$ has size $\kappa^+$ and $\vert T\cap {}^\alpha \kappa	\vert\leq\vert\alpha\vert$ holds for all $\alpha\in S$. 
 \end{claim}

 \begin{proof}[Proof of the Claim]
  First assume that $\kappa$ is not ineffable in $\VV$ (see {\cite[Section 2]{JensenKunen1969:Ineffable}}). By the results of~\cite{JensenKunen1969:Ineffable}, this implies that there is a \emph{slim $\kappa$-Kurepa tree}, i.e.\  a subtree $T$ of ${}^{{<}\kappa}\kappa$ with $[T]$ of size $\kappa^+$ and $\vert T\cap {}^\alpha \kappa	\vert\leq\vert\alpha\vert$ for all $\alpha<\kappa$. Since forcing with $\vec{\PP}_{{<}\kappa+1}$ 
  preserves cardinalities, $T$ is a slim $\kappa$-Kurepa tree in $\VV[G]$ as well. 

  Now assume that $\kappa$ is ineffable in $\VV$. Then $\kappa$ is a Mahlo cardinal in $\VV$ and {\cite[Proposition 7.13]{MR2768691}} shows that $\vec{\PP}_{{<}\kappa}$ satisfies the $\kappa$-chain condition. 
  Since $\dot{\PP}_\kappa$ is a $\vec{\PP}_{{<}\kappa}$-name for a ${<}\kappa$-closed partial order, this implies that forcing with $\vec{\PP}_{{<}\kappa+1}$ preserves stationary subsets of $\kappa$ and all cardinalities. 
  By our assumptions, we can apply the results of {\cite[Section 3]{FriedmanKulikov}} to find a stationary subset $S$ of $\kappa$ and a weak $S$-Kurepa tree in $\VV$. 
  Thus by the above remarks the statement of the claim holds also in this case. 
 \end{proof}

 Let $T$ be a subtree as in Claim~\ref{claim:slim}. An easy argument (see for example {\cite[Section 7]{PL}}) shows that the closed set $[T]$ does not have the \( \kappa \)-perfect set property in $\VV[G]$. Hence by Lemma~\ref{lemma:Class iteration propoerties} we are done. 
\end{proof}

Recall from Proposition~\ref{prop:PSPHurewicz} that when \( \kappa \) is not weakly compact the \( \kappa \)-perfect set property implies (pointwise) the Hurewicz dichotomy. However, the next corollary shows that the latter is really weaker: in fact we can separate the two properties already at the level of closed subsets of \( \pre{\kappa}{\kappa} \).

\begin{corollary} \label{cor:HDnonPSP}
It is consistent with \( \ZFC \) that for every uncountable regular cardinal \( \kappa \), all \( \mathbf{\Sigma}^1_1 \) subsets of \( \pre{\kappa}{\kappa} \) satisfy the Hurewicz dichotomy while there are closed subsets of \( \pre{\kappa}{\kappa} \) without the \( \kappa \)-perfect subset property.
\end{corollary}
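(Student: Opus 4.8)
\textbf{Proof proposal for Corollary~\ref{cor:HDnonPSP}.} The plan is to simply combine Theorems~\ref{theorem:GlobalHD} and~\ref{theorem:SeparatePSPfromHD} by working over the constructible universe. First I would start in the model \( \VV = \LL \), which of course satisfies the \( \mathsf{GCH} \), and let \( \vec{\PP} \) be the class-sized Easton support iteration constructed in the proof of Theorem~\ref{theorem:GlobalHD}. Fix a \( \vec{\PP} \)-generic filter \( G \) over \( \VV \) and work in \( \VV[G] \); since \( \vec{\PP} \) is a tame/definable class forcing that preserves all cofinalities and the \( \mathsf{GCH} \) by Lemma~\ref{lemma:Class iteration propoerties}, the extension \( \VV[G] \) is a model of \( \ZFC + \mathsf{GCH} \) with the same regular cardinals as \( \LL \).

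Next I would invoke condition~\ref{theorem:GlobalHD-2} of Theorem~\ref{theorem:GlobalHD}: in \( \VV[G] \), for every uncountable regular cardinal \( \kappa \) (which automatically satisfies \( \kappa = \kappa^{{<}\kappa} \) since the \( \mathsf{GCH} \) holds), every \( \mathbf{\Sigma}^1_1 \) subset of \( \pre{\kappa}{\kappa} \) satisfies the Hurewicz dichotomy. This gives the first half of the desired conclusion. For the second half I would apply Theorem~\ref{theorem:SeparatePSPfromHD}, which says precisely that, since \( \VV = \LL \) and \( \vec{\PP} \) is this same class forcing, for every uncountable regular \( \kappa \) there is a subtree \( T \subseteq \pre{{<}\kappa}{\kappa} \) (lying in \( \LL \)) such that in \( \VV[G] \) the closed set \( [T] \) fails to have the \( \kappa \)-perfect set property. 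Putting these two facts together, \( \VV[G] \) is the required model: all \( \mathbf{\Sigma}^1_1 \) subsets of \( \pre{\kappa}{\kappa} \) satisfy the Hurewicz dichotomy at every uncountable regular \( \kappa \), yet at each such \( \kappa \) there is a closed counterexample to the \( \kappa \)-perfect set property.

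There is essentially no obstacle here: the whole content has already been packaged into the two cited theorems, and the only thing to check is that they can be applied to the \emph{same} generic extension. This is immediate because Theorem~\ref{theorem:SeparatePSPfromHD} is explicitly formulated for the forcing \( \vec{\PP} \) of Theorem~\ref{theorem:GlobalHD} and for an arbitrary \( \vec{\PP} \)-generic extension of \( \LL \), while Theorem~\ref{theorem:GlobalHD} asserts its conclusion in every \( \vec{\PP} \)-generic extension of a model of \( \mathsf{GCH} \); taking \( \VV = \LL \) makes both hypotheses simultaneously available. One could remark, for completeness, that by Theorem~\ref{thm:weaklycompactintroduction} or the discussion following Proposition~\ref{prop:PSPHurewicz}, the failure of the \( \kappa \)-perfect set property for the closed set \( [T] \) is genuinely compatible with the Hurewicz dichotomy holding for all \( \mathbf{\Sigma}^1_1 \) sets, so no hidden inconsistency arises; but no further argument is needed.
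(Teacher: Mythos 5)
Your proposal is correct and follows exactly the paper's own argument: force over \( \LL \) with the class forcing \( \vec{\PP} \) from Theorem~\ref{theorem:GlobalHD} and combine its conclusion with Theorem~\ref{theorem:SeparatePSPfromHD} in the same generic extension. Nothing further is needed.
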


\begin{proof}
Force over \( \LL \) with the class forcing $\vec{\PP}$ constructed in the proof of Theorem~\ref{theorem:GlobalHD}: then the result follows from Theorems~\ref{theorem:GlobalHD} and~\ref{theorem:SeparatePSPfromHD}.
\end{proof}

As observed in the introduction, the \( \kappa \)-perfect set property also implies (pointwise) its symmetric version, namely the \( \kappa \)-Sacks 
measurability. The following result allows us to separate these two properties at  the level of closed subsets of \( \pre{\kappa}{\kappa} \) (for a 
non-weakly compact cardinal \( \kappa \)). Moreover, it shows that we can separate also the \( \kappa \)-Miller measurability from the 
\( \kappa \)-perfect set property for \( \kappa \) a weakly compact cardinal.

\begin{corollary} \label{cor:measurabilitynonPSP}
It is consistent with \( \ZFC \) that for every non-weakly compact regular cardinal \( \kappa \) all \( \mathbf{\Sigma}^1_1 \) and all \( \mathbf{\Pi}^1_1 \) subsets of \( \pre{\kappa}{\kappa} \) are \( \kappa \)-Sacks measurable, for every weakly compact cardinal \( \kappa \) all \( \mathbf{\Sigma}^1_1 \) and all \( \mathbf{\Pi}^1_1 \) subsets of \( \pre{\kappa}{\kappa} \) are \( \kappa \)-Miller measurable, while for every uncountable regular cardinal $\kappa$ there are closed subsets of \( \pre{\kappa}{\kappa} \) without the \( \kappa \)-perfect subset property.
\end{corollary}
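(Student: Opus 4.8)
The plan is to use the very same model as in the proof of Corollary~\ref{cor:HDnonPSP}, namely a $\vec{\PP}$-generic extension $\VV[G]$ of $\LL$, where $\vec{\PP}$ is the class forcing constructed in the proof of Theorem~\ref{theorem:GlobalHD}; the three assertions of the corollary will then drop out by combining Theorems~\ref{theorem:GlobalHD}, \ref{theorem:HDimpliesMeasurability} and~\ref{theorem:SeparatePSPfromHD}. If one wants the clause about weakly compact cardinals to be non-vacuous, one should moreover start from an $\LL$ containing a strongly unfoldable cardinal (this is consistent with $\VV = \LL$, cf.\ the discussion in~\cite{MR2467213}), since such a cardinal remains strongly unfoldable, hence weakly compact, in $\VV[G]$ by Theorem~\ref{theorem:GlobalHD}.

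First I would record the relevant structural features of $\VV[G]$. By Theorem~\ref{theorem:GlobalHD}, forcing with $\vec{\PP}$ preserves all cofinalities and the $\mathsf{GCH}$, so the uncountable regular cardinals of $\VV[G]$ are exactly those of $\LL$ and every such $\kappa$ satisfies $\kappa = \kappa^{{<}\kappa}$ in $\VV[G]$; moreover, in $\VV[G]$ every $\mathbf{\Sigma}^1_1$ subset of $\pre{\kappa}{\kappa}$ satisfies the Hurewicz dichotomy, for each uncountable regular $\kappa$. In particular, the hypotheses of Theorem~\ref{theorem:HDimpliesMeasurability} are met at every such $\kappa$.

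Next I would invoke Theorem~\ref{theorem:HDimpliesMeasurability} inside $\VV[G]$, splitting into the two cases of that theorem according to whether the given uncountable regular $\kappa$ is weakly compact in $\VV[G]$: in the non-weakly compact case it yields that all $\mathbf{\Sigma}^1_1$ and all $\mathbf{\Pi}^1_1$ subsets of $\pre{\kappa}{\kappa}$ are $\kappa$-Sacks measurable, and in the weakly compact case that they are $\kappa$-Miller measurable. This gives the first two assertions. Finally, Theorem~\ref{theorem:SeparatePSPfromHD}, applied in the same model $\VV[G]$, provides for each uncountable regular $\kappa$ a subtree $T \subseteq \pre{{<}\kappa}{\kappa}$ such that $[T]$ is a closed subset of $\pre{\kappa}{\kappa}$ failing the $\kappa$-perfect set property, which is the third assertion.

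Since every ingredient is already in place, I do not expect a genuine obstacle here. The two points that deserve a moment's care are: (i) checking that the $\mathsf{GCH}$ of $\VV[G]$ indeed delivers the assumption $\kappa = \kappa^{{<}\kappa}$ required by Theorem~\ref{theorem:HDimpliesMeasurability}; and (ii) noticing that one does \emph{not} need $\vec{\PP}$ to preserve weak compactness in general --- for each $\kappa$ one simply applies whichever clause of Theorem~\ref{theorem:HDimpliesMeasurability} matches the status of $\kappa$ in $\VV[G]$, the only role of the (strong unfoldability, hence) weak compactness preservation being to make the second clause of the corollary non-vacuous.
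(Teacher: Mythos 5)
Your proposal is correct and follows the paper's own argument exactly: force over $\LL$ with the class forcing $\vec{\PP}$ of Theorem~\ref{theorem:GlobalHD} and combine Theorems~\ref{theorem:GlobalHD}, \ref{theorem:HDimpliesMeasurability} and~\ref{theorem:SeparatePSPfromHD} (the paper's one-line proof cites only the first two, but the third is clearly needed for the failure of the $\kappa$-perfect set property, just as in Corollary~\ref{cor:HDnonPSP}). Your remark that strong unfoldability in $\LL$ is only needed to make the weakly compact clause non-vacuous matches the paper's own discussion following the corollary.
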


\begin{proof}
Force again over \( \LL \) with the class forcing $\vec{\PP}$ constructed in the proof of Theorem~\ref{theorem:GlobalHD}: the result then follows from Theorems~\ref{theorem:GlobalHD} and~\ref{theorem:HDimpliesMeasurability}. 
\end{proof}

Notice that  if there are strongly unfoldable cardinals in \( \LL \), then by Theorem~\ref{theorem:GlobalHD} such cardinals remain strongly unfoldable (hence also weakly compact) in the generic extension of \( \LL \) considered in the above proof. Thus our results show that 
the possibility of separating the \( \kappa \)-Miller measurability from the \( \kappa \)-perfect set property (for some uncountable cardinal \( \kappa \))
 is consistent relatively to some large cardinal assumption: we do not know if such separation can be obtained from \( \ZFC \) alone, as it is already the case for the \( \kappa \)-Sacks measurability.


\section{Failures of the Hurewicz dichotomy}\label{section:FailureCohen}

We show that the Hurewicz dichotomy fails above a regular cardinal $\mu$ after we added a single Cohen subset of $\mu$ to the ground model.

\begin{theorem}\label{theorem:CounterexampleCohenReal}
Suppose that $\mu=\mu^{<\mu}$ and $G$ is $\Add{\mu}{1}$-generic over $\VV$. If $\kappa>\mu$ is a cardinal with $\kappa=\kappa^{{<}\kappa}$, then $({}^{\kappa}\kappa)^\VV$ is a closed subset of ${}^\kappa\kappa$ in $\VV[G]$ that does not satisfy the Hurewicz dichotomy. 
\end{theorem}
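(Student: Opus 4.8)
The plan is to show, in $\VV[G]$, that $A:=(\pre{\kappa}{\kappa})^{\VV}$ is closed but satisfies neither alternative of the Hurewicz dichotomy. Throughout I use that $\Add{\mu}{1}$ has cardinality $\mu^{{<}\mu}=\mu<\kappa$ (so it has the $\mu^+$-chain condition, and so does its square, hence also the $\kappa$-chain condition), and that it is ${<}\mu$-closed (so it adds no new ${<}\mu$-sequences; in particular $c\restriction\alpha\in\VV$ for all $\alpha<\mu$, where $c\in(\pre{\mu}{2})^{\VV[G]}\setminus\VV$ is the Cohen subset).

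\emph{Closedness.} First I would prove that $\Add{\mu}{1}$ adds no \emph{fresh} branch of length $\kappa$: if $x\in(\pre{\kappa}{\kappa})^{\VV[G]}$ has $x\restriction\alpha\in\VV$ for all $\alpha<\kappa$, then $x\in\VV$. Given a name $\dot x$ for such a fresh branch, the $\mu^+$-chain condition guarantees that for each $\alpha<\kappa$ only $\leq\mu$ many elements of $(\pre{\alpha}{\kappa})^{\VV}$ can be forced to equal $\dot x\restriction\alpha$; the collection of all such potential values is a ground-model subtree $T\subseteq\pre{{<}\kappa}{\kappa}$ all of whose levels have size $<\kappa$, and $\dot x^G$ is a cofinal branch of $T$ not lying in $\VV$. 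But $\Add{\mu}{1}\times\Add{\mu}{1}$ has the $\kappa$-chain condition, and a forcing whose square is $\kappa$-chain condition (for $\kappa$ regular) adds no new cofinal branch to a tree of height $\kappa$ with levels of size $<\kappa$; contradiction. The same argument shows $\Add{\mu}{1}$ adds no cofinal branch to any ground-model tree of height $\kappa$, a fact I reuse. Since $T_A=(\pre{{<}\kappa}{\kappa})^{\VV}$ and, by the above, $[T_A]$ computed in $\VV[G]$ equals $A$, the set $A=[T_A]$ is closed in $\VV[G]$.

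\emph{$A$ is not covered by a $\K{\kappa}$ set.} By Lemma~\ref{lemma:bounded} it suffices to show $A$ is not eventually bounded in $\VV[G]$, and in fact it is not even dominated by a single function there. Given a name $\dot x$ for an element of $\pre{\kappa}{\kappa}$, the $\mu^+$-chain condition implies that for each $\alpha<\kappa$ the value $\dot x(\alpha)$ ranges over fewer than $\kappa$ ordinals $<\kappa$; by regularity of $\kappa$ there is $g\in(\pre{\kappa}{\kappa})^{\VV}$ with $\mathbbm{1}\Vdash\anf{\dot x(\alpha)\leq\check g(\alpha)}$ for all $\alpha$. Then the ground-model function $\alpha\mapsto g(\alpha)+1$ belongs to $A$ and is not $\leq^*$ (indeed not $\leq$) any realization of $\dot x$, so no $x\in\VV[G]$ eventually dominates $A$.

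\emph{$A$ contains no closed copy of $\pre{\kappa}{\kappa}$.} Suppose toward a contradiction that $C\subseteq A$ is closed in $\VV[G]$ and homeomorphic to $\pre{\kappa}{\kappa}$. Since $\pre{\kappa}{\kappa}$ contains a closed copy of $\pre{\kappa}{2}$, and $\pre{\kappa}{2}$ a closed copy of $\pre{\mu}{2}$, there is a continuous injection $i\colon(\pre{\mu}{2})^{\VV[G]}\to C\subseteq A$ with closed range, living in $\VV[G]=\VV[c]$. Projecting $\mathrm{ran}(i)$ to its first $\mu$ coordinates produces a set that is closed in $\VV[c]$ (using superclosedness of the tree one gets after passing to a superclosed subcopy of $C$), consists of ground-model points, and is nowhere dense because $(\pre{\mu}{\mu})^{\VV}$ is dense in $(\pre{\mu}{\mu})^{\VV[c]}$ (the tree $\pre{{<}\mu}{\mu}$ is unchanged). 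Applying Corollary~\ref{corollary:PerfectClosedTreeEmb} with $\mu$ in the role of the cardinal there and with $T$ the canonical tree of this projected set, I would extract a perfect subtree $S$ of a ground-model subtree of $\pre{{<}\mu}{\mu}$ with $[S]\subseteq(\pre{\mu}{\mu})^{\VV}$ and $[S]$ homeomorphic to $\pre{\mu}{2}$. Finally I would contradict the genericity of $c$: passing to a name $\dot S$, using the $\mu^+$-chain condition to see that the point of $[S]$ naturally associated with $c$ is forced into $\VV$ and so realized along an antichain of size $\leq\mu$, and exploiting the homogeneity $\Add{\mu}{1}\cong\Add{\mu}{1}\times\Add{\mu}{1}$ via a mutual-genericity argument, one shows that $c$ would be recoverable from ground-model data together with a single condition, forcing $c\in\VV$.

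The main obstacle is this last step: the witnessing embedding $i$ \emph{a priori} lives only in $\VV[c]$ rather than in $\VV$, so one cannot directly "read off" $c$ from ground-model information. This is exactly the point of Corollary~\ref{corollary:PerfectClosedTreeEmb}: it trades the arbitrary $\VV[c]$-embedding for a perfect tree $S$ whose \emph{nodes} lie in $\VV$, and this rigidity (together with the fact that $\Add{\mu}{1}$ does not change the tree $\pre{{<}\mu}{\mu}$) is what makes the standard Cohen-genericity density argument go through at level $\mu$. Carrying out the reduction of the "closed copy of $\pre{\kappa}{\kappa}$ at level $\kappa$" to a "closed copy of $\pre{\mu}{2}$ at level $\mu$", and checking the nowhere-density/superclosedness bookkeeping it requires, is the technical heart of the proof.
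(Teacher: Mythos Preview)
Your closedness argument and your proof that $A$ is not eventually bounded (hence not contained in a $\K{\kappa}$ set, by Lemma~\ref{lemma:bounded}) are correct; the paper even notes this unboundedness route as an alternative to its argument via Lemma~\ref{covering kappa-compact sets in generic extensions}.

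The genuine gap is in the third step. Your reduction from level $\kappa$ to level $\mu$ via ``projecting $\ran{i}$ to the first $\mu$ coordinates'' does not work: if $r\colon\pre{\kappa}{\kappa}\to\pre{\mu}{\kappa}$ is restriction, the composition $r\circ i\colon(\pre{\mu}{2})^{\VV[G]}\to\pre{\mu}{\kappa}$ need not be \emph{injective}, since $i$ may send distinct $x,y\in\pre{\mu}{2}$ to elements of $A$ that agree on their first $\mu$ coordinates but split only much later. Without injectivity Corollary~\ref{corollary:PerfectClosedTreeEmb} does not apply, and passing to a superclosed subcopy of $C$ does nothing to fix this (nor does it make $r[\ran{i}]$ closed rather than merely $\mathbf{\Sigma}^1_1$). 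The paper's replacement for this step is Lemma~\ref{lemma:CoverPerfectTree}: starting from a continuous inclusion-preserving injection $\map{\iota}{\pre{{\leq}\mu}{2}}{(\pre{{<}\kappa}{\kappa})^{\VV}}$, one uses the $\mu^+$-cover property of $\VV$ in $\VV[G]$ to cover $\iota[\pre{\mu}{2}]$ by a \emph{ground-model} set $\bar D$ of size $\mu$, and then a ground-model coding function $\map{h}{\bar D}{\pre{\mu}{\mu}}$ (built from enumerations of $\bar D$ along a ground-model cofinal $\mu$-sequence) is composed with $\iota$. It is this coding, not a naive projection, that recovers injectivity and continuity at level $\mu$ and lets Corollary~\ref{corollary:PerfectClosedTreeEmb} produce a perfect $S_*\subseteq(\pre{{<}\mu}{\mu})^{\VV}$ with $[S_*]\subseteq\VV$.

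Your final paragraph is also too schematic to count as a proof. The precise statement you need is Lemma~\ref{lemma:PerfectSubtreeGroundModelCohenExt}: in an $\Add{\mu}{1}$-extension, \emph{every} perfect subtree of $\pre{{<}\mu}{\mu}$ (even one given only by a name) has a branch outside $\VV$. The paper proves this by constructing in $\VV$ a system of conditions and companion nodes indexed by $\pre{{<}\mu}(\mu\times 2)$, so that the levels form maximal antichains and the generic filter determines an element $\dot y^G\in[\dot T^G]$; a density argument then shows no condition can force $\dot y$ to equal a fixed ground-model function. Your homogeneity/mutual-genericity outline does not manufacture such a branch, and the phrase ``the point of $[S]$ naturally associated with $c$'' has no clear meaning when $S$ itself exists only as a name in $\VV[G]$.
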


We prove this result with the help of the following lemmas.

\begin{lemma}\label{covering kappa-compact sets in generic extensions} 
 Assume that $\PP$ is a partial order satisfying the $\kappa$-chain condition such that forcing with $\PP$ preserves all $\kappa$-Aronszajn trees.  
 Let $\dot{T}$ be a $\PP$-name for a pruned subtree of ${}^{{<}\kappa}\kappa$ such that 
 \[
\mathbbm{1}_\PP \Vdash\anf{\text{$\dot{T}\subseteq \dot{\VV}$ and $[\dot{T}]$ is $\kappa$-compact}}.
\]
 Then there is a pruned subtree $T$ of ${}^{{<}\kappa}\kappa$ such that the closed set $[T]$ is $\kappa$-compact and $\mathbbm{1}_\PP \Vdash \anf{[\dot{T}]\subseteq[\check{T}]}$. 
\end{lemma}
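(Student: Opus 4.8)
The plan is to work in the ground model $\VV$ and build the tree $T$ directly from the name $\dot T$, using the hypothesis that $\PP$ has the $\kappa$-chain condition to keep the levels of $T$ small. First I would define, for each $s \in \pre{<\kappa}{\kappa}$,
\[
T = \Set{s \in \pre{<\kappa}{\kappa}}{\exists p \in \PP \, (p \Vdash \anf{\check s \in \dot T})}.
\]
Since $\mathbbm{1}_\PP \Vdash \anf{\dot T \subseteq \dot\VV}$, every node forced into $\dot T$ by \emph{some} condition is a genuine ground-model node, so this is a well-defined subtree of $\pre{<\kappa}{\kappa}$; and clearly $\mathbbm{1}_\PP \Vdash \anf{\dot T \subseteq \check T}$, hence $\mathbbm{1}_\PP \Vdash \anf{[\dot T] \subseteq [\check T]}$. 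The first real point is that $T$ is a $\kappa$-tree: for each $\alpha < \kappa$, if $T \cap \pre{\alpha}{\kappa}$ had size $\kappa$, we could pick for each such $s$ a condition $p_s$ forcing $\check s \in \dot T$; by the $\kappa$-chain condition two of these, say $p_s$ and $p_t$ with $s \neq t$ of the same length, would be compatible, and a common extension would force two distinct nodes of the same length into $\dot T$ — fine so far, that alone is no contradiction. The cleaner route: fix a single generic $G$; in $\VV[G]$ the tree $[\dot T^G]$ is $\kappa$-compact, so by Lemma~\ref{lemma:kurepa} the canonical subtree $T_G$ induced by it is a $\kappa$-tree, and each level $T_G \cap \pre{\alpha}{\kappa}$ has size $<\kappa$. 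Now $T \cap \pre{\alpha}{\kappa} = \bigcup \Set{(T_G)\cap \pre{\alpha}{\kappa}}{G}$ ranging over generics — but this is a union of possibly many small sets. Instead one uses the $\kappa$-c.c.\ directly: the set $A_s = \Set{p \in \PP}{p \Vdash \anf{\check s \in \dot T}}$ is nonempty for $s \in T$; by a maximal-antichain/name argument, $T \cap \pre{\alpha}{\kappa}$ is covered by the decisions made along a single maximal antichain (of size $<\kappa$) of conditions each deciding $\dot T \cap \pre{\alpha}{\kappa}$, and since each such decision is a subset of $\pre{\alpha}{\kappa}$ of size $<\kappa$ (as $[\dot T]$ is forced $\kappa$-compact, hence $\dot T$ is forced to be a $\kappa$-tree), the union has size $<\kappa$. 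This is the step I expect to require the most care.

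Next I would prune: replace $T$ by the subtree induced by $[T]$, i.e.\ keep only nodes extendible to a $\kappa$-branch; call the result $T$ again. It remains closed under the operation and is still a $\kappa$-tree, and still satisfies $\mathbbm{1}_\PP \Vdash \anf{[\dot T] \subseteq [\check T]}$ because every $\kappa$-branch forced into $[\dot T]$ lies in $\VV$ (again by $\dot T \subseteq \dot\VV$ — more precisely, every initial segment of such a branch is in $T$, and the branch itself, being in $\VV$, witnesses extendibility). Actually, one should be a bit careful here: a branch of $\dot T^G$ need not be in $\VV$ a priori, but $\mathbbm{1}_\PP \Vdash \anf{\dot T \subseteq \dot\VV}$ says the \emph{nodes} are ground-model nodes, and since $\dot T$ is forced to be a $\kappa$-tree its branches are determined by cofinal sequences through a ground-model tree of size $<\kappa$ per level; one invokes that $\PP$ preserves $\kappa$-Aronszajn trees to conclude no new branches are added through $T$, so $[\dot T^G] \subseteq [T]^{\VV[G]} = [T]^{\VV}$.

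Finally, to see that $[T]$ is $\kappa$-compact in $\VV$, by Lemma~\ref{lemma:kurepa} it suffices to show $T$ has no $\kappa$-Aronszajn subtree. Suppose $S \subseteq T$ were $\kappa$-Aronszajn in $\VV$. Since $\PP$ preserves $\kappa$-Aronszajn trees, $S$ remains $\kappa$-Aronszajn in $\VV[G]$ for any generic $G$. But $S \subseteq T$ and, in $\VV[G]$, $[\dot T^G]$ is $\kappa$-compact; however $S$ need not be inside $\dot T^G$, so this does not immediately contradict Lemma~\ref{lemma:kurepa} applied to $\dot T^G$. Instead I apply Lemma~\ref{lemma:kurepa} to $T$ itself \emph{in $\VV[G]$}: there $[T]$ is a superset of the $\kappa$-compact set $[\dot T^G]$, which is not obviously $\kappa$-compact. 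So the right argument is: in $\VV[G]$, use that $[\dot T^G]$ is $\kappa$-compact to get (Lemma~\ref{lemma:kurepa}) that $T_{[\dot T^G]}$ has no $\kappa$-Aronszajn subtree; then show $T = T_{[\dot T^G]}$ by the c.c.\ name analysis, i.e.\ every node of $T$ is already a node of $\dot T^G$ for a suitable $G$, and conversely — combining this with the preservation of Aronszajn trees to transfer "no $\kappa$-Aronszajn subtree" back down to $\VV$. The main obstacle, as flagged, is precisely controlling the level sizes of $T$ and identifying $T$ with the ground-model tree induced by $[\dot T^G]$ using only the $\kappa$-chain condition; once that identification is in place, $\kappa$-compactness of $[T]$ in $\VV$ follows from Lemma~\ref{lemma:kurepa} together with the preservation of $\kappa$-Aronszajn trees.
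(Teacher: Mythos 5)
Your definition of $T$ as $\Set{t\in{}^{{<}\kappa}\kappa}{\exists p\in\PP\,(p\Vdash\anf{\check t\in\dot T})}$ is exactly the paper's, and your treatment of the levels (cover $T\cap{}^\alpha\kappa$ by the decisions along a single maximal antichain, each of size ${<}\kappa$ because $\dot T$ is forced to be a $\kappa$-tree, and use the $\kappa$-c.c.\ plus regularity of $\kappa$) is essentially the paper's argument. But the two remaining claims are where the proposal breaks down. First, your pruning step rests on the assertion that preservation of $\kappa$-Aronszajn trees gives $[T]^{\VV[G]}=[T]^{\VV}$, i.e.\ that no new branches are added through $T$. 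That is false in general under the stated hypotheses (a small forcing such as adding a Cohen real is $\kappa$-c.c., preserves $\kappa$-Aronszajn trees, and adds new branches to $\kappa$-trees like ${}^{{<}\kappa}2$ for $\kappa$ inaccessible). What Aronszajn-preservation actually gives — and what both you and the paper need — is the weaker statement that every branch of $\dot T^G$ passes only through nodes $t$ with $N_t\cap[T]^{\VV}\neq\emptyset$: if some $t\in T$ had $N_t\cap[T]^{\VV}=\emptyset$, then $T_t=\Set{s\in T}{s\subseteq t\vee t\subseteq s}$ would be a $\kappa$-Aronszajn tree in $\VV$ gaining a branch in $\VV[G]$ (since $\dot T^G$ is pruned and $t\in\dot T^G$ for suitable $G$). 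The paper runs exactly this argument to show $T$ is already pruned, so no separate pruning step is needed.

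The more serious gap is the claim that $T$ has no $\kappa$-Aronszajn subtree, which is what Lemma~\ref{lemma:kurepa} requires for $\kappa$-compactness of $[T]$. You correctly notice that "$S$ remains Aronszajn in $\VV[G]$" does not by itself contradict anything, but your proposed fix — identifying $T$ with $T_{[\dot T^G]}$ for "a suitable $G$" — cannot work: each node of $T$ lies in $\dot T^G$ for \emph{some} generic $G$, but no single generic captures all of them, and in general $T\supsetneq\dot T^G$ for every $G$. The missing idea is to reuse the maximal-antichain bounding trick a second time: if $S\subseteq T$ were $\kappa$-Aronszajn in $\VV$, then in any extension $S\cap\dot T^G$ is a subtree of $\dot T^G$ with small levels and no $\kappa$-branch (since $S$ stays Aronszajn), hence cannot have height $\kappa$ by Lemma~\ref{lemma:kurepa} applied to $\dot T^G$; so the set of conditions forcing a bound $\beta_p$ on the height of $\check S\cap\dot T$ is dense, a maximal antichain of such conditions yields a uniform bound $\beta_*<\kappa$ by the $\kappa$-c.c., and then any $s\in S$ of length above $\beta_*$ is forced out of $\dot T$ by every condition, contradicting $s\in T$. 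Without this (or an equivalent) argument, the $\kappa$-compactness of $[T]$ is not established.
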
 

\begin{proof} 
 We define 
\[T  =  \Set{t\in {}^{{<}\kappa}\kappa}{\exists p\in\PP \, (  p\Vdash\anf{ \check{t}\in \dot{T}} )}.
\] 
 Then $T$ is a subtree of ${}^{{<}\kappa}\kappa$ of height $\kappa$ with $\mathbbm{1}_\PP\Vdash\anf{\dot{T}\subseteq\check{T}}$. 
 Moreover, for every $s\in T$ and every $\alpha<\kappa$, there is a $t\in T$ with $s\subseteq t$ and $\length{t}>\alpha$.

 \begin{claim}
  $T$ is a $\kappa$-tree.
 \end{claim}
 
 \begin{proof}[Proof of the Claim]
  Assume, towards a contradiction, that there is an $\alpha<\kappa$ with $\vert T\cap{}^\alpha\kappa\vert=\kappa$. 
  Let $\seq{t(\alpha)}{\alpha<\kappa}$ be an enumeration of $T\cap{}^\alpha\kappa$. 
  By Lemma~\ref{lemma:kurepa}, the set $D$ of all $p\in\PP$ such that there is an $\beta_p<\kappa$ with 
  \[
p \Vdash \anf{\forall t\in \dot{T}\cap{}^{\check{\alpha}}\check{\kappa} \; \exists\beta<\check{\beta}_p \, (t=\check{t}(\beta))}
\] 
is dense in $\PP$. 
  Let $A$ be a maximal antichain in $D$. Since $\PP$ satisfies the $\kappa$-chain condition, we have $\beta_*=\sup\Set{\beta_p}{p\in A}<\kappa$. 
  But this implies $\mathbbm{1}_\PP\Vdash\anf{\check{t}(\check{b}_*+1)\notin\dot{T}}$, a contradiction. 
 \end{proof}

 \begin{claim}
  $T$ is pruned. 
 \end{claim}
 
 \begin{proof}[Proof of the Claim]
  Assume, towards a contradiction, that there is a $t\in T$ with $N_t\cap[T]=\emptyset$. Define $T_t=\Set{s\in T}{s\subseteq t\vee t\subseteq s}$. 
  By the above remarks, $T_t$ is a $\kappa$-tree and our assumption implies that $T_t$ is a $\kappa$-Aronszajn tree. 
  Pick $p\in\PP$ with $p\Vdash\anf{\check{t}\in \dot{T}}$ and let $G$ be $\PP$-generic over $\VV$ with $p\in G$. 
  Since $\dot{T}^G$ is pruned in $\VV[G]$ and $t\in\dot{T}^G$, there is an $x\in[\dot{T}^G]^{\VV[G]}$ with $t\subseteq x$.
  This implies $x\in[T_t]^{\VV[G]}$ we can conclude that $T_t$ is not a $\kappa$-Aronszajn tree in $\VV[G]$, a contradiction. 
 \end{proof}

 \begin{claim}
  $T$ does not contain a $\kappa$-Aronszajn subtree. 
 \end{claim}
 
 \begin{proof}[Proof of the Claim]
  Assume, towards a contradiction, that there is a $\kappa$-Aronszajn subtree $S$ of $T$. 
  By our assumptions, the set $D$ of all $p\in\PP$ such that there is an $\beta_p<\kappa$ with 
  \[
p \Vdash \anf{\forall t\in \check{S}\cap\dot{T} \, ( \length{t}<\check{\beta}_p)}
\]
 is dense in $\PP$. 
  Let $A$ be a maximal antichain in $D$. Since $\PP$ satisfies the $\kappa$-chain condition, 
  there is an $s\in S$ with $\length{s}>\beta_p$ for all $p\in A$. This implies that $\mathbbm{1}_\PP\Vdash\anf{\check{t}\notin\dot{T}}$, a contradiction.   
 \end{proof}

 Our definition directly implies $\mathbbm{1}_\PP \Vdash \anf{[\dot{T}]\subseteq[\check{T}]}$ and Lemma~\ref{lemma:kurepa} shows that the closed set $[T]$ is $\kappa$-compact.  
\end{proof}

Note that partial orders of cardinality less than $\kappa$ satisfy the assumptions of the above lemma.  
We sketch an argument showing that, in general, the second assumptions of the lemma cannot be omitted: 
Suppose that $G$ is $\Add{\kappa}{1}$-generic over $\VV$ and $\kappa$ is weakly compact in $\VV[G]$. By a classical argument of Kunen (see~\cite{MR495118}), 
there is an intermediate model $\VV\subseteq\WW\subseteq\VV[G]$ such that there is a homogenous $\kappa$-Souslin tree $S\subseteq{}^{{<}\kappa}\kappa$ in $\WW$ and $\VV[G]$ is a forcing extension of $\WW$ 
by the canonical forcing that satisfies the $\kappa$-chain condition and adds a $\kappa$-branch through $S$. 
The homogeneity of $S$ in $\WW$ implies that every node in $S$ is extended by a $\kappa$-branch through $S$ in $\VV[G]$ and hence $S$ is pruned in $\VV[G]$.  
Moreover, $S$ is a $\kappa$-tree without $\kappa$-Aronszajn subtrees and this shows that  the set $[S]$ is $\kappa$-compact in $\VV[G]$.   
Assume, towards a contradiction, that there is a pruned subtree $T$ of ${}^{{<}\kappa}\kappa$ in $\WW$ such that $[T]$ is $\kappa$-compact in $\WW$ and $[S]^{\VV[G]}\subseteq[T]^{\VV[G]}$.  By the above remarks, we have $S\subseteq T$ and we can conclude that $[T]$ is not $\kappa$-compact in $\WW$, a contradiction.


The next lemma deals with the question whether it is possible to have an infinite cardinal $\mu$ and an inner model $M$ such that $M$ contains all branches through a perfect subtree of ${}^{{<}\mu}\mu$ and there is a subset of $\mu$ that is not contained in $M$. 
This question is closely connected to questions of Prikry (see {\cite[Question 1.1]{MR1632148}}) and Woodin (see {\cite[Question 3.2]{MR1632148}}) and the results of~\cite{MR1632148} and~\cite{MR1640916}. 
The following lemma shows that it is not possbile to obtain a positive answer to the above question by adding Cohen-subsets to $\mu$.

\begin{lemma}\label{lemma:PerfectSubtreeGroundModelCohenExt}
 Let $\mu$ be an infinite cardinal with $\mu=\mu^{{<}\mu}$ and $\dot{T}$ be an $\Add{\mu}{1}$-name for a perfect subtree of ${}^{{<}\mu}\mu$. 
 If $G$ is  $\Add{\mu}{1}$-generic over the ground model $\VV$, then there is an $x\in[\dot{T}^G]^{\VV[G]}$ with $x\notin\VV$. 
\end{lemma}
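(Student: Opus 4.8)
The plan is to run a fusion argument below an arbitrary condition $p^* \in \Add{\mu}{1}$. (Recall that $\mu = \mu^{<\mu}$ makes $\mu$ regular and $\Add{\mu}{1}$ ${<}\mu$-closed, and that a perfect subtree is superclosed and contains $\langle\rangle$.) I would build in $\VV$, by recursion on $\alpha < \mu$, a subtree $S$ of indexing sequences together with conditions $\langle p_s \mid s \in S\rangle$ and nodes $\langle t_s \mid s \in S\rangle$ of $\pre{<\mu}{\mu}$, starting from $p_{\langle\rangle} = p^*$ and $t_{\langle\rangle} = \langle\rangle$, so that: $p_{s'} \le p_s$ and $t_s \subsetneq t_{s'}$ whenever $s \subsetneq s'$; $p_s \Vdash \check t_s \in \dot T$; incompatible nodes of $S$ receive incompatible conditions; and at a limit level $p_s$ is the union $\bigcup_{\beta < \length s} p_{s \restriction \beta}$ (a condition, since the domains stay bounded below $\mu$) while $t_s = \bigcup_\beta t_{s\restriction\beta}$ — and this $p_s$ still forces $t_s \in \dot T$ because $\dot T$ is forced to be superclosed.

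The successor step is the heart of the matter. Given $s \in S$ with $p_s \Vdash \check t_s \in \dot T$, the set of $q \le p_s$ deciding two incompatible proper extensions $v^q_0 \perp v^q_1$ of $t_s$ inside $\dot T$ is dense below $p_s$ (since $p_s$ forces $\dot T$ perfect, and ${<}\mu$-closure of $\Add{\mu}{1}$ lets one decide an entire node of $\dot T$ with a single condition); I would fix a maximal antichain $A_s$ of such $q$ below $p_s$. For $q \in A_s$ and $i \in 2$ I would then set $p_{s^\smallfrown (q,i)} := q \cup \{(\dom q, i)\}$ — the one-bit extension of $q$ that records $i$ — and $t_{s^\smallfrown (q,i)} := v^q_i$. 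The point of this device is that $\Set{p_{s^\smallfrown (q,i)}}{q \in A_s,\ i \in 2}$ is again a maximal antichain below $p_s$: any $r \le p_s$ meets $A_s$ in some $q$, hence is compatible with $q \cup \{(\dom q, i)\}$ for at least one $i$. Consequently each level of $S$ is predense below $p^*$, so a generic $G \ni p^*$ determines a branch $s_G$ of length $\mu$ with all $p_{s_G \restriction \alpha} \in G$; letting $\dot x$ name $\bigcup_{\alpha < \mu} t_{s_G \restriction \alpha}$, the strict growth of $\length{t_{s_G\restriction\alpha}}$ together with $p_{s_G\restriction\alpha} \Vdash \check t_{s_G\restriction\alpha} \in \dot T$ and superclosedness of $\dot T$ gives $p^* \Vdash \dot x \in [\dot T]$.

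It remains to show $p^* \Vdash \dot x \notin \VV$; by homogeneity it suffices that no $r' \le p^*$ forces $\dot x = \check z$ for a $z \in (\pre{\mu}{\mu})^\VV$. Given such an $r'$, I would trace the tree below it: while the witnessing conditions $q$ encountered in the $A_s$'s have $\dom q < \dom r'$, the condition $r'$ already lies below a unique child and one continues; but $\dom p_{s\restriction\alpha}$ strictly increases along a branch, so, $r'$ being unable to sit below a condition of larger domain, after fewer than $\mu$ steps one reaches a node $s$ and a $q \in A_s$ with $q \le r'$ and $\dom q \ge \dom r'$. At that point $r'$ is compatible with both $p_{s^\smallfrown (q,0)}$ and $p_{s^\smallfrown (q,1)}$, whose attached nodes $v^q_0 \perp v^q_1$ are incompatible, so one $v^q_i$ is not an initial segment of $z$; extending $r'$ to $r''' := p_{s^\smallfrown (q,i)}$ forces $s_G \supseteq s^\smallfrown (q,i)$, hence $v^q_i = t_{s^\smallfrown (q,i)} \subseteq \dot x$, hence $r''' \Vdash \dot x \ne \check z$, contradicting $r''' \le r'$. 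Thus $p^* \Vdash \dot x \notin \VV$, and since $p^*$ was arbitrary, $\mathbbm{1} \Vdash \exists x \in [\dot T]\,(x \notin \VV)$, which is the assertion. The only genuine obstacle throughout is the conflict, in the successor step, between keeping the $p_s$ strong enough to pin down (and incompatibly branch) nodes of $\dot T$ and keeping the levels of $S$ predense enough for $G$ to read off a branch of length $\mu$; the ``maximal antichain of deciding conditions, then record one bit'' construction is precisely what reconciles the two, and it is also exactly what drives the final density argument.
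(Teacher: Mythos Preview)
Your proposal is correct and follows essentially the same approach as the paper: both run a fusion below an arbitrary condition in which, at each successor step, one takes a maximal antichain of conditions deciding a split of $\dot T$ above the current node and then appends a single extra bit to the forcing condition, so that each level of the indexing tree is again a maximal antichain; the generic then selects a branch, and the associated branch of $\dot T$ is shown not to lie in $\VV$ by finding, below any would-be deciding condition, two incompatible continuations. Your labelling of children by pairs $(q,i)$ with $q$ ranging over the antichain $A_s$ is exactly the paper's labelling by $\mu\times 2$ via a fixed surjection $f_t\colon\mu\to E_t$, and your final density argument matches the paper's Claim~\ref{claim:final}.
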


\begin{proof}
  In the following arguments, we identify $\Add{\mu}{1}$ with the set ${}^{{<}\mu}2$ ordered by reverse inclusion. We inductively  construct continuous inclusion-preserving functions $\map{i}{{}^{{<}\mu}(\mu\times 2)}{{}^{{<}\mu}2}$ and $\map{\iota}{{}^{{<}\mu}(\mu\times 2)}{{}^{{<}\mu}\mu}$ with $i(t)\Vdash\anf{\check{\iota}(\check{t}\in\dot{T})}$ for all $t\in{}^{{<}\mu}(\mu\times 2)$.

 Set $i(\emptyset)=\iota(\emptyset)$. Since both functions are continuous at nodes of limit length, we only need to discuss the successor step of the construction. 
 Assume that we already constructed $i(t)$ and $\iota(t)$ for some $t\in{}^{{<}\mu}(\mu\times 2)$. Let $D_t$ denote the dense open set of all 
extensions $p$ of $i(t)$ in $\Add{\mu}{1}$ such that $\length{p}>\length{t}$ and there are $s,t^0,t^1\in{}^{{<}\mu}\mu$ such that $p$ forces 
that $s$ is a $2$-splitting node in $\dot{T}$ above $\iota(t)$ and the pair $\langle t^0,t^1\rangle$ consists of distinct immediate successor of $s$ in 
$\dot{T}$.  For each $p\in D_t$, we fix witnesses $s_p,t^0_p,t^1_p\in{}^{{<}\mu}\mu$. Let $E_t$ denote the set of all $p\in D_t$ with 
$p\restriction\alpha\notin D_t$ for all $\alpha<\length{p}$. Then $E_t$ is a maximal antichain below $i(t)$ in $\Add{\mu}{1}$.  Fix a surjection 
$\map{f_t}{\kappa}{E_t}$. Given $\alpha<\mu$ and $k<2$, we define 
$i(t^\smallfrown\langle\alpha,k\rangle)=f_t(\alpha)^\smallfrown  k $ and $\iota(t^\smallfrown\langle\alpha,k\rangle)=t^k_{f(\alpha)}$. 
Note that the set $\Set{i(t^\smallfrown\langle\alpha,k\rangle)}{\alpha<\mu, \, k = 0,1}$ is also a maximal antichain below $i(t)$ in $\Add{\mu}{1}$.

 \begin{claim}
  If $\alpha<\mu$, then $A_\alpha=\Set{i(t)}{t\in{}^\alpha(\mu\times 2)}$ is a maximal antichain in the partial order $\Add{\mu}{1}$.
 \end{claim}
 
 \begin{proof}[Proof of the Claim]
  The above construction directly implies that $A_\alpha$ is an antichain for every $\alpha<\mu$. 
  Fix a condition $q$ in $\Add{\mu}{1}$ and $\alpha<\mu$. By the above remark, we can inductively construct continuous descending sequences $\seq{p_\beta}{\beta\leq\alpha}$ and $\seq{q_\beta}{\beta\leq\alpha}$ in $\Add{\mu}{1}$ with  $p_\beta\in A_\beta$ and  $q_\beta\leq_{\Add{\mu}{1}}p_\beta,q$ for all $\beta\leq\alpha$. Then $p_\alpha$ is an element of $A_\alpha$ compatible with $q$. 
 \end{proof}

 \begin{claim}
  If $\alpha<\mu$, then the set $\bigcup\Set{A_\beta}{\alpha<\beta<\mu}$ is dense in $\Add{\mu}{1}$. 
 \end{claim}
 
 \begin{proof}[Proof of the Claim]
  Pick a condition $q$ in $\Add{\mu}{1}$ and $\beta<\mu$ with $\alpha,\length{q}<\beta$. By the above claim, there is a $p\in A_\beta$ compatible  with $q$. Since we assumed that the support of conditions in $\Add{\mu}{1}$ is downwards-closed, we get  $p\leq_{\Add{\mu}{1}}q$. 
 \end{proof}

 \begin{claim} \label{claim:6.2.3}
  If $G$ is $\Add{\mu}{1}$-generic over $\VV$, then there is an $x\in({}^\mu(\mu\times 2))^{\VV[G]}$ such that $i(x\restriction\alpha)$ is the unique element in $A_\alpha\cap G$ for every $\alpha<\mu$. 
 \end{claim}
 
 \begin{proof}[Proof of the Claim]
  We inductively construct functions $\seq{t_\alpha}{\alpha<\mu}$ such that $t_\alpha\in{}^\alpha(\mu\times 2)$, $i(t_\alpha)\in A_\alpha\cap G$ and $t_\beta\subsetneq t_\alpha$ holds for all $\beta<\alpha<\mu$. The successor step directly follows from the above definition of the function $i$. Hence we may assume that $\alpha\in\Lim\cap\mu$ and there is a function $t\in{}^\alpha(\mu\times 2)$ such that the sequence $\seq{t\restriction\beta}{\beta<\alpha}$ satisfies the above properties. By the continuity of $i$, the condition $i(t)\in A_\alpha$ is the greatest lower bound of the descending sequence $\seq{i(t\restriction\beta)}{\beta<\alpha}$ of conditions in $G$ and therefore this condition is also an element of $G$. 
 \end{proof}

 By  Claim~\ref{claim:6.2.3} there is an $\Add{\mu}{1}$-name $\dot{y}$ for an element of $[\dot{T}]$ with the property that, whenever $G$ is $\Add{\mu}{1}$-generic over $\VV$, there is an $x\in({}^\mu(\mu\times 2))^{\VV[G]}$ with $i(x\restriction\alpha)\in A_\alpha\cap G$ and $\iota(x\restriction\alpha)\subseteq\dot{y}^G$ for all $\alpha<\mu$. Moreover, if $t\in{}^{{<}\mu}(\mu\times 2)$, $\alpha<\mu$ and $k<2$, then $i(t^\smallfrown\langle\alpha,k\rangle)\Vdash\anf{ \hspace{1.3pt} \check{t}^{\hspace{1.3pt}k}_{f_t(\alpha)}\subseteq\dot{y}}$.

 \begin{claim} \label{claim:final}
  If $G$ is $\Add{\mu}{1}$-generic over $\VV$, then $\dot{y}^G\notin\VV$. 
 \end{claim}
 
 \begin{proof}[Proof of the Claim]
  Assume towards a contradiction that there are $q \in \Add{\mu}{1}$ and $y\in{}^\mu\mu$ such that $q\Vdash\anf{\dot{y}=\check{y}}$. By the above claims, there is a function $t\in{}^{{<}\mu}(\mu\times 2)$ with $i(t)\leq_{\Add{\mu}{1}}q$. Then there is a $k<2$ with $t^k_{f_t(0)}\not\subseteq y$ and by the observation preceding this claim we get $i(t^\smallfrown\langle 0,k\rangle)\Vdash\anf{ \hspace{1.3pt}  \check{t}^{\hspace{1.3pt} k}_{f_t(0)}\subseteq\dot{y}=\check{y}}$, a contradiction. 
 \end{proof}
 
Claim~\ref{claim:final} shows that \( x = \dot{y}^G \) is as required, hence we are done. 
\end{proof}

Note that a similar argument shows that every $\mu$-Miller tree in an $\Add{\mu}{1}$-generic extension contains a cofinal branch that eventually dominates every element of ${}^\mu\mu$ from the ground model.

Remember that, given an uncountable cardinal $\delta$, an inner model $M$ has the \emph{$\delta$-cover property} (see~\cite{MR2063629}) 
if every set of ordinals of cardinality less than $\delta$ is covered by a set that is an element of $M$ and has cardinality less than $\delta$ in $M$. 
The following lemma shows that a construction used in the proof of {\cite[Theorem 7.2]{LS}} can also be used at higher cardinalities.

\begin{lemma}\label{lemma:CoverPerfectTree}
 Let $\mu$ be an infinite cardinal with $\mu=\mu^{{<}\mu}<\kappa$ and $M$ be an inner model with the $\mu^+$-cover property. 
 Define $S$ to be the subtree $({}^{{<}\mu}\mu)^M$ of ${}^{{<}\mu}\mu$ and $T$ to be the subtree $({}^{{<}\kappa}\kappa)^M$ of ${}^{{<}\kappa}\kappa$. 
 If $T$ contains a perfect subtree, then $S$ contains a perfect subtree $S_*$ with $[S_*]\subseteq M$.    
\end{lemma}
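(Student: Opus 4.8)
The plan is to reduce the statement to the construction of a single continuous injection $\map{j}{{}^\mu 2}{{}^\mu\mu}$ with $\ran{j} \subseteq M$, and then to appeal to Corollary~\ref{corollary:PerfectClosedTreeEmb}. Indeed, once such a $j$ is available, every element of $\ran{j}$ lies in $({}^\mu\mu)^M$ and therefore has all of its proper initial segments in $({}^{{<}\mu}\mu)^M = S$, so that $\ran{j} \subseteq [S]$; applying Corollary~\ref{corollary:PerfectClosedTreeEmb} with $S$ and $j$ in place of the tree $T$ and the map $i$ appearing there, we obtain a perfect subtree $S_* \subseteq S$ with $[S_*] \subseteq \ran{j} \subseteq M$, which is precisely the desired conclusion.

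To produce $j$, I would first extract combinatorial data from the hypothesis on $T$. Since $T$ contains a perfect subtree, Lemma~\ref{lemma:PerfectClosedTreeEmb}, applied at $\kappa$, yields an inclusion-preserving continuous injection $\map{\iota}{{}^{{<}\kappa}2}{T}$ with $\iota(s^\smallfrown 0)$ and $\iota(s^\smallfrown 1)$ incompatible in ${}^{{<}\kappa}\kappa$ for all $s \in {}^{{<}\kappa}2$; arguing as in the proof of that lemma one may in addition arrange that there is a strictly increasing, continuous sequence $\seq{\eta_\alpha}{\alpha \leq \kappa}$ of ordinals with $\length{\iota(s)} = \eta_{\length{s}}$ for every $s$, together with ordinals $\gamma_s \in [\eta_{\length{s}}, \eta_{\length{s}+1})$ at which $\iota(s^\smallfrown 0)$ and $\iota(s^\smallfrown 1)$ first disagree. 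The key observation is that for each $z \in {}^\mu 2$ — which belongs to ${}^{{<}\kappa}2$ since $\mu < \kappa$ — the node $\iota(z)$ lies in the downward closure of $\ran{\iota}$, a perfect subtree of $T = ({}^{{<}\kappa}\kappa)^M$, and hence $\iota(z) \in M$; moreover all these nodes have the common length $\eta_\mu \in [\mu, \kappa)$, they are pairwise incompatible, and $z \mapsto \iota(z)$ is a homeomorphism of ${}^\mu 2$ onto its range inside $({}^{\eta_\mu}\kappa)^M$. So $M$ already contains $2^\mu$-many elements of ${}^{{<}\kappa}\kappa$ carrying the full splitting pattern of ${}^\mu 2$; it remains to transport this pattern down into $({}^\mu\mu)^M$.

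For the transport one uses the $\mu^+$-cover property of $M$, following the construction in the proof of {\cite[Theorem 7.2]{LS}}. Since $\mu^{{<}\mu} = \mu$ we have $\vert {}^{{<}\mu}2 \vert \leq \mu$, so the relevant portion of the embedding below level $\mu$ involves, along each fixed coordinate, only $\mu$-many ordinals $< \kappa$; the cover property then lets us find a set $W \in M$ with $\vert W \vert^M \leq \mu$ containing every ordinal that actually occurs as a value of one of the nodes $\iota(z)$. Fixing in $M$ a bijection of $W$ with an ordinal $\leq \mu$ and composing suitably with $z \mapsto \iota(z)$, one arrives at the desired continuous injection $\map{j}{{}^\mu 2}{({}^\mu\mu)^M}$, and the proof concludes as in the first paragraph. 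The main difficulty is exactly this last step: a priori the length $\eta_\mu$ can exceed $\mu$ and even have $V$-cardinality larger than $\mu$, so one cannot simply restrict the $\iota(z)$ to their first $\mu$ coordinates; instead one must first pass to a suitable perfect subtree of $T$ and re-trunk it — using that deleting a common initial segment lying in $M$ carries a perfect subtree of $({}^{{<}\kappa}\kappa)^M$ to another one — so that only $\mu$-many levels of the tree are ever relevant, and only then can the cover property capture the (now $\mu$-sized) initial portion of the tree inside $M$. Carrying out this compression while preserving the injectivity and continuity of the resulting map is the delicate point of the argument.
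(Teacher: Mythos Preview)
Your overall strategy --- reduce to building a continuous injection $j\colon{}^\mu 2\to({}^\mu\mu)^M$ and then invoke Corollary~\ref{corollary:PerfectClosedTreeEmb} --- is exactly the paper's, and you have correctly isolated the obstacle: the length $\eta_\mu$ may have $V$-cardinality $>\mu$, so one cannot simply cover all the values appearing in the $\iota(z)$ by a set $W\in M$ of $M$-size $\mu$ and re-index. However, your proposed fix does not work. ``Re-trunking'' by deleting a common initial segment cannot shrink the number of relevant levels: the nodes $\iota(z)$ already disagree below level $\eta_1$, so their longest common initial segment has length $<\eta_1$, and removing it still leaves essentially all of $\eta_\mu$ in play. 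Passing to a further perfect subtree of $T$ only pushes the level function upward. There is simply no way, by trimming the tree, to arrange that ``only $\mu$-many levels are ever relevant'' while remaining inside $T$.

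The paper resolves the difficulty by \emph{encoding} rather than compressing. First cover the set $\{\eta_\alpha:\alpha<\mu\}$ by a set $C\in M$ of $M$-size $\mu$, so that in $M$ one may fix a cofinal sequence $\langle\gamma_\alpha:\alpha<\mu\rangle$ in $\eta_\mu$ with each $\gamma_\alpha\in C$. Next observe that for fixed $\alpha$, the restriction $\iota(z)\restriction\gamma_\alpha$ depends only on $z\restriction\bar\alpha$ for some $\bar\alpha<\mu$, so there are at most $\mu$ such restrictions altogether; one then covers a $\mu$-sized set of elements of $({}^{\eta_\mu}\kappa)^M$ witnessing all of them by a set $\bar D\in M$ of $M$-size $\mu$. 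Finally, working in $M$, enumerate $\{t\restriction\gamma_\alpha:t\in\bar D\}$ as $\langle t^\alpha_\beta:\beta<\mu\rangle$ and set $h(t)(\alpha)=\min\{\beta:t^\alpha_\beta\subseteq t\}$. The composition $h\circ(\iota\restriction{}^\mu 2)\colon{}^\mu 2\to({}^\mu\mu)^M$ is then the desired continuous injection. The key point is that $j$ is produced not by a bijection of levels or values, but by recording at each of the $\mu$ stages $\gamma_\alpha$ an $M$-index for the current restriction; this sidesteps the cardinality of $\eta_\mu$ entirely.
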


\begin{proof}
 Assume that $T$ contains a perfect subtree. Then we can construct a continuous strictly increasing  function $\map{c}{\mu+1}{\kappa}$ and a continuous inclusion-preserving injection 
 $\map{\iota}{{}^{{\leq}\mu}2}{T}$ such that $\length{\iota(s)}=c(\length{s})$ holds for all $s\in\dom{\iota}$. 
 By our assumptions, we have $\mu^+=(\mu^+)^M$, $\cof(c(\mu))^M=\mu$ and there is a set $C\in M$ with $C\subseteq c(\mu)$, $c[\mu]\subseteq C$ and $\vert{C}\vert^M=\mu$. 
 Moreover, there is a $D\in[{}^\mu 2]^\mu$ such that for every $s\in{}^\mu 2$ and every $\gamma\in C$, there is a $t\in D$ with $c(s)\restriction\gamma\subseteq t$. 
 Using an enumeration of $({}^{c(\mu)}\kappa)^M$ in $M$, we find $\bar{D}\in M$ with $\bar{D}\subseteq({}^{c(\mu)}\kappa)^M$, $D\subseteq\bar{D}$ and $\vert\bar{D}\vert^M=\mu$. 
 Work in $M$ and pick a strictly increasing sequence $\seq{\gamma_\alpha}{\alpha<\mu}$ of elements of $C$ that is cofinal in $c(\mu)$. 
 Given $\alpha<\mu$, pick an enumeration $\seq{t^\alpha_\beta}{\beta<\mu}$ of the set $\Set{t\restriction\gamma_\alpha}{t\in\bar{D}}$. 
 This allows us define a function 
\[
\Map{h}{\bar{D}}{{}^\mu\mu}{t}{h(t)},
\]
where \( h(t)(\alpha) = \min\Set{\beta<\mu}{t^\alpha_\beta\subseteq t} \)  for each \( \alpha < \mu \).

 Now work in $\VV$. Then the function $\map{i=h\circ\iota}{{}^\mu 2}{{}^\mu\mu}$ is an injection with $\ran{i}\subseteq[S]^M\subseteq[S]$. 
  Pick $s\in{}^\mu 2$, $\alpha<\mu$ and $\alpha_*<\mu$ with $\gamma_\alpha\leq c(\alpha_*)$. If $s^\prime\in{}^\mu 2$ with $s\restriction\alpha_*=s^\prime\restriction\alpha^*$, 
  then $\iota(s)\restriction\gamma_\alpha=\iota(s^\prime)\restriction\gamma_\alpha$ and we can conclude that $i(s)\restriction(\alpha+1)=i(s^\prime)\restriction(\alpha+1)$. 
 This shows that the function $i$ is continuous and we can apply Corollary~\ref{corollary:PerfectClosedTreeEmb} to derive the statement of the lemma. 
\end{proof}

We are now ready to show that the Hurewicz dichotomy fails above a regular cardinal $\mu$ after we added a single Cohen subset of $\mu$ to the ground model.

\begin{proof}[Proof of Theorem \ref{theorem:CounterexampleCohenReal}] 
 Let $\mu$ be a cardinal with $\mu=\mu^{{<}\mu}<\kappa$ and $G$ be $\Add{\mu}{1}$-generic over $\VV$. 
 Set $S=({}^{{<}\kappa}\kappa)^\VV$. 
 Since $\Add{\mu}{1}$ has cardinality less than $\kappa$, we know that $S$ is a pruned tree with $[S]=({}^\kappa\kappa)^\VV$ in $\VV[G]$. 
 In particular, $({}^\kappa\kappa)^\VV$ is a closed subset of ${}^\kappa\kappa$ in $\VV[G]$. 
 Assume towards a contradiction that the closed subset $[S]$ satisfies the Hurewicz dichotomy in $\VV[G]$. 
 Since $\VV$ has the $\mu^+$-cover property in $\VV[G]$, we can combine Lemma~\ref{lemma:PerfectSubtreeGroundModelCohenExt} and Lemma~\ref{lemma:CoverPerfectTree} to conclude that $S$ contains no perfect subtree in $\VV[G]$. 
Thus $[S]$ cannot contain a closed copy of \( \pre{\kappa}{\kappa} \) by Lemma~\ref{lemma:PerfectClosedTreeEmb}, and hence by the Hurewicz dichotomy it is (contained in) a $\K{\kappa}$ subset of ${}^\kappa\kappa$ in $\VV[G]$. In this situation, Lemma~\ref{covering kappa-compact sets in generic extensions} shows that the ground model $\VV$ contains a sequence $\seq{T_\alpha}{\alpha<\kappa}$ of pruned subtrees of ${}^{{<}\kappa}\kappa$ 
 such that $[S] =({}^\kappa\kappa)^\VV\subseteq\bigcup\Set{[T_\alpha]^{\VV[G]}}{\alpha<\kappa}$ and $[T_\alpha]$ is $\kappa$-compact in $\VV$ for all $\alpha<\kappa$. 
 But this would imply that in \( \VV \) the whole ${}^\kappa\kappa$ is a $\K{\kappa}$ subset, contradicting Fact~\ref{proposition:DichotomyUnc}.  
Note that the fact that \( [S] = (\pre{\kappa}{\kappa})^\VV \) cannot be a $\K{\kappa}$ subset of ${}^\kappa\kappa$ in $\VV[G]$ can also be proved directly without using Lemma~\ref{lemma:PerfectClosedTreeEmb}: in fact, it is enough to observe that  \( (\pre{\kappa}{\kappa})^\VV \) is unbounded in the forcing extension (because the forcing is small) and then use Lemma~\ref{lemma:bounded}. 
  \end{proof}

We close this section with an observation motivated by the above constructions.

\begin{proposition}\label{proposition:FailureWithSuperperfectTree} 
 If there is a pruned subtree $S$ of ${}^{{<}\kappa}\kappa$ such that $[S]$ does not satisfy the Hurewicz dichotomy, 
 then there is a superclosed subtree $T$ of ${}^{{<}\kappa}\kappa$ such that the closed subset $[T]$ does not satisfy the Hurewicz dichotomy. 
\end{proposition}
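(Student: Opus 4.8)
The plan is to replace $S$ by its closure under limits of its own branches and to observe that this operation changes neither the body of the tree nor, consequently, the status of the Hurewicz dichotomy for that body. Concretely, I would put
\[
T  =  \Set{s\in{}^{{<}\kappa}\kappa}{\forall\alpha<\length{s} \, (s\restriction\alpha\in S)},
\]
so that $T$ consists of exactly those sequences all of whose \emph{proper} initial segments belong to $S$. Intuitively, at successor levels $T$ agrees with $S$ apart from possibly adding some dead ends, while at limit levels $T$ automatically contains the union of any cofinal increasing sequence of its own nodes, which is precisely what superclosure demands.

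The verification splits into three routine steps. First, $T$ is a subtree of ${}^{{<}\kappa}\kappa$: if $s\in T$ and $\alpha<\length{s}$, then every proper initial segment of $s\restriction\alpha$ is also a proper initial segment of $s$, hence lies in $S$, so $s\restriction\alpha\in T$. Second, $T$ is superclosed: given an increasing sequence $\seq{t_\beta}{\beta<\delta}$ in $T$ with $\delta<\kappa$ a limit ordinal, its union $t$ has length $\gamma=\sup_{\beta<\delta}\length{t_\beta}<\kappa$ by regularity of $\kappa$, and for each $\alpha<\gamma$ one picks $\beta<\delta$ with $\length{t_\beta}>\alpha$, whence $t\restriction\alpha=t_\beta\restriction\alpha\in S$; thus $t\in T$. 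Third — and this is the step that really carries the argument — one shows $[T]=[S]$: the inclusion $[S]\subseteq[T]$ follows from $S\subseteq T$ (which holds because $S$, being a subtree, contains all initial segments of its elements), and for the converse, if $x\in[T]$ then for every $\beta<\kappa$ we have $x\restriction(\beta+1)\in T$ with $\beta<\length{x\restriction(\beta+1)}$, so $x\restriction\beta\in S$ by definition of $T$; as $\beta$ ranges over all of $\kappa$ this gives $x\in[S]$.

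Given all this, the conclusion is immediate: satisfying the Hurewicz dichotomy (Definition~\ref{def:Hurewiczdichotomy}) is a property of the underlying set of points, and $[T]=[S]$, so the closed set $[T]$ fails the Hurewicz dichotomy because $[S]$ does. I do not anticipate a genuine obstacle; the only point requiring care is the third step, i.e.\ making sure the passage from $S$ to its superclosure does not enlarge the body (in particular does not create a closed homeomorphic copy of ${}^\kappa\kappa$). It is worth noting that $T$ will in general \emph{not} be pruned — the extra limit nodes need not extend to $\kappa$-branches — but pruning is not part of the statement, and in fact one cannot in general keep a tree simultaneously pruned and superclosed. (As a byproduct the same construction shows that every closed subset of ${}^\kappa\kappa$ is of the form $[T]$ for a superclosed subtree $T\subseteq{}^{{<}\kappa}\kappa$.)
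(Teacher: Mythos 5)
Your proof is correct, and it reaches the conclusion by a genuinely different (and in fact shorter) route than the paper. You enlarge $S$ to the tree $T$ of all sequences whose proper initial segments lie in $S$; this adds only dead ends (a node $t\in T\setminus S$ has no proper extension in $T$, since any such extension would need $t$ itself as a proper initial segment lying in $S$), so $[T]=[S]$ and the failure of the dichotomy transfers verbatim, the dichotomy being a property of the point set alone. The paper instead keeps the tree pruned: it appends to each boundary node $s\in\partial S$ (your $T\setminus S$) the constant-$0$ tails $s^\smallfrown 0^{(\alpha)}$ for all $\alpha<\kappa$, so that $[T]$ genuinely grows; one must then check that the new branches are isolated in $[T]$, that $[T]$ is still not contained in a $\K{\kappa}$ set (because $[S]\subseteq[T]$), and that any closed copy of ${}^\kappa\kappa$ inside $[T]$, having no isolated points, would already lie in $[S]$. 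What the paper's construction buys is a pruned witness $T$; what yours buys is the elimination of all three of those verifications, plus the (correct) byproduct that every closed subset of ${}^\kappa\kappa$ is the body of a superclosed subtree. Your parenthetical remarks are also accurate: the unique pruned subtree with a prescribed body is the canonical induced tree, which need not be superclosed, so some trade-off of this kind is unavoidable.
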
 

\begin{proof} 
 Set $  \partial S  = \Set{t\in \pre{< \kappa}{\kappa}}{t \notin S \wedge \forall \alpha<\length{t} \, (t\restriction\alpha\in S)}$ 
and 
 \[
T  =   S \cup  \Set{t {}^\smallfrown{} 0^{(\alpha)}\in{}^{{<}\kappa}\kappa}{ s\in\partial S  \wedge \alpha < \kappa},
\]
where \( 0^{(\alpha)} \) is the sequence of length \(\alpha\) constantly equal to \( 0 \).
 Then $T$ is a superclosed tree such that $[S]\subseteq[T]$ and every branch in $[T]\setminus[S]$ is isolated in $[T]$. 
 This shows that $[T]$ is not a $\K{\kappa}$ subset of ${}^\kappa\kappa$ and every closed subset of $[T]$ without isolated points is contained in $[S]$. 
Thus since \( \pre{\kappa}{\kappa} \) has no isolated point the closed set $[T]$ cannot satisfy the Hurewicz dichotomy. 
\end{proof}


\section{Failures of the Hurewicz dichotomy in $\LL$}\label{section:FailureL}


In this section, we work in G\"odel's constructible universe $\LL$ and modify a construction of Friedman and Kulikov from~\cite{FriedmanKulikov} to obtain closed counterexamples to the Hurewicz dichotomy at every uncountable regular cardinal.

\begin{theorem}\label{theorem:CounterexampleInL}
 Assume $\VV=\LL$. If $\kappa$ is an uncountable regular cardinal, then there is a closed subset of ${}^\kappa\kappa$ that does not satisfy the Hurewicz dichotomy. 
\end{theorem}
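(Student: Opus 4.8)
The plan is to reduce the theorem to the construction of a single closed set and then build that set in $\LL$ by a recursion refining the $\kappa$-Baire almost $\kappa$-Kurepa tree of Friedman and Kulikov~\cite{FriedmanKulikov}.

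\smallskip
\emph{Reduction.} Fix an uncountable regular cardinal $\kappa$. It suffices to produce a pruned subtree $T \subseteq \pre{{<}\kappa}{\kappa}$ with the following two properties.
\begin{enumerate-(a)}
 \item \label{item:plan-unbdd} $[T]$ is not eventually bounded: for every $x \in \pre{\kappa}{\kappa}$ there is $y \in [T]$ with $y \not\leq^* x$.
 \item \label{item:plan-thin} If $\kappa$ is not weakly compact, then $T$ contains no perfect subtree; if $\kappa$ is weakly compact, then $T$ contains no $\kappa$-Miller subtree.
\end{enumerate-(a)}
Indeed, property~\ref{item:plan-unbdd} together with Lemma~\ref{lemma:bounded}\ref{prop:bounded-02} shows that the closed set $[T]$ is not contained in any $\K{\kappa}$ subset of $\pre{\kappa}{\kappa}$. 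On the other hand, if $[T]$ contained a closed subset homeomorphic to $\pre{\kappa}{\kappa}$, then when $\kappa$ is weakly compact Proposition~\ref{proposition:MillerTreeHomeoKappaKappa}\ref{item:weaklycompact} would yield a $\kappa$-Miller subtree of $T$, while when $\kappa$ is not weakly compact Corollary~\ref{corollary:RevNegroPonte} together with the equivalence of~\ref{item:homeo} and~\ref{item:perfect} in Lemma~\ref{lemma:PerfectClosedTreeEmb} would yield a perfect subtree of $T$; either way this contradicts~\ref{item:plan-thin}. Hence $[T]$ satisfies neither alternative of Definition~\ref{def:Hurewiczdichotomy}, so it is the desired counterexample (and by Proposition~\ref{proposition:FailureWithSuperperfectTree}, or directly from the construction, $T$ may moreover be taken superclosed).

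\smallskip
\emph{Construction.} Working in $\LL$ one builds $T$ by recursion on $\alpha < \kappa$, specifying $T \cap \pre{\alpha}{\kappa}$ at stage $\alpha$, keeping $T$ pruned and superclosed by closing off at limit levels, and processing along the way a list of ``tasks''. Since $\VV = \LL$, one has $\diamondsuit_\kappa$ available (and, more usefully, the finer level-by-level guessing extracted from the fine structure of $\LL$ in~\cite{FriedmanKulikov}). Two kinds of tasks are interleaved. First, at club-many levels $\alpha$ the tree is made ``fat'' above suitable nodes, with successors realizing arbitrarily large values; combined with the almost $\kappa$-Kurepa bookkeeping this produces $\kappa^+$ branches and, for every $x \in \pre{\kappa}{\kappa}$, a branch $y\in[T]$ with $y(\beta) > x(\beta)$ for cofinally many $\beta$, giving~\ref{item:plan-unbdd}. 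Second, whenever $\diamondsuit_\kappa$ hands us a code for a node $t$ together with a continuous, inclusion-preserving partial embedding of $\pre{{<}\alpha}{2}$ (respectively $\pre{{<}\alpha}{\kappa}$) into $T$ above $t$ that is on its way to witnessing a perfect (respectively $\kappa$-Miller) subtree of $T$, one extends the tree above some node of the range of that embedding so that no node above it ever becomes $2$-splitting (respectively $\kappa$-splitting) in $T$; as every perfect (respectively $\kappa$-Miller) subtree of $T$ must arise from such an embedding in the sense of Definition~\ref{def:perfectandkappaMiller} and is anticipated at some stage, this yields~\ref{item:plan-thin}. The key structural point is that each ``killing'' step freezes the tree only inside one basic clopen set $N_s$, leaving ample room at other nodes to keep outrunning the remaining functions.

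\smallskip
\emph{Main obstacle.} The delicate step is the weakly compact case of~\ref{item:plan-thin}. In that case, by Lemma~\ref{lemma:bounded}\ref{prop:bounded-03} property~\ref{item:plan-unbdd} is \emph{equivalent} to $[T]$ not being $\K{\kappa}$-covered, and it forces $\kappa$-wide levels cofinally in $\kappa$, hence — by regularity of $\kappa$ — cofinally many $\kappa$-splitting nodes of $T$; were these allowed to sit above every node along a superclosed subtree they would immediately assemble into a $\kappa$-Miller subtree, which is exactly what must be avoided. Threading this needle — arranging the unavoidable $\kappa$-splitting ``diffusely'' enough (this is the $\kappa$-Baireness of the tree) that $\diamondsuit_\kappa$ can sabotage every attempted $\kappa$-Miller subtree while $[T]$ stays unbounded and of size $\kappa^+$ — is where the modification of the Friedman–Kulikov construction does the real work; the non-weakly-compact case is comparatively easier, since there one may even take $T \subseteq \pre{{<}\kappa}{2}$ and only needs to destroy $2$-splitting along candidate perfect subtrees.
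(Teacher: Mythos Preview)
Your reduction is sound but diverges from the paper's, and the construction remains a sketch whose hardest step you name but do not carry out.

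The paper does not argue via eventual unboundedness. It isolates a different reduction (Proposition~\ref{proposition:ThreePropertisNoHD}): if a pruned $T$ has (i) no perfect subtree, (ii) $[T]$ $\kappa$-Baire, and (iii) every node $\kappa$-splitting, then $[T]$ fails the Hurewicz dichotomy. Condition (i) handles the ``closed copy of ${}^\kappa\kappa$'' side just as your (b) does, but the ``not $\K\kappa$'' side comes from (ii)+(iii) via a Baire-category argument (a $\kappa$-compact piece with nonempty interior contradicts the $\kappa$-splitting), not from your (a). This makes the reduction uniform in $\kappa$: no case split on weak compactness. The construction is also quite different. The paper does not run a $\diamondsuit$-recursion killing guessed embeddings; it writes $T$ down explicitly by a condensation scheme (sequences whose restrictions to certain limit levels lie in the appropriate small $L_\gamma$), and then (iii) is immediate, (ii) is a Skolem-hull argument producing a generic branch, and (i) is a direct cardinality count (a perfect embedding would inject too large a product into a small $L_{\lambda^*}$). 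No embeddings are ever ``killed''; ``no perfect subtree'' falls out of definability, and since $\kappa$-Miller implies perfect, your weakly compact case is subsumed without extra work.

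Two concrete repairs are needed if you pursue your route. First, your closing remark that in the non-weakly-compact case one may take $T\subseteq{}^{{<}\kappa}2$ is inconsistent with your own reduction: any $[T]\subseteq{}^\kappa 2$ is bounded by the constant function $2$, so (a) fails outright. Second, the ``Main obstacle'' paragraph identifies the tension in the weakly compact case but does not resolve it. The sealing strategy you describe (freeze $\kappa$-splitting above one node per $\diamondsuit$-guess) needs a real argument that enough of the tree survives unsealed to sustain unboundedness through $\kappa$ many sealings, and that every potential $\kappa$-Miller subtree is actually caught at some stage by the guess of an initial segment; neither is automatic. The paper's route sidesteps both issues by replacing (a) with $\kappa$-Baireness and obtaining ``no perfect subtree'' from counting rather than diagonalization.
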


The counterexamples constructed in the proof of this result will be of the form $[T]$ for \( T \subseteq {}^{{<}\kappa}\kappa \) as in the following proposition.

\begin{proposition}\label{proposition:ThreePropertisNoHD}
Let $T \subseteq {}^{{<}\kappa}\kappa$ be a pruned subtree with the following three properties:
\begin{enumerate-(1)}
 \item \label{proposition:ThreePropertisNoHD-1}
$T$ does not contain a perfect subtree;
 \item \label{proposition:ThreePropertisNoHD-3}
the closed set $[T]$ is $\kappa$-Baire, i.e.\ if $\seq{D_\alpha}{\alpha<\kappa}$ is a sequence of dense open subsets of $[T]$ then there is $x\in[T]$ with $x\in \bigcap_{\alpha < \kappa} D_\alpha$ (equivalently: \( T \) is not \( \kappa \)-meager, i.e.\ it is not a union of \( \kappa \)-many nowhere dense sets);
 \item \label{proposition:ThreePropertisNoHD-2}
every node in $T$ is $\kappa$-splitting.
\end{enumerate-(1)} 
Then the closed set $[T]$ does not satisfy the Hurewicz dichotomy. 
\end{proposition}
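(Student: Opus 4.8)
The plan is to establish both failures of the two alternatives in the Hurewicz dichotomy for $[T]$. We must show that $[T]$ is not contained in a $\K{\kappa}$ subset of ${}^\kappa\kappa$ and that $[T]$ does not contain a closed subset homeomorphic to ${}^\kappa\kappa$. The second of these is immediate: by Lemma~\ref{lemma:PerfectClosedTreeEmb}, if $[T]$ contained a closed-in-${}^\kappa\kappa$ copy of ${}^\kappa 2$ then $T$ would contain a perfect subtree, contradicting property~\ref{proposition:ThreePropertisNoHD-1}; and since every closed copy of ${}^\kappa\kappa$ contains a closed copy of ${}^\kappa 2$ (e.g.\ restrict to branches taking values in $\{0,1\}$ on a fixed cofinal set of splitting coordinates, or simply note ${}^\kappa 2$ embeds as a closed subspace of ${}^\kappa\kappa$), a closed copy of ${}^\kappa\kappa$ inside $[T]$ would yield the same contradiction. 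So the heart of the matter is the first alternative.

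To see that $[T]$ is not contained in any $\K{\kappa}$ subset of ${}^\kappa\kappa$, suppose toward a contradiction that $[T] \subseteq \bigcup_{\alpha<\kappa} K_\alpha$ with each $K_\alpha$ a $\kappa$-compact subset of ${}^\kappa\kappa$. Since $\kappa$-compact sets are closed, each $C_\alpha = K_\alpha \cap [T]$ is a closed subset of $[T]$, and $[T] = \bigcup_{\alpha<\kappa} C_\alpha$. By property~\ref{proposition:ThreePropertisNoHD-3}, $[T]$ is $\kappa$-Baire, so it is not a union of $\kappa$-many nowhere dense subsets of itself; hence some $C_\beta$ is not nowhere dense in $[T]$, and being closed it therefore has nonempty interior in $[T]$. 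Thus there is $s \in T$ with $N_s \cap [T] \subseteq C_\beta \subseteq K_\beta$, so $N_s \cap [T]$ is a nonempty $\kappa$-compact subset (a closed subset of the $\kappa$-compact set $K_\beta$). Now I invoke property~\ref{proposition:ThreePropertisNoHD-2}: every node in $T$ is $\kappa$-splitting. In particular, above $s$ the subtree $T_s = \{ t \in T \mid t \subseteq s \vee s \subseteq t\}$ is a pruned subtree all of whose nodes (at and above $s$) are $\kappa$-splitting, and $N_s \cap [T] = [T_s]$. But a pruned subtree with a $\kappa$-splitting node cannot have $\kappa$-compact body: the immediate successors of that $\kappa$-splitting node yield $\kappa$-many pairwise disjoint nonempty basic open subsets of $[T_s]$, which by Lemma~\ref{lemma:kurepa} (or directly: $T_s$ then has a level of size $\kappa$, so the basic open sets given by that level form a cover with no subcover of size $<\kappa$) witness that $[T_s]$ is not $\kappa$-compact. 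This contradiction completes the argument.

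I expect the main obstacle to be purely expository: making sure the interaction between "$\kappa$-Baire" (phrased topologically via dense open sets) and "not $\kappa$-meager" (phrased combinatorially) is handled cleanly, and checking that the Baire-category step really does produce a $C_\beta$ with interior rather than merely a non-meager $C_\beta$ — this uses that $C_\beta$ is closed, so non-meager closed implies somewhere dense implies (by closedness) containing a basic open set. One small subtlety worth spelling out: we need $[T]$ itself, with its subspace topology, to be the right space in which to run Baire category and in which $\kappa$-splitting nodes of $T$ produce genuine disjoint open covers; both are routine since $T$ is pruned, so $N_s \cap [T] \neq \emptyset$ for all $s \in T$ and the basic open subsets of $[T]$ are exactly the sets $N_s \cap [T]$ for $s \in T$. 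Everything else is a direct application of Lemma~\ref{lemma:kurepa} and Lemma~\ref{lemma:PerfectClosedTreeEmb} together with Fact~\ref{proposition:DichotomyUnc}.
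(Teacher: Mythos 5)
Your proposal is correct and follows essentially the same route as the paper's proof: rule out the second alternative via Lemma~\ref{lemma:PerfectClosedTreeEmb} and property~\ref{proposition:ThreePropertisNoHD-1}, then use $\kappa$-Baireness to find a basic open piece $N_s\cap[T]$ of $[T]$ inside one $\kappa$-compact piece, and contradict $\kappa$-compactness with the $\kappa$-many immediate successors of the ($\kappa$-splitting) node $s$. Your packaging of the last step (observing that $N_s\cap[T]$ is itself a closed subset of a $\kappa$-compact set, hence $\kappa$-compact, and then exhibiting the size-$\kappa$ disjoint cover) is a slightly cleaner rendering of the same contradiction.
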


\begin{proof}
 Assume towards a contradiction that $[T]$  satisfies the Hurewicz dichotomy. 
 Since $T$ does not contain a perfect subtree by~\ref{proposition:ThreePropertisNoHD-1}, by Lemma~\ref{lemma:PerfectClosedTreeEmb} the set $[T]$ does not contain a closed subset homeomorphic to ${}^\kappa\kappa$.
 Hence $[T]$ is contained in a $K_\kappa$-subset of ${}^\kappa\kappa$. Since $[T]$ is closed, it follows that $[T]$ is equal to the union of $\kappa$-many $\kappa$-compact subsets of $[T]$. This shows that there is a sequence $\seq{T_\alpha}{\alpha<\kappa}$ of subtrees of $T$ such that $[T]=\bigcup\Set{[T_\alpha]}{\alpha<\kappa}$ and the set $[T_\alpha]$ is $\kappa$-compact for every $\alpha<\kappa$. 
 Since $[T]$ is $\kappa$-Baire by~\ref{proposition:ThreePropertisNoHD-3}, 
there is an $\alpha_*<\kappa$ such that the interior of $[T_{\alpha_*}]$ in $[T]$ is nonempty, and  thus there is a node $t_*$ in $T$ with $N_{t_*}\cap[T]\subseteq[T_{\alpha_*}]$. 
 By~\ref{proposition:ThreePropertisNoHD-2}, $t_*$ is a $\kappa$-splitting node of \( T \), and every direct successor is extended by an element of $[T]\cap N_{t_*}\subseteq[T_{\alpha_*}]$. 
 This allows us to construct an open covering of the $\kappa$-compact set $[T_{\alpha_*}]$ without a subcover of cardinality less than $\kappa$, a contradiction.  
\end{proof}

\begin{proof}[Proof of Theorem \ref{theorem:CounterexampleInL}]
 Fix an uncountable regular cardinal $\kappa$. It is enough to construct a pruned subtree \( T \subseteq \pre{<\kappa}{\kappa} \) satisfying conditions~\ref{proposition:ThreePropertisNoHD-1}--\ref{proposition:ThreePropertisNoHD-2} of Proposition~\ref{proposition:ThreePropertisNoHD}.

 If $\kappa=\mu^+$ for some cardinal $\mu$, then set $\nu = \cof(\mu)$; 
 otherwise, set $\nu=\omega$. Define a function 
 \[
\map{F}{\Set{t\in{}^{{<}\kappa}\kappa}{\length{t}\in\Lim, \, \cof(\length{t})=\nu}}{\kappa}
\]
 by setting 
\[
F(t)  =  \min\Set{\alpha<\kappa}{\length{t}<\alpha, \; \ran{t}\subseteq\alpha, \; \LL_\alpha\models\ZFC^-, \; \cof(\length{t})^{\LL_\alpha}=\nu}.
\]
 for all $t\in\dom{F}$. Finally, set 
\[
T  =  \Set{t\in{}^{{<}\kappa}\kappa}{\forall\alpha\leq\length{t} \, \left( t\restriction\alpha\in\dom{F} \Rightarrow t\restriction\alpha\in\LL_{F(t)}\right)}.
\] 
 Then $T$ is a subtree of ${}^{{<}\kappa}\kappa$ of height $\kappa$. Moreover, \( T \) is pruned: if $t\in[T]$ and $x\in{}^\kappa\kappa$ with $t\subseteq x$ and $x(\alpha)=0$ for all $\length{t}\leq\alpha<\kappa$, then $x\in[T]$. 

 \begin{claim}
  The tree $T$ does not contain a perfect subtree, and thus satisfies condition~\ref{proposition:ThreePropertisNoHD-1} of Proposition~\ref{proposition:ThreePropertisNoHD}.
 \end{claim}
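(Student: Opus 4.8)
The plan is to show that any continuous inclusion-preserving injection $\map{\iota}{\pre{<\kappa}{2}}{T}$ would give rise to a ``large'' antichain of nodes of $T$ that cannot all lie in the correct initial segment $\LL_{F(t)}$ of $\LL$, contradicting the very definition of $T$. The point is that the tree $T$ is ``slim'' at levels of cofinality $\nu$: if $t\in T$ has length $\lambda$ with $\cof(\lambda)=\nu$, then $t\in\LL_{F(t)}$, and $\LL_{F(t)}$ has size $<\kappa$ (indeed size $\leq\vert\lambda\vert$ once we note that $\LL_\alpha$ has cardinality $\vert\alpha\vert$ for $\alpha\geq\omega$), so there are at most $\vert\lambda\vert<\kappa$ nodes of $T$ of length $\lambda$ sitting below any fixed branch's relevant $\LL_{F(t)}$ — more precisely, $T\cap\pre{\lambda}{\kappa}$ has size $<\kappa$ whenever $\cof(\lambda)=\nu$. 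That already forbids perfect subtrees, essentially by the standard argument that a perfect tree has $2^\nu\geq\nu^+$ branches splitting by some level of cofinality $\nu$.

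Concretely, here is how I would carry it out. First I would record the auxiliary fact that $\vert T\cap\pre{\lambda}{\kappa}\vert\leq\vert\lambda\vert$ for every limit ordinal $\lambda<\kappa$ with $\cof(\lambda)=\nu$: indeed every such $t$ satisfies $t\restriction\lambda=t\in\LL_{F(t)}$, and $F(t)$ is the \emph{least} ordinal $\alpha$ with $\lambda<\alpha$, $\ran{t}\subseteq\alpha$, $\LL_\alpha\models\ZFC^-$, and $\cof(\lambda)^{\LL_\alpha}=\nu$; since this least value depends on $t$ only through a bounded amount of information and all the candidate $\LL_\alpha$ have size $\vert\alpha\vert$, a counting argument (or: observe $F(t)$ is always below $\lambda^{+\LL}$, which has $\LL$-cardinality $\vert\lambda\vert$) bounds the number of such $t$ by $\vert\lambda\vert$. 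Second, suppose toward a contradiction that $S\subseteq T$ is a perfect subtree, hence superclosed with every node extended by a splitting node. Using the splitting and superclosedness of $S$, I would build an inclusion- and incompatibility-preserving continuous map $\map{\iota}{\pre{\leq\nu}{2}}{S}$ together with a continuous strictly increasing $\map{c}{\nu+1}{\kappa}$ with $\length{\iota(u)}=c(\length{u})$; at the limit stage $\nu$ this is possible precisely because $S$ is superclosed. Then $\lambda:=c(\nu)$ is a limit ordinal of cofinality $\nu$, and the $2^\nu\geq\nu^+>\nu\geq\aleph_0$ distinct nodes $\Set{\iota(u)}{u\in\pre{\nu}{2}}$ all lie in $T\cap\pre{\lambda}{\kappa}$. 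But $\vert\lambda\vert\geq\nu^+$ need not hold — and this is exactly where the case split on whether $\kappa$ is a successor enters.

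When $\kappa$ is a limit cardinal, $\nu=\omega$, and since $\lambda<\kappa$ we get $2^\nu=2^{\aleph_0}=\aleph_1\leq\vert\lambda\vert$ only if $\vert\lambda\vert\geq\aleph_1$; but we can simply choose $c$ so that $\lambda$ is much larger, or rather observe that the contradiction we want is the \emph{reverse} inequality: $T\cap\pre{\lambda}{\kappa}$ has size $\leq\vert\lambda\vert$, while we would like to say this is $<2^\nu$ — which fails. So instead the correct contradiction is obtained by continuing the embedding to length $\kappa$: since $S$ is perfect and superclosed we actually get a full embedding $\map{\iota}{\pre{<\kappa}{2}}{S}$ and continuous $\map{c}{\kappa}{\kappa}$, and then for a club of $\lambda<\kappa$ with $\cof(\lambda)=\nu$ the set $\Set{\iota(u)}{u\in\pre{\lambda}{2}}$ has size $2^{\vert\lambda\vert}>\vert\lambda\vert\geq\vert T\cap\pre{c(\lambda)}{\kappa}\vert$ once one checks $\cof(c(\lambda))=\nu$ for such $\lambda$ (guaranteeing $c(\lambda)\in\dom F$-relevant levels), which is the desired contradiction. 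The main obstacle — and the step deserving real care — is this last bookkeeping: arranging the map $c$ (and the choice of $\nu$ in the successor case, where $\nu=\cof(\mu)$ and levels of cofinality $\nu$ are cofinal in $\kappa=\mu^+$) so that the length $c(\lambda)$ of a node built over a $\nu$-cofinal level $\lambda$ of $\pre{<\kappa}{2}$ again has cofinality $\nu$, so that the slimness bound $\vert T\cap\pre{c(\lambda)}{\kappa}\vert\leq\vert c(\lambda)\vert=\vert\lambda\vert$ applies; once that is in place, $2^{\vert\lambda\vert}>\vert\lambda\vert$ finishes it. I expect the remaining conditions~\ref{proposition:ThreePropertisNoHD-3} and~\ref{proposition:ThreePropertisNoHD-2} of Proposition~\ref{proposition:ThreePropertisNoHD} to be handled in subsequent claims: $\kappa$-splitting of every node is immediate from the freedom to append arbitrary values followed by $0$'s (which keeps us in $T$), and $\kappa$-Baireness will follow from a condensation/$\diamondsuit$-style argument in $\LL$ showing $[T]$ is not $\kappa$-meager, modeled on Friedman–Kulikov.
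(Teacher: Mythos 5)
Your overall strategy --- counting the nodes that a perfect subtree would push into a single level of cofinality $\nu$ against the size of the $\LL_\alpha$'s those nodes must inhabit --- is the same as the paper's, but the ``slimness'' lemma you rest it on is false, and what is missing is exactly the technical heart of the paper's argument. The claim that $\vert T\cap\pre{\lambda}{\kappa}\vert\leq\vert\lambda\vert$ whenever $\cof(\lambda)=\nu$ does not hold: the definition of $F$ requires $\ran{t}\subseteq\alpha$, so a node $t$ of length $\lambda$ whose range reaches up to some large $\beta<\kappa$ has $F(t)\geq\beta$, and $F(t)$ is in no way bounded by $\lambda^{+\LL}$ (your parenthetical justification is simply wrong, and the ``bounded amount of information'' count only shows the level has size $\leq\kappa$). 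Indeed, for every $\beta<\kappa$ the constant function of length $\lambda$ with value $\beta$ lies in $T$, so $T\cap\pre{\lambda}{\kappa}$ has cardinality $\kappa$. What is true is only that $\Set{t\in T\cap\pre{\lambda}{\kappa}}{\ran{t}\subseteq\lambda}$ has size $\leq\vert\lambda\vert$, because all such $t$ share the \emph{single} value $F(t)=\lambda^*<\lambda^{+\LL}$ and hence all lie in the one set $\LL_{\lambda^*}$ of cardinality $\vert\lambda\vert$. Consequently your contradiction only materializes if the embedding is built so that all the nodes it produces at the critical level have range bounded by that level (or by a common ordinal of cardinality $\vert\lambda\vert$). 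Nothing in your construction arranges this, and it is not automatic: the union of the ranges of the nodes $\iota(v)$ for $v$ below the critical level is a bounded subset of $\kappa$, but its supremum can be an ordinal of cardinality far above $\vert\lambda\vert$, in which case the least suitable $\LL_\alpha$ containing all of them is too large for the counting to close.

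This range control is precisely clause (c), $\ran{(\iota\circ e_{\bar{\rho}})(x)}\subseteq\lambda_\rho$, in the paper's recursion, and it also explains why the paper does not embed all of $\pre{<\kappa}{2}$ but only a product $\prod_{\xi<\nu}\lambda_\xi$. In the successor case $\kappa=\mu^+$ your ``continue to length $\kappa$ and take a club'' variant puts $2^{\vert\lambda\vert}=\kappa$ many branches through the critical level once $\vert\lambda\vert=\mu$, and $\kappa$ many nodes cannot have their ranges commonly bounded below $\kappa$ at all; the paper instead thins the binary tree via the maps $e_\rho$ so that only fewer than $\kappa$ nodes are handled at each stage $\rho<\nu$ (so their lengths and ranges can be trapped below $\lambda_{\rho+1}$), while the limit stage still yields $\vert\prod_{\xi<\nu}\lambda_\xi\vert=\mu^{\cof(\mu)}>\mu\geq\vert\LL_{\lambda^*}\vert$ pairwise distinct nodes. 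In the inaccessible case your club-of-closure-points idea is salvageable (close under the lengths and range-suprema of $\iota\restriction\pre{\leq\beta}{2}$, which is a $<\kappa$-sized set since $2^{\vert\beta\vert}<\kappa$, and pick a closure point of cofinality $\omega$), but as written the proof has a genuine gap in both cases.
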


 \begin{proof}[Proof of the Claim]
  Assume, towards a contradiction, that $T$ contains a perfect subtree. 
  By Lemma~\ref{lemma:PerfectClosedTreeEmb}, we can find a continuous inclusion-preserving injection $\map{\iota}{{}^{{<}\kappa}2}{T}$.  
  We inductively construct a strictly increasing continuous sequence $\seq{\lambda_\rho}{\rho\leq\nu}$ of ordinals contained in the interval $(\nu,\kappa)$ and 
  injective functions $\seq{\map{e_\rho}{\prod_{\xi<\rho}\lambda_\xi}{{}^{{<}\kappa}2}}{\rho\leq\nu}$ such that the following statements hold for all $\bar{\rho}<\rho<\nu$ and $x\in\prod_{\xi<\rho}\lambda_\xi$.
 \begin{enumerate-(a)}
  \item $e_{\bar{\rho}}(x\restriction\bar{\rho})\subseteq e_{\rho}(x)$. 

  \item $\lambda_{\bar{\rho}}\leq \length{(\iota\circ e_{\bar{\rho}})(x\restriction\bar{\rho})}<\lambda_\rho$.   

  \item $\ran{(\iota\circ e_{\bar{\rho}})(x\restriction\bar{\rho})}\subseteq\lambda_\rho$.  

  \item If $\kappa$ is an inaccessible cardinal, then $\lambda_\rho$ is a cardinal. 
 \end{enumerate-(a)}

  Given $\rho<\nu$, assume that we already constructed the sequences $\seq{\lambda_{\bar{\rho}}}{\bar{\rho}<\rho}$ and $\seq{\map{e_{\bar{\rho}}}{\prod_{\xi<\bar{\rho}}\lambda_\xi}{{}^{{<}\kappa}2}}{\bar{\rho}<\rho}$ with the above properties. If $\rho\in\Lim$, then we define $\lambda_\rho=\sup_{\bar{\rho}<\rho}\lambda_{\bar{\rho}}$ and $e_\rho(x)=\bigcup_{\bar{\rho}<\rho}e_{\bar{\rho}}(x\restriction\bar{\rho})$ for all $x\in\prod_{\xi<\rho}\lambda_\xi$. 
 Now, assume that $\rho=\bar{\rho}+1$. Since the $\mathsf{GCH}$ holds and $\rho<\nu$, the product $\prod_{\xi<\rho}\lambda_\xi$ has cardinality less than $\kappa$ and there is a $\lambda_\rho$ with $\length{(\iota\circ e_{\bar{\rho}})(x)}<\lambda_\rho$ and $\ran{(\iota\circ e_{\bar{\rho}})(x)}\subseteq\lambda_\rho$ for all $x\in\prod_{\xi<\rho}\lambda_\xi$. If $\kappa$ is inaccessible, then we find a cardinal $\lambda_\rho$ with this property. 
  Pick $y\in\prod_{\xi<\bar{\rho}}\lambda_\xi$. Then there is an $s\in{}^{{<}\kappa}2$ with $e_{\bar{\rho}}(y)\subseteq s$ and $\length{s}>\lambda_{\bar{\rho}}$. Given $\alpha<\lambda_{\bar{\rho}}$, define $e_\rho(y^\frown\langle\alpha\rangle)=s_\alpha$, where  $\map{s_\alpha}{\length{s}+\lambda_{\bar{\rho}}}{2}$ is  the unique function with $s\subseteq s_\alpha$ and $s_{\alpha}(\length{s}+\beta)=1\longleftrightarrow \alpha=\beta$ for all $\beta<\lambda_{\bar{\rho}}$. This completes the construction of the sequences.

Define $i=\iota\circ e_\nu$, $\lambda=\lambda_\nu$ and  
\begin{equation*}
  \lambda^*  =  \min\Set{\alpha<\kappa}{\lambda<\alpha, \; \LL_\alpha\models\ZFC^-, \; \cof(\lambda)^{\LL_\alpha}=\nu}. 
 \end{equation*}
Then the above construction ensures that $i$ is an injection of $\prod_{\xi<\nu}\lambda_\xi$ into $T\cap{}^\lambda\lambda$ and $(F\circ i)(x)=\lambda^*$ for all $x\in\prod_{\xi<\nu}\lambda_\xi$. In particular, $\ran{i}\subseteq\LL_{\lambda^*}$.

  If $\kappa$ is an inaccessible cardinal, then $\lambda$ is a singular strong limit cardinal of countable cofinality  and 
  \[
 \lambda^+  =  2^\lambda  =  \left\vert\prod_{n<\omega}\lambda_n\right \vert  \leq  \vert\LL_{\lambda^*}\vert  =  \lambda, 
\] a contradiciton. 
  In the other case, if there is a cardinal $\mu$ such that $\kappa=\mu^+$, then 
  \[
\kappa  =  2^\mu  =  \left\vert\prod_{\xi<\nu}\lambda_\xi \right \vert  \leq  \vert\LL_{\lambda^*}\vert  =  \mu  <  \kappa,
\] 
a contradiciton.  
 \end{proof}

 \begin{claim}
  The set $[T]$ is $\kappa$-Baire, and thus satisfies condition~\ref{proposition:ThreePropertisNoHD-3} of Proposition~\ref{proposition:ThreePropertisNoHD}. 
 \end{claim}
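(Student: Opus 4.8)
The plan is to verify the definition of $\kappa$-Baireness directly: given dense open sets $D_\alpha \subseteq [T]$ for $\alpha < \kappa$, I will construct a $\kappa$-branch $x \in [T] \cap \bigcap_{\alpha < \kappa} D_\alpha$ by a recursion of length $\kappa$, the whole difficulty being to arrange that the partial branches produced along the way remain in $T$. Writing each $D_\alpha = \bigcup\{N_s \cap [T] \mid s \in W_\alpha\}$ with $W_\alpha \subseteq {}^{{<}\kappa}\kappa$, the sequence $\langle W_\alpha \mid \alpha<\kappa\rangle$ is coded by a subset of $\kappa$ and hence lies in $\LL_\theta$ for some $\theta < \kappa^+$ with $\LL_\theta \models \ZFC^-$ and $T, \kappa, \nu \in \LL_\theta$; fix such a $\theta$. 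Fix also a bookkeeping surjection $\map{\pi}{\kappa}{\kappa}$ with $\pi(\xi) \leq \xi$ and each fibre unbounded. Since $T$ is pruned of height $\kappa$, every $t \in T$ has arbitrarily long extensions in $T$, and since each $D_\alpha$ is dense open, every $t \in T$ has an extension $s \in T$ with $N_s \cap [T] \subseteq D_\alpha$.

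Next I would build, by recursion on $\xi < \kappa$, a continuous $\in$-increasing chain $\langle M_\xi \mid \xi < \kappa\rangle$ of elementary submodels of $\langle \LL_\theta, \in\rangle$ of size $<\kappa$, together with an increasing sequence of nodes $s_\xi \in T$, so that $\nu + 1 \cup \{\, T, \kappa, \langle W_\alpha \mid \alpha < \kappa\rangle, \pi \,\} \subseteq M_0$, $\langle M_\eta, s_\eta \mid \eta \leq \xi\rangle \in M_{\xi + 1}$, $\gamma_\xi := M_\xi \cap \kappa$ is an ordinal $< \kappa$, and the following hold. At a successor stage, $s_{\xi+1}$ is the $<_\LL$-least node of $T$ of length exceeding $\gamma_\xi$ extending the $<_\LL$-least node of $T$ extending $s_\xi$ with $N_s \cap [T] \subseteq D_{\pi(\xi)}$; by elementarity $s_{\xi+1} \in M_{\xi+1}$, hence $\length{s_{\xi+1}} < \gamma_{\xi+1}$ and $\ran{s_{\xi+1}} \subseteq \gamma_{\xi+1}$. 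At a limit stage, $s_\xi = \bigcup_{\eta<\xi} s_\eta$, which has length $\sup_{\eta<\xi}\gamma_\eta = \gamma_\xi$, range contained in $\gamma_\xi$, and cofinality $\cof(\xi)$. Granting that every $s_\xi$ lies in $T$, the union $x = \bigcup_{\xi<\kappa} s_\xi$ is a $\kappa$-branch through $T$, and the bookkeeping ensures $x \in D_\alpha$ for every $\alpha$, so we are done.

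That each $s_\xi$ lies in $T$ is clear at successor stages and at limit stages $\xi$ with $\cof(\xi) \neq \nu$ (for such $\xi$ the node $s_\xi$ is not in $\dom{F}$, and the membership conditions on its proper initial segments are inherited from the nodes $s_\eta \in T$). So the argument reduces to the case of a limit $\xi$ with $\cof(\xi) = \nu$, where I must show $s_\xi \in \LL_{F(s_\xi)}$. Let $\gamma = \gamma_\xi = \length{s_\xi}$ and let $\map{\sigma}{M_\xi}{\LL_{\bar\theta}}$ be the transitive collapse; then $\sigma$ fixes all ordinals $< \gamma$ (and hence $\nu$), $\sigma(\kappa) = \gamma$, $\LL_{\bar\theta} \models \ZFC^-$, and $\bar\theta > \gamma$. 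A routine condensation computation shows that $\langle \gamma_\eta \mid \eta < \xi\rangle$, and hence — via the above $\LL$-definable recursion carried out inside $\LL_{\bar\theta}$, using absoluteness of the defining formula of $T$ — the sequence $\langle s_\eta \mid \eta < \xi\rangle$, is definable over $\LL_{\bar\theta}$, so $s_\xi \in \LL_{\bar\theta + \omega}$. On the other hand $\LL_\theta$, being a transitive $\ZFC^-$-model with $\kappa$ among its ordinals, satisfies that $\kappa$ is regular, so by elementarity $\LL_{\bar\theta}$ satisfies that $\gamma$ is regular; since cofinality is non-increasing in the $\LL$-hierarchy this forces $\cof(\gamma)^{\LL_\alpha} = \gamma \neq \nu$ for every $\alpha \leq \bar\theta$, whence the ordinal $F(s_\xi)$ exceeds $\bar\theta$, and in particular $F(s_\xi) > \bar\theta + \omega$. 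Therefore $s_\xi \in \LL_{F(s_\xi)}$, and since $\length{s_\xi}, \sup\ran{s_\xi} \leq \gamma < F(s_\xi)$ we get $s_\xi \in T$, as required.

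The hard part will be exactly this last step: getting the condensation of the tower $\langle M_\xi \rangle$ to interact correctly with the definitions of $F$ and of $T$ — precisely, checking that the defining formula of $T$ (which quantifies over $\ZFC^-$-levels $\LL_\alpha$ and their internal cofinalities) is absolute between $\LL_{\bar\theta}$ and $\LL$ so that $s_\xi$ is genuinely reconstructed inside $\LL_{\bar\theta}$, and tracking the bookkeeping $\cof(\length{s_\xi}) = \cof(\xi)$ that makes $F(s_\xi)$ land strictly above $\bar\theta$. The remaining ingredients — existence of the $D_\alpha$-meeting extensions, the elementarity producing $s_{\xi+1} \in M_{\xi+1}$, and the verification that $x$ meets every $D_\alpha$ — are standard.
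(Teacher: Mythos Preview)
Your approach is essentially the paper's: both build a continuous elementary chain below $\LL_{\kappa^+}$, run an $\LL$-canonical recursion to produce the branch, and at a limit stage $\xi$ of cofinality $\nu$ use that the transitive collapse $\LL_{\bar\theta}$ of $M_\xi$ believes $\gamma_\xi$ is regular (so $\cof(\gamma_\xi)^{\LL_\alpha}=\gamma_\xi\neq\nu$ for all $\alpha\le\bar\theta$) to force $F(s_\xi)>\bar\theta$, while the recursion reconstructs $s_\xi$ definably over $\LL_{\bar\theta}$. The paper's presentation differs only cosmetically: it argues by contradiction from the $<_\LL$-least counterexample $\vec{D}$, and it takes the $M_\alpha$'s to be $\Sigma_n$-Skolem hulls in $\LL_{\kappa^+}$ so that the map $\alpha\mapsto M_\alpha$ is itself $\Sigma_n$-definable---this is precisely what makes the sequence $\langle\kappa_\alpha\mid\alpha<\xi\rangle$ recoverable inside each $M_\xi$.

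That last point is the one place your write-up is a bit loose. Your ``routine condensation computation'' for $\langle\gamma_\eta\mid\eta<\xi\rangle$ needs the chain itself to be canonically built: merely requiring $\langle M_\eta,s_\eta\mid\eta\le\zeta\rangle\in M_{\zeta+1}$ puts each proper initial segment into $M_\xi$, but does not by itself make the full $\xi$-sequence definable over $\LL_{\bar\theta}$, and your recursion for $s_{\eta+1}$ explicitly uses $\gamma_\eta$. The fix is exactly the paper's move---take each $M_{\xi+1}$ to be the $\LL$-definable Skolem hull of $M_\xi$ together with the previous history---after which the tower, and hence $\langle\gamma_\eta\rangle$ and $\langle s_\eta\rangle$, is uniformly definable and condensation goes through as you indicate.
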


\begin{proof}[Proof of the Claim]
  Assume, towards a contradiction, that there exists a sequence $\vec{D}=\seq{D_\alpha}{\alpha<\kappa}$ of dense-open subsets of $[T]$ such that $\bigcap_{\alpha<\kappa}D_\alpha=\emptyset$. 
  Let $\vec{D}$ denote the $<_\LL$-least such sequence and define $\bar{D}_\alpha=\Set{s\in{}^{{<}\kappa}2}{N_s\cap[T]\subseteq D_\alpha}$ for every $\alpha<\kappa$.  
 Pick a sufficiently large $n<\omega$. The existence of $\Sigma_n$-definable $\Sigma_n$-Skolem functions in $\LL_{\kappa^+}$ (see {\cite[Section 3]{MR0309729}}) implies that 
 there is a continuous sequence $\seq{M_\alpha}{\alpha<\kappa}$  of $\Sigma_n$-elementary submodels of $\LL_{\kappa^+}$ of cardinality less than $\kappa$ 
 and a strictly increasing continuous sequence $\seq{\kappa_\alpha<\kappa}{\alpha<\kappa}$ of ordinals such that $\kappa_\alpha=\kappa\cap M_\alpha$ for all $\alpha<\kappa$ 
 and the function that sends $\alpha$ to $M_\alpha$ is $\Sigma_n$-definable in $\langle\LL_{\kappa^+},\in\rangle$. 
 Given $\alpha<\kappa$, let $\map{\pi_\alpha}{M_\alpha}{\LL_{\delta_\alpha}}$ denote the corresponding transitive collapse. 
 By the above choices, we have $\kappa,\nu,F,T\in M_0$ and hence 
 \[
T\cap {}^{{<}\kappa_\alpha}2, \,  \seq{\bar{D}_\beta\cap {}^{{<}\kappa_\alpha}2}{\beta<\kappa_\alpha}  \in  \LL_{\delta_\alpha}
\] 
 for every $\alpha<\kappa$. Moreover, $\kappa_\alpha$ is a regular cardinal in $\LL_{\delta_\alpha}$ and hence we have $\alpha\leq\kappa_\alpha<\delta_\alpha<F(t)$ for every $\map{t}{\kappa_\alpha}{\kappa}$.

 We inductively construct a strictly increasing sequence $\seq{t_\alpha}{\alpha<\kappa}$ of nodes in $T$ such that the following statements hold for all $\alpha<\kappa$. 
 \begin{enumerate-(a)}
  \item $t_0=\emptyset$. 

  \item $t_{\alpha+1}$ is the $<_\LL$-least element $t$ of $\bar{D}_\alpha$ with $\length{t}\geq\kappa_\alpha$.  

  \item If $\alpha\in\Lim$, then $t_\alpha=\bigcup\Set{t_{\bar{\alpha}}}{\bar{\alpha}<\alpha}$. 
 \end{enumerate-(a)}

  Let $\alpha<\kappa$ and assume that we already constructed a sequence $\vec{t}=\seq{t_{\bar{\alpha}}}{\bar{\alpha}<\alpha}$ with the above properties. 
  Since $N_t\cap[T]\neq\emptyset$ for every $t\in T$, we may assume that $\alpha$ is a limit ordinal of cofinality $\nu$.  
  By the above assumptions, we have  $\alpha\leq\kappa_\alpha=\kappa\cap M_\alpha$, the function 
  \[
\Map{g}{\kappa}{\kappa}{\beta}{\kappa_\beta}
\] 
is definable in $M_\alpha$ and the sequence $\seq{\bar{D}_\beta}{\beta<\kappa}$ is an element of $M_\alpha$. This allows us to  conclude that the sequence $\vec{t}$ and the function $t_\alpha=\bigcup_{\bar{\alpha}<\alpha}t_{\bar{\alpha}}$ are both definable in $\langle\LL_{\delta_\alpha},\in\rangle$.   In particular, $t_\alpha$ is an element of $\LL_{t_\alpha}$ and a node in $T$.  

  If we define $x=\bigcup\Set{t_\alpha}{\alpha<\kappa}$, then $x\in[T]$ with $x\in D_\alpha$ for every $\alpha<\kappa$, a contradiction. 
 \end{proof}

By the definition of $T$, if $t$ is a node in $T$, then $t^\smallfrown \alpha \in T$ for every $\alpha<\kappa$. 
In particular, every node in $T$ is $\kappa$-splitting and thus \( T \) satisfies also condition~\ref{proposition:ThreePropertisNoHD-2} of Proposition~\ref{proposition:ThreePropertisNoHD}. This concludes our proof.
\end{proof}


\section{Concluding remarks and open questions}

In~\cite{PS}, the third author shows that if $\lambda>\kappa$ is an inaccessible cardinal and $G$ is $\mathrm{Col}(\kappa,\lambda)$-generic over $\VV$, 
then every subset of ${}^{\kappa}\kappa$ definable from ordinals and subsets of $\kappa$ has the \( \kappa \)-perfect set property in $\VV[G]$. 
In particular, if $\kappa$ is not weakly compact in $\VV[G]$ then every subset of ${}^{\kappa}\kappa$ definable from ordinals and subsets of $\kappa$ satisfies the Hurewicz dichotomy. 
In the light of the results of this paper, this consistency result leads to a number of interesting questions. First, we ask whether the use of an inaccessible cardinal is necessary.

\begin{question} 
 If the Hurewicz dichotomy holds for $\mathbf{\Pi}^1_1$ subsets of ${}^{\kappa}\kappa$ (i.e.\ complements of $\mathbf{\Sigma}^1_1$ subsets), is there an inner model with an inaccessible cardinal? 
\end{question}

Note that in the classical case, we can construct a model in which all $\mathbf{\Sigma}^1_2$ subsets of ${}^\omega\omega$ satisfy the Hurewicz dichotomy without large cardinals by iterating Hechler forcing $\omega_1$-many times. 
Next, we ask whether the conclusion of Theorem~\ref{theorem:HDinCollapse} extends to all definable subsets of ${}^\kappa\kappa$.

\begin{question} 
 If $\lambda>\kappa$ is an inaccessible cardinal and $G$ is $\mathrm{Col}(\kappa,\lambda)$-generic over $\VV$ such that $\kappa$ is weakly compact in $\VV[G]$, 
 does every subset of ${}^\kappa\kappa$ in $\VV[G]$ that is definable from ordinals and subsets of $\kappa$ satisfy the Hurewicz dichotomy?  
\end{question}

As we already observed, at non-weakly compact cardinals the Hurewicz dichotomy is a consequence of the \( \kappa \)-perfect set property, but it is not clear whether this implication might fail at weakly compact cardinals.

\begin{question} 
 Is it consistent that there is a weakly compact cardinal $\kappa$ such that all $\mathbf{\Sigma}^1_1$ subsets of ${}^\kappa\kappa$ have the \( \kappa \)-perfect set property but there is a closed (or even just \( \mathbf{\Sigma}^1_1 \)) subset of ${}^\kappa\kappa$ that does not satisfy the Hurewicz dichotomy? 
\end{question}

By Theorem~\ref{theorem:HDimpliesMeasurability}, if $\kappa$ is weakly compact then the Hurewicz dichotomy for $\mathbf{\Sigma}^1_1$ subsets of ${}^{\kappa}\kappa$ implies that these subsets are $\kappa$-Miller measurable.

\begin{question}
 If $\kappa$ is not a weakly compact cardinal, does the assumption that all $\mathbf{\Sigma}^1_1$ subsets satisfy the Hurewicz dichotomy imply that these subsets are $\kappa$-Miller measurable?
\end{question}

We also ask about the converse. 

\begin{question} 
 Is it consistent that all  $\mathbf{\Sigma}^1_1$ subsets of ${}^\kappa\kappa$ are $\kappa$-Miller measurable and there is a $\mathbf{\Sigma}^1_1$ subset that does not satisfy the Hurewicz dichotomy?  Can such a $\kappa$ be weakly compact? 
\end{question}

It was mentioned in Section~\ref{section:intro} that the strong Hurewicz dichotomy for $\mathbf{\Sigma}^1_1$ subsets of ${}^{\kappa}\kappa$ provably fails for weakly compact $\kappa$. In the light of \cite[Theorem 21.18]{MR1321597}, it is also interesting to consider the following weakening of Definition~\ref{definition:StrongHD}.

\begin{definition}
 Given an infinite cardinal $\kappa$, we say that a subset $A$ of ${}^\kappa\kappa$ satisfies the \emph{relativized strong Hurewicz dichotomy} 
 if either $A$ is a \( \K{\kappa} \) subset of ${}^\kappa\kappa$ or $A$ contains a \emph{relatively closed} (that is, a closed-in-\( A \)) subset homeomorphic to ${}^\kappa\kappa$. 
\end{definition}

Note that the constructions of Sections~\ref{section:FailureCohen} and~\ref{section:FailureL} also show that consistently there are closed counterexamples to the relativized strong Hurewicz dichotomy: in fact, for closed sets the latter is clearly equivalent to the strong Hurewicz dichotomy. Moreover, Theorem~\ref{theorem:ConsNonWC} already shows that if $\kappa$ is not weakly compact, then the relativized strong Hurewicz dichotomy can be forced to hold for all $\mathbf{\Sigma}^1_1$ subsets of ${}^{\kappa}\kappa$.

\begin{question} 
 Is it consistent that for some weakly compact $\kappa$ the relativized strong Hurewicz dichotomy holds for all $\mathbf{\Sigma}^1_1$ subsets of ${}^{\kappa}\kappa$? 
\end{question}


\end{document}